\xpatchcmd{\@thm}{\fontseries\mddefault\upshape}{}{}{} 
\providecommand{\U}[1]{\protect\rule{.1in}{.1in}}
\newtheorem{theorem}{Theorem}
\newtheorem{definition}[theorem]{Definition}
\newtheorem{lemma}[theorem]{Lemma}
\newtheorem{notation}[theorem]{Notation}
\newtheorem*{question*}{Question}
\newtheorem{proposition}[theorem]{Proposition}
\newtheorem{remark}[theorem]{Remark}
\begin{document}
	\author{Swarnendu Sil\thanks{swarnendusil@iisc.ac.in}}
	\affil{Department of Mathematics \\ Indian Institute of Science\\ Bangalore, India}
	\title{Topology and approximation of weak $G$-bundles in the supercritical dimensions}
	\maketitle 
	\begin{abstract}
		For analyzing stationary Yang-Mills connections in higher dimensions, one has to work with Morrey-Sobolev bundles and connections. The transition maps for a Morrey-Sobolev principal $G$-bundles are not continuous and thus the usual notion of topology does not make sense. In this work, we develop the notion of a topological isomorphism class for a bundle-connection pair $\left( P, A\right)$ and use these notions to derive several approximability results for bundles and connections in the Morrey-Sobolev setting. Our proofs follow a connection-oriented approach and also highlight the fact that in the low regularity regime, the regularity of the bundle and connection are intertwined. Our results parallels the theory of the topological degree and approximation results for manifold-valued $\mathrm{VMO}$ maps. 
	\end{abstract}
	
	\textbf{Keywords:} Morrey-Sobolev bundles, Connections, Vanishing Morrey spaces, Coulomb gauges, Topology, Approximation, Yang-Mills. \\
	
	\textbf{MSC codes:} 58E15, 53C07, 46T10. 
	\tableofcontents 
	\section{Introduction and main results}
	Throughout this article, we shall assume that $n \geq 3, N \geq 1$ are integers and 
	\begin{itemize}
		\item $1 < m \leq n/2$ is a real number 
		\item $G \subset \mathbb{O}(N)$ is a compact finite dimensional non-Abelian real Lie group with Lie algebra $\mathfrak{Lie}(G)=\mathfrak{g}$ ,
		\item $M^{n}$ is a connected, closed $n$-dimensional smooth orientable Riemannian manifold. 
	\end{itemize} 
	Here closed means a compact manifold without boundary. 
	
	\subsection{Bundles and connections with bounds on $m$-Yang-Mills energy} The main concern of mathematical analysis of classical gauge fields over Riemannian manifolds  is a principal $G$-bundle $P$ over $M^{n}$ equipped with a connection $A$ on $P$ with finite  $m$-Yang-Mills energy, i.e. $$\mathcal{YM}_{m}(A):= \int_{M^{n}} \left\lvert F_{A} \right\rvert^{m} < \infty, $$ where $F_{A}$ is the curvature corresponding to $A$ and the norm is the usual norm induced by the Killing inner product on $\mathfrak{g}$-valued forms. When $m=2,$ the $m$-Yang-Mills energy is simply the Yang-Mills energy.  
	
	\par As is often the case, the crux of the matter boils down to studying a sequence of  pairs $\left\lbrace \left( P^{s}, A^{s} \right) \right\rbrace_{s \in \mathbb{N}},$ where for each $s \in \mathbb{N},$ $P^{s}$ is a principal $G$-bundle over $M^{n}$ equipped with connection $A_{s}$ on $P^{s},$ such that the $m$-Yang-Mills energy is uniformly bounded, i.e. 
	$$ \limsup\limits_{s \rightarrow \infty} \mathcal{YM}_{m}(A^{s}) < \infty. $$ The main questions are the following.  
	\begin{question*}
		Can we extract a subsequence which converges to a `limit bundle' equipped with a `limit connection'?
	\end{question*} 
	\begin{question*}\label{question of topology in the limit}
		Assuming all the bundles $P^{s}$ are topologically isomorphic to a fixed topological bundle $P,$ is the `limit bundle' ( assuming one such bundle indeed exists ) is topologically the `same' or not? 
	\end{question*}

	\par When $m > n/2,$ which is usually called the \emph{subcritical dimensions}, both questions have affirmative answers, as shown by Uhlenbeck in her pioneering work \cite{UhlenbeckGaugefixing} for $m=2,$ but her techniques generalize to any $m > n/2.$ In the same work, she has shown that for $m=2$ and $n=4,$ i.e., when $m=n/2,$ called the \emph{critical dimension}, it is possible to talk about convergence to `limit bundle' and a `limit connection', but only away from finitely many `singular points' ( see also Sedlacek \cite{Sedlacek_YangMills} ), even when the sequence  $\left\lbrace \left( P^{s}, A^{s} \right) \right\rbrace_{s \in \mathbb{N}}$ are all smooth Yang-Mills connections on smooth bundles. Later in \cite{Uhlenbeck_removable_singularity}, Uhlenbeck has shown that these `singularities are removable' in the Sobolev sense, meaning it is possible to obtain a Sobolev bundle of class $W^{2,2}$ as a `limit bundle.' However, since $W^{2,2}$ maps are not necessarily continuous in four dimensions, these `bundles' are not necessarily topological bundles. Same conclusion holds for any $1 < m = n/2.$ The situation is even worse when $m < n/2,$ usually called the \emph{supercritical dimensions} as the $m$-Yang-Mills energy is too weak to even ensure the existence of a `limit Sobolev bundle' in this regime. 
	
	\par However, if the sequence $\left\lbrace \left( P^{s}, A^{s} \right) \right\rbrace_{s \in \mathbb{N}}$ are all smooth Yang-Mills connections on smooth bundles, then the connections are also \emph{stationary Yang-Mills connections}. Using a fundamental monotonicity formula due to Price \cite{Price_monotonicity}, Nakajima \cite{Nakajima_YangMills_compactness} showed that when $m=2,$ weak convergence of curvatures can be achieved, but only away from a `singular set' of codimension $4$. Tian \cite{Tian_GaugetheoryCalibratedgeometry} once again used the monotonicity to show that convergence to `limit bundle' and a `limit connection' can be achieved away from a `singular set' of codimension $4$ and proved improved results concerning the singular set. The main point here is via the monotonicity formula, uniform bound on the Yang-Mills energy implies a uniform bound on a Morrey norm of the curvatures, namely the $\mathrm{L}^{2,n-4}$ norm. A `removable singularity' result, which concludes the removability of only the `codimension four strata'  of the singular set and is considerably harder to prove in this case, has also been established in Tao-Tian \cite{TaoTian_YangMills} and Meyer-Rivi\`{e}re \cite{MeyerRiviere_YangMills}.  Energy identity and bubbling have been studied in Rivi\`{e}re \cite{Riviere_QuantizationYangMills} and Naber-Valtorta \cite{NaberValtorta_YangMillsenergyidenty}. 
	
	\subsection{Topology without continuity}
	
	The upshot of the discussion above is that in the critical and supercritical dimensions, i.e. the cases when $1 < m \leq n/2,$ the `limit bundles' are Sobolev or Morrey-Sobolev bundles, possibly with singularities, \textbf{which are not in general, topological bundles}. Hence one might be tempted to believe that Question \ref{question of topology in the limit} does not even make sense. 
	
	\par However, that is not necessarily the case. An analogous situation exists for manifold-valued Sobolev maps. As is well-known, the homotopy classes of continuous maps $u:\mathbb{S}^{n} \rightarrow \mathbb{S}^{n}$ are classified by their \emph{degree}. In a number of geometric variational problems, one is forced to consider Sobolev maps $u \in W^{1,p}\left( \mathbb{S}^{n}; \mathbb{S}^{n}\right).$
	When $p >n,$ i.e. the \emph{subcritical dimension} case, these maps are continuous by Sobolev embedding and the usual topological result holds. When $p \leq n,$ i.e. the \emph{critical and supercritical dimension} case, a priori, these maps can be discontinuous. However, motivated by the seminal work of Schoen-Uhlenbeck \cite{SchoenUhlenbeck_boundaryregularityharmonicmaps}, Brezis-Nirenberg showed in \cite{BrezisNirenberg_degree1} that maps in $W^{1,n}\left( \mathbb{S}^{n}; \mathbb{S}^{n}\right)$  indeed have a well-defined notion of a \emph{degree}. In fact, their results are actually stronger. They showed that 
	\begin{enumerate}[(i)]
		\item maps in $\mathrm{VMO}\left( \mathbb{S}^{n}; \mathbb{S}^{n}\right)$ has a well-defined notion of topological degree and 
		\item for any $u \in \mathrm{VMO}\left( \mathbb{S}^{n}; \mathbb{S}^{n}\right),$ there exists a $\delta >0,$ possibly depending on $u,$ such that any map $v \in \mathrm{VMO}\left( \mathbb{S}^{n}; \mathbb{S}^{n}\right)$ in the $\delta$-neighborhood of $u$ in the $\mathrm{BMO}$ has the same degree as $u.$
	\end{enumerate}
	Their basic argument is that the degree is continuous with respect to convergence in the $\mathrm{BMO}$ norm and thus, by approximating $\mathrm{VMO}$ maps by smooth maps in the $\mathrm{BMO}$ norm, the topological information survives in the limit, although the limiting maps are no longer necessarily continuous. The last result follows from the stability of $\mathrm{VMO}$ degree under small perturbations in the $\mathrm{BMO}$ distance.  
	
	\subsection{Main results} 
	The main contribution of the present work are analogues of the above-mentioned results to the setting of Morrey-Sobolev bundles and connections. In \cite{Sil_YangMillscriticaltoappear}, the critical dimension case was analyzed ( see Isobe \cite{Isobe_Sobolevbundlecriticaldimension} and Shevchisin \cite{Shevchishin_limitholonomyYangMills} for earlier attempts ) and it was shown that for a pair $\left( P, A\right),$ where $P$ is a Sobolev bundle of class $W^{2,n/2}$ and $A$ is an $W^{1,n/2}$ connection on $P$,  
	\begin{enumerate}[(i)]
		\item we can assign a well-defined topological isomorphism class and 
		\item can be  approximated in the corresponding Sobolev norm topologies by smooth connections on smooth bundles in an appropriate sense. 
	\end{enumerate}
	This result can be viewed as the analogue of `$W^{1,n}$-degree' result in the present setting. However, there is an important difference from the case of the degree. In the case of bundles and connections, the topological isomorphism class is \textbf{not} defined by the approximation. In fact, in some sense, the scenario is reverse here, where one first shows the existence of a topological isomorphism class to deduce the approximability result. 
	
	The natural space to work with in the supercritical dimensions should be Morrey-Sobolev spaces, as these are precisely the norms which are scale invariant and a bound for these norms are implied by the monotonicity formula for stationary connections. The trouble is that even in the case of scalar valued functions, smooth functions are \textbf{not dense} in Morrey-Sobolev spaces. This situation is again similar to case of the degree, as smooth functions are not dense in $\mathrm{BMO}$ and $\mathrm{VMO}$ is precisely the proper subspace of $\mathrm{BMO}$ where smooth functions are dense. So one might expect an analogue in our situation. Roughly speaking, our first main result is precisely such an analogue.  
	\begin{theorem}\label{Vanishing Morrey case approx and topo}
		Let $P$ be a principal $G$-bundle over a $n$-dimensional closed manifold $M^{n},$ such that the bundle transition functions are in the vanishing Morrey-Sobolev class $\mathsf{W}^{2}\mathrm{VL}^{2m,n-2m}$ and let $A$ be a connection on $P$ such that the local connection forms are in vanishing Morrey-Sobolev class $\mathsf{W}^{1}\mathrm{VL}^{m, n-2m}.$ Then the following holds. 
		
		\begin{enumerate}[(i)]
			\item $\left(P,A\right)$ is gauge equivalent to $\left(P_{C}, A_{C}\right)$ via $\mathsf{W}^{2}\mathrm{VL}^{m, n-2m}$ gauge transformations, where $P_{C}$ is a $C^{0}$ bundle and $A_{C}$ is a $\mathsf{W}^{1}\mathrm{VL}^{m, n-2m}$ connection which is Coulomb. Moreover, the $C^{0}$ isomorphism class of $P_{C}$ invariant under $\mathsf{W}^{1}\mathrm{VL}^{m, n-2m}$ gauge transformation of the \emph{pair} $\left( P, A \right).$  
			\item There exists a sequence of smooth bundle-connections pairs $\left\lbrace \left( P^{s}, A^{s}\right)\right\rbrace_{s \in \mathbb{N}}$ on $M^{n}$ such that 
			\begin{enumerate}[(a)]
				\item $P^{s}$ is gauge-equivalent to $P$ via $\mathsf{W}^{2}\mathrm{VL}^{m, n-2m}$ gauges $\sigma^{s}$ for each $s \in \mathbb{N},$
				
				\item  the smooth bundle transition functions $g_{ij}^{s}$ of $P^{s}$ converges to the bundle transition functions $g_{ij}$ of $P$ in the  $\mathsf{W}^{2}\mathrm{L}^{m, n-2m}$ norm, locally on $M^{n},$ i.e.  
				$$ g_{ij}^{s} \rightarrow g_{ij} \qquad \text{ locally in } \mathsf{W}^{2}\mathrm{L}^{m, n-2m} \text{ for every } i,j, $$  and 
				the the pulled back connections $\left( \sigma_{s}^{-1}\right)^{\ast}A^{s}$ converges to the connection forms for $A$ locally in $\mathsf{W}^{1}\mathrm{L}^{m, n-2m}$ norm, i.e. 
				$$ \left( \left( \sigma^{s}\right)^{-1}\right)^{\ast}A^{s}_{i} \rightarrow A_{i} \qquad \text{ locally in } \mathsf{W}^{1}\mathrm{L}^{m, n-2m} \text{ for every } i. $$ 
			\end{enumerate} 
		\end{enumerate}
	\end{theorem}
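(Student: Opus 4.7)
The overall strategy is to reduce the global statement to a local analysis via Coulomb gauges, exploiting the fact that the vanishing Morrey condition plays the role of VMO, i.e.\ the precise subspace in which smooth objects are dense. My first step would be to invoke a Morrey-type analogue of Uhlenbeck's gauge fixing theorem (in the spirit of Meyer--Rivi\`ere and Tao--Tian, adapted to the vanishing Morrey setting) to produce, on sufficiently small geodesic balls $B_r(x) \subset M^n$, a local trivialization $\sigma$ such that $\sigma^{\ast} A$ is Coulomb, i.e.\ $d^{\ast}(\sigma^{\ast}A)=0$, with small $\mathsf{W}^{1}\mathrm{VL}^{m,n-2m}$ norm controlled by the local curvature Morrey norm. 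Such a gauge exists precisely because the vanishing Morrey hypothesis allows one to choose balls on which the relevant small-energy condition holds uniformly.

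The next step is to upgrade regularity inside a Coulomb gauge. The equation $d^{\ast}A=0$ together with the curvature equation $dA + A\wedge A = F_A$ gives an elliptic system $\Delta A = d^{\ast}F_A - d^{\ast}(A\wedge A)$; Morrey--Campanato estimates for $\Delta$ applied to the quadratic Morrey product $A \wedge A$ and the Morrey input $F_A$ show that vanishing Morrey decay of $\nabla A$ is preserved and in fact yields continuity of $A$ in the Campanato/Morrey sense strong enough to make the transition equation $d g_{ij} = g_{ij}A_j - A_i g_{ij}$ an elliptic equation whose $\mathsf{W}^{2}\mathrm{VL}^{2m,n-2m}$ solutions are $C^0$. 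This produces the $C^0$ bundle $P_C$ together with the Coulomb connection $A_C$, by patching Coulomb representatives across overlaps (the overlap gauges also satisfy elliptic systems with Morrey data, so they inherit continuity).

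For the invariance statement in (i), I would argue as follows. Suppose $(P, A)$ and $(P', A')$ are gauge equivalent via a $\mathsf{W}^{1}\mathrm{VL}^{m,n-2m}$ transformation $h$. After putting both in Coulomb gauge as above, $h$ transforms the Coulomb representatives into each other, and the difference of two Coulomb gauges for the same connection satisfies a harmonic-type equation in the relevant Morrey space. The vanishing Morrey hypothesis then forces $h$, in the new trivializations, to be continuous, and hence to define a $C^0$ bundle isomorphism between $P_C$ and $P'_C$. The main delicate point here is to check that the gauge $h$ acquires this continuity \emph{globally}, which requires combining the local Coulomb rigidity with compactness of $M^n$ and the coherent patching across charts.

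For (ii), I would approximate the finitely many Coulomb forms $A_i := (\sigma^{-1})^{\ast} A$ by smooth $A_i^s$ in $\mathsf{W}^{1}\mathrm{L}^{m,n-2m}$, which is possible precisely because of the vanishing Morrey assumption on $A$. Then I would reconstruct the smooth transition functions $g^{s}_{ij}$ by solving the compatibility equation $d g^{s}_{ij} = g^{s}_{ij} A^{s}_{j} - A^{s}_{i} g^{s}_{ij}$ with initial data close to the continuous $g_{ij}$ coming from $P_C$; stability of this ODE/PDE system in Morrey spaces, together with continuity of $g_{ij}$, yields smooth $g^{s}_{ij}$ satisfying the cocycle relations at least after a small perturbation, producing smooth bundles $P^s$ and smooth connections $A^s$. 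The gauges $\sigma^s$ are then the identity in the new trivializations, and the convergence statements reduce to the already established local approximation. The main obstacle is ensuring the cocycle identities survive the approximation exactly, which I expect to handle by a fixed-point/implicit function argument in the appropriate vanishing Morrey space, using that $P_C$ is a genuine $C^0$ bundle and so the unperturbed cocycles are continuous.
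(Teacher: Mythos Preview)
Your outline for part (i) is essentially the paper's approach: refine the cover so that the vanishing Morrey condition makes $\|F_{A}\|_{\mathrm{L}^{m,n-2m}}$ small on each ball, apply the Coulomb gauge existence result in $\mathsf{W}^{2}\mathrm{VL}^{m,n-2m}$, and then use the elliptic equation for the transition maps (coming from the gluing relation plus $d^{\ast}A_i=d^{\ast}A_j=0$) together with Morrey elliptic estimates to conclude H\"older continuity of $h_{ij}$. The invariance argument is also the paper's: two Coulomb representatives are related by gauges solving the same critical elliptic equation, hence continuous. One small correction: it is not $A$ that becomes continuous, only the transition functions; the paper in fact shows $P_C$ is $\mathsf{W}^{2}\mathrm{L}^{q,n-2m}$ for every $m<q<2m$, i.e.\ strictly subcritical.

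Your plan for (ii), however, has a genuine gap. You propose to first mollify the local Coulomb forms $A_i$ to smooth $A_i^{s}$ and then \emph{solve} $dg_{ij}^{s}=g_{ij}^{s}A_j^{s}-A_i^{s}g_{ij}^{s}$ for smooth cocycles. But in dimension $n\ge 2$ this is an overdetermined first-order system; it has a solution only if the curvatures satisfy the compatibility $F_{A_i^{s}}g_{ij}^{s}=g_{ij}^{s}F_{A_j^{s}}$, which fails once the $A_i$'s are mollified independently. No fixed-point or implicit-function correction repairs this, because the obstruction is a hard integrability condition, not a perturbative one. The paper proceeds in the opposite order and exploits a point you have not used: the Coulomb bundle $P_C$ has \emph{subcritical} regularity $\mathsf{W}^{2}\mathrm{L}^{q,n-2m}$ with $q>m$, so the cocycle $\{h_{ij}\}$ can be approximated by smooth cocycles $\{h_{ij}^{s}\}$ via the subcritical Uhlenbeck-type factorization (Theorem~\ref{smoothingC0bundleMorrey}). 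On each resulting smooth bundle $P^{s}$ one then mollifies the local connection forms and glues them with a partition of unity,
\[
B_j^{\mu,s}:=\sum_{k}\zeta_k\bigl[(h_{kj}^{s})^{-1}dh_{kj}^{s}+(h_{kj}^{s})^{-1}\tilde A_k^{\mu,s}h_{kj}^{s}\bigr],
\]
which satisfies the gluing relations \emph{exactly} by the cocycle identity. Finally, the Coulomb gauges $\rho_i$ themselves are approximated by smooth $G$-valued maps (density in $\mathsf{W}^{2}\mathrm{VL}^{m,n-2m}$) to transfer the approximation from $(P_C,A_C)$ back to $(P,A)$. The two ingredients you are missing are thus (a) the subcritical regularity of $P_C$, which is what makes cocycle smoothing possible, and (b) the partition-of-unity formula for connections, which is how one produces smooth connections satisfying the gluing relations without solving an overdetermined system.
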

	The vanishing Morrey-Sobolev spaces are a subspace of Morrey-Sobolev spaces where smooth functions are dense and serves as the correct analogue of $\mathrm{VMO}$ in our setting. However, density of $G$-valued smooth maps in the class of $G$-valued vanishing Morrey-Sobolev maps does not immediately yield the approximation result, since here one needs to approximate cocycles by smooth cocycles, i.e. preserve the pointwise cocycle condition as well.

	\par Another striking feature of our case is that the isomorphism class is assigned to the \emph{bundle-connection pair} and not to the bundles alone. This is in stark contrast to the usual topological theory of principal bundles, where the topology of the bundle is independent of the connection and one can use \emph{any smooth connection on the bundle} to compute the Chern classes of the underlying topological bundle via the Chern-Weil theory.  The reason for this discrepancy is the fact that a smooth ( or regular enough ) connection  can only `live' on one and only one ( up to $C^0$ isomorphism ) topological bundle. We prove here that if we specialize to the case of a smooth connection on a topological bundles, our topological isomorphism class for the \emph{bundle-connection pair} is the same as the usual topological class of the underlying bundle. However, this above fact is no longer true for connections with low regularity. A  connection with critical Morrey-Sobolev regularity can `live' on \textbf{more than one distinct} topological bundles, whose Chern classes differ from each other. This does not contradict the Chern-Weil theory, as the regularity of the connection is too low for the theory to apply. 
	
	When the bundle and the connection is merely Morrey-Sobolev and not vanishing Morrey-Sobolev, clearly approximation in the Morrey-Sobolev norms are impossible. However, one might hope to approximate by smooth bundle-connection pairs in the Sobolev norms. Our next result shows this can be done if the norm of the connection is sufficiently small, with the smallness parameter depending on the cover.   
	\begin{theorem}\label{small in Morrey}
		Let $P = \left( \left\lbrace U_{i}\right\rbrace_{i \in I}, \left\lbrace g_{ij}\right\rbrace_{i,j \in I}\right)$ be a principal $G$-bundle over a $n$-dimensional closed manifold $M^{n},$ such that the bundle transition functions $\left\lbrace g_{ij}\right\rbrace$s are in the Morrey-Sobolev class $\mathsf{W}^{2}\mathrm{L}^{m, n-2m}$ and let $A$ be a connection on $P$ such that the local connection forms $\left\lbrace A_{i}\right\rbrace $s are in Morrey-Sobolev class $\mathsf{W}^{1}\mathrm{L}^{m, n-2m}.$ Then there exists a number $\delta >0,$ depending on the cover  $\left\lbrace U_{i}\right\rbrace_{i \in I}$, $n, m, M^{n}$ and $G$ such that if   
		\begin{align*}
			\sup\limits_{i} \left\lVert A_{i} \right\rVert_{\mathsf{W}^{1}\mathrm{L}^{m, n-2m}} &\leq \delta, 
		\end{align*}
		Then the following holds. 
		
		\begin{enumerate}[(i)]
			\item $\left(P,A\right)$ is gauge equivalent to $\left(P_{C}, A_{C}\right)$ via $\mathsf{W}^{2}\mathrm{L}^{m, n-2m}$ gauge transformations, where $P_{C}$ is a $C^{0}$ bundle and $A_{C}$ is a $\mathsf{W}^{1}\mathrm{L}^{m, n-2m}$ connection which is Coulomb.    
			
			\item There exists a sequence of smooth bundle-connections pairs $\left\lbrace \left( P^{s}, A^{s}\right)\right\rbrace_{s \in \mathbb{N}}$ on $M^{n}$ such that 
			\begin{enumerate}[(a)]
				\item $P^{s}$ is gauge-equivalent to $P$ via $\mathsf{W}^{2}\mathrm{L}^{m, n-2m}$ gauges $\rho_{s}$ for each $s \in \mathbb{N},$ 
				
				\item  the smooth bundle transition functions $g_{ij}^{s}$ of $P^{s}$ is uniformly bounded in $\mathsf{W}^{2}\mathrm{L}^{m, n-2m}$ and converges to the bundle transition functions $g_{ij}$ of $P$ in the  $W^{2,m}$ and $W^{1,2m}$ norm, locally on $M^{n},$ i.e.  
				\begin{align*}
					\limsup\limits_{s \rightarrow \infty }\left\lVert g_{ij}^{s}\right\rVert_{\mathsf{W}^{2}\mathrm{L}^{m, n-2m}} \leq C \left\lVert g_{ij}\right\rVert_{\mathsf{W}^{2}\mathrm{L}^{m, n-2m}}, \\
					g_{ij}^{\varepsilon} \rightarrow g_{ij} \qquad \text{ locally in } W^{2,m} \text{ and } W^{1,2m} \text{ for every } i,j,
				\end{align*}
				and the local connection forms for the pulled back connections $\left( \rho_{\varepsilon}^{-1}\right)^{\ast}A^{\varepsilon}$ is uniformly bounded in $\mathsf{W}^{1}\mathrm{L}^{m, n-2m}$ and  converges to the local connection forms for $A$ in $W^{1, m}$ and $L^{2m}$ norm, i.e. 
				\begin{align*}
					\limsup\limits_{s \rightarrow \infty }\left\lVert \left( \rho_{s}^{-1}\right)^{\ast}A^{s}_{i}\right\rVert_{\mathsf{W}^{1}\mathrm{L}^{m, n-2m}} \leq C \left\lVert A_{i}\right\rVert_{\mathsf{W}^{1}\mathrm{L}^{m, n-2m}}, \\
					\left( \rho_{s}^{-1}\right)^{\ast}A^{s}_{i} \rightarrow A_{i} \qquad \text{ locally in } W^{1, m} \text{ and  } L^{2m} \text{ for every } i.
				\end{align*}
				
			\end{enumerate} 
	\end{enumerate}  \end{theorem}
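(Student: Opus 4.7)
The plan is to mirror the connection-oriented two-step scheme of Theorem~\ref{Vanishing Morrey case approx and topo}: first place $A$ in Coulomb gauge on each chart using the smallness hypothesis, upgrading the transitions to $C^{0}$; then approximate the Coulomb data by smooth objects while preserving the cocycle, and pull back to a gauge of $P$. The crucial difference from the vanishing Morrey case is that smooth functions are no longer dense in $\mathsf{W}^{1}\mathrm{L}^{m,n-2m}$; accordingly, the best that can be obtained are uniform Morrey bounds together with strong convergence in the strictly weaker Sobolev norms listed in the statement.

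For part (i), on each chart $U_{i}$ I seek $\sigma_{i} \in \mathsf{W}^{2}\mathrm{L}^{m,n-2m}(U_{i};G)$ solving the Coulomb condition $d^{\ast}(\sigma_{i}^{-1} d\sigma_{i} + \sigma_{i}^{-1} A_{i} \sigma_{i})=0$. Writing $\sigma_{i} = \exp(u_{i})$ reduces this to a nonlinear problem whose linearization is the Hodge Laplacian, contractive on a small ball in $\mathsf{W}^{2}\mathrm{L}^{m,n-2m}$ via elliptic Morrey estimates as soon as $\left\lVert A_{i}\right\rVert_{\mathsf{W}^{1}\mathrm{L}^{m,n-2m}}\leq \delta$ with $\delta=\delta(\{U_{i}\},n,m,G,M^{n})$. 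The output is $A_{i}^{C}$ Coulomb, with $\left\lVert A_{i}^{C}\right\rVert_{\mathsf{W}^{1}\mathrm{L}^{m,n-2m}}\lesssim \left\lVert A_{i}\right\rVert_{\mathsf{W}^{1}\mathrm{L}^{m,n-2m}}$. The gauged cocycle $\tilde g_{ij}=\sigma_{i} g_{ij}\sigma_{j}^{-1}$ then satisfies $\tilde g_{ij}^{-1} d\tilde g_{ij} = A_{j}^{C} - \mathrm{Ad}(\tilde g_{ij}^{-1})A_{i}^{C}$, whose right-hand side is Coulomb on both sides; a Morrey hidden-cancellation and Hodge estimate of the type used in Theorem~\ref{Vanishing Morrey case approx and topo} promotes $\tilde g_{ij}$ to $C^{0}\cap \mathsf{W}^{2}\mathrm{L}^{m,n-2m}$, so $P_{C}=(\{U_{i}\},\{\tilde g_{ij}\})$ is the required $C^{0}$ bundle carrying the Coulomb connection $A_{C}$.

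For part (ii), with $\tilde g_{ij}$ continuous on a finite cover, I approximate it by smooth cocycles. Naive mollification destroys the cocycle identity $\tilde g_{ij}\tilde g_{jk}=\tilde g_{ik}$, so I reconstruct smooth $\tilde g_{ij}^{s}$ by the partition-of-unity and simplicial procedure on a refinement developed in Theorem~\ref{Vanishing Morrey case approx and topo}; since $\tilde g_{ij}$ is already continuous, the construction goes through and yields $\tilde g_{ij}^{s}\to\tilde g_{ij}$ locally in $W^{2,m}\cap W^{1,2m}$, with uniform $\mathsf{W}^{2}\mathrm{L}^{m,n-2m}$-bounds inherited from $\mathrm{L}^{m,n-2m}$-boundedness of mollification. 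Parallel mollification of the local Coulomb forms gives $A_{i}^{C,s}\to A_{i}^{C}$ in $W^{1,m}\cap L^{2m}$ with uniform $\mathsf{W}^{1}\mathrm{L}^{m,n-2m}$-bounds. The resulting smooth pair $(P_{C}^{s},A_{C}^{s})$ is then transferred to a gauge of $P$: since $\tilde g_{ij}^{s}$ is $C^{0}$-close to $\tilde g_{ij}$, classical bundle theory yields smooth gauges $h_{i}^{s}$ identifying $P_{C}^{s}$ with $P_{C}$, and composing with $\sigma_{i}$ produces the $\mathsf{W}^{2}\mathrm{L}^{m,n-2m}$ gauge $\rho_{s}$ identifying the resulting smooth bundle with $P$; the corresponding $(P^{s},A^{s})$ satisfies all the bounds and convergences in (ii) after a multiplicative Morrey-Sobolev estimate for the gauge change.

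The principal obstacle is the Coulomb gauge fixing of the first step: without the vanishing Morrey property one cannot shrink to small balls to beat down the Morrey norm, so the Uhlenbeck-type nonlinear construction has to succeed on each full chart using only the global smallness assumption, and must be carried out in the full Morrey-Sobolev topology rather than in a pure Sobolev space. Once that is in hand, the continuity upgrade of the cocycle and the cocycle-preserving smoothing are close adaptations of tools already developed for the vanishing case, the only additional bookkeeping being the split between the uniform Morrey bounds and the strong convergence in the weaker Sobolev norms.
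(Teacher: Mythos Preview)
Your overall scheme matches the paper's, but your proposed execution of the ``principal obstacle''---the local Coulomb gauge---has a real gap. You suggest writing $\sigma_{i}=\exp(u_{i})$ and running a contraction/implicit-function argument directly in $\mathsf{W}^{2}\mathrm{L}^{m,n-2m}$. The difficulty is that this space embeds only into $\mathrm{BMO}$, not into $L^{\infty}$ or $C^{0}$; hence $u_{i}$ with small $\mathsf{W}^{2}\mathrm{L}^{m,n-2m}$ norm need not be bounded, $\exp(u_{i})$ is not well-defined as a $G$-valued map, and the nonlinear superposition operators are not $C^{1}$ (or even continuous) between the relevant Banach spaces. This is exactly why the paper's implicit-function argument for Coulomb gauges (Theorem~\ref{existence of Coulomb gauges in better spaces}) is run in the strictly subcritical space $\mathsf{W}^{1}\mathrm{L}^{m,n-2m+\kappa}$ with $\kappa>0$, where the $C^{0}$ embedding holds.

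The paper's route around this (Theorem~\ref{existence_Coulombgauges_small_connection}) is not a direct fixed point at all: one first approximates $A_{i}$ by smooth $A_{i}^{s}$ in $W^{1,m}\cap L^{2m}$ with uniform $\mathsf{W}^{1}\mathrm{L}^{m,n-2m}$ bounds (Lemma~\ref{approximabilityconditionremovallemma}), so that $\|F_{A_{i}^{s}}\|_{\mathrm{L}^{m,n-2m}}$ is uniformly small; applies the smooth Coulomb gauge theorem to each $A_{i}^{s}$; and passes to a weak limit of the resulting gauges $\rho_{i}^{s}$. The smallness of $\|A_{i}\|_{\mathsf{W}^{1}\mathrm{L}^{m,n-2m}}$ enters through this approximation step, which is why $\delta$ depends on the cover and is not scale-invariant. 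A second point you should make explicit: to transfer back from $(P_{C},A_{C})$ to $(P,A)$ one must approximate the Coulomb gauges $\rho_{i}$ themselves by smooth $G$-valued maps, and this requires the extra smallness $\|d\rho_{i}\|_{\mathrm{L}^{2m,n-2m}}\le\varepsilon_{\text{approx}}$ (Theorem~\ref{Approximation of G valued maps critical}(ii)); the paper arranges this by a further shrinking of $\delta$. Once the Coulomb bundle is in hand, your outline for part~(ii) is essentially the paper's: the Coulomb transitions lie in $\mathsf{W}^{2}\mathrm{L}^{q,n-2m}$ for some $q>m$ (Theorem~\ref{regularity Coulomb}), so Theorem~\ref{smoothingC0bundleMorrey} supplies the smooth cocycles, and the connection is smoothed and reglued via a partition of unity exactly as in Step~4 of the proof of Theorem~\ref{Vanishing Morrey case approx and topo}.
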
	
	
	Since we would show in Theorem \ref{existence of connection} that every $\mathsf{W}^{2}\mathrm{L}^{m, n-2m},$ respectively $\mathsf{W}^{2}\mathrm{VL}^{m, n-2m},$ bundle admits a $\mathsf{W}^{1}\mathrm{L}^{m, n-2m},$ respectively $\mathsf{W}^{1}\mathrm{VL}^{m, n-2m},$ connection, Theorem \ref{Vanishing Morrey case approx and topo} and Theorem \ref{small in Morrey} immediately implies the following approximability result for cocycles. As far as we are aware, this is the first cocycle approximation results in the Morrey-Sobolev setting. 
	
	\begin{theorem}\label{approximation of cocycles}
		Let $P = \left( \left\lbrace U_{\alpha}\right\rbrace_{\alpha \in I}, \left\lbrace g_{\alpha\beta}\right\rbrace_{\alpha, \beta \in I}\right)$ be a $\mathsf{W}^{2}\mathrm{L}^{m, n-2m}$ principal $G$-bundle over a $n$-dimensional closed manifold $M^{n}.$ \begin{enumerate}[(i)]
			\item If $P$ is a $\mathsf{W}^{2}\mathrm{VL}^{m, n-2m}$ bundle, then there exists \textbf{at least one}  sequence of smooth bundles $\left\lbrace  P^{s} \right\rbrace_{s \in \mathbb{N}}$ on $M^{n}$ such that $P^{s}$ is gauge-equivalent to $P$ via $\mathsf{W}^{2}\mathrm{VL}^{m, n-2m}$ gauges for each $s \in \mathbb{N}$ and the smooth bundle transition functions of $P^{s}$ converges to the bundle transition functions  of $P$ in the  $\mathsf{W}^{2}\mathrm{L}^{m, n-2m}$ norm, locally on $M^{n}.$ 
			\item There exists a $\delta >0,$ depending on the cover  $\left\lbrace U_{\alpha}\right\rbrace_{\alpha \in I}$, $n, m, M^{n}$ and $G$ such that if \begin{align*}
				\sup\limits_{\substack{\alpha, \beta \in I,\\U_{\alpha}\cap U_{\beta} \neq \emptyset}} \left\lVert g_{\alpha\beta} \right\rVert_{\mathsf{W}^{2}\mathrm{L}^{m, n-2m}} \leq \delta, 
			\end{align*}
			then there exists \textbf{at least one}  sequence of smooth bundles $\left\lbrace  P^{s} \right\rbrace_{s \in \mathbb{N}}$ on $M^{n}$ such that $P^{s}$ is gauge-equivalent to $P$ via $\mathsf{W}^{2}\mathrm{L}^{m, n-2m}$ gauges for each $s \in \mathbb{N}$ and the smooth bundle transition functions of $P^{s}$ converges to the bundle transition functions  of $P$ in the  $W^{2,m}$ and $W^{1,2m}$ norm, locally on $M^{n}.$
		\end{enumerate}
	\end{theorem}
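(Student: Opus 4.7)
The plan is to derive this as a direct corollary of Theorem \ref{Vanishing Morrey case approx and topo} and Theorem \ref{small in Morrey}, by first endowing the bundle $P$ with an auxiliary connection of matching regularity and then discarding the connection data from their conclusions.

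First I would invoke Theorem \ref{existence of connection} to produce a connection $A$ on $P$ whose local connection forms lie in $\mathsf{W}^{1}\mathrm{VL}^{m,n-2m}$ in case (i), respectively in $\mathsf{W}^{1}\mathrm{L}^{m,n-2m}$ in case (ii). In case (i), I would then apply Theorem \ref{Vanishing Morrey case approx and topo} to the pair $(P,A)$: its conclusion furnishes smooth pairs $(P^{s}, A^{s})$ with gauges $\sigma^{s}$ of class $\mathsf{W}^{2}\mathrm{VL}^{m,n-2m}$ and cocycle convergence $g^{s}_{\alpha\beta} \to g_{\alpha\beta}$ locally in $\mathsf{W}^{2}\mathrm{L}^{m,n-2m}$. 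Retaining only the bundle component of this conclusion is exactly the content of part (i).

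For part (ii) the same strategy works, but the smallness hypothesis has to be transferred from cocycles to the auxiliary connection before Theorem \ref{small in Morrey} can be applied. Here I would rely on a quantitative version of Theorem \ref{existence of connection} producing an $A$ with a bound of the form
\[
\sup_{i}\left\lVert A_{i}\right\rVert_{\mathsf{W}^{1}\mathrm{L}^{m,n-2m}} \leq C\,\sup_{\alpha,\beta}\left\lVert g_{\alpha\beta}\right\rVert_{\mathsf{W}^{2}\mathrm{L}^{m,n-2m}},
\]
with $C$ depending only on the cover, $n$, $m$, $M^{n}$ and $G$. Taking $\delta$ smaller than the smallness threshold of Theorem \ref{small in Morrey} divided by this $C$ ensures that the resulting connection satisfies the smallness hypothesis of Theorem \ref{small in Morrey}, which applied to $(P,A)$ produces smooth pairs $(P^{s}, A^{s})$ with the required $W^{2,m}$ and $W^{1,2m}$ cocycle convergence; again, forgetting the connection gives part (ii).

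The only genuine point to check is the quantitative estimate of the auxiliary connection by the cocycles; if Theorem \ref{existence of connection} is stated only qualitatively, one has to revisit its proof to extract this linear bound, but the construction of $A$ by gluing local primitives through a partition of unity naturally produces such an estimate. Everything else is a routine extraction from the earlier pair-approximation theorems.
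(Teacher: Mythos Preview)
Your proposal is correct and follows exactly the paper's approach: the paper's proof is the one-line observation that Theorem~\ref{approximation of cocycles} is an immediate corollary of Theorems~\ref{Vanishing Morrey case approx and topo} and~\ref{small in Morrey} via Theorem~\ref{existence of connection}. Your only hesitation---whether the quantitative bound on the auxiliary connection is available---is unnecessary, since the statement of Theorem~\ref{existence of connection} already includes the clause that the $\mathsf{W}^{1}\mathrm{L}^{m,n-2m}$ norm of the connection is estimated by the corresponding norms of the transition functions, and the partition-of-unity construction in its proof makes this explicit.
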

	
	The analogue of the stability of degree in our quite nonlinear setting is however a different matter. In this work, we establish such results around a flat bundle with a flat connection \text{ when $\sqrt{n}/2 < m \leq n/2$}. To do this, we first establish a result which is probably of an interest in itself.  This asserts that the for a sequence of bundle-connection pairs in the vanishing Morrey-Sobolev class, the topological isomorphism class must stabilize if the curvatures do not concentrate in Morrey-Sobolev norms. 	
	\begin{theorem}\label{equivanishing Morrey}
		Suppose $\frac{\sqrt{n}}{2} < m \leq \frac{n}{2}.$ Let $\left\lbrace P^{s} \right\rbrace_{s \in \mathbb{N}}$ be a sequence of $\mathsf{W}^{2}\mathrm{VL}^{m, n-2m}$ bundles such that there exists a common refinement $\left\lbrace U_{\alpha}\right\rbrace_{\alpha \in I}$ for the associated covers. Let	$ A^{s}$ be a $\mathsf{W}^{1}\mathrm{VL}^{m, n-2m}$ connection on $P^{s}$ for all $s \in \mathbb{N}$ such that the local curvature forms are uniformly bounded in $\mathrm{L}^{m, n-2m}$ and equivanishing in $\mathrm{VL}^{m, n-2m}.$ More precisely, we have 
		\begin{align*}
			\sup\limits_{\alpha \in I} \left\lVert F_{A^{s}_{\alpha}}\right\rVert_{\mathrm{L}^{m, n-2m}\left( U_{\alpha}; \Lambda^{2}\mathbb{R}^{n}\otimes \mathfrak{g} \right)} &\leq \Lambda \qquad \text{ for every } s \in \mathbb{N}, \\
			\sup\limits_{\alpha \in I} \sup\limits_{\substack{0 < \rho < r, \\ B_{\rho}(x) \subset \subset U_{\alpha}, }} 
			\frac{1}{\rho^{n-2m}} \int_{B_{\rho}(x)} \lvert F_{A^{s}_{\alpha}} \rvert^{m} &\leq 	\Theta\left( r \right) \qquad \text{ for every } s \in \mathbb{N}, 
		\end{align*}
		for some constant $\Lambda >0$ and some function $\Theta \left(r\right)$ such that 
		$$ \Theta \left(r\right) \rightarrow 0 \qquad \text{ as } r \rightarrow 0.$$
		Then there exists a subsequence $\left\lbrace \left( P^{s_{\mu}}, A^{s_{\mu}} \right) \right\rbrace_{\mu \in \mathbb{N} },$ an integer $\mu_{0} \in \mathbb{N},$ a sequence $\mathsf{W}^{2}\mathrm{VL}^{m, n-2m}$ gauges $\left\lbrace \sigma_{s_{\mu}}\right\rbrace_{\mu \in \mathbb{N} },$ a refinement $\left\lbrace V_{i}\right\rbrace_{i \in J}$ of $\left\lbrace U_{\alpha}\right\rbrace_{\alpha \in I}$, a $C^{0}$ bundle $P^{\infty}$ trivialized over $\left\lbrace V_{i}\right\rbrace_{i \in J}$ and a $\mathsf{W}^{1}\mathrm{VL}^{m, n-2m}$ connection $A^{\infty}$ on $P^{\infty}$ such that 
		\begin{align*}
			P^{s_{\mu}} \simeq_{\sigma_{s_{\mu}}}  P^{\infty}  \quad \text{ and } \quad \left[ P^{s_{\mu}}_{C}\right]_{C^{0}} = \left[ P^{\infty}\right]_{C^{0}} \qquad \text{ for every } \mu \geq \mu_{0},
		\end{align*}
		where $P^{s_{\mu}}_{C}$ denotes the Coulomb bundle associated to $\left( P^{s_{\mu}}, A^{s_{\mu}} \right) $ and we have 
		\begin{align*}
			\left\lVert F_{A^{\infty}_{i}}\right\rVert_{\mathrm{L}^{m, n-2m}} &\leq  \liminf\limits_{\mu \rightarrow \infty }\left\lVert F_{A^{s_{\mu}}_{i}}\right\rVert_{\mathrm{L}^{m, n-2m}} \intertext{ and }
			\left( \sigma_{s_{\mu}}^{-1}\right)^{\ast}A^{s_{\mu}}_{i} &\rightharpoonup A^{\infty}_{i} \qquad \text{ weakly in } W^{1, m}\text{ and  } L^{2m} \text{ for every } i \in J.
		\end{align*}
	\end{theorem}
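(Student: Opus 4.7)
The plan is to combine Theorem~\ref{Vanishing Morrey case approx and topo} with a uniform Uhlenbeck-type local Coulomb gauge construction, and then extract a continuous limit bundle via Arzelà--Ascoli applied to the resulting transition functions. First I apply Theorem~\ref{Vanishing Morrey case approx and topo} to each pair $(P^s, A^s)$, passing to its Coulomb representative $(P^s_C, A^s_C)$; this does not alter the Morrey bound $\Lambda$ nor the equivanishing modulus $\Theta$ of the curvatures. Using the equivanishing hypothesis I pick a single radius $r_0 > 0$ such that on every ball $B_{r_0}(x) \subset\subset U_\alpha$ the scale-invariant energy of $F_{A^s_\alpha}$ lies below the smallness threshold of the Uhlenbeck small-curvature Coulomb gauge lemma adapted to the Morrey setting. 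On each such ball I construct a local Coulomb gauge $g^s : B_{r_0}(x) \to G$ of class $\mathsf{W}^2 \mathrm{L}^{m, n-2m}$ so that the pulled-back connection form $\hat A^s$ satisfies $d^\ast \hat A^s = 0$ together with the uniform bound $\|\hat A^s\|_{\mathsf{W}^1 \mathrm{L}^{m, n-2m}} \leq C \|F_{A^s_\alpha}\|_{\mathrm{L}^{m, n-2m}}$.

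Next I fix a finite refinement $\{V_i\}_{i \in J}$ of $\{U_\alpha\}$ by balls of radius $r_0/2$. By the uniform estimate above together with Morrey--Rellich-type compactness, after a diagonal subsequence $\hat A^{s_\mu}_i \rightharpoonup A^\infty_i$ weakly in $W^{1, m}$ and $L^{2m}$ on each $V_i$; the equivanishing modulus $\Theta$ passes to the limit, placing $A^\infty_i \in \mathsf{W}^1 \mathrm{VL}^{m, n-2m}$, and weak lower semicontinuity yields $\|F_{A^\infty_i}\|_{\mathrm{L}^{m, n-2m}} \leq \liminf_\mu \|F_{A^{s_\mu}_i}\|_{\mathrm{L}^{m, n-2m}}$.

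The crucial step is the analysis of the transition functions $h^{s_\mu}_{ij} : V_i \cap V_j \to G$ between the new local gauges, which satisfy the identity $dh = h\,\hat A_i - \hat A_j\,h$. Differentiating and using $d^\ast \hat A_i = 0$ produces an elliptic system for $h^{s_\mu}_{ij}$ whose right-hand side is quadratic in $dh$ and $\hat A$; here the hypothesis $m > \sqrt{n}/2$ is what makes the Morrey product estimates on these nonlinearities close at the critical scaling. A bootstrap then yields a uniform $\mathsf{W}^2 \mathrm{L}^{m, n-2m}$ bound on $h^{s_\mu}_{ij}$, with an equivanishing modulus inherited from $\Theta$. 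The embedding $\mathsf{W}^2 \mathrm{VL}^{m, n-2m} \hookrightarrow C^0$ combined with this equivanishing gives equicontinuity of $\{h^{s_\mu}_{ij}\}_\mu$; Arzelà--Ascoli produces, along a further subsequence, a uniform limit $h^\infty_{ij}$, and uniform convergence preserves the cocycle identity, yielding a $C^0$ principal $G$-bundle $P^\infty$ trivialized over $\{V_i\}$. The limits $A^\infty_i$ are compatible under the $h^\infty_{ij}$ by passing to the limit in the gauge-change identity, defining the desired connection $A^\infty$.

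Finally, for $\mu \geq \mu_0$ the cocycles $\{h^{s_\mu}_{ij}\}$ are uniformly arbitrarily close to $\{h^\infty_{ij}\}$; since $G$ is a compact Lie group, linear interpolation in an exponential chart near the identity produces a continuous homotopy of cocycles, whence $P^{s_\mu}_C$ and $P^\infty$ are $C^0$-isomorphic. The main obstacle is the transition-function bootstrap: pushing $h^{s_\mu}_{ij}$ up to $\mathsf{W}^2$ regularity in a scale-invariant, equivanishing way, because the quadratic terms $|dh|^2$ and $|A|\,|dh|$ sit exactly at the critical Morrey scaling — this is precisely where the restriction $m > \sqrt{n}/2$ enters and where a delicate Morrey product estimate must replace any naive Schauder-type bound.
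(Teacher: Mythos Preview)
Your argument has a genuine gap at the key compactness step. You assert that ``the embedding $\mathsf{W}^{2}\mathrm{VL}^{m, n-2m} \hookrightarrow C^{0}$ combined with this equivanishing gives equicontinuity,'' but this embedding is \emph{false}: since $2m + (n-2m) = n$, we are precisely at the borderline of Proposition~\ref{Adams embedding bounded domain}, and $\mathsf{W}^{2}\mathrm{VL}^{m, n-2m}$ embeds only into $\mathrm{VMO}$, not into $C^{0}$. Hence a uniform $\mathsf{W}^{2}\mathrm{L}^{m, n-2m}$ bound on the $h^{s_{\mu}}_{ij}$, even with a common vanishing modulus, does not yield equicontinuity, and the Arzel\`a--Ascoli step as you have written it does not go through.

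The paper closes this gap differently. First, Theorem~\ref{regularity Coulomb} (regularity of Coulomb bundles) gives the individual $h^{s}_{ij}$ regularity in the \emph{subcritical} space $\mathsf{W}^{2}\mathrm{L}^{q, n-2m}$ for every $m < q < 2m$, which does embed into $C^{0,\gamma}$. But the decisive step is not a uniform bound; rather, after extracting weak limits $g^{\infty}_{ij}$ and $A^{\infty}_{i}$, the paper writes the elliptic equation \eqref{laplacian of the diff} for the \emph{difference} $u^{s}_{ij} = g^{s}_{ij} - g^{\infty}_{ij}$ and applies Lemma~\ref{ellipticCritical} to it. The source term involves $\bigl[(A^{s}_{C})_{i} - A^{\infty}_{i}\bigr]$ paired with $dg^{\infty}_{ij}$, and one needs these connection differences to go to zero in an $\mathrm{L}^{q, n-2m}$ norm. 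This is exactly where $m > \sqrt{n}/2$ enters: it guarantees $n/(2m) < 2m$, so the strong $L^{q}$ convergence of the connections (for $q < 2m$) can be interpolated against the uniform $\mathrm{L}^{2m, n-2m}$ bound to produce strong $\mathrm{L}^{q, n-2m}$ convergence (see \eqref{strong convergence of connections in Morrey norms}). Your proposal places the hypothesis $m > \sqrt{n}/2$ at the bootstrap for uniform bounds, but that bootstrap (Lemma~\ref{ellipticCritical}) works for all $1 < m \le n/2$; the restriction is needed for the \emph{convergence} of the right-hand side, not for the a~priori estimate.
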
	
	This immediately implies a flatness result, which is an analogue of the stability result around the flat connection on a flat bundle in our setting. 
	\begin{theorem}\label{flatness in vanishing}
		Suppose $\frac{\sqrt{n}}{2} < m \leq \frac{n}{2}.$. For any cover $\left\lbrace U_{\alpha}\right\rbrace_{\alpha \in I}$ of $M^{n},$ there exists a constant $\delta >0,$ depending only on $\left\lbrace U_{\alpha}\right\rbrace_{\alpha \in I}$, $M^{n}$, $G$, $m, n$ such that if $P$ is a $\mathsf{W}^{2}\mathrm{VL}^{m, n-2m}$ bundle trivialized over $\left\lbrace U_{\alpha}\right\rbrace_{\alpha \in I}$ and $A$ is a $\mathsf{W}^{1}\mathrm{VL}^{m, n-2m}$ connection on $P$  such that if 
		\begin{align*}
			\sup\limits_{\alpha \in I}\left\lVert F_{A_{\alpha}}\right\rVert_{\mathrm{L}^{m, n-2m}\left( U_{\alpha}; \Lambda^{2}\mathbb{R}^{n}\otimes \mathfrak{g} \right)} < \delta, 
		\end{align*} then the Coulomb bundle associated to $\left(P, A\right)$ is flat and thus $P$ is $\mathsf{W}^{2}\mathrm{VL}^{m, n-2m}$ gauge equivalent to a flat bundle. 
	\end{theorem}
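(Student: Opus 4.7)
The plan is to derive this directly from Theorem~\ref{equivanishing Morrey} by a contradiction-compactness argument. Fix the cover $\{U_\alpha\}_{\alpha\in I}$ and suppose no such $\delta>0$ exists. Then for each $s\in\mathbb{N}$ I can produce a $\mathsf{W}^{2}\mathrm{VL}^{m,n-2m}$ bundle $P^{s}$ trivialized over $\{U_\alpha\}$ and a $\mathsf{W}^{1}\mathrm{VL}^{m,n-2m}$ connection $A^{s}$ with
\[
\sup_{\alpha\in I}\|F_{A^{s}_\alpha}\|_{\mathrm{L}^{m,n-2m}(U_\alpha;\,\Lambda^{2}\mathbb{R}^{n}\otimes\mathfrak{g})} < \tfrac{1}{s},
\]
yet such that $P^{s}$ is not $\mathsf{W}^{2}\mathrm{VL}^{m,n-2m}$ gauge equivalent to any flat bundle. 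I want to feed this sequence into Theorem~\ref{equivanishing Morrey}: the uniform curvature bound holds trivially with $\Lambda=1$, and for the equivanishing modulus I set
\[
\omega_{s}(r) := \sup_{\alpha\in I}\,\sup_{\substack{0<\rho<r\\ B_\rho(x)\subset\subset U_\alpha}} \frac{1}{\rho^{n-2m}}\int_{B_\rho(x)}|F_{A^{s}_\alpha}|^{m}.
\]
The global Morrey smallness forces $\omega_{s}(r)\le (1/s)^{m}$ for all $s,r$, while for each individual $s$ the vanishing-Morrey hypothesis gives $\omega_{s}(r)\to 0$ as $r\to 0$. Given $\varepsilon>0$ I pick $N$ with $(1/N)^{m}<\varepsilon$, then use vanishing Morrey on the finitely many indices $s\le N$ to find $r_\varepsilon>0$ with $\omega_{s}(r)<\varepsilon$ for $s\le N$ and $r<r_\varepsilon$. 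The function $\Theta(r):=\sup_{s}\omega_{s}(r)$ is therefore $<\varepsilon$ for $r<r_\varepsilon$, and tends to $0$ as $r\to 0$, establishing equivanishing.

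Theorem~\ref{equivanishing Morrey} then delivers a subsequence $(P^{s_\mu},A^{s_\mu})$, gauges $\sigma_{s_\mu}$, a refinement $\{V_i\}_{i\in J}$, a $C^{0}$ bundle $P^{\infty}$, a $\mathsf{W}^{1}\mathrm{VL}^{m,n-2m}$ connection $A^{\infty}$, and an index $\mu_0$ with $[P^{s_\mu}_{C}]_{C^{0}}=[P^{\infty}]_{C^{0}}$ for $\mu\ge\mu_0$. Lower semicontinuity of the Morrey norm, also supplied by that theorem, gives
\[
\|F_{A^{\infty}_i}\|_{\mathrm{L}^{m,n-2m}} \le \liminf_{\mu\to\infty}\|F_{A^{s_\mu}_i}\|_{\mathrm{L}^{m,n-2m}} \le \liminf_{\mu\to\infty}\tfrac{1}{s_\mu} = 0,
\]
so $A^{\infty}$ is a flat $\mathsf{W}^{1}\mathrm{VL}^{m,n-2m}$ connection on $P^{\infty}$ and $P^{\infty}$ is a flat bundle. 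For $\mu\ge\mu_0$, Theorem~\ref{Vanishing Morrey case approx and topo}(i) identifies $P^{s_\mu}$ with its Coulomb bundle $P^{s_\mu}_{C}$ through a $\mathsf{W}^{2}\mathrm{VL}^{m,n-2m}$ gauge, while $P^{s_\mu}_{C}$ lies in the same $C^{0}$ class as the flat $P^{\infty}$. Threading these equivalences places $P^{s_\mu}$ in the $\mathsf{W}^{2}\mathrm{VL}^{m,n-2m}$ gauge class of a flat bundle for $\mu\ge\mu_0$, contradicting the standing assumption and proving the theorem.

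The main analytic obstacle is the equivanishing calibration, where at first glance the modulus threatens to depend on $s$: the two-regime split above---large $s$ absorbed by the quantitative decay $1/s$, small $s$ absorbed by the finitely many individual vanishing moduli---is precisely what restores uniformity. A secondary point needing care is upgrading the topological statement \emph{$P^{\infty}$ is a flat bundle} to a genuine $\mathsf{W}^{2}\mathrm{VL}^{m,n-2m}$ gauge equivalence between $P^{s_\mu}_{C}$ and a bundle with locally constant transition functions; this rests on a local frame-straightening argument solving $dg=gA^{\infty}$ in small charts, feasible precisely at the threshold $m>\sqrt{n}/2$ invoked in Theorem~\ref{equivanishing Morrey}, together with the observation that the $C^{0}$ isomorphism between $P^{s_\mu}_{C}$ and $P^{\infty}$ delivered by the stability theorem is realised by gauges already lying in the required Morrey-Sobolev class.
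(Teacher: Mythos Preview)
Your proof is correct and follows the same contradiction--compactness strategy as the paper: negate the conclusion, produce a sequence with $\|F_{A^s}\|_{\mathrm{L}^{m,n-2m}}\to 0$, feed it into Theorem~\ref{equivanishing Morrey}, and derive a contradiction from the flatness of the limiting bundle $P^\infty$.

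The one place where you take a slightly more circuitous route is in showing that $P^\infty$ is flat. You use the lower semicontinuity of the curvature norm from the statement of Theorem~\ref{equivanishing Morrey} to get $F_{A^\infty}=0$, and then invoke a frame-straightening argument to pass from ``flat connection'' to ``locally constant transition functions.'' The paper instead re-enters the proof of Theorem~\ref{equivanishing Morrey} and observes that the Coulomb estimate $\|(A^s_C)_i\|_{\mathrm{L}^{2m,n-2m}}\le C_{Coulomb}\|F_{A^s_i}\|_{\mathrm{L}^{m,n-2m}}$ forces the limiting Coulomb connection itself to vanish: $A^\infty_i=0$, not merely $F_{A^\infty_i}=0$. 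The gluing relation $dg^\infty_{ij}=g^\infty_{ij}A^\infty_j-A^\infty_i g^\infty_{ij}$ then gives $dg^\infty_{ij}=0$ directly, so $P^\infty$ has constant transition functions without any further ODE-type argument. This shortcut also clarifies that the threshold $m>\sqrt{n}/2$ plays no role in the flatness step; it is used in Theorem~\ref{equivanishing Morrey} for the strong Morrey convergence \eqref{strong convergence of connections in Morrey norms} of the connections, not for solving $dg=gA^\infty$. Your equivanishing verification via the two-regime split is correct; the paper simply asserts it, since for all $s$ large the global smallness $\|F_{A^s}\|<1/s$ already falls below the Coulomb threshold on the original cover.
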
 
	
	For Morrey-Sobolev bundles, we have an analogous result assuming smallness for the norm of the connection, not just the curvature. 
	\begin{theorem}\label{small distance to flat}
		Suppose $\frac{\sqrt{n}}{2} < m \leq \frac{n}{2}.$ For any cover $\left\lbrace U_{\alpha}\right\rbrace_{\alpha \in I}$ of $M^{n},$ there exists a constant $\delta >0,$ depending only on $\left\lbrace U_{\alpha}\right\rbrace_{\alpha \in I}$, $M^{n}$, $G$, $m, n$ such that if $P$ is a $\mathsf{W}^{2}\mathrm{L}^{m, n-2m}$ bundle trivialized over $\left\lbrace U_{\alpha}\right\rbrace_{\alpha \in I}$ and $A$ is a $\mathsf{W}^{1}\mathrm{VL}^{m, n-2m}$ connection on $P$  such that if 
		\begin{align*}
			\sup\limits_{\alpha \in I}\left\lVert A_{\alpha}\right\rVert_{\mathsf{W}^{1}\mathrm{L}^{m, n-2m}\left( U_{\alpha}; \Lambda^{1}\mathbb{R}^{n}\otimes \mathfrak{g} \right)} < \delta, 
		\end{align*} then the Coulomb bundle associated to $\left(P, A\right)$ is flat and thus $P$ is $\mathsf{W}^{2}\mathrm{L}^{m, n-2m}$ gauge equivalent to a flat bundle.
	\end{theorem}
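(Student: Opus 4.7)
The plan is to reduce this to the vanishing-case flatness result, Theorem \ref{flatness in vanishing}, via the smooth approximation supplied by Theorem \ref{small in Morrey}. Since the smallness hypothesis on $A$ lies in the scope of Theorem \ref{small in Morrey} (after shrinking $\delta$ if necessary), I would first invoke part (i) to produce the Coulomb representative $(P_C, A_C)$ gauge equivalent to $(P, A)$, and part (ii) to produce a sequence of smooth pairs $\left\lbrace (P^s, A^s) \right\rbrace_{s \in \mathbb{N}}$ gauge equivalent to $(P, A)$ via $\mathsf{W}^2 \mathrm{L}^{m, n-2m}$ transformations $\rho_s$, with the quantitative bound
\[
\limsup\limits_{s \to \infty} \left\lVert \left(\rho_s^{-1}\right)^\ast A^s_\alpha \right\rVert_{\mathsf{W}^1 \mathrm{L}^{m,n-2m}} \leq C \left\lVert A_\alpha \right\rVert_{\mathsf{W}^1 \mathrm{L}^{m,n-2m}} \leq C \delta.
\]

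Smooth bundles and connections belong automatically to the vanishing Morrey-Sobolev class. Using $F_{A^s} = dA^s + A^s \wedge A^s$, the pointwise gauge invariance of $\left\lvert F_{A^s}\right\rvert$, a H\"older-type inequality in Morrey spaces, and the embedding $\mathsf{W}^1 \mathrm{L}^{m,n-2m} \hookrightarrow \mathrm{L}^{2m, n-2m}$ valid in the range $\sqrt{n}/2 < m \leq n/2$, I would estimate the curvature of each smooth approximation by
\[
\sup\limits_\alpha \left\lVert F_{A^s_\alpha} \right\rVert_{\mathrm{L}^{m,n-2m}} \leq C\delta + C\delta^2 \qquad \text{for every large } s.
\]
By shrinking $\delta$ below the threshold supplied by Theorem \ref{flatness in vanishing}, I conclude that the Coulomb bundle $P^s_C$ associated with $(P^s, A^s)$ is flat for every sufficiently large $s$.

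The final step is to transfer flatness from the approximations to $P_C$. Since $(P^s, A^s)$ is $\mathsf{W}^2 \mathrm{L}^{m,n-2m}$-gauge equivalent to $(P, A)$ by Theorem \ref{small in Morrey}(ii)(a), the topological invariance of the Coulomb bundle of a pair under such gauges---the Morrey-Sobolev analogue of the invariance stated in Theorem \ref{Vanishing Morrey case approx and topo}(i), which should be established in parallel with Theorem \ref{small in Morrey}(i)---forces $\left[ P_C \right]_{C^0} = \left[ P^s_C \right]_{C^0}$ for every $s$. Admitting a flat connection is a topological property of a principal $G$-bundle, determined by the existence of a homomorphism $\pi_1(M) \to G$ realizing the bundle as a flat associated bundle; hence flatness of $P^s_C$ descends to $P_C$, and the $\mathsf{W}^2 \mathrm{L}^{m,n-2m}$ gauge equivalence of $P$ to this flat bundle is exactly the Coulomb gauge produced by Theorem \ref{small in Morrey}(i).

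The main obstacle is establishing the above topological invariance of the Coulomb bundle in the Morrey-Sobolev (non-vanishing) setting. The Coulomb construction solves a nonlinear elliptic gauge-fixing problem whose solution depends on $(P, A)$, and one must verify that the $C^0$ class of the resulting bundle is independent of the chosen representative in the $\mathsf{W}^2 \mathrm{L}^{m,n-2m}$ gauge equivalence class. In the vanishing regime this is part of Theorem \ref{Vanishing Morrey case approx and topo}(i); I expect the same proof to adapt under the present smallness assumption, the crucial point being the continuous dependence of the Coulomb gauge on the connection in $\mathsf{W}^1 \mathrm{L}^{m,n-2m}$.
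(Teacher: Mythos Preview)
Your route is genuinely different from the paper's, and the obstacle you flag at the end is real and is precisely why the paper does not go this way.

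The paper argues by contradiction directly. Assuming a sequence $\left(P^{s}, A^{s}\right)$ with $\sup_{\alpha}\left\lVert A^{s}_{\alpha}\right\rVert_{\mathsf{W}^{1}\mathrm{L}^{m,n-2m}} \to 0$ and none gauge equivalent to a flat bundle, it invokes Theorem~\ref{existence_Coulombgauges_small_connection} (Coulomb gauges under small connection norm, not small curvature) to build Coulomb representatives over the fixed cover for all large $s$, and then reruns verbatim the weak-convergence and bootstrap machinery from the proof of Theorem~\ref{equivanishing Morrey}. The limiting Coulomb bundle $P^{\infty}$ has $dg^{\infty}_{ij}=0$, hence is flat, and the $C^{0}$-closeness of $g^{s}_{ij}$ to $g^{\infty}_{ij}$ for large $s$ together with Theorem~\ref{subcritical cocycle factorization} gives the contradiction. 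No invariance statement for Coulomb bundles in the non-vanishing regime is needed.

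Your reduction to Theorem~\ref{flatness in vanishing} via Theorem~\ref{small in Morrey} is appealingly modular, but the transfer step hinges on $\left[P_{C}\right]_{C^{0}} = \left[P^{s}_{C}\right]_{C^{0}}$, which the paper never establishes outside the vanishing class: Proposition~\ref{gaugerelatedcoulomb} and Proposition~\ref{uniquenessvanishingMorreyCoulomb} are stated only for $\mathsf{W}^{1}\mathrm{VL}^{m,n-2m}$ connections, and Theorem~\ref{small in Morrey}(i) conspicuously omits the uniqueness clause present in Theorem~\ref{Vanishing Morrey case approx and topo}(i). Your expectation that the proof of Proposition~\ref{gaugerelatedcoulomb} adapts is plausible, since the smallness hypothesis on $A$ forces $\left\lVert A_{C}\right\rVert_{\mathrm{L}^{2m,n-2m}}$ and $\left\lVert A^{s}_{C}\right\rVert_{\mathrm{L}^{2m,n-2m}}$ to be uniformly small on the original cover, which is exactly what Lemma~\ref{ellipticCritical} needs---so one could bypass the refinement step that otherwise requires the vanishing property. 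But this is an additional lemma you would have to state and prove; as written, the proposal has a gap at exactly the point you identify. The paper's direct argument sidesteps the whole issue.
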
  
	
	As a consequence of the last two results, we deduce a general result about factorizing a Morrey-Sobolev cocycle. 
	\begin{theorem}\label{cocycle factorization MorreySobolev}
		Suppose $\frac{\sqrt{n}}{2} < m \leq \frac{n}{2}.$ Let $M^{n}$ be a \textbf{simply connected} closed manifold.  For any cover $\left\lbrace U_{\alpha}\right\rbrace_{\alpha \in I}$ of $M^{n},$ there exists a constant $\delta >0,$ depending only on $\left\lbrace U_{\alpha}\right\rbrace_{\alpha \in I}$, $M^{n}$, $G$, $m, n$ such that if $P = \left( \left\lbrace U_{\alpha}\right\rbrace_{\alpha \in I}, \left\lbrace g_{\alpha\beta}\right\rbrace_{\alpha, \beta \in I}\right)$ is a $\mathsf{W}^{2}\mathrm{L}^{m, n-2m}$ bundle trivialized over $\left\lbrace U_{\alpha}\right\rbrace_{\alpha \in I}$ and 
		\begin{align*}
			\sup\limits_{\substack{\alpha, \beta \in I,\\ U_{\alpha}\cap U_{\beta} \neq \emptyset}}  	\left\lVert  dg_{\alpha\beta}\right\rVert_{\mathsf{W}^{1}\mathrm{L}^{m, n-2m}\left( U_{\alpha}\cap U_{\beta}; \Lambda^{1}\mathbb{R}^{n}\otimes\mathfrak{g}\right)}  < \delta, 
		\end{align*}
		then there exists a refined cover $\left\lbrace V_{i}\right\rbrace_{i \in J}$ with the refinement map $\phi: J \rightarrow I$ such that for each $i \in J,$ there exists a map $\sigma_{i} \in \mathsf{W}^{2}\mathrm{L}^{m, n-2m}\left( V_{i}; G \right)$ satisfying 
		\begin{align*}
			\mathbbm{1}_{G} = \sigma_{i}^{-1}g_{\phi\left(i\right)\phi\left(i\right)}\sigma_{j} \qquad \text{ in } V_{i}\cap V_{j},
		\end{align*}
		whenever $V_{i}\cap V_{j} \neq \emptyset.$ Furthermore, if $P$ is a $\mathsf{W}^{2}\mathrm{VL}^{m, n-2m}$ bundle, then we can choose $\sigma_{i} \in \mathsf{W}^{2}\mathrm{VL}^{m, n-2m}\left( V_{i}; G \right)$ as well. 
	\end{theorem}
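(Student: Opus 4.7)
The plan is to reduce the factorization statement to the flatness results Theorem~\ref{small distance to flat} and Theorem~\ref{flatness in vanishing} by building an auxiliary connection with small norm out of the cocycle itself, and then exploiting simple connectedness of $M^{n}$ to globally trivialize the resulting flat Coulomb bundle.

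\textbf{Step 1 (connection from cocycle).} Fix a smooth partition of unity $\{\chi_{\gamma}\}$ subordinate to $\{U_{\alpha}\}_{\alpha \in I}$ and set
\[
A_{\alpha} := \sum_{\gamma}\chi_{\gamma}\,g_{\gamma\alpha}^{-1}dg_{\gamma\alpha}\qquad\text{on }U_{\alpha}.
\]
The cocycle identity $g_{\gamma\beta}=g_{\gamma\alpha}g_{\alpha\beta}$ yields, by direct computation, the connection transformation law $A_{\beta}=g_{\alpha\beta}^{-1}A_{\alpha}g_{\alpha\beta}+g_{\alpha\beta}^{-1}dg_{\alpha\beta}$, so the $A_{\alpha}$ glue to a genuine connection $A$ on $P$. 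Because $G \subset \mathbb{O}(N)$ is compact, the $g_{\gamma\alpha}$ are bounded, and the product/partition estimates in $\mathsf{W}^{1}\mathrm{L}^{m,n-2m}$ give
\[
\sup_{\alpha}\|A_{\alpha}\|_{\mathsf{W}^{1}\mathrm{L}^{m,n-2m}(U_{\alpha})}\le C\bigl(\{U_{\alpha}\},n,m,M^{n},G\bigr)\,\sup_{\alpha,\beta}\|dg_{\alpha\beta}\|_{\mathsf{W}^{1}\mathrm{L}^{m,n-2m}}\le C\delta.
\]
If $P$ is vanishing Morrey-Sobolev then $A$ automatically lies in the vanishing class.

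\textbf{Step 2 (flatness of the Coulomb bundle).} Choose $\delta$ so small that $C\delta$ is below the threshold of Theorem~\ref{small distance to flat} (respectively Theorem~\ref{flatness in vanishing} in the vanishing case). That theorem gives a $C^{0}$ bundle $P_{C}$ with $\mathsf{W}^{2}\mathrm{L}^{m,n-2m}$ (resp.\ $\mathsf{W}^{2}\mathrm{VL}^{m,n-2m}$) gauge transformations $\rho_{\alpha}:U_{\alpha}\to G$ and a Coulomb connection $A_{C}$ on $P_{C}$ with $F_{A_{C}}=0$, trivialized on the Coulomb cover with transition functions $g^{C}_{\alpha\beta}=\rho_{\alpha}^{-1}g_{\phi(\alpha)\phi(\beta)}\rho_{\beta}$ (after the refinement produced by the construction of the Coulomb gauge).

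\textbf{Step 3 (global trivialization of the flat Coulomb bundle).} On each sufficiently small ball $V_{i}\Subset U_{\phi(i)}$ of a refinement $\{V_{i}\}_{i\in J}$, the flat Coulomb connection $A_{C}$ is gauge-equivalent to the zero connection via a map $\tau_{i}:V_{i}\to G$ obtained by solving $d\tau_{i}=\tau_{i}A_{C,\phi(i)}$ with an initial condition at a chosen base point of $V_{i}$. Since $F_{A_{C}}=0$ and $V_{i}$ is simply connected, the ODE $d\tau=\tau A_{C}$ is integrable and yields $\tau_{i}\in\mathsf{W}^{2}\mathrm{L}^{m,n-2m}(V_{i};G)$ (resp.\ vanishing class), the regularity of the solution matching that of $A_{C}$ (one derivative better, since integration gains one derivative along each curve). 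In the $\tau_{i}$-gauge the transitions of $P_{C}$ become locally constant, so $P_{C}$ is classified by a representation $\pi_{1}(M^{n})\to G$. Simple connectedness of $M^{n}$ forces this representation to be trivial; absorbing the resulting constants into the $\tau_{i}$, we may arrange $\tau_{i}\tau_{j}^{-1}=g^{C}_{\phi(i)\phi(j)}$ on $V_{i}\cap V_{j}$.

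\textbf{Step 4 (assembling the factorization).} Set $\sigma_{i}:=\rho_{\phi(i)}\,\tau_{i}\in\mathsf{W}^{2}\mathrm{L}^{m,n-2m}(V_{i};G)$ (resp.\ vanishing class). Substituting the identities of Steps~2 and~3, on $V_{i}\cap V_{j}$ we obtain
\[
\sigma_{i}^{-1}g_{\phi(i)\phi(j)}\sigma_{j}=\tau_{i}^{-1}\rho_{\phi(i)}^{-1}g_{\phi(i)\phi(j)}\rho_{\phi(j)}\tau_{j}=\tau_{i}^{-1}g^{C}_{\phi(i)\phi(j)}\tau_{j}=\mathbbm{1}_{G},
\]
which is the desired factorization. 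The main technical obstacle is Step~3: controlling the regularity of the local flat frames $\tau_{i}$ in Morrey-Sobolev scale, and, crucially, constructing them coherently so that a single integer-valued cohomology class (the monodromy) captures the obstruction, which then dies by $\pi_{1}(M^{n})=0$. This should go through because flatness plus the Coulomb condition makes the system for $A_{C}$ elliptic with small data, allowing a standard bootstrap that keeps the relevant norms under control; in the vanishing case the same argument respects the smallness-on-small-balls condition defining $\mathrm{VL}^{m,n-2m}$.
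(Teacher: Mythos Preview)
Your overall architecture (build a connection from the cocycle, invoke the flatness theorems, then exploit $\pi_{1}(M^{n})=0$) coincides with the paper's. The paper's proof is exactly: use Theorem~\ref{existence of connection} to get a connection with small $\mathsf{W}^{1}\mathrm{L}^{m,n-2m}$ norm, apply Theorems~\ref{flatness in vanishing} and~\ref{small distance to flat}, and then cite the classical fact (Steenrod) that a constant cocycle over a simply connected base is a constant coboundary.

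Where you diverge is in Steps~2--3, and there is a genuine gap. Theorems~\ref{flatness in vanishing} and~\ref{small distance to flat} do \emph{not} assert $F_{A_{C}}=0$. Their conclusion is that the Coulomb bundle $P_{C}$ is gauge equivalent (in the appropriate class) to a bundle whose transition functions are \emph{constant}; equivalently, $P$ is $\mathsf{W}^{2}\mathrm{L}^{m,n-2m}$-equivalent to a cocycle $\{c_{ij}\}$ of constants in $G$. The curvature of $A_{C}$ coincides, up to conjugation, with that of $A$, which is small but not zero. So your premise for Step~3 is false, and the ODE/Frobenius construction of local flat frames $\tau_{i}$ solving $d\tau_{i}=\tau_{i}A_{C,i}$ has no foothold. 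Even had $F_{A_{C}}=0$ been available, integrating along curves to produce $\tau_{i}$ with the asserted $\mathsf{W}^{2}\mathrm{L}^{m,n-2m}$ regularity is not justified in this borderline Morrey--Sobolev class: $A_{C}$ is only in $\mathrm{L}^{2m,n-2m}\hookrightarrow\mathrm{BMO}$, not continuous, so pathwise holonomy is ill-defined without further work.

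The fix is simply to read Theorems~\ref{flatness in vanishing} and~\ref{small distance to flat} correctly. Once you know $P$ is $\mathsf{W}^{2}\mathrm{L}^{m,n-2m}$-equivalent (via gauges $\rho_{i}$) to a constant cocycle $\{c_{ij}\}$, your Step~3 collapses to the elementary statement that $\check{H}^{1}(M^{n};G_{\mathrm{const}})$ is trivial for $M^{n}$ simply connected, i.e.\ $c_{ij}=d_{i}^{-1}d_{j}$ for constants $d_{i}\in G$. Then $\sigma_{i}:=\rho_{i}d_{i}$ does the job, exactly as in your Step~4 computation, and the vanishing Morrey case follows because the $\rho_{i}$ produced by Theorem~\ref{flatness in vanishing} are already $\mathsf{W}^{2}\mathrm{VL}^{m,n-2m}$.
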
 
	The result basically says that for any cover of a simply connected manifold, there exists a $\delta >0$ such that any $\mathsf{W}^{2}\mathrm{L}^{2m,n-2m}$ cocycle   in the `$\delta$-neighborhood' of the identically identity cocycle is a $\mathsf{W}^{2}\mathrm{L}^{2m,n-2m}$ coboundary. This result holds for any cocycle, rather than just the identity cocycle, in the subcritical situation ( see Theorem \ref{subcritical cocycle factorization} ) and is essentially established by Uhlenbeck ( Proposition 3.2 and Corollary 3.3. in \cite{UhlenbeckGaugefixing} ). To the best of our knowledge, our result is new even in the critical case and is a first result of this kind beyond the subcritical setting.

	\subsection{Coulomb gauges and continuity of the Coulomb bundles} The basic point underlying all of our results is a regularity result for Coulomb bundles, i.e. bundles in which the local connection forms $A_{i}$ satisfy the Coulomb condition $$d^{\ast}A_{i} = 0 \qquad \text{ in } U_{i}.$$ That $W^{1,n}$ Coulomb bundles are continuous is first established for $n=4$ in Taubes \cite{Taubes_YangMillsModulispaces} and for general $n$ in Rivi\`{e}re \cite{Riviere_QuantizationYangMills}. In fact, $W^{1,n}$ Coulomb bundles are not just continuous, they are $C^{0, \alpha}$ bundles for any H\"{o}lder exponent $\alpha < 1$ is shown in Shevchishin \cite{Shevchishin_limitholonomyYangMills} and Sil \cite{Sil_YangMillscriticaltoappear}.  
	
	\par Here we extend the techniques of \cite{Sil_YangMillscriticaltoappear} to prove that the H\"{o}lder continuity result still holds even in the supercritical dimensions as long as the bundles have the critical Morrey-Sobolev regularity. As far as we are aware, this is new and is the first result about continuity of the Coulomb bundles in the supercritical situation and should be of independent interest as well.  This also shows that for regularity of Coulomb bundles, the only issue that matters is the scale invariance of the norm.

	\par The next issue is the existence of Coulomb bundles. The existence of local Coulomb gauges in the Morrey-Sobolev setting is a subtle issue due to the fact that smooth functions are not dense in Morrey spaces. In \cite{MeyerRiviere_YangMills}, existence of local Coulomb gauges were proved for `approximable'  $\mathsf{W}^{1}\mathrm{L}^{2, n-4}$ connections, using an improved Gagliardo-Nirenberg type interpolation inequality for the non-borderline case. Theorem \ref{existence of Coulomb gauges in better spaces} generalizes their result to any $1 < m \leq n/2$ with a new proof, which do not use the interpolation inequality. Our Theorem \ref{existence of Coulomb gauges in critical Morrey-Sobolev case} essentially establishes that the vanishing Morrey condition is a natural sufficient condition for `approximability' in their sense. On the other hand, Theorem \ref{existence_Coulombgauges_small_connection} shows `approximability' is not necessary for local Coulomb gauges to exists. The price to pay for this generality is that the smallness parameters are no longer scale-invariant and the norm of the curvature alone does not control the norm of the connection even in the Coulomb gauge. This of course, severely restricts the efficacy of these gauges for problems involving only a bound on the Yang-Mills energy. However, by keeping careful track of the scale-invariance ( or the lack thereof ) of our constants, we show that even these weaker estimates still lead to new results like the cocycle factorization in Theorem \ref{cocycle factorization MorreySobolev}.    
	
	\subsection{Organization of the article}
	
	The rest of the article is organized as follows. In Section \ref{notations}, we collect the preliminary materials about Sobolev and Morrey-Sobolev principal $G$ bundles and the function spaces that we shall use. Section \ref{Prep estimates} derives the main estimates. Section \ref{topology} gives the definition of the topological isomorphism class and discusses some issues to put matters into context. Section \ref{Proof Main results} proves our main results.

	\section{Notations and preliminaries}\label{notations}
	\subsection{Function spaces}
	\subsubsection{Morrey spaces and Vanishing Morrey spaces}
	Let $\Omega \subset \mathbb{R}^{n}$ be an open set such that for any $x \in \Omega$ and any $0 < r < \operatorname{diam}\left(\Omega\right)$, there exists a constant $A,$ independent of $x$ and $r$, such that $$ \left\lvert B_{r}(x) \cap \Omega\right\rvert \geq A r^{n}.$$ For $1\leqslant p <  \infty$ and $ 0 \leq \lambda \leq n,$ we use the notation   $\mathrm{L}^{p,\lambda}\left(\Omega \right) $ for the Morrey space of all scalar function $u \in L^{p}\left(\Omega \right)$ such that 
	$$ \lVert u \rVert_{\mathrm{L}^{p,\lambda}\left(\Omega\right)}^{p} := \sup_{\substack{ x \in \Omega,\\ 0 < r < \operatorname{diam}\left(\Omega\right) }} 
	\frac{1}{r^{\lambda}} \int_{B_{r}(x) \cap \Omega} \lvert u \rvert^{p} < \infty, $$ endowed with the norm 
	$ \lVert u \rVert_{\mathrm{L}^{p,\lambda}\left(\Omega\right)}.$ For $\lambda = 0,$ these spaces are just the usual $L^{p}$ spaces with equivalent norms and for $\lambda = n,$ we have $\mathrm{L}^{p,n} \simeq L^{\infty}.$ However, we shall be concerned with $0 \leq \lambda < n.$\bigskip 
	
	\noindent For $u \in \mathrm{L}^{p,\lambda}\left(\Omega \right),$  we define the $\mathrm{L}^{p,\lambda}$ modulus of $u,$ denoted $\Theta^{u}_{p, \lambda, \Omega}$ as the function $\Theta^{u}_{p, \lambda, \Omega}: (0, \operatorname{diam}\left(\Omega\right) ] \rightarrow \mathbb{R}$ given by 
	\begin{align*}
		\Theta^{u}_{p, \lambda, \Omega}\left( r \right): = \left( \sup_{\substack{ x \in \Omega,\\ 0 < \rho < r }} 
		\frac{1}{\rho^{\lambda}} \int_{B_{\rho}(x) \cap \Omega} \lvert u \rvert^{p}\right)^{\frac{1}{p}} \qquad \text{ for }  0 < r < \operatorname{diam}\left(\Omega\right). 
	\end{align*}
	We define the \emph{Vanishing Morrey space} 
	$\mathrm{VL}^{p, \lambda}\left(\Omega\right)$ as 
	\begin{align*}
		\mathrm{VL}^{p, \lambda}\left(\Omega\right) : = \left\lbrace u \in \mathrm{L}^{p,\lambda}\left(\Omega \right): \lim\limits_{r \rightarrow 0} \Theta^{u}_{p, \lambda, \Omega}\left( r \right) = 0 \right\rbrace.  
	\end{align*}
	For $\lambda = 0,$ $\mathrm{L}^{p, \lambda} =\mathrm{VL}^{p, \lambda},$ but $\mathrm{VL}^{p, \lambda} \subsetneq \mathrm{L}^{p, \lambda}$ as soon as $\lambda > 0.$
	For any finite dimensional vector space $X,$ these definitions extend componentwise in the obvious manner for $X$-valued functions and the corresponding spaces will be denoted by $\mathrm{L}^{p,\lambda}\left(\Omega; X \right)$ and $\mathrm{VL}^{p, \lambda}\left(\Omega; X \right),$ respectively. 
	
	For convenience, we also record the H\"{o}lder inequality in Morrey spaces, which can be proved by simply applying the usual H\"{o}lder inequality. 
	\begin{proposition}[H\"{o}lder inequality in Morrey spaces]\label{holderMorrey}
		Let $0 < \lambda, \mu < n$ and $1 < p, q < \infty$ be real numbers such that $\frac{1}{p} + \frac{1}{q} \leq 1.$ Then for any $f \in \mathrm{L}^{p,\lambda}\left(\Omega\right)$ and any $g \in \mathrm{L}^{q,\mu}\left(\Omega\right),$ we have $fg \in \mathrm{L}^{r,\nu}$ and the estimate 
		$$\lVert fg \rVert_{\mathrm{L}^{r,\nu}\left(\Omega\right)}  \leq \lVert f \rVert_{\mathrm{L}^{p,\lambda}\left(\Omega\right)}\lVert g \rVert_{\mathrm{L}^{q,\mu}\left(\Omega\right)},$$
		where $\frac{1}{r} = \frac{1}{p} + \frac{1}{q}$ and $\frac{\nu}{r} = \frac{\lambda}{p} + \frac{\mu}{q}. $
	\end{proposition}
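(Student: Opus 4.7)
The statement is the familiar Morrey Hölder inequality; the plan is to reduce it, ball by ball, to the usual Hölder inequality on $L^p$ spaces and then book-keep the radial weights.

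First I would fix $x \in \Omega$ and $0 < \rho < \operatorname{diam}(\Omega)$ and apply the classical Hölder inequality on $B_\rho(x) \cap \Omega$ with exponents $p/r$ and $q/r$, which are conjugate since the relation $\tfrac{1}{r} = \tfrac{1}{p} + \tfrac{1}{q}$ gives $\tfrac{r}{p} + \tfrac{r}{q} = 1$. This yields
\begin{align*}
\int_{B_\rho(x) \cap \Omega} |fg|^r \,\leq\, \Bigl(\int_{B_\rho(x) \cap \Omega} |f|^p\Bigr)^{r/p} \Bigl(\int_{B_\rho(x) \cap \Omega} |g|^q\Bigr)^{r/q}.
\end{align*}
The condition $\tfrac{1}{p} + \tfrac{1}{q} \leq 1$ is exactly what guarantees $r \geq 1$, so the left-hand side is a genuine $L^r$ integral.

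Next I would insert the defining Morrey bounds
\begin{align*}
\int_{B_\rho(x) \cap \Omega} |f|^p \leq \rho^{\lambda} \,\|f\|_{\mathrm{L}^{p,\lambda}(\Omega)}^{p}, \qquad \int_{B_\rho(x) \cap \Omega} |g|^q \leq \rho^{\mu} \,\|g\|_{\mathrm{L}^{q,\mu}(\Omega)}^{q},
\end{align*}
into the previous inequality. This produces the weight $\rho^{\lambda r/p + \mu r/q} = \rho^{\nu}$ by the definition of $\nu$, and the multiplicative constant $\|f\|_{\mathrm{L}^{p,\lambda}}^{r}\,\|g\|_{\mathrm{L}^{q,\mu}}^{r}$.

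Finally I would divide through by $\rho^{\nu}$, take the supremum over $x \in \Omega$ and $0 < \rho < \operatorname{diam}(\Omega)$, and extract the $r$-th root; this is precisely the claimed estimate $\|fg\|_{\mathrm{L}^{r,\nu}(\Omega)} \leq \|f\|_{\mathrm{L}^{p,\lambda}(\Omega)} \|g\|_{\mathrm{L}^{q,\mu}(\Omega)}$ and simultaneously shows $fg \in \mathrm{L}^{r,\nu}$. There is no real obstacle: the only thing to double-check is that the exponent arithmetic $\tfrac{r}{p}+\tfrac{r}{q}=1$ and $\nu = \tfrac{r\lambda}{p} + \tfrac{r\mu}{q}$ match so that the powers of $\rho$ cancel exactly.
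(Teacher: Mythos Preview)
Your proof is correct and is exactly the argument the paper has in mind: it merely states that the result ``can be proved by simply applying the usual H\"older inequality,'' which is precisely your ball-by-ball application of H\"older with exponents $p/r$ and $q/r$ followed by the bookkeeping of the $\rho$-weights.
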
\bigskip 
	
	\noindent Vanishing Morrey spaces have been introduced in the context of elliptic PDEs in \cite{Vitanza_VanishingMorrey1} ( see also \cite{ChiarenzaFranciosi_vanishingMorrey} and \cite{Vitanza_VanishingMorrey2} ). The reason why the spaces $\mathrm{VL}^{p, \lambda}$ are better behaved in comparison to $\mathrm{L}^{p,\lambda}$ spaces is that as soon as $ 0 < \lambda < n,$ a general $f \in \mathrm{L}^{p,\lambda}\left(\Omega\right)$ can not be approximated, not even locally, by smooth functions in the $\mathrm{L}^{p,\lambda} $ norm. Indeed, we have the following simple counter example due to Zorko. For any $x_{0} \in \Omega,$ define the function $$ f\left( x \right) = \frac{1}{\left\lvert x - x_{0}\right\rvert^{\frac{n-\lambda}{p}}} \qquad \text{ for a.e. } x \in \Omega. $$ Clearly, $f \in \mathrm{L}^{p,\lambda}\left(\Omega\right),$ but Zorko showed in \cite{Zorko_Morreyspaces} that this $f$ can not be approximated by continuous functions in the $\mathrm{L}^{p,\lambda}\left(\Omega\right)$ norm in any neighborhood of $x_{0} \in \Omega.$ However, we do have the following results.  
	\begin{proposition}\label{approximation in Morrey spaces}
		Let $0 < \lambda < n$ and $1 < p < \infty$ be real numbers. 
		\begin{enumerate}[(i)]
			\item For any $u \in L^{p}_{\text{loc}}\left(\mathbb{R}^{n}\right)$ with $\lVert u \rVert_{\mathrm{L}^{p,\lambda}\left(\mathbb{R}^{n}\right)} < +\infty,$ there exists a sequence $\left\lbrace u_{s}\right\rbrace_{s \in \mathbb{N}} \subset C^{\infty}\left(\mathbb{R}^{n}\right)$ such that 
			\begin{align*}
				u_{s} \rightarrow u \quad \text{ in }  L^{p}_{\text{loc}}\left(\mathbb{R}^{n}\right) \qquad \text{ and }\qquad 
				\limsup\limits_{s\rightarrow \infty} \lVert u_{s} \rVert_{\mathrm{L}^{p,\lambda}\left(\mathbb{R}^{n}\right)} \leq \lVert u \rVert_{\mathrm{L}^{p,\lambda}\left(\mathbb{R}^{n}\right)}. 
			\end{align*} 
			\item For any $u \in \mathrm{VL}^{p, \lambda}\left( \mathbb{R}^{n}\right),$ there exists a sequence $\left\lbrace u_{s} \right\rbrace_{s \in \mathbb{N}} \subset C^{\infty}\left( \mathbb{R}^{n}\right)$ such that 
			\begin{align*}
				u_{s} &\rightarrow u \qquad \text{ as } s \rightarrow \infty &&\text{ strongly in } \mathrm{L}^{p, \lambda} \left( \mathbb{R}^{n}\right).
			\end{align*}
		\end{enumerate}
	\end{proposition}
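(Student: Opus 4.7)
The plan is to prove both assertions simultaneously by convolution with a standard mollifier. Fix $\rho \in C_c^\infty(\mathbb{R}^n)$ with $\rho \geq 0$, $\int \rho = 1$, $\operatorname{supp} \rho \subset B_1(0)$, set $\rho_\varepsilon(x) = \varepsilon^{-n}\rho(x/\varepsilon)$, and define $u_\varepsilon := \rho_\varepsilon \ast u$. Each $u_\varepsilon$ is smooth, and $u_\varepsilon \to u$ in $L^p_{\mathrm{loc}}(\mathbb{R}^n)$ by the classical Friedrichs lemma.

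For (i), the key estimate comes from Minkowski's integral inequality applied ball-by-ball: for every $x \in \mathbb{R}^n$ and $r > 0$,
\begin{align*}
	\left(\int_{B_r(x)} |u_\varepsilon(y)|^p\,dy\right)^{1/p}
	&\leq \int_{\mathbb{R}^n} \rho_\varepsilon(z)\left(\int_{B_r(x)}|u(y-z)|^p\,dy\right)^{1/p}dz \\
	&= \int_{\mathbb{R}^n}\rho_\varepsilon(z)\|u\|_{L^p(B_r(x-z))}\,dz
	\leq r^{\lambda/p}\|u\|_{\mathrm{L}^{p,\lambda}(\mathbb{R}^n)}.
\end{align*}
Taking the sup over $x$ and $r$ yields the (stronger than needed) bound $\|u_\varepsilon\|_{\mathrm{L}^{p,\lambda}} \leq \|u\|_{\mathrm{L}^{p,\lambda}}$. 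The same argument applied to $u$ itself shows, en passant, that $\Theta^{u_\varepsilon}_{p,\lambda}(r) \leq \Theta^{u}_{p,\lambda}(r)$ for every $r>0$, so $u_\varepsilon$ inherits the vanishing Morrey modulus of $u$ whenever $u \in \mathrm{VL}^{p,\lambda}$.

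For (ii), I reduce the convergence $u_\varepsilon \to u$ in $\mathrm{L}^{p,\lambda}$ to strong continuity of translations on $\mathrm{VL}^{p,\lambda}$. Writing $\tau_z u(y) := u(y-z)$ and applying the same Minkowski trick,
\begin{equation*}
	\|u_\varepsilon - u\|_{\mathrm{L}^{p,\lambda}} \leq \int_{\mathbb{R}^n}\rho_\varepsilon(z)\|\tau_z u - u\|_{\mathrm{L}^{p,\lambda}}\,dz,
\end{equation*}
so it suffices to show $\|\tau_z u - u\|_{\mathrm{L}^{p,\lambda}} \to 0$ as $|z|\to 0$. Given $\eta >0$, pick $r_0>0$ with $\Theta^u_{p,\lambda}(r_0)<\eta$. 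On balls of radius $\rho\leq r_0$ the elementary triangle inequality together with translation-invariance of the modulus gives $\rho^{-\lambda/p}\|\tau_z u - u\|_{L^p(B_\rho(x))} \leq 2\eta$ uniformly in $x$ and $|z|$. For $\rho \geq r_0$ the integrand $1/\rho^\lambda$ is bounded by $1/r_0^\lambda$, which reduces the question to the continuity of translations in $L^p_{\mathrm{loc}}$ on balls of comparable size; by a further truncation / partition-of-unity argument one can localize $u$ on a ball of size comparable to $r_0$, on which $L^p$-continuity of translations is classical and gives the required smallness for $|z|$ small enough.

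The only genuine difficulty is the large-scale regime $\rho \geq r_0$ in part (ii), because on $\mathbb{R}^n$ the Morrey seminorm controls balls of arbitrarily large radius while $u$ need not be globally $L^p$. The remedy is the localization-plus-$L^p$-translation-continuity step outlined above (equivalently, one approximates the bounded piece of $u$ by $C_c$ functions, for which translation continuity is trivial, and controls the error by the finite Morrey norm). Once the translation-continuity estimate is in hand, the Minkowski inequality and dominated convergence in the $\rho_\varepsilon$-integral immediately yield $\|u_\varepsilon - u\|_{\mathrm{L}^{p,\lambda}}\to 0$.
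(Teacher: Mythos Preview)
Your argument for part (i) is correct and essentially matches the paper's, which invokes mollification together with Jensen's inequality (cf.\ Lemma~\ref{approximabilityconditionremovallemma}); your Minkowski version is an equivalent way to obtain the same pointwise-in-$r$ bound $\Theta^{u_\varepsilon}_{p,\lambda}(r)\le \Theta^{u}_{p,\lambda}(r)$.

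Part (ii), however, has a genuine gap in the large-radius regime. Your reduction to translation continuity $\lVert \tau_z u - u\rVert_{\mathrm{L}^{p,\lambda}}\to 0$ is the right move, and the small-ball estimate ($\rho\le r_0$) via the vanishing modulus is fine. The large-ball step is not. You write that $\rho^{-\lambda}\le r_0^{-\lambda}$ ``reduces the question to the continuity of translations in $L^{p}_{\mathrm{loc}}$ on balls of comparable size'' and that one can ``localize $u$ on a ball of size comparable to $r_0$''. But the Morrey norm takes the supremum over \emph{all} centres $x\in\mathbb{R}^n$ and \emph{all} radii $\rho\ge r_0$, so what you would actually need is $\sup_{x,\,\rho\ge r_0}\lVert \tau_z u-u\rVert_{L^p(B_\rho(x))}\to 0$, i.e.\ essentially global $L^p$ translation continuity. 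A general $u\in \mathrm{VL}^{p,\lambda}(\mathbb{R}^n)$ need not lie in $L^p(\mathbb{R}^n)$ (take $u(x)\sim |x|^{-(n-\lambda)/p}$ at infinity, smoothed near the origin), so this fails. Your fallback, ``approximate the bounded piece of $u$ by $C_c$ functions and control the error by the finite Morrey norm'', is circular: density of compactly supported functions in $\mathrm{VL}^{p,\lambda}$ is exactly what is at stake, and the truncation error $(1-\psi_R)u$ does \emph{not} tend to $0$ in $\mathrm{L}^{p,\lambda}(\mathbb{R}^n)$ in general. A direct covering of $B_\rho(x)$ by $\sim(\rho/r_0)^n$ balls of radius $r_0$ also fails, since it yields $\rho^{-\lambda}\cdot(\rho/r_0)^n\,\eta^p r_0^{\lambda}=(\rho/r_0)^{n-\lambda}\eta^p\to\infty$.

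The paper does not attempt a self-contained proof of (ii); it simply combines Lemma~1.2 of Chiarenza--Franciosi with Zorko's characterisation, where the implication ``vanishing modulus $\Rightarrow$ strong continuity of translations $\Rightarrow$ mollifier approximation in $\mathrm{L}^{p,\lambda}$'' is established by less elementary means (duality/predual arguments). If you want to keep your direct approach, you will need a genuinely new idea for the regime $\rho\ge r_0$ that does not rely on global $L^p$ membership or on pre-existing density.
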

	The first conclusion is easily proved by mollification and a Jensen inequality argument ( see Lemma \ref{approximabilityconditionremovallemma} ). For the second conclusion, one can combine Lemma 1.2 in \cite{ChiarenzaFranciosi_vanishingMorrey} with Zorko's results in \cite{Zorko_Morreyspaces}.
	Now we need a result about sequences which are uniformly bounded in Morrey norms, which can be easily proved. 
	\begin{proposition}\label{Morrey uniformly bounded}
		Let $\Omega \subset \mathbb{R}^{n}$ be open, bounded with at least Lipschitz boundary. Let $1 \leq p < \infty$ and $0 \leq \lambda < n$ be real numbers. Then for any sequence $\left\lbrace u_{s}\right\rbrace_{s \in \mathbb{N}}$ which is uniformly bounded in $\mathrm{L}^{p, \lambda}\left( \Omega\right),$ there exists a subsequence, which we do not relabel, and $u \in \mathrm{L}^{p, \lambda}\left( \Omega\right)$  such that 
		\begin{align*}
			u_{s} \rightharpoonup u \quad \text{ weakly in } \mathrm{L}^{p}\left( \Omega\right) \qquad \text{ and } \qquad 
			\lVert u \rVert_{\mathrm{L}^{p,\lambda}\left(\mathbb{R}^{n}\right)} \leq 	\liminf\limits_{s\rightarrow \infty} \lVert u_{s} \rVert_{\mathrm{L}^{p,\lambda}\left(\mathbb{R}^{n}\right)} .
		\end{align*}
	\end{proposition}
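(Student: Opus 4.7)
The plan is two-fold: first extract a weakly convergent subsequence in $L^p(\Omega)$ using the boundedness of $\Omega$, then show that the Morrey norm is weakly lower semicontinuous on $L^p(\Omega)$.

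For the first step, I would exploit boundedness of $\Omega$ to reduce to ordinary $L^p$ weak compactness. Fixing any $x_0 \in \Omega$ and taking $r_0 \to \operatorname{diam}(\Omega)^-$, we have $B_{r_0}(x_0) \supset \Omega$ (up to the boundary), so the very definition of the Morrey norm gives
\[
	\int_{\Omega} |u_s|^p \;\leq\; r_0^{\lambda}\, \lVert u_s \rVert_{\mathrm{L}^{p,\lambda}(\Omega)}^{p} \;\leq\; C,
\]
uniformly in $s$. For $1 < p < \infty$, reflexivity of $L^p(\Omega)$ produces (after relabelling) a weak limit $u_s \rightharpoonup u$ in $L^p(\Omega)$, with $u \in L^p(\Omega)$. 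The borderline case $p=1$ would require an extra equi-integrability argument via Dunford--Pettis; since the paper's applications all lie in the reflexive regime, this case may be taken as excluded or treated separately.

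For the second step I would test the Morrey condition on a single ball at a time. Fix any admissible pair $(x,r)$ with $x \in \Omega$ and $0<r<\operatorname{diam}(\Omega)$, and set $E := B_r(x)\cap \Omega$. Since multiplication by $\mathbbm{1}_E$ is a bounded operator on $L^p(\Omega)$, we have $u_s \mathbbm{1}_E \rightharpoonup u\mathbbm{1}_E$ weakly in $L^p(\Omega)$. The weak lower semicontinuity of the $L^p$ norm then yields
\[
	\frac{1}{r^{\lambda}} \int_{E} |u|^{p} \;\leq\; \liminf_{s \to \infty} \frac{1}{r^{\lambda}} \int_{E} |u_s|^{p} \;\leq\; \liminf_{s \to \infty} \lVert u_s \rVert_{\mathrm{L}^{p,\lambda}(\Omega)}^{p}.
\]
Taking the supremum of the left-hand side over all admissible $(x,r)$ produces
\[
	\lVert u \rVert_{\mathrm{L}^{p,\lambda}(\Omega)} \;\leq\; \liminf_{s \to \infty} \lVert u_s \rVert_{\mathrm{L}^{p,\lambda}(\Omega)},
\]
which in particular forces $u \in \mathrm{L}^{p,\lambda}(\Omega)$.

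No serious obstacle arises in this argument: the Morrey seminorm is by construction a supremum of $L^p$-integrals over balls, and the supremum of a family of weakly lower semicontinuous functionals is itself weakly lower semicontinuous. The only genuinely delicate point is the case $p=1$, which demands extra structure beyond mere norm boundedness.
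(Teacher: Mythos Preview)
Your argument is correct and is exactly the natural proof the paper has in mind: the paper gives no proof at all, simply stating that the result ``can be easily proved,'' and your two-step approach (uniform $L^p$ bound from the Morrey norm, then weak lower semicontinuity of each local $L^p$ integral followed by a supremum) is the standard way to fill this in. Your flag on the case $p=1$ is also appropriate --- the statement as written includes $p=1$, but weak sequential compactness genuinely fails there without equi-integrability; since every application in the paper uses $p=m>1$ or $p=2m>2$, this is harmless in context.
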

	\subsubsection{Morrey-Sobolev and Vanishing Morrey-Sobolev spaces} 
	We shall be working with the Sobolev scale of spaces modelled on Morrey and Vanishing Morrey spaces. Let  $1\leqslant p <  \infty$, $ 0 \leq \lambda < n$ be real numbers. For any $l \in \mathbb{N},$ we define the $l$-th order Morrey-Sobolev space  as  \begin{align*}
		\mathsf{W}^{l}\mathrm{L}^{p,\lambda}\left(\Omega\right): = \left\lbrace u \in W^{l,p}\left(\Omega \right) : D^{\alpha}u \in \mathrm{L}^{p,\lambda}\left(\Omega\right) \text{ for any multiindex } \left\lvert \alpha \right\rvert \leq l    \right\rbrace  
	\end{align*}
	with the norm 
	$$ \left\lVert u \right\rVert_{\mathsf{W}^{1}\mathrm{L}^{p,\lambda}\left(\Omega\right)} := \sum\limits_{\left\lvert \alpha \right\rvert \leq l}\left\lVert D^{\alpha}u \right\rVert_{\mathrm{L}^{p,\lambda}\left(\Omega\right)} . $$ 
	Similarly, we define the $l$-th order Vanishing Morrey-Sobolev space as 
	\begin{align*}
		\mathsf{W}^{l}\mathrm{VL}^{p,\lambda}\left(\Omega\right): = \left\lbrace u \in W^{l,p}\left(\Omega \right) :  D^{\alpha}u \in \mathrm{VL}^{p,\lambda}\left(\Omega\right), \text{ for any } \left\lvert \alpha \right\rvert \leq l   \right\rbrace .
	\end{align*}
	As usual, the definitions extends to vector-valued maps. We now record an extension result. This can be done in several ways ( See  \cite{Fanciullo_Lamberti_MorreySobolev_extension_Burenkov}, \cite{LambertiViolo_extensionMorreySobolev} ). 
	\begin{proposition}\label{Morrey-Sobolev extension}
		Let $\Omega \subset \mathbb{R}^{n}$ be open, bounded and smooth. Let $1<  p <  \infty$, $ 0 \leq \lambda < n$ be real numbers. For any $l \in \mathbb{N},$ there exists constants $C = C \left( n, l, p, \lambda, \Omega \right) >0$ such that for every $u \in \mathsf{W}^{l}\mathrm{L}^{p,\lambda}\left(\Omega\right),$ there exists an extension $ \tilde{u} \in \mathsf{W}^{l}\mathrm{L}^{p,\lambda}\left(\mathbb{R}^{n}\right)$ such that $\tilde{u} = u$ in $\Omega$ and we have the estimates 
		\begin{align*}
			\left\lVert \tilde{u} \right\rVert_{\mathsf{W}^{l}\mathrm{L}^{p,\lambda}\left(\mathbb{R}^{n}\right)} &\leq C \left\lVert u \right\rVert_{\mathsf{W}^{l}\mathrm{L}^{p,\lambda}\left(\Omega\right)}  \\
			\Theta^{D^{\alpha}\tilde{u}}_{p, \lambda, \mathbb{R}^{n}}\left( r \right) &\leq C \sum\limits_{0 \leq \left\lvert \beta \right\rvert \leq \left\lvert \alpha \right\rvert } \Theta^{D^{\beta}u}_{p, \lambda, \Omega}\left( r \right) \qquad \text{ for all } 0 < r \leq  \operatorname{diam}\left(\Omega\right), 
		\end{align*}
		for all multiindex $0 \leq \left\lvert \alpha \right\rvert \leq l.$ The last estimate implies, in particular, if we have, in addition, that $u \in \mathsf{W}^{l}\mathrm{VL}^{p,\lambda}\left(\Omega\right),$ then $\tilde{u} \in \mathsf{W}^{l}\mathrm{VL}^{p,\lambda}\left(\mathbb{R}^{n}\right).$
	\end{proposition}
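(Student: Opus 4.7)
The plan is to use a classical Hestenes--Lichtenstein higher-order reflection extension, implemented via boundary charts, and to track the Morrey modulus through each step rather than only the $L^{p}$ norm. First I would fix a finite atlas of smooth boundary charts $\Phi_{j}: V_{j} \to B_{1}(0)$ (for $j=1,\ldots,N$) that flatten $\partial\Omega$ onto $\{x_{n}=0\}$, together with an interior chart $V_{0} \subset\subset \Omega$, and a subordinate $C^{\infty}$ partition of unity $\{\eta_{j}\}$. After multiplying by $\eta_{j}$ and composing with $\Phi_{j}^{-1}$, each piece becomes a compactly supported function $v_{j}$ on the upper half ball. Because $\Phi_{j}$ is bi-Lipschitz with uniformly bounded derivatives of all orders and $\eta_{j} \in C^{\infty}_{c}$, these operations map $\mathsf{W}^{l}\mathrm{L}^{p,\lambda}$ boundedly to itself and satisfy a pointwise modulus estimate of the form $\Theta^{D^{\alpha} v_{j}}_{p,\lambda} \leq C \sum_{|\beta|\leq|\alpha|} \Theta^{D^{\beta} u}_{p,\lambda,\Omega}(C r)$, which is precisely the kind of control the statement asks for.

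On the upper half ball the reflection of order $l$ reads
\begin{equation*}
\tilde{v}(x',x_{n}) = \sum_{k=1}^{l+1} c_{k}\, v(x',-k x_{n}) \qquad \text{for } x_{n} < 0,
\end{equation*}
with coefficients $c_{k}$ fixed by the Hestenes system $\sum_{k} c_{k}(-k)^{j} = 1$ for $j = 0,\ldots, l$, guaranteeing $W^{l,p}_{\mathrm{loc}}$ matching across $\{x_{n} = 0\}$. Differentiating under the sum gives pointwise $|D^{\alpha}\tilde{v}(y)| \leq C \sum_{k=1}^{l+1} |(D^{\alpha}v)(y', -k y_{n})|$ for $y_{n}<0$. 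The Morrey control then follows from a direct change of variables: for any ball $B_{\rho}(x)$ with $x_{n} < 0$, the map $y \mapsto (y', -k y_{n})$ has Jacobian $1/k$ and sends $B_{\rho}(x)\cap\{y_{n}<0\}$ into a Euclidean ball $B_{Ck\rho} \cap \mathbb{R}^{n}_{+}$, so
\begin{equation*}
\frac{1}{\rho^{\lambda}} \int_{B_{\rho}(x)\cap\{y_{n}<0\}} |D^{\alpha}\tilde{v}|^{p} \;\leq\; C\, k^{\lambda-1}\, \bigl(\Theta^{D^{\alpha}v}_{p,\lambda,\mathbb{R}^{n}_{+}}(C k \rho)\bigr)^{p}.
\end{equation*}
Summing $k \leq l+1$, combining with the trivial contribution from $\{y_{n} \geq 0\}$, and taking $\sup_{\rho < r}$, I obtain $\Theta^{D^{\alpha}\tilde{v}}_{p,\lambda,\mathbb{R}^{n}}(r) \leq C\, \Theta^{D^{\alpha}v}_{p,\lambda,\mathbb{R}^{n}_{+}}(C r)$.

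Finally I would reassemble by setting $\tilde{u} = \sum_{j} (\widetilde{(\eta_{j} u) \circ \Phi_{j}^{-1}}) \circ \Phi_{j}$, zero-extended outside a tubular neighborhood of $\bar\Omega$ and multiplied by a further global cutoff equal to $1$ near $\bar\Omega$. The chart and partition-of-unity steps mix orders of derivatives, which is exactly what forces the sum $\sum_{|\beta|\leq |\alpha|}$ on the right-hand side of the modulus estimate — and this is precisely the form of the bound in the statement. Integrating in $\rho$ up to $r = \operatorname{diam}(\Omega)$ yields the $\mathsf{W}^{l}\mathrm{L}^{p,\lambda}$ norm estimate, and the $\mathsf{W}^{l}\mathrm{VL}^{p,\lambda}$ conclusion is an immediate consequence since $\Theta^{D^{\beta}u}_{p,\lambda,\Omega}(r) \to 0$ for every $|\beta|\leq l$ forces $\Theta^{D^{\alpha}\tilde{u}}_{p,\lambda,\mathbb{R}^{n}}(r)\to 0$ for every $|\alpha|\leq l$. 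The main bookkeeping obstacle is tracking how the partition of unity and the chart diffeomorphisms produce the lower-order derivatives on the right-hand side, and how the change-of-variables factor $k^{\lambda-1}$ is absorbed into a constant depending only on $l, n, p, \lambda$; once those are set up, the regularity arguments themselves are routine.
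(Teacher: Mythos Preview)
Your proof sketch is correct. The paper itself does not prove this proposition: it only states the result and cites \cite{Fanciullo_Lamberti_MorreySobolev_extension_Burenkov} and \cite{LambertiViolo_extensionMorreySobolev}, which establish that Burenkov's and Stein's extension operators, respectively, preserve Morrey--Sobolev spaces (and in fact work on Lipschitz domains). Your route via the Hestenes--Lichtenstein higher-order reflection is a genuinely different construction. It is more elementary and self-contained, but it relies on the smoothness of $\partial\Omega$ to flatten the boundary by $C^{l}$ charts; the Burenkov and Stein operators in the cited references do not need charts and cover rougher domains. Since the proposition explicitly assumes $\Omega$ smooth, your approach is perfectly adequate here, and the explicit tracking of the modulus through the change of variables (the $k^{\lambda-1}$ factor) and through the chart/cutoff steps (producing the lower-order sum $\sum_{|\beta|\le|\alpha|}$) gives exactly the stated modulus inequality, which is the part one does not find written out in standard Sobolev references.
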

	Using the extension result, combined with Proposition \ref{approximation in Morrey spaces}, we have the following approximation result in Morrey-Sobolev spaces. 	
	\begin{proposition}\label{approximation in Morrey-Sobolev spaces}
		Let $\Omega \subset \mathbb{R}^{n}$ be open, bounded and smooth. Let $0 < \lambda < n$ and $1 < p < \infty$ be real numbers and let $l \in \mathbb{N}.$
		\begin{enumerate}[(i)]
			\item For any $u \in \mathsf{W}^{l}\mathrm{L}^{p,\lambda}\left(\Omega\right),$ there exists a constant $C = C\left( \Omega, l, p, \lambda \right) >0 $ and a sequence $\left\lbrace u_{s}\right\rbrace_{s \in \mathbb{N}} \subset C^{\infty}\left(\overline{\Omega}\right)$ such that 
			\begin{align*}
				u_{s} \rightarrow u \quad \text{ in }  W^{l,p}\left(\Omega\right) \qquad \text{ and }\qquad 
				\limsup\limits_{s\rightarrow \infty} \lVert u_{s} \rVert_{\mathsf{W}^{l}\mathrm{L}^{p,\lambda}\left(\Omega\right)} \leq C\lVert u \rVert_{\mathsf{W}^{l}\mathrm{L}^{p,\lambda}\left(\Omega\right)}. 
			\end{align*} 
			\item For any $u \in \mathsf{W}^{l}\mathrm{VL}^{p,\lambda}\left(\Omega\right),$ there exists a sequence $\left\lbrace u_{s} \right\rbrace_{s \in \mathbb{N}} \subset C^{\infty}\left(\overline{\Omega}\right)$ such that 
			\begin{align*}
				u_{s} &\rightarrow u \qquad \text{ as } s \rightarrow \infty &&\text{ strongly in } \mathsf{W}^{l}\mathrm{L}^{p,\lambda}\left(\Omega\right).
			\end{align*}
		\end{enumerate}
	\end{proposition}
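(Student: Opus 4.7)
The plan is to reduce the problem to mollification of functions on $\mathbb{R}^{n}$. Given $u \in \mathsf{W}^{l}\mathrm{L}^{p,\lambda}(\Omega)$, first apply Proposition \ref{Morrey-Sobolev extension} to obtain an extension $\tilde u \in \mathsf{W}^{l}\mathrm{L}^{p,\lambda}(\mathbb{R}^{n})$; the modulus estimate in that proposition guarantees that whenever $u \in \mathsf{W}^{l}\mathrm{VL}^{p,\lambda}(\Omega)$, one has $\tilde u \in \mathsf{W}^{l}\mathrm{VL}^{p,\lambda}(\mathbb{R}^{n})$ as well. Then define $u_{s} := \eta_{1/s} * \tilde u$, where $\eta_{\varepsilon}$ is a standard nonnegative radial mollifier with $\int \eta_{\varepsilon} = 1$, so $u_{s} \in C^{\infty}(\mathbb{R}^{n})$, and in particular $u_{s}|_{\overline\Omega} \in C^{\infty}(\overline\Omega)$. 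Since $D^{\alpha} u_{s} = \eta_{1/s} * D^{\alpha} \tilde u$ for every $|\alpha|\leq l$, the entire argument reduces to understanding how mollification acts on $\mathrm{L}^{p,\lambda}$ and $\mathrm{VL}^{p,\lambda}$.

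\textbf{Part (i).} The key fact is a Jensen-type contraction: for any $f \in \mathrm{L}^{p,\lambda}(\mathbb{R}^{n})$ and any $x\in\mathbb{R}^{n}, r>0$, Jensen's inequality and Fubini give
\begin{align*}
\frac{1}{r^{\lambda}}\int_{B_{r}(x)}|\eta_{\varepsilon} * f|^{p}\, dy \leq \int \eta_{\varepsilon}(z)\, \frac{1}{r^{\lambda}}\int_{B_{r}(x-z)}|f(w)|^{p}\, dw\, dz \leq \|f\|_{\mathrm{L}^{p,\lambda}(\mathbb{R}^{n})}^{p},
\end{align*}
so $\|\eta_{\varepsilon} * f\|_{\mathrm{L}^{p,\lambda}(\mathbb{R}^{n})} \leq \|f\|_{\mathrm{L}^{p,\lambda}(\mathbb{R}^{n})}$. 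Summing this estimate over $|\alpha|\leq l$ applied to $f = D^{\alpha}\tilde u$ and invoking Proposition \ref{Morrey-Sobolev extension} yields $\limsup_{s\to\infty}\|u_{s}\|_{\mathsf{W}^{l}\mathrm{L}^{p,\lambda}(\Omega)} \leq C\|u\|_{\mathsf{W}^{l}\mathrm{L}^{p,\lambda}(\Omega)}$ with $C$ the extension constant, while classical mollifier theory for Sobolev functions delivers $u_{s} \to u$ in $W^{l,p}(\Omega)$.

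\textbf{Part (ii).} Fix $|\alpha|\leq l$ and set $f := D^{\alpha}\tilde u \in \mathrm{VL}^{p,\lambda}(\mathbb{R}^{n})$ and $g_{\varepsilon} := \eta_{\varepsilon} * f - f$. Taking $\sup$ over $0 < \rho < r$ in the Jensen bound used in part (i) gives $\Theta^{\eta_{\varepsilon} * f}_{p,\lambda,\mathbb{R}^{n}}(r) \leq \Theta^{f}_{p,\lambda,\mathbb{R}^{n}}(r)$, whence the triangle inequality produces $\Theta^{g_{\varepsilon}}_{p,\lambda,\mathbb{R}^{n}}(r) \leq 2\Theta^{f}_{p,\lambda,\mathbb{R}^{n}}(r)$ uniformly in $\varepsilon$. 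Given $\delta > 0$, pick $r_{0} > 0$ with $2\Theta^{f}_{p,\lambda,\mathbb{R}^{n}}(r_{0}) < \delta$; this dominates the Morrey modulus of $g_{\varepsilon}$ on $\Omega$ at all scales $r \leq r_{0}$ uniformly in $\varepsilon$. For the complementary scales $r_{0} \leq r \leq \operatorname{diam}(\Omega)$, the monotonicity $r^{-\lambda} \leq r_{0}^{-\lambda}$ yields
\begin{align*}
\frac{1}{r^{\lambda}}\int_{B_{r}(x)\cap\Omega}|g_{\varepsilon}|^{p}\, dy \leq \frac{1}{r_{0}^{\lambda}}\|g_{\varepsilon}\|_{L^{p}(\Omega)}^{p},
\end{align*}
and since $\tilde u \in L^{p}_{\mathrm{loc}}(\mathbb{R}^{n})$ (immediate from $\mathrm{L}^{p,\lambda}$ membership), classical mollifier theory gives $\|g_{\varepsilon}\|_{L^{p}(\Omega)} \to 0$. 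Sending $\varepsilon \to 0$ and then $\delta \to 0$ yields $\|g_{\varepsilon}\|_{\mathrm{L}^{p,\lambda}(\Omega)} \to 0$; summing over $|\alpha|\leq l$ delivers strong convergence in $\mathsf{W}^{l}\mathrm{L}^{p,\lambda}(\Omega)$.

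\textbf{Main obstacle.} The substantive step is the two-scale decomposition in part (ii): one cannot hope for strong $\mathrm{L}^{p,\lambda}$-convergence of mollifications for a generic Morrey function, as Zorko's counterexample precludes it. The vanishing Morrey hypothesis is exactly what is needed to make the small-scale Morrey modulus of $g_{\varepsilon}$ go to zero uniformly in $\varepsilon$, while the boundedness of $\Omega$ converts the remaining large-scale Morrey norm into an ordinary $L^{p}$-norm on $\Omega$, where classical mollifier convergence applies.
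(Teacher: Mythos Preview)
Your proof is correct and follows the same route as the paper: extend via Proposition~\ref{Morrey-Sobolev extension}, then mollify, with the Jensen argument controlling the Morrey norms (exactly the mechanism the paper later spells out in Lemma~\ref{approximabilityconditionremovallemma}). The only difference is that for part~(ii) the paper defers to the literature (Proposition~\ref{approximation in Morrey spaces}(ii), which in turn cites Chiarenza--Franciosi and Zorko), whereas you supply a direct and self-contained two-scale argument---this is a legitimate and arguably cleaner way to reach the same conclusion.
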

	\subsubsection{Sobolev embedding for Morrey-Sobolev spaces}
	We begin with embedding theorems for Morrey-Sobolev spaces in $\mathbb{R}^{n}.$ 
	\begin{proposition}[Adams-Peetre embedding]\label{Adams embedding}
		Let $1 < p < n$ and $0 \leq \lambda < n - p $ be real numbers. Then we have the following Sobolev type embeddings. 
		\begin{itemize}
			\item \textbf{(Adams embedding)} If $u \in \mathsf{W}^{1}\mathrm{L}^{p,\lambda}\left(\mathbb{R}^{n}\right),$ then $u \in \mathrm{L}^{\frac{(n-\lambda)p}{n-\lambda - p}, \lambda}\left(\mathbb{R}^{n}\right).$ 
			\item \textbf{(Peetre embedding)} If $u \in \mathsf{W}^{1}\mathrm{L}^{p,\lambda}\left(\mathbb{R}^{n}\right),$ then $u \in \mathrm{L}^{p, p + \lambda}\left(\mathbb{R}^{n}\right).$ 
		\end{itemize}
	\end{proposition}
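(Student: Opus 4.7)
The approach is the classical Hedberg argument on the Riesz potential, which I would use to reduce both embeddings to a single mapping property. Using the pointwise bound $|u(x)| \le C_n\, I_1(|\nabla u|)(x)$, where $I_1 f(x) := \int_{\mathbb{R}^n}|x-y|^{1-n} f(y)\,dy$ is the Riesz potential of order $1$ (valid for $u \in C_c^\infty(\mathbb{R}^n)$ by the fundamental theorem of calculus, and extended to $\mathsf{W}^{1}\mathrm{L}^{p,\lambda}(\mathbb{R}^n)\subset W^{1,p}(\mathbb{R}^n)$ via Proposition \ref{approximation in Morrey-Sobolev spaces} after truncation), it suffices to prove that $I_1$ maps $\mathrm{L}^{p,\lambda}(\mathbb{R}^n)$ into $\mathrm{L}^{q,\lambda}(\mathbb{R}^n)$ for the Adams exponent $q = (n-\lambda)p/(n-\lambda-p)$. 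This is the statement one really has to establish; the Peetre embedding will then fall out for free.

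For the mapping property, fix $x \in \mathbb{R}^n$ and $r > 0$ and split the Riesz integral at radius $r$. The local part $\int_{B_r(x)}|f(y)|/|x-y|^{n-1}\,dy$ is controlled by $C\,r\,(Mf)(x)$ via a dyadic annulus argument, where $M$ denotes the Hardy--Littlewood maximal operator. The far part is decomposed into dyadic annuli $A_k = \{2^kr \le |x-y| < 2^{k+1}r\}$; applying Hölder inside each annulus together with the elementary Morrey estimate $\int_{B_\rho}|f| \le C\rho^{\,n-(n-\lambda)/p}\|f\|_{\mathrm{L}^{p,\lambda}}$ yields $\int_{A_k}|f(y)|/|x-y|^{n-1}\,dy \le C(2^kr)^{1-(n-\lambda)/p}\|f\|_{\mathrm{L}^{p,\lambda}}$. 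The hypothesis $\lambda < n-p$ is exactly what makes the exponent $1-(n-\lambda)/p$ negative, so the resulting geometric series converges and gives a bound $Cr^{1-(n-\lambda)/p}\|f\|_{\mathrm{L}^{p,\lambda}}$ for the far part. Optimizing in $r$ then produces the Hedberg-type pointwise bound
\[ |I_1 f(x)|^q \le C\,(Mf(x))^p\,\|f\|_{\mathrm{L}^{p,\lambda}}^{\,q-p}. \]

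Integrating this inequality on an arbitrary ball $B_\rho(x_0)$ and invoking the Chiarenza--Frasca boundedness of $M$ on $\mathrm{L}^{p,\lambda}$ (valid since $p>1$ and $0\le \lambda<n$) yields $\rho^{-\lambda}\int_{B_\rho(x_0)}|I_1 f|^q \le C\|f\|_{\mathrm{L}^{p,\lambda}}^{\,q}$ uniformly in $x_0,\rho$, which is the Adams embedding. The Peetre embedding then follows from Adams via the elementary Morrey inclusion $\mathrm{L}^{q,\lambda}\hookrightarrow \mathrm{L}^{p,\lambda+p}$: on any ball, Hölder gives $\int_{B_\rho}|v|^p \le |B_\rho|^{1-p/q}\bigl(\int_{B_\rho}|v|^q\bigr)^{p/q} \le C\rho^{\,n(1-p/q)+\lambda p/q}$, and a direct computation shows $n(1-p/q)+\lambda p/q = n - (n-\lambda)p/q = \lambda + p$ precisely for the Adams exponent $q$.

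The only genuinely delicate step is the Hedberg optimization combined with the Chiarenza--Frasca maximal-function bound on Morrey spaces; once that pointwise inequality is in hand, the passage to the norm estimate and the deduction of Peetre from Adams are purely algebraic. The sharpness of the threshold $\lambda < n-p$ is also transparent from this approach, as it is precisely the condition ensuring convergence of the dyadic tail.
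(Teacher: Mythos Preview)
Your argument is correct. The Hedberg pointwise splitting of $I_1$ combined with the Chiarenza--Frasca boundedness of the maximal function on $\mathrm{L}^{p,\lambda}$ is precisely the modern proof of Adams' theorem, and your derivation of Peetre from Adams via H\"older and the algebraic identity $n(1-p/q)+\lambda p/q = \lambda + p$ is clean.

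As for comparison: the paper does not prove this proposition at all. It simply attributes the first statement to Adams and the second to Peetre, notes the constraint $p+\lambda<n$, and moves on. So you have supplied considerably more than the paper does. One remark worth making is that your deduction of Peetre from Adams, while efficient, reverses the historical order: Peetre's embedding predates Adams' and was originally obtained by interpolation-theoretic methods rather than as a corollary of the sharper Adams exponent. Your route has the advantage of making transparent why both embeddings hinge on the same threshold $\lambda<n-p$, namely the summability of the dyadic tail in the Hedberg decomposition.
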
 The first conclusion, usually called the Adams embedding, is due to Adams \cite{Adams_RieszPotentials} and the second one is due to Peetre \cite{Peetre_Morreytypespaces}. The result requires $p + \lambda < n.$ 
	We record the embedding in the case $p+\lambda = n,$ which is a consequence of Poincar\'{e} inequality and H\"{o}lder inequality.  
	\begin{proposition}[Embedding into BMO and VMO]
		Let $n \geq 2$ be an integer and $1 < p \leq n$ be a real number. Then $u \in \mathsf{W}^{1}\mathrm{L}^{p,n-p}\left(\mathbb{R}^{n}\right)$ implies $u \in \mathrm{BMO}\left(\mathbb{R}^{n} \right).$ If $u \in \mathsf{W}^{1}\mathrm{VL}^{p,n-p}\left(\mathbb{R}^{n}\right),$ then $u \in \mathrm{VMO}\left(\mathbb{R}^{n} \right).$
	\end{proposition}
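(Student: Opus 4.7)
The plan is a direct computation using only the two tools the author mentions: the Poincar\'e inequality and H\"older's inequality. Fix $x \in \mathbb{R}^{n}$ and $r > 0$ and write $u_{B_{r}(x)}$ for the mean of $u$ over $B_{r}(x)$. By the $L^{1}$ Poincar\'e inequality on a ball,
\begin{align*}
\int_{B_{r}(x)} \lvert u - u_{B_{r}(x)}\rvert \,dy \leq C\, r \int_{B_{r}(x)} \lvert \nabla u \rvert \,dy.
\end{align*}
Then by H\"older with exponent $p$,
\begin{align*}
\int_{B_{r}(x)} \lvert \nabla u \rvert \,dy \leq \lvert B_{r}(x) \rvert^{1-\frac{1}{p}} \left( \int_{B_{r}(x)} \lvert \nabla u \rvert^{p} \,dy \right)^{\frac{1}{p}} \leq C\, r^{n-\frac{n}{p}} \left( \int_{B_{r}(x)} \lvert \nabla u \rvert^{p} \,dy \right)^{\frac{1}{p}}.
\end{align*}

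For the BMO assertion, I would now invoke the Morrey bound $\int_{B_{r}(x)} \lvert \nabla u \rvert^{p} \leq \lVert \nabla u \rVert_{\mathrm{L}^{p,n-p}}^{p}\, r^{n-p}$. Combining the three displays and dividing by $\lvert B_{r}(x) \rvert \sim r^{n}$, the powers of $r$ telescope via $n - n/p + (n-p)/p = n-1$, giving
\begin{align*}
\frac{1}{\lvert B_{r}(x)\rvert} \int_{B_{r}(x)} \lvert u - u_{B_{r}(x)} \rvert \,dy \leq C\, \lVert \nabla u \rVert_{\mathrm{L}^{p,n-p}(\mathbb{R}^{n})},
\end{align*}
uniformly in $x$ and $r$, which is the BMO bound.

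For the VMO statement when $p < n$, the same chain works but with $\lVert \nabla u \rVert_{\mathrm{L}^{p,n-p}}$ replaced by $\Theta^{\nabla u}_{p,n-p,\mathbb{R}^{n}}(r)$, since the Morrey integral on a ball of radius $\leq r$ is controlled by $\bigl(\Theta^{\nabla u}_{p,n-p,\mathbb{R}^{n}}(r)\bigr)^{p} r^{n-p}$. The hypothesis $\nabla u \in \mathrm{VL}^{p,n-p}$ forces $\Theta^{\nabla u}_{p,n-p,\mathbb{R}^{n}}(r)\to 0$ as $r\to 0$, and we conclude $u \in \mathrm{VMO}$.

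The only subtle point is the borderline case $p = n$, where $\lambda = n-p = 0$ makes the Morrey norm degenerate to the ordinary $L^{n}$ norm and $\Theta^{\nabla u}_{n,0}(r)$ does not automatically vanish as $r\to 0$. This is the one place where the Morrey/vanishing-Morrey machinery is not enough on its own; to handle it I would fall back on the classical argument that $\nabla u \in L^{n}(\mathbb{R}^{n})$ can be decomposed as $v + w$ with $v \in C_{c}^{\infty}$ and $\lVert w \rVert_{L^{n}} < \varepsilon$, which gives $\sup_{x} \lVert \nabla u \rVert_{L^{n}(B_{r}(x))} \to 0$ by absolute continuity of the Lebesgue integral. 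Inserting this into the H\"older step yields $u \in \mathrm{VMO}$ in the endpoint case as well. This endpoint case is the only mildly non-routine piece; everything else is a one-line scaling computation.
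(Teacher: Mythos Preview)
Your argument is correct and is exactly the Poincar\'e-plus-H\"older computation the paper has in mind; the paper does not spell out a proof beyond that one-line description, and your displays are precisely its intended content. One small remark on your endpoint discussion: your claim that ``$\Theta^{\nabla u}_{n,0}(r)$ does not automatically vanish'' is slightly off, since the paper itself records that $\mathrm{L}^{p,0}=\mathrm{VL}^{p,0}$ for every $p$, which is exactly the absolute-continuity fact you then prove via the $v+w$ decomposition. So your extra paragraph is not handling a genuinely exceptional case but rather re-deriving that identity; once you accept $\mathrm{VL}^{n,0}=L^{n}$, the $p=n$ case needs no separate treatment.
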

	Using the extension result in Proposition \ref{Morrey-Sobolev extension}, these embeddings can be proved for bounded smooth domains. 
	\begin{proposition}[Morrey-Sobolev embeddings]\label{Adams embedding bounded domain}
		Let $\Omega \subset \mathbb{R}^{n}$ be a bounded open smooth set and $n \geq 2.$ Let $1 < p < n$ and $0 \leq \lambda < n $ be real numbers. Then we have the following Sobolev type embeddings. 
		\begin{itemize}
			\item If $0 \leq \lambda < n-p,$ then 
			\begin{align*}
				\mathsf{W}^{1}\mathrm{L}^{p,\lambda}\left(\Omega \right) &\hookrightarrow \mathrm{L}^{\frac{(n-\lambda)p}{n-\lambda - p}, \lambda}\left(\Omega \right) , \\  \mathsf{W}^{1}\mathrm{VL}^{p,\lambda}\left(\Omega \right) &\hookrightarrow \mathrm{VL}^{\frac{(n-\lambda)p}{n-\lambda - p}, \lambda}\left(\Omega \right). 
			\end{align*} 
			\item If $\lambda = n-p,$ then  
			\begin{align*}
				\mathsf{W}^{1}\mathrm{L}^{p,n-p}\left(\Omega \right) &\hookrightarrow \mathrm{BMO}\left(\Omega \right) , \\  \mathsf{W}^{1}\mathrm{VL}^{p,n-p}\left(\Omega \right) &\hookrightarrow \mathrm{VMO}\left(\Omega \right). 
			\end{align*}
			\item If $\lambda > n-p,$ then 
			\begin{align*}
				\mathsf{W}^{1}\mathrm{L}^{p,\lambda}\left(\Omega \right) &\hookrightarrow C^{0,\frac{\lambda -n + p}{p} }\left(\overline{\Omega} \right). 
			\end{align*}
		\end{itemize} 
	\end{proposition}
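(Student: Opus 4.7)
The plan is to reduce all three cases to the corresponding embeddings on $\mathbb{R}^{n}$ via the extension result of Proposition \ref{Morrey-Sobolev extension}, and then handle the supercritical borderline separately through a Campanato-type argument.

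First, given $u \in \mathsf{W}^{1}\mathrm{L}^{p,\lambda}\left(\Omega\right)$, apply Proposition \ref{Morrey-Sobolev extension} to obtain $\tilde{u} \in \mathsf{W}^{1}\mathrm{L}^{p,\lambda}\left(\mathbb{R}^{n}\right)$ with $\tilde{u}|_{\Omega}=u$ and the quantitative modulus estimate
\[
\Theta^{D^{\alpha}\tilde u}_{p, \lambda, \mathbb{R}^{n}}\left( r \right) \leq C \sum_{|\beta|\leq|\alpha|}\Theta^{D^{\beta}u}_{p, \lambda, \Omega}\left( r \right),\quad 0\le|\alpha|\le 1.
\]
This simultaneously handles the Morrey and vanishing Morrey settings: if $u \in \mathsf{W}^{1}\mathrm{VL}^{p,\lambda}$, then all the moduli on the right tend to $0$ as $r \to 0$, so $\tilde u \in \mathsf{W}^{1}\mathrm{VL}^{p,\lambda}\left(\mathbb{R}^{n}\right)$.

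For $\lambda < n-p$, I would apply Proposition \ref{Adams embedding} (Adams--Peetre) directly to $\tilde u$ to obtain $\tilde u \in \mathrm{L}^{q,\lambda}\left(\mathbb{R}^{n}\right)$ with $q=(n-\lambda)p/(n-\lambda-p)$, and restrict to $\Omega$. The vanishing version requires noting that the Adams embedding is established via pointwise Riesz-potential estimates of the form $|\tilde u(x)|^{q}\leq C \left(I_{1}|D\tilde u|\right)^{q}$, and the corresponding local bound reads $\Theta^{\tilde u}_{q,\lambda,\mathbb{R}^{n}}(r)\leq C\,\Theta^{D\tilde u}_{p,\lambda,\mathbb{R}^{n}}(r)^{p/q}\|D\tilde u\|_{\mathrm{L}^{p,\lambda}}^{1-p/q}$, which vanishes as $r\to 0$ by the extension modulus estimate. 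For $\lambda = n-p$, the BMO/VMO embedding quoted just before the proposition applies to $\tilde u$ and restricts to $\Omega$.

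The main obstacle, and the case requiring genuine work, is $\lambda > n-p$. Here I would argue via Campanato's characterization. For any ball $B_{\rho}(x)$ compactly contained in $\Omega$ (and by the cone condition/extension for balls centered on the boundary), Poincaré's inequality gives
\[
\fint_{B_{\rho}(x)} |u - u_{x,\rho}|^{p}\,dy \leq C \rho^{p}\fint_{B_{\rho}(x)} |\nabla u|^{p}\,dy \leq C \|\nabla u\|_{\mathrm{L}^{p,\lambda}}^{p}\,\rho^{p+\lambda-n},
\]
so $u$ lies in the Campanato space $\mathcal{L}^{p,p+\lambda}\left(\Omega\right)$. Since $p+\lambda > n$, the Campanato--Meyers theorem identifies this space with $C^{0,\alpha}\left(\overline{\Omega}\right)$ for $\alpha = (p+\lambda-n)/p$, yielding the claimed Hölder embedding. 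The boundary regularity of $\Omega$ is used here to ensure the cone condition required to upgrade the interior Campanato estimate to a global one.
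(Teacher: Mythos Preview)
Your approach matches the paper's: it too reduces all three cases to the $\mathbb{R}^{n}$ embeddings via the extension operator of Proposition~\ref{Morrey-Sobolev extension}, and for the H\"older case simply cites Morrey's theorem on the growth of the Dirichlet integral (which is precisely your Campanato argument). The paper offers no further detail beyond that one-line sketch, so your write-up is essentially an expanded version of the same route; your quantitative modulus inequality for the vanishing Adams case is stated a bit loosely, but the underlying mechanism is correct.
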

	The last conclusion is Morrey's celebrated theorem about the growth of the Dirichlet integral. The theorem was first obtained in a special case by Morrey in \cite{Morrey_dirichletgrowthdim2} ( see also \cite{Morrey_dirichletgrowth2} and Campanato \cite{Campanto_campanatotheoremholdercontinuity} for a general version ).

	\subsection{Smooth principal $G$ bundles with connections}
	A smooth principal $G$-bundle ( or simply a $G$-bundle ) $P$ over $M^{n}$ is usually denoted by the notation $P \stackrel{\pi}{\rightarrow} M^{n},$ where $\pi: P \rightarrow M^{n}$ is a smooth map, called the \emph{projection map}, $P$ is called the \emph{total space} of the bundle, $M^{n}$ is the \emph{base space}. One way to define a smooth principal $G$ bundle $P \stackrel{\pi}{\rightarrow} M^{n},$ is to specify an open cover $\mathcal{U} = \left\lbrace U_{\alpha} \right\rbrace_{\alpha \in I} $ of $M^{n},$ i.e. $M^{n}= \bigcup\limits_{\alpha \in I} U_{\alpha}$ and a collection of bundle trivialization maps $\left\lbrace \phi_{\alpha}\right\rbrace_{\alpha \in I}$ such that $\phi_{\alpha}: U_{\alpha}\times G \rightarrow \pi^{-1}\left( U_{\alpha} \right) $ is a smooth diffeomorphism for every $\alpha \in I$ and each of them preserves the fiber, i.e. $\pi \left(\phi_{\alpha}\left(x, g\right)\right) = x$ for every $g \in G$ for every $x \in U_{\alpha}$ and they are $G$-equivariant, i.e. whenever $U_{\alpha}\cap U_{\beta}$ is nonempty, there exist smooth maps, called transition functions $g_{\alpha\beta}: U_{\alpha}\cap U_{\beta} \rightarrow G$ such that for every $x \in U_{\alpha}\cap U_{\beta},$ we have 
	\begin{align}\label{Gequivariance}
		\left( \phi^{-1}_{\alpha} \circ \phi_{\beta}\right) (x,h) = (x,g_{\alpha\beta}(x)h ) \qquad \text{ for every } h \in G.
	\end{align}  
	From \eqref{Gequivariance}, it is clear that $g_{\alpha\alpha}=\mathbf{1}_{G},$ the identity element of $G,$ for all $\alpha \in I$ and if $U_{\alpha}\cap U_{\beta}\cap U_{\gamma} \neq \emptyset,$ the transition functions satisfy the cocycle identity 
	\begin{align}\label{cocycle condition def}
		g_{\alpha\beta}(x)g_{\beta\gamma}(x) = g_{\alpha\gamma}(x) \qquad \text{ for every } x \in U_{\alpha}\cap U_{\beta}\cap U_{\gamma}.
	\end{align}
	\subsubsection{Bundles as cocycles} We shall be using an equivalent way ( see e.g. \cite{Steenrod_fibrebundles} ) of defining the bundle structure -- by specifying the open cover $\mathcal{U}$ along with the cocycles $\left\lbrace g_{\alpha\beta} \right\rbrace_{\alpha, \beta \in I}.$ $P = \left( \left\lbrace U_{\alpha}\right\rbrace_{\alpha \in I}, \left\lbrace g_{\alpha\beta} \right\rbrace_{\alpha, \beta \in I} \right)$ shall denote a smooth or $C^{0}$ principal $G$ bundle, if $g_{\alpha\beta}$ are smooth or continuous, respectively. 
	
	\subsubsection{Good covers and refinements} A \emph{refinement} of a cover $\left\lbrace U_{\alpha} \right\rbrace_{\alpha \in I}$ is another cover $\left\lbrace V_{j} \right\rbrace_{j \in J}$ with a \emph{refinement map} $\phi: J \rightarrow I $ such that for every $j \in J,$ we have 
	$ V_{j} \subset \subset U_{\phi(j)}.$ If $\left\lbrace U_{\alpha}\right\rbrace_{\alpha \in I}$ and $ \left\lbrace V_{\tilde{\alpha}}\right\rbrace_{\tilde{\alpha} \in \tilde{I}}$ are two covers of the same base space, then a \emph{common refinement} is another cover $\left\lbrace W_{j}\right\rbrace_{j \in J}$ with refinement maps $\phi: J  \rightarrow I$ and $\tilde{\phi}: J \rightarrow \tilde{I}$ such that for every $j \in J,$ we have 
	$ W_{j} \subset \subset U_{\phi(j)}\cap V_{\tilde{\phi}(j)}.$ 
	
	A finite cover is a \emph{good cover in the \u{C}ech sense}, or simply a \emph{good cover} if every nonempty finite intersection of the open sets in the elements of the cover are diffeomorphic to the open unit Euclidean ball. A good cover is called \emph{shrinkable} if, roughly speaking, we can slightly shrink the elements of the cover and the resulting sets still constitute a cover. More formally, if the cover admits a refinement with identity refinement map, i.e. the the refinement map $\phi:I \rightarrow I$ is identity. 
	
	\begin{notation}
		We shall always assume the covers ( including refinements and common refinements ) involved are \emph{finite}, \emph{good cover in the \u{C}ech sense} and \emph{shrinkable}. Since $M^{n}$ are closed manifolds, such covers always exist. In fact, we shall assume that the elements in the cover are small enough convex geodesic balls such that their volume is comparable to Euclidean balls.  
	\end{notation}

	\subsubsection{Connection, gauges and curvature}
	A \emph{connection}, or more precisely, a \emph{connection form} $A$ on $P$ is a collection $\left\lbrace A_{\alpha} \right\rbrace_{\alpha \in I},$ where for each $\alpha \in I,$ $A_{\alpha}: U_{\alpha} \rightarrow \Lambda^{1}\mathbb{R}^{n}\otimes \mathfrak{g}$ is a Lie algebra-valued $1$-form and the collection satisfies the \textbf{gluing relations} 
	\begin{align}\label{gluing relation def}
		A_{\beta} = g_{\alpha\beta}^{-1} dg_{\alpha\beta} +  g_{\alpha\beta}^{-1} A_{\alpha}g_{\alpha\beta} \qquad \text{ in } U_{\alpha}\cap U_{\beta}, \text{  for every } U_{\alpha}\cap U_{\beta} \neq \emptyset.
	\end{align} This collection defines a global $\operatorname*{ad} P$-valued $1$-form   $A: M^{n} \rightarrow \Lambda^{1}T^{\ast}M^{n}\otimes \operatorname*{ad} P,$ which is smooth if $A_{\alpha}$s are. Here $\operatorname*{ad} P= P \times^{\operatorname*{Ad}} \mathfrak{g}$ denotes the \emph{adjoint bundle} associated to $P$ and $\operatorname*{Ad}: G \rightarrow \operatorname*{Aut}\left( \mathfrak{g}\right) \subset \mathbb{GL}\left(\mathfrak{g}\right)$ is the adjoint representation of $G$. We denote the space of smooth connections on a $P$ by the notation $\mathcal{A}^{\infty}\left(P\right).$ A \emph{gauge} $\rho =\left\lbrace \rho_{\alpha} \right\rbrace_{\alpha \in I}$ is a collection of maps $\rho_{\alpha}: U_{\alpha} \rightarrow G$. which represents a change of trivialization for the bundle, given by 
	$$\phi_{\alpha}^{\rho_{\alpha}} (x, h) = \phi_{\alpha}(x,\rho_{\alpha}(x)h ) \qquad \text{ for all } x \in U_{\alpha} \text{ and for all } h \in G . $$ Then the new transition functions are given by $ h_{\alpha\beta} = \rho_{\alpha}^{-1}g_{\alpha\beta}\rho_{\beta}$ in $ U_{\alpha}\cap U_{\beta}$ for all $\alpha, \beta \in I.$ The local representatives of the connections form with respect to the new trivialization $\left\lbrace A_{\alpha}^{\rho_{\alpha}}\right\rbrace_{\alpha \in I}$ satisfy the gauge change identity
	\begin{align}
		\label{gauge change relation}
		A_{\alpha}^{\rho_{\alpha}} = \rho_{\alpha}^{-1}d\rho_{\alpha} + \rho_{\alpha}^{-1} A_{\alpha}\rho_{\alpha} \qquad \text{ in } U_{\alpha}, \quad \text{ for all }\alpha \in I.
	\end{align} 
	The \emph{curvature} or the \emph{curvature form} associated to a connection form $A$ is a global $\operatorname*{ad} P$-valued $2$-form on $M^{n},$ denoted $F_{A}:M^{n}\rightarrow \Lambda^{2}T^{\ast}M^{n}\otimes \operatorname*{ad} P.$ Its local expressions, $\left( F_{A}\right)_{\alpha \in I},$ denoted $F_{A_{\alpha}}$ by a slight abuse of notations, are $\mathfrak{g}$-valued $2$-forms given by 
	\begin{align}
		\label{curvature of a conn def}
		F_{A_{\alpha}} = dA_{\alpha} + A_{\alpha}\wedge A_{\alpha} = dA_{\alpha} + \frac{1}{2} \left[ A_{\alpha}, A_{\alpha}\right] \qquad \text{ in } U_{\alpha} , \quad \text{ for all }\alpha \in I,
	\end{align}
	where the wedge product denotes the wedge product of $\mathfrak{g}$-valued forms and the bracket $\left[ \cdot, \cdot \right]$ is the Lie bracket of $\mathfrak{g},$ extended to $\mathfrak{g}$-valued forms the usual way. The gauge change identity \eqref{gauge change relation} implies 
	\begin{align}
		\label{connection gauge change def}
		F_{A_{\alpha}^{\rho_{\alpha}}} = \rho_{\alpha}^{-1} F_{A_{\alpha}}\rho_{\alpha} \qquad \text{ in } U_{\alpha} , \quad \text{ for all }\alpha \in I.
	\end{align}
	Similarly, the gluing relation \eqref{gluing relation def} implies that we have 
	$F_{A_{\beta}} = g_{\alpha\beta}^{-1} F_{A_{\alpha}}g_{\alpha\beta}$ in $ U_{\alpha}\cap U_{\beta},$ whenever $U_{\alpha}\cap U_{\beta}\neq \emptyset.$ This as usual implies  that the collection $\left( F_{A}\right)_{\alpha \in I}$ defines a global $\operatorname*{ad} P$-valued $2$-form on $M^{n}.$   
	
	\subsubsection{Yang-Mills energy} For any $1 \leq q < \infty,$ the $q$-Yang-Mills energy of a connection $A,$ denoted $YM_{q}\left( A\right),$ is defined as 
	$$ YM_{q}\left( A\right): = \int_{M^{n}} \left\lvert F_{A}\right\rvert^{q},$$
	where the norm $\left\lvert \cdot \right\rvert$ denotes the norm for \emph{$\mathfrak{g}$-valued differential forms}, induced by the Killing scalar product which is invariant under the adjoint action. Invariance under the adjoint action and \eqref{connection gauge change def} implies ( see e.g. \cite{Tu_connections} ) that the norm $\left\lvert F_{A} \right\rvert$ is gauge invariant and thus the integrand in $YM_{q}$ is gauge invariant for any $1 \leq q < \infty.$
	
	\subsection{$G$-valued Morrey-Sobolev maps}\label{Gvaluedmaps}  Without loss of generality, we can always assume that the compact finite dimensional Lie group $G$ is endowed with a bi-invariant metric and is smoothly embedded isometrically in $\mathbb{R}^{N_{0}}$ for some, possibly quite large, integer $N_{0} \geq 1.$ By compactness of $G,$ there exists a constant $C_{G} > 0$ such that $G \subset \subset B_{C_{G}}(0) \subset \mathbb{R}^{N_{0}}.$ 
	
	\begin{definition}
		Let $U \subset \mathbb{R}^{n}$ be open. Let $1 < p < \infty$ and $0 < \lambda < n$ be real numbers. For $l=1$ or $2$, the  space $\mathsf{W}^{l}\mathrm{L}^{p, \lambda}\left( U ; G \right)$ and $\mathsf{W}^{l}\mathrm{VL}^{p, \lambda}\left( U ; G \right)$ is defined as 
		\begin{align*}
			\mathsf{W}^{l}\mathrm{L}^{p, \lambda}\left( U ; G \right)&:=  \left\lbrace  f \in 	\mathsf{W}^{l}\mathrm{L}^{p, \lambda}\in\left( U ; \mathbb{R}^{N_{0}}\right) : f(x) \in G \text{ for a.e. } x \in U  \right\rbrace, \\
			\mathsf{W}^{l}\mathrm{VL}^{p, \lambda}\left( U ; G \right)&:=  \left\lbrace  f \in 	\mathsf{W}^{l}\mathrm{VL}^{p, \lambda}\in\left( U ; \mathbb{R}^{N_{0}}\right) : f(x) \in G \text{ for a.e. } x \in U  \right\rbrace.
		\end{align*}
		
	\end{definition}
	The compactness of $G$ implies that $\mathsf{W}^{l}\mathrm{L}^{p, \lambda}\left( U ; G \right)\left( U ; G \right) \subset L^{\infty}\left( U ; \mathbb{R}^{N_{0}} \right)$  with the bounds $\left\lVert f \right\rVert_{L^{\infty}\left( U ; \mathbb{R}^{N_{0}} \right)} \leq C_{G}$ for any $f \in W^{k,p}\left( U ; G \right).$ By the Gagliardo-Nirenberg inequality, it follows that $W^{k,p}\left( U ; G \right)$ is an infinite dimensional topological group with respect to the topology in inherits as a topological subspace of the  Banach space  $W^{k,p}\left( U ; \mathbb{R}^{N_{0}}\right)$. Note that $\mathsf{W}^{l}\mathrm{L}^{p, \lambda}\left( U ; G \right)$ is not a linear space. As a subset of the Banach space $\mathsf{W}^{l}\mathrm{L}^{p, \lambda}\left( U ; \mathbb{R}^{N_{0}} \right),$ it inherits only a topology from the norm topology of $\mathsf{W}^{l}\mathrm{L}^{p, \lambda}\left( U ; \mathbb{R}^{N_{0}} \right).$ 
	
	On the other hand, since the Lie algebra of $G$, i.e. $\mathfrak{g}$ is a linear space and consequently so is $\Lambda^{k}\mathbb{R}^{n}\otimes\mathfrak{g}$ for any $0 \leq k \leq n,$ the space of $\mathfrak{g}$-valued $k$-forms of class $\mathsf{W}^{l}\mathrm{L}^{p, \lambda}$ or $\mathsf{W}^{l}\mathrm{VL}^{p, \lambda}$ is defined by requiring each scalar component of the maps to be $\mathsf{W}^{l}\mathrm{L}^{p, \lambda}$ or respectively, $\mathsf{W}^{l}\mathrm{VL}^{p, \lambda}$ functions in the usual sense. The standard properties of Sobolev functions, including smooth approximation by mollification, carry over immediately to this setting by arguing componentwise. The stark contrast between the two settings is due to the fact that in general a map $ g \in \mathsf{W}^{l}\mathrm{VL}^{p, \lambda}\left( U; G\right)$ need not have a $\mathsf{W}^{l}\mathrm{VL}^{p, \lambda}$ `lift' to the Lie algebra. More precisely, there need not exist a map $ u \in \mathsf{W}^{l}\mathrm{VL}^{p, \lambda}\left( U; \mathfrak{g}\right)$ with the property that $g = \operatorname{exp}\left( u \right),$ where $\operatorname{exp}: \mathfrak{g} \rightarrow G$ is the exponential map of $G.$ However, $\mathfrak{g} = T_{\mathbbm{1}_{G}}G$ and there exists a small enough $C^{0}$-neighborhood of the identity element $\mathbbm{1}_{G} \in G$ in $G$ such the exponential map is a local smooth diffeomorphism onto that neighborhood. Moreover, since $G$ is a smooth manifold which is also compact, there exists a small number $\delta_{G}>0$ such that `nearest point projection map onto $G$' is well defined and smooth. See Proposition \ref{tubular nbd lemma}.  
	
	\subsection{Sobolev and Morrey-Sobolev bundles and connections}
	\paragraph{Sobolev principal $G$ bundles} Now we define bundles where the transition functions are Sobolev or Morrey-Sobolev maps, not necessarily smooth or continuous ( see \cite{Riviere_YM_Tianbook} ). In analogy with the case of smooth bundles, we define 
	\begin{definition}[$\mathsf{W}^{l}\mathrm{L}^{p,\lambda}$ and $\mathsf{W}^{l}\mathrm{VL}^{p,\lambda}$ principal $G$-bundles]
		Let $1 < p < \infty$ and $0 < \lambda < n$ be real numbers. For $l=1$ or $2,$ we call $P$ a $\mathsf{W}^{l}\mathrm{L}^{p,\lambda}$ principal $G$-bundle over $M^{n}$  if $P = \left( \left\lbrace U_{\alpha}\right\rbrace_{\alpha \in I}, \left\lbrace g_{\alpha\beta} \right\rbrace_{\alpha, \beta \in I} \right),$ where 
		\begin{enumerate}[(i)]
			\item $g_{\alpha\beta} \in \mathsf{W}^{l}\mathrm{L}^{p,\lambda} \left( U_{\alpha}\cap U_{\beta}; G\right) $ for every $\alpha, \beta \in I$ with $U_{\alpha}\cap U_{\beta} \neq \emptyset, $
			\item $g_{\alpha\alpha}= \mathbbm{1}_{G}$ for every $\alpha \in I,$ 
			\item for every $\alpha, \beta, \gamma \in I$ such that $U_{\alpha}\cap U_{\beta}\cap U_{\gamma} \neq \emptyset, $ the transition maps satisfy the cocycle conditions 
			\begin{align}
				\label{cocycle sobolev def}
				g_{\alpha\beta}(x)g_{\beta\gamma}(x) = g_{\alpha\gamma}(x) \qquad \text{ for a.e. } x \in U_{\alpha}\cap U_{\beta}\cap U_{\gamma}.
			\end{align}
		\end{enumerate}
		A $\mathsf{W}^{l}\mathrm{L}^{p,\lambda}$ principal $G$-bundle $P = \left( \left\lbrace U_{\alpha}\right\rbrace_{\alpha \in I}, \left\lbrace g_{\alpha\beta} \right\rbrace_{\alpha, \beta \in I} \right),$ is called a $\mathsf{W}^{l}\mathrm{VL}^{p,\lambda}$ principal $G$-bundle over $M^{n},$ if in addition, for every $ \alpha, \beta \in I $ with  $U_{\alpha}\cap U_{\beta} \neq \emptyset,$ we have 
		\begin{align*}
			g_{\alpha\beta} \in \mathsf{W}^{l}\mathrm{VL}^{p,\lambda}\left( U_{\alpha}\cap U_{\beta}; G\right). 
		\end{align*}
	\end{definition}
	We also need the notion of gauge equivalence of bundles, extended to the setting of Morrey-Sobolev bundles. 
	\begin{definition}[Gauge equivalence]
		Let $1 < p < \infty$ and $0 < \lambda < n$ be real numbers and let $l=1$ or $2.$ Let 
		\begin{align*}
			P = \left( \left\lbrace U_{\alpha}\right\rbrace_{\alpha \in I}, \left\lbrace g_{\alpha\beta} \right\rbrace_{\alpha, \beta \in I} \right) \qquad \text{ and } \qquad \tilde{P} = \left( \left\lbrace V_{\tilde{\alpha}}\right\rbrace_{\tilde{\alpha} \in \tilde{I}}, \left\lbrace h_{\tilde{\alpha}\tilde{\beta}} \right\rbrace_{\tilde{\alpha}, \tilde{\beta} \in \tilde{I}} \right)
		\end{align*} be two $\mathsf{W}^{l}\mathrm{L}^{p,\lambda}$ principal $G$-bundles over the same base space $M^{n}.$ $P$ and $\tilde{P}$ is called \emph{$\mathsf{W}^{l}\mathrm{L}^{p,\lambda}$-gauge equivalent} by a gauge transformation $\sigma$, denoted by 
		\begin{align*}
			P  \simeq_{\sigma}   \tilde{P},
		\end{align*}
		if there exists a common refinement $\left\lbrace W_{j}\right\rbrace_{j \in J}$ of the covers $\left\lbrace U_{\alpha}\right\rbrace_{\alpha \in I}$ and $\left\lbrace V_{\tilde{\alpha}}\right\rbrace_{\tilde{\alpha} \in \tilde{I}} $ and maps 
		$\sigma_{j} \in \mathsf{W}^{l}\mathrm{L}^{p,\lambda}\left( W_{j}; G\right)$ for each $j \in J$  such that 
		\begin{align}\label{equivalence gauge relation}
			h_{\tilde{\phi}(i)\tilde{\phi}(j)} = \sigma_{i}^{-1}g_{\phi(i)\phi(j)}\sigma_{j}\qquad \text{ a.e. in } W_{i}\cap W_{j},
		\end{align} 
		for each pair $i,j \in J$ with $W_{i}\cap W_{j} \neq \emptyset,$ where $\phi: J  \rightarrow I$ and $\tilde{\phi}: J \rightarrow \tilde{I}$ are the respective refinement maps.  If the local gauge transformations are also in the Vanishing Morrey-Sobolev space, i.e. $\sigma_{j} \in \mathsf{W}^{l}\mathrm{VL}^{p,\lambda}\left( W_{j}; G\right)$ for each $j \in J,$ them we say $P$ and $\tilde{P}$ is called \emph{$\mathsf{W}^{l}\mathrm{VL}^{p,\lambda}$-gauge equivalent}. 
	\end{definition} 
	Smooth or $C^{0}$ equivalence is defined in analogous manner by requiring the maps $\sigma_{j}$ to be smooth or $C^{0}$ respectively. It is easy to check that they are indeed equivalence relations in the corresponding category. If $P$ is a $C^{0}$-bundle, we denote its equivalence class under $C^{0}$-equivalence by $\left[P\right]_{C^{0}}.$   
	\paragraph{Morrey-Sobolev connections}
	Now we define the notion of a Morrey-Sobolev connection  on a bundle $P.$  
	\begin{definition}
		Let $1 < p < \infty$ and $0 < \lambda < n$ be real numbers Let $P$ be a $\mathsf{W}^{2}\mathrm{L}^{p,\lambda}$ be a principal $G$-bundle on $M^{n}.$ We say the connection $A=\left\lbrace A_{\alpha} \right\rbrace_{\alpha \in I}$ is a $\mathsf{W}^{1}\mathrm{L}^{p,\lambda}$-connection on $P$ if we have 
		\begin{align*}
			A_{\alpha} \in \mathsf{W}^{1}\mathrm{L}^{p,\lambda}\left( U_{\alpha}; \Lambda^{1}\mathbb{R}^{n}\otimes\mathfrak{g}\right)  \qquad \text{ for every } \alpha \in I. 
		\end{align*}
	\end{definition}

	\section{Preparatory results}\label{Prep estimates}	
	\subsection{Subcritical regime}\label{subcritical regime}
	\subsubsection{Approximation of $G$-valued maps in the critical case}
	We first begin with an approximation result for $G$-valued Morrey-Sobolev maps. The basic argument is well-known since \cite{SchoenUhlenbeck_boundaryregularityharmonicmaps} ( see also \cite{BrezisNirenberg_degree1}, \cite{Mironescu_sobolevmapsonmanifolds_survey},  \cite{VanSchaftingen_oxford}, \cite{mironescu:hal-04101540} ). We first record a standard fact about smooth compact submanifolds. 
	\begin{proposition}\label{tubular nbd lemma}
		For any compact real Lie group $G,$ assumed to be isometrically embedded in $\mathbb{R}^{N_{0}}$ for some $N_{0} \in \mathbb{N},$ there exists a constant $\delta_{G}>0$ and a map, called the `nearest point projection', $\Pi \in C^{\infty}\left( \overline{N_{\delta_{G}}}; G\right) $ such that 
		\begin{align*}
			\Pi \left(y \right) = y \qquad \text{ for any } y \in G,
		\end{align*}
		where $N_{\delta_{G}}$ is the tubular neighborhood of $G$ defined as 
		\begin{align*}
			N_{\delta_{G}}:= \left\lbrace x \in \mathbb{R}^{N_{0}}: \operatorname{dist} \left(x, G\right) < \delta_{G}\right\rbrace.  	
		\end{align*}
	\end{proposition}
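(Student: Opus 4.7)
The plan is to invoke the classical tubular neighborhood theorem for smooth compact embedded submanifolds of Euclidean space, applied to $G \subset \mathbb{R}^{N_{0}}$. Since $G$ is a compact smooth Lie group, isometrically embedded in $\mathbb{R}^{N_{0}}$, it is in particular a smooth compact embedded submanifold, so the existence of the required $\delta_{G}$ and projection $\Pi$ will follow from standard differential topology.

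First I would introduce the normal bundle $NG \to G$, whose fiber over $y \in G$ is the orthogonal complement of $T_{y}G$ inside $\mathbb{R}^{N_{0}}$, and consider the smooth map
\begin{equation*}
E: NG \to \mathbb{R}^{N_{0}}, \qquad E(y, v) = y + v.
\end{equation*}
A direct computation shows that the differential of $E$ at any point $(y, 0)$ of the zero section is invertible: along the zero section it restricts to the identity of $T_{y}G$, and along the fiber it restricts to the identity of the normal fiber. Hence $E$ is a local diffeomorphism in a neighborhood of each $(y, 0)$. By compactness of $G$, a standard covering/continuity argument produces a uniform $\delta_{G} > 0$ such that the restriction of $E$ to $\{(y, v) \in NG : |v| < \delta_{G}\}$ is a diffeomorphism onto the open tubular neighborhood $N_{\delta_{G}}$.

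Next I would define $\Pi: N_{\delta_{G}} \to G$ by $\Pi = \pi \circ E^{-1}$, where $\pi: NG \to G$ is the bundle projection. Smoothness of $E^{-1}$ and $\pi$ gives $\Pi \in C^{\infty}(N_{\delta_{G}}; G)$, which extends smoothly to the closure $\overline{N_{\delta_{G}}}$ after possibly shrinking $\delta_{G}$. For any $y \in G$ one has $E(y, 0) = y$, so $E^{-1}(y) = (y, 0)$ and hence $\Pi(y) = y$. Finally, one checks that for $x \in N_{\delta_{G}}$ the point $\Pi(x)$ is precisely the unique nearest point on $G$ to $x$: if $(\Pi(x), v) = E^{-1}(x)$, then $x - \Pi(x) = v \in N_{\Pi(x)}G$, and standard minimization (using that $\operatorname{dist}(x, G) < \delta_{G}$ and the first-order condition for minimizers) shows uniqueness and the nearest-point property.

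No step here is particularly difficult; the only mild subtlety is arranging the uniformity of $\delta_{G}$ across $G$, which is handled by compactness. Overall this is a direct application of the tubular neighborhood theorem, with the additional observation (immediate from the construction via the normal bundle) that the resulting smooth retraction coincides with the nearest-point projection.
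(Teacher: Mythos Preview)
Your proof is correct and is exactly the standard tubular neighborhood argument the paper has in mind; in fact the paper does not give a proof at all, merely recording this proposition as ``a standard fact about smooth compact submanifolds.'' Your argument via the normal bundle map $E(y,v)=y+v$, local invertibility at the zero section, and compactness to obtain a uniform $\delta_{G}$ is precisely the textbook proof of this fact.
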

	Now we establish an approximation result in the critical case. 
	\begin{theorem}\label{Approximation of G valued maps critical}
		Let $\tilde{\Omega} \subset \subset \Omega \subset \mathbb{R}^{n}$ be both open, bounded and smooth. Let $1 < m \leq n/2$ be a real number. 
		\begin{enumerate}[(i)]
			\item For any $u \in \mathsf{W}^{2}\mathrm{VL}^{m, n-2m}\left( \Omega; G\right),$ there exists a sequence $\left\lbrace u^{s} \right\rbrace_{s \in \mathbb{N}} \subset C^{\infty}\left( \overline{\tilde{\Omega}}; G\right)$ such that 
			\begin{align*}
				u^{s} \rightarrow u \qquad \text{ in } \mathsf{W}^{2}\mathrm{L}^{m, n-2m}\left( \tilde{\Omega}; G\right). 
			\end{align*}
			\item There exists a smallness parameter $\varepsilon_{\text{approx}} = \varepsilon_{\text{approx}}\left( \Omega, \tilde{\Omega}, n, m, G\right) \in (0,1)$ such that for any $u \in \mathsf{W}^{2}\mathrm{L}^{m, n-2m}\left( \Omega; G\right)$ satisfying 
			\begin{align*}
				\left\lVert du \right\rVert_{\mathrm{L}^{2m, n-2m}\left( \Omega; G\right)} \leq \varepsilon_{\text{approx}},
			\end{align*}
			there exists a sequence $\left\lbrace u^{s} \right\rbrace_{s \in \mathbb{N}} \subset C^{\infty}\left( \overline{\tilde{\Omega}}; G\right)$ such that 
			\begin{align*}
				u^{s} &\rightarrow u \qquad \text{ in } W^{2,m}\left( \tilde{\Omega}; G\right), \\
				u^{s} &\rightarrow u \qquad \text{ in } W^{1,2m}\left( \tilde{\Omega}; G\right), \\
				\limsup\limits_{s \in \mathbb{N}}	\left\lVert u^{s} \right\rVert_{\mathsf{W}^{2}\mathrm{L}^{m, n-2m}\left( \tilde{\Omega}; G\right)} &\leq C_{\text{approx}} 	\left\lVert u \right\rVert_{\mathsf{W}^{2}\mathrm{L}^{m, n-2m}\left( \Omega; G\right)},
			\end{align*}
			for some constant $C_{\text{approx}} = C_{\text{approx}} \left( \Omega, \tilde{\Omega}, n, m, G\right) >0.$
		\end{enumerate}
	\end{theorem}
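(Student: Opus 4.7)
The plan is a Schoen--Uhlenbeck type scheme: mollify $u$ and then compose with the smooth retraction $\Pi$ supplied by Proposition \ref{tubular nbd lemma}. Extend $u$ to $\mathbb{R}^n$ via Proposition \ref{Morrey-Sobolev extension} with controlled $\mathsf{W}^{2}\mathrm{L}^{m, n-2m}$ norm (and preserving the vanishing Morrey property in case (i)), set $u_\varepsilon := u * \eta_\varepsilon$ for a standard symmetric mollifier $\eta_\varepsilon$, and, once $\varepsilon$ is small enough that $u_\varepsilon(\tilde{\Omega}) \subset N_{\delta_G}$, define $u^\varepsilon := \Pi \circ u_\varepsilon$ on $\tilde{\Omega}$. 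This is smooth and $G$-valued; the approximating sequence will be $u^s := u^{\varepsilon_s}$ for a sequence $\varepsilon_s \searrow 0$.

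\medskip

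\textbf{Landing in the tubular neighborhood.} This is where the two cases diverge. For any $x \in \tilde{\Omega}$ and any $\varepsilon < \operatorname{dist}(\tilde{\Omega}, \partial\Omega)$, since $u(y) \in G$ a.e.,
\begin{equation*}
\operatorname{dist}\bigl(u_\varepsilon(x), G\bigr) \,\leq\, |u_\varepsilon(x) - u_{B_\varepsilon(x)}| + \operatorname{dist}\bigl(u_{B_\varepsilon(x)}, G\bigr) \,\leq\, \frac{C}{|B_\varepsilon|}\int_{B_\varepsilon(x)}|u - u_{B_\varepsilon(x)}|.
\end{equation*}
A routine combination of Jensen and the scalar $L^{2m}$-Poincar\'e inequality on $B_\varepsilon$, together with the definition of the Morrey norm, yields
\begin{equation*}
\frac{1}{|B_\varepsilon|}\int_{B_\varepsilon(x)}|u - u_{B_\varepsilon(x)}| \,\leq\, C \, \Theta^{du}_{2m,\, n-2m,\, \Omega}(\varepsilon).
\end{equation*}
By the Adams embedding of Proposition \ref{Adams embedding bounded domain}, $du \in \mathrm{L}^{2m, n-2m}$, with vanishing Morrey modulus in case (i). Hence in (i) the right side tends to $0$ uniformly in $x$ with $\varepsilon$, while in (ii) it is bounded uniformly by $C\|du\|_{\mathrm{L}^{2m, n-2m}(\Omega)} \leq C\varepsilon_{\text{approx}}$. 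Choosing $\varepsilon_{\text{approx}}$ so that $C\varepsilon_{\text{approx}} < \delta_G$ (respectively $\varepsilon_s$ small enough) forces $u_\varepsilon(\tilde{\Omega}) \subset N_{\delta_G}$, so the projection $\Pi$ is legitimate.

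\medskip

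\textbf{Convergence and norm control.} On $\overline{N_{\delta_G}}$ the derivatives $D\Pi$ and $D^2 \Pi$ are bounded by constants depending only on $G$, and the chain rule gives
\begin{align*}
D u^\varepsilon &= D\Pi(u_\varepsilon) \cdot D u_\varepsilon, \\
D^2 u^\varepsilon &= D^2\Pi(u_\varepsilon)(Du_\varepsilon, Du_\varepsilon) + D\Pi(u_\varepsilon) \cdot D^2 u_\varepsilon.
\end{align*}
The H\"older-Morrey inequality (Proposition \ref{holderMorrey}) bounds $|Du_\varepsilon|^2$ in $\mathrm{L}^{m, n-2m}$ by $\|Du_\varepsilon\|_{\mathrm{L}^{2m, n-2m}}^2$, itself controlled by Adams in terms of $\|u\|_{\mathsf{W}^{2}\mathrm{L}^{m, n-2m}(\Omega)}^2$. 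In case (i), Proposition \ref{approximation in Morrey-Sobolev spaces}(ii) yields $u_\varepsilon \to u$ in $\mathsf{W}^{2}\mathrm{L}^{m, n-2m}$, and the vanishing Adams embedding gives $Du_\varepsilon \to Du$ in $\mathrm{L}^{2m, n-2m}$; a dominated-convergence passage in each term of the chain rule then delivers $u^\varepsilon \to \Pi\circ u = u$ in $\mathsf{W}^{2}\mathrm{L}^{m, n-2m}(\tilde\Omega; G)$. In case (ii), standard mollification gives $u_\varepsilon \to u$ in $W^{2,m}$ and $W^{1, 2m}$ (the latter since $Du \in \mathrm{L}^{2m, n-2m} \hookrightarrow L^{2m}$), so the same chain-rule computation yields $W^{2,m}$ and $W^{1, 2m}$ convergence of $u^\varepsilon$; the uniform Morrey bound comes from Proposition \ref{approximation in Morrey-Sobolev spaces}(i) for $u_\varepsilon$ combined with $|D^k\Pi|\leq C(G)$ and the H\"older-Morrey inequality.

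\medskip

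\textbf{Main obstacle.} The crux of the argument is the tubular-neighborhood step: in (i) the vanishing Morrey condition is precisely what forces $\Theta^{du}_{2m, n-2m}(\varepsilon) \to 0$ as $\varepsilon \to 0$, without which the mollifications can drift away from $G$; in (ii) one must verify that the Morrey-Poincar\'e constant is scale- and point-independent, so that smallness of $\|du\|_{\mathrm{L}^{2m, n-2m}}$ alone keeps $u_\varepsilon$ inside $N_{\delta_G}$ for every $\varepsilon$. Once $\Pi \circ u_\varepsilon$ is well-defined, everything reduces to chain rule plus H\"older-Morrey bookkeeping.
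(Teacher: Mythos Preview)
Your proposal is correct and follows essentially the same Schoen--Uhlenbeck scheme as the paper: mollify, use the $\mathrm{BMO}$/Morrey--Poincar\'e bound on mean oscillation (controlled by $\Theta^{du}_{2m,n-2m}$ in the vanishing case, by $\|du\|_{\mathrm{L}^{2m,n-2m}}$ under smallness) to land in the tubular neighbourhood, project via $\Pi$, and read off the convergences from the chain rule and H\"older--Morrey. The paper's version is terser---it writes the distance bound as a double average and defers the Morrey bound on the mollifications to the Jensen argument of Lemma~\ref{approximabilityconditionremovallemma}, then simply declares the final verification ``easy''---but the content is the same; your explicit extension step and chain-rule bookkeeping only make the argument more transparent.
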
 
	\begin{proof}
		We only prove part (ii). The first part is similar but easier. Since $G$ embeds into $\mathbb{R}^{N_{0}}$ for some large natural number $N_{0},$ by standard mollification, we can construct a smooth sequence $v^{s}:= u \ast \eta^{s},$ where $\left\lbrace \eta_{s}\right\rbrace_{s \in \mathbb{N}}$ is a standard mollifying sequence, such that the Sobolev convergences hold on $\tilde{\Omega}.$ The estimate for the Morrey-Sobolev norms follows from a Jensen inequality argument outlined in the proof of Lemma \ref{approximabilityconditionremovallemma}. Hence the only issue here is that $v^{\varepsilon}$ does not, in general, takes values in $G.$ However, we have the key estimate 
		\begin{align*}
			\operatorname{dist}\left( v^{s}\left(x\right), G\right) \leq C \fint_{B_{1/s}\left(x\right)}\fint_{B_{1/s}\left(x\right)} \left\lvert u\left(y\right) - u\left(z\right) \right\rvert\mathrm{d}y \mathrm{d}z 
		\end{align*}
		for any $x \in \tilde{\Omega}$ and any $s \in \mathbb{N}.$ Now the integral on the right can be bounded above by the $\mathrm{BMO}$ seminorm on $u$ and consequently, using Proposition \ref{Adams embedding bounded domain}, also by the $\mathrm{L}^{2m, n-2m}$ norm of $du.$  Thus, we can choose $\varepsilon_{\text{approx}}$  small enough to obtain 
		\begin{align*}
			\sup\limits_{x \in \overline{\tilde{\Omega}}}\operatorname{dist}\left( v^{s}\left(x\right), G\right) < \delta_{G},
		\end{align*}
		where $\delta_{G}$ is the constant given by Proposition \ref{tubular nbd lemma}. Then we define 
		\begin{align*}
			u^{s} := \Pi \left( v^{s}\right).
		\end{align*}
		Now it is easy to verify that $\left\lbrace u^{s} \right\rbrace_{s \in \mathbb{N}}$ satisfies all the required properties. 
	\end{proof}
	\begin{remark}
		The crucial point is $\mathsf{W}^{2}\mathrm{L}^{m, n-2m}$ embeds into $\mathrm{BMO}$ and $\mathsf{W}^{2}\mathrm{VL}^{m, n-2m}$ embeds into $\mathrm{VMO}.$ This result is much easier for $\mathsf{W}^{2}\mathrm{L}^{p, \lambda}$  with $2p + \lambda > n,$ in view of the embedding of $\mathsf{W}^{2}\mathrm{L}^{p, \lambda}$ into $C^{0}.$ Notice that in part (ii), approximation in the Morrey norms are impossible, as smooth functions are not dense in Morrey norms, even for scalar valued functions. 
		
	\end{remark}
	\subsubsection{Smooth approximation in the subcritical regime}
	Now we investigate smooth approximation questions for $\mathsf{W}^{l}\mathrm{L}^{p, \lambda}$ Morrey-Sobolev bundles. By the approximation result in the last section, given any $\mathsf{W}^{2}\mathrm{VL}^{m, n-2m}$ cocycle  $\left\lbrace g_{\alpha\beta} \right\rbrace_{\alpha, \beta \in I},$ we can find sequence of maps $\left\lbrace \left\lbrace g^{s}_{\alpha\beta} \right\rbrace_{\alpha, \beta \in I} \right\rbrace_{s \in \mathbb{N}},$ such that for each $s \in \mathbb{N},$ 
	$g^{s}_{\alpha\beta} \in C^{\infty} \left( \overline{V_{\alpha}\cap V_{\beta}}; G\right) $ and 
	$$ g^{s}_{\alpha\beta} \rightarrow g_{\alpha\beta} \qquad \text{ in } \mathsf{W}^{2}\mathrm{L}^{m, n-2m}\left( V_{\alpha}\cap V_{\beta}; G\right),$$ where $\left\lbrace V_{\alpha}\right\rbrace_{\alpha \in I}$ is a slight shrinking of the original cover $\left\lbrace U_{\alpha}\right\rbrace_{\alpha \in I}.$ However, there is no particular reason why $\left\lbrace g^{s}_{\alpha\beta} \right\rbrace_{\alpha, \beta \in I}$ would define a cocycle, i.e. satisfy \eqref{cocycle condition def}. We can however modify our procedure to ensure this the subcritical regime, i.e. for $\mathsf{W}^{2}\mathrm{VL}^{p, \lambda}$ bundles with $2p + \lambda > n.$ Below we present a weaker result which however would be sufficient for our purposes. 
	\begin{theorem}[Smooth approximation in Morrey norm in the subcritical Sobolev  regime]\label{smoothingC0bundleMorrey} 
		Let $n \geq 3$ be an integer and let $0 < \lambda < n $ and $ 1< p < q <  \infty$ be real numbers satisfying $2p +\lambda > n.$ Given any $\mathsf{W}^{2}\mathrm{L}^{q,\lambda}$ principal $G$-bundle $P = \left( \left\lbrace V_{j}\right\rbrace_{j \in J}, \left\lbrace h_{ij} \right\rbrace_{i, j \in J}\right),$ there exists a refinement $\left\lbrace W_{i}\right\rbrace_{i \in K}$ of $\left\lbrace V_{j}\right\rbrace_{j \in J}$ with a refinement map $\psi: K \rightarrow J,$ a sequence of continuous gauges $\left\lbrace \left\lbrace \sigma_{i}^{s} \right\rbrace_{i \in K} \right\rbrace_{s \in \mathbb{N}}$ and a sequence of smooth bundles $\left\lbrace P^{s} \right\rbrace_{s \in \mathbb{N}},$ where 
		\begin{align*}
			P^{s} &= \left( \left\lbrace W_{i}\right\rbrace_{i \in K},\left\lbrace h^{s}_{ij}\right\rbrace_{i,j \in K}\right) \qquad \text{ for every } s \in \mathbb{N}, \\
			\sigma_{i}^{s} &\in \mathsf{W}^{2}\mathrm{VL}^{p,n-2m}\left( W_{i}; G\right) \qquad \text{ for every } i \in K  \text{ for every } s \in \mathbb{N},
		\end{align*} such that 
		\begin{align*}
			P^{s} &\simeq_{\sigma^{s}} P, \\
			h^{s}_{ij} &\rightarrow h_{\psi\left(i\right)\psi\left(j\right)} \qquad \text{ in } \mathsf{W}^{2}\mathrm{VL}^{p, n-2m}.
		\end{align*} 
	\end{theorem}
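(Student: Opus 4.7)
The plan is to factor the approximation through a smooth reference bundle, reducing the cocycle approximation to the easier task of approximating a single $G$-valued gauge. Since $2p+\lambda>n$ and $q>p$, the last conclusion of Proposition~\ref{Adams embedding bounded domain} gives $\mathsf{W}^{2}\mathrm{L}^{q,\lambda}\hookrightarrow C^{0,\alpha}$ for some $\alpha>0$, so $P$ is in particular a continuous principal $G$-bundle. In this subcritical regime, mollifications of $G$-valued maps remain uniformly close to $G$, so the nearest-point projection of Proposition~\ref{tubular nbd lemma} applies without any smallness hypothesis; this yields a subcritical analogue of Theorem~\ref{Approximation of G valued maps critical} in $\mathsf{W}^{2}\mathrm{L}^{p,\lambda}$, which I will apply freely below.

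First, invoke the classical fact that every continuous principal $G$-bundle over a smooth manifold admits an equivalent smooth structure. After passing to a refinement $\{W_i\}_{i\in K}$ with refinement map $\psi:K\to J$, this produces a smooth reference bundle $P^{*}=(\{W_i\},\{g^{*}_{ij}\})$ together with continuous gauges $\tau_i:W_i\to G$ satisfying
$$g^{*}_{ij}=\tau_i^{-1}\,h_{\psi(i)\psi(j)}\,\tau_j\qquad\text{in }W_i\cap W_j.$$
Normalize $\tau_{i_0}\equiv\mathbbm{1}_G$ on a base index $i_0$ (absorbing the discrepancy into $g^{*}$) and propagate the identity $\tau_j=h_{\psi(i)\psi(j)}^{-1}\tau_i\,g^{*}_{ij}$ along a spanning tree of the nerve of $\{W_i\}_{i\in K}$. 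Using smoothness of $g^{*}_{ij}$, boundedness of $\tau_i$, and the algebra property of $\mathsf{W}^{2}\mathrm{L}^{q,\lambda}\cap L^{\infty}$ in the subcritical regime, one deduces $\tau_i\in\mathsf{W}^{2}\mathrm{L}^{q,\lambda}(W_i;G)$. A Hölder/Jensen computation based on Proposition~\ref{holderMorrey} then gives $\mathrm{L}^{q,\lambda}\hookrightarrow\mathrm{VL}^{p,\lambda}$ for $q>p$ and $\lambda<n$, so in fact $\tau_i\in\mathsf{W}^{2}\mathrm{VL}^{p,\lambda}(W_i;G)$.

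Second, apply the subcritical $G$-valued approximation theorem to each $\tau_i$ to obtain smooth $\tau_i^{s}\in C^{\infty}(\overline{W_i};G)$ converging to $\tau_i$ in $\mathsf{W}^{2}\mathrm{VL}^{p,\lambda}$, and define
$$h^{s}_{ij}:=\tau_i^{s}\,g^{*}_{ij}\,(\tau_j^{s})^{-1}.$$
These are smooth, automatically satisfy the cocycle condition (because $g^{*}_{ij}$ does), and $h^{s}_{ij}\to\tau_i g^{*}_{ij}\tau_j^{-1}=h_{\psi(i)\psi(j)}$ in $\mathsf{W}^{2}\mathrm{VL}^{p,\lambda}$ by the algebra property together with the smoothness of the fixed multiplier $g^{*}_{ij}$. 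Setting $P^{s}:=(\{W_i\},\{h^{s}_{ij}\})$ and $\sigma_i^{s}:=\tau_i(\tau_i^{s})^{-1}\in\mathsf{W}^{2}\mathrm{VL}^{p,\lambda}$ yields the required smooth bundles gauge-equivalent to $P$ with the stated convergence.

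The main obstacle is the regularity bootstrap: the gauges $\tau_i$ produced by the classical continuous-to-smooth reduction are only known a priori to be continuous, and extracting their $\mathsf{W}^{2}\mathrm{L}^{q,\lambda}$ regularity requires simultaneously choosing the refinement so that the spanning-tree propagation stays inside common domains of intersection and that the Morrey-Sobolev algebra/multiplier structure survives the finite sequence of multiplications performed. Subcriticality ($2p+\lambda>n$) is essential precisely because it embeds $\tau_i$ into $L^{\infty}$, making the relevant products continuous operations on $\mathsf{W}^{2}\mathrm{L}^{q,\lambda}$; without it, the composition of a merely continuous gauge with an $\mathsf{W}^{2}\mathrm{L}^{q,\lambda}$ cocycle would not be controlled in the target norm.
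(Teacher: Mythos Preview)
Your overall strategy---factor through a fixed smooth reference bundle $P^{*}$, then approximate only the gauges---is the right shape, and the second step is fine. The genuine gap is in the regularity bootstrap for the $\tau_i$.

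The relation $\tau_j = h_{\psi(i)\psi(j)}^{-1}\,\tau_i\,g^{*}_{ij}$ holds only on $W_i\cap W_j$, so propagating it along a spanning tree yields $\mathsf{W}^{2}\mathrm{L}^{q,\lambda}$-regularity of $\tau_j$ only on the single overlap with its parent, not on all of $W_j$. There is no mechanism in your sketch that covers the rest of $W_j$; points lying in $W_j$ alone (or only in overlaps with other indices not on the tree path) are untouched. Your proposed normalization $\tau_{i_0}\equiv\mathbbm{1}_G$ by ``absorbing the discrepancy into $g^{*}$'' does not help either: absorbing it means replacing $g^{*}_{i_0 j}$ by $\tau_{i_0}\,g^{*}_{i_0 j}$, and since $\tau_{i_0}$ is a priori only continuous, this destroys the smoothness of the reference cocycle you need later. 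The closing paragraph of your proposal flags the difficulty but the fix you offer (careful refinement so the tree ``stays inside common domains of intersection'') cannot repair it: no refinement of a cover of a compact manifold makes every $W_j$ lie inside its parent along a tree unless the root already covers everything.

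What actually closes the gap is to build $P^{*}$ and the $\tau_i$ \emph{simultaneously} from the Morrey--Sobolev data rather than importing an arbitrary continuous smoothing. In the subcritical regime one mollifies and projects each $h_{ij}$ (as in Theorem~\ref{Approximation of G valued maps critical}, using the $C^{0}$-embedding of Proposition~\ref{Adams embedding bounded domain} to stay near $G$), producing smooth maps $\tilde h_{ij}$ close to $h_{ij}$ in $\mathsf{W}^{2}\mathrm{L}^{p,\lambda}$ and hence in $C^{0}$. These are only approximate cocycles, but the subcritical cocycle factorization of Theorem~\ref{subcritical cocycle factorization} then supplies $\mathsf{W}^{2}\mathrm{L}^{p,\lambda}$ gauges $\rho_i$ close to $\mathbbm{1}_G$ making $\rho_i\tilde h_{ij}\rho_j^{-1}$ an honest smooth cocycle; the required regularity of the connecting gauges is built into the output of that theorem rather than being bootstrapped after the fact. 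This is the route the paper indicates (via the two embeddings $\mathsf{W}^{2}\mathrm{L}^{q,\lambda}\hookrightarrow\mathsf{W}^{2}\mathrm{VL}^{p,\lambda}\hookrightarrow C^{0}$ and an adaptation of the argument in \cite{Sil_YangMillscriticaltoappear}).
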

	\begin{proof}
		The crux of the matter are hinges on the following two points. The first is that since $2q+\lambda > 2p+\lambda > n,$ $\mathsf{W}^{2}\mathrm{VL}^{p,n-2m}$ embeds into $C^{0}.$ The second is that since $q >p,$ $\mathsf{W}^{2}\mathrm{L}^{q,n-2m}$ embeds into $\mathsf{W}^{2}\mathrm{VL}^{p,n-2m}.$ With these two facts in hand, the rest is a somewhat tedious but straight forward adaptation of the proof of Theorem $8$ in \cite{Sil_YangMillscriticaltoappear}. 
	\end{proof}	
	\subsubsection{Cocycle factorization in the subcritical regime}
	The cocycle factorization result in the subcritical regime has been essentially proved by Uhlenbeck. Here we just record a result, which is just Proposition 3.2 in \cite{UhlenbeckGaugefixing}.
	\begin{theorem}\label{C0 cocycle factorization}
		Let $\left\lbrace U_{\alpha}\right\rbrace_{\alpha \in I}$ be an finite open cover for $M^{n}$, where $I$ has $N$ elements. Let $h_{\alpha \beta}: U_\alpha \cap U_\beta \rightarrow G$ and $g_{\alpha \beta}: U_\alpha \cap U_\beta \rightarrow G$ be two sets of continuous cocycles. Then there exists $\delta_{0} = \delta_{0}\left( n, N, M^{n}, G \right)$ such that if
		$$
		m=\max _{\substack{\alpha, \beta \in I \\ x \in U_\alpha \cap U_\beta}}\left|\exp ^{-1} h_{\alpha, \beta}(x) g_{\beta, \alpha}(x)\right| \leq \delta_{0},
		$$
		then there exists a refinement $V_\alpha \subset U_\alpha, M^{n} \subset \bigcup_\alpha V_\alpha$ and continuous $\rho_\alpha: V_\alpha \rightarrow G$ such that $h_{\alpha, \beta}=\rho_\alpha g_{\alpha, \beta} \rho_\beta^{-1}$ on $V_\alpha \cap V_\beta$. Moreover, $\max \left|\exp ^{-1} \rho_\alpha\right| \leq cm$ for some constant $c = c \left( n, N, M^{n}, G \right)>0.$
	\end{theorem}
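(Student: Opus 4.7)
The plan is to solve the nonlinear system $h_{\alpha\beta} = \rho_\alpha g_{\alpha\beta} \rho_\beta^{-1}$ by passing to the exponential parametrization $\rho_\alpha = \exp(\phi_\alpha)$ and running a Banach fixed-point argument for the resulting twisted \v{C}ech coboundary equation. First I would shrink the cover to $V_\alpha \subset\subset U_\alpha$ still covering $M^n$ and choose a continuous partition of unity $\{\chi_\gamma\}_{\gamma \in I}$ subordinate to $\{U_\gamma\}$. For $\delta_0$ small enough the logarithm is well defined on $h_{\alpha\beta}(x)g_{\beta\alpha}(x) \in G$, so set $v_{\alpha\beta}(x) := \log\bigl(h_{\alpha\beta}(x) g_{\beta\alpha}(x)\bigr) \in \mathfrak{g}$ with $\|v_{\alpha\beta}\|_{C^0} \leq m$. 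A Baker--Campbell--Hausdorff expansion, together with the identity $g_{\alpha\beta} \exp(X) g_{\alpha\beta}^{-1} = \exp(\operatorname{Ad}(g_{\alpha\beta}) X)$, converts the target equation into the twisted coboundary equation
$$\phi_\alpha - \operatorname{Ad}(g_{\alpha\beta}) \phi_\beta = v_{\alpha\beta} + Q_{\alpha\beta}(\phi, v) \quad \text{on } V_\alpha \cap V_\beta,$$
where $Q_{\alpha\beta}$ is a continuous remainder with the quadratic estimate $\|Q_{\alpha\beta}\|_{C^0} \leq C\bigl(\|\phi\|_{C^0} + m\bigr)^2$ vanishing to second order at $(\phi,v) = 0$.

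The crucial observation is that the cocycle identities for $h$ and $g$ force $\{v_{\alpha\beta}\}$ to be a twisted \v{C}ech cocycle up to quadratic error. Indeed, writing $h_{\alpha\gamma} g_{\gamma\alpha} = h_{\alpha\beta}\bigl(h_{\beta\gamma} g_{\gamma\beta}\bigr) g_{\beta\alpha}$ and applying BCH yields
$$v_{\alpha\gamma} = v_{\alpha\beta} + \operatorname{Ad}(g_{\alpha\beta}) v_{\beta\gamma} + E_{\alpha\beta\gamma}, \qquad \|E_{\alpha\beta\gamma}\|_{C^0} \leq C m^2.$$
Define the approximate right-inverse $T(v)_\alpha := \sum_\gamma \chi_\gamma\, v_{\alpha\gamma}$. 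Using $\sum_\gamma \chi_\gamma \equiv 1$ and the near-cocycle identity above, a direct computation gives $T(v)_\alpha - \operatorname{Ad}(g_{\alpha\beta}) T(v)_\beta = v_{\alpha\beta} + R_{\alpha\beta}$ with $\|R_{\alpha\beta}\|_{C^0} \leq C m^2$, so $T$ inverts the linearized operator modulo a quadratic error.

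The final step is to apply Banach's contraction mapping theorem to $\Psi(\phi) := T\bigl(v + Q(\phi, v)\bigr)$ on the closed ball of radius $C m$ in $\bigoplus_\alpha C^0(\overline{V_\alpha}; \mathfrak{g})$. The quadratic nature of both $Q$ and the cocycle defect $E$ makes $\Psi$ a contraction once $\delta_0 = \delta_0(n, N, M^n, G)$ is chosen sufficiently small (absorbing the BCH constants, the $C^0$ norms of $\chi_\gamma$, and the operator norms of $\operatorname{Ad}(g_{\alpha\beta})$, which are bounded since $G$ is compact). The fixed point $\phi_\alpha$ satisfies $\|\phi_\alpha\|_{C^0} \leq c m$ by construction, and $\rho_\alpha := \exp(\phi_\alpha)$ then solves the factorization with the stated bound $\max |\exp^{-1}\rho_\alpha| \leq c m$.

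The main obstacle will be bookkeeping the BCH remainder terms carefully enough to establish that the cocycle defect $E_{\alpha\beta\gamma}$ is genuinely $O(m^2)$ rather than $O(m)$, and similarly that $Q_{\alpha\beta}$ vanishes to second order. Both facts hinge on exploiting the \emph{exact} cocycle identities $h_{\alpha\gamma} = h_{\alpha\beta} h_{\beta\gamma}$ and $g_{\alpha\gamma} = g_{\alpha\beta} g_{\beta\gamma}$ to cancel all linear contributions, leaving only commutator-type terms which are quadratically small in $m$. Without this quadratic control the fixed-point map would degenerate to a Lipschitz map with constant near $1$ and the whole contraction scheme would collapse.
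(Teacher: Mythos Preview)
The paper does not prove this statement; it quotes it verbatim as Proposition~3.2 of Uhlenbeck~\cite{UhlenbeckGaugefixing} and moves on, so there is no in-paper proof to compare against. Your approach is in the right spirit and close to the classical argument, but the contraction-mapping step as written has a real gap. The operator $T$ satisfies $(\delta T w)_{\alpha\beta}=w_{\alpha\beta}$ \emph{only} when $w$ is already a twisted $1$-cocycle; for a general cochain one has $(\delta T w)_{\alpha\beta}=w_{\alpha\beta}+\sum_\gamma \chi_\gamma\bigl(w_{\alpha\gamma}-w_{\alpha\beta}-\operatorname{Ad}(g_{\alpha\beta})w_{\beta\gamma}\bigr)$, the extra term being governed by the cocycle defect of $w$. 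Consequently a fixed point $\phi^\ast$ of $\Psi$ gives only $\delta\phi^\ast=(v+Q(\phi^\ast,v))+\bar S$, equivalently the residual $W(\phi^\ast):=\log\bigl((\rho^\ast)^{-1}h\rho^\ast g^{-1}\bigr)$ is a priori merely $O(m^2)$, not zero. Your final sentence asserts that $\rho^\ast$ ``then solves the factorization'' without addressing this.

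The hole can be closed, but it needs an extra self-consistency argument you do not supply: since $(\rho^\ast)^{-1}h\rho^\ast$ is again an honest $G$-cocycle, $W(\phi^\ast)$ satisfies the \emph{same} near-cocycle estimate you proved for $v$; combining this with $v+Q(\phi^\ast,v)=W(\phi^\ast)+\delta\phi^\ast$ and $\delta^2=0$ yields $\lVert W(\phi^\ast)\rVert\le C\lVert W(\phi^\ast)\rVert^2$, which together with the a priori bound $\lVert W(\phi^\ast)\rVert=O(m^2)$ forces $W(\phi^\ast)=0$ for $m$ small. The more standard route, avoiding this subtlety entirely, is to iterate at the \emph{group} level: set $\rho^{(1)}_\alpha:=\exp\bigl(\sum_\gamma \chi_\gamma v_{\alpha\gamma}\bigr)$, replace $h$ by the exact cocycle $h^{(1)}:=(\rho^{(1)})^{-1}h\,\rho^{(1)}$, verify that the new distance satisfies $m^{(1)}\le Cm^2$, and repeat; the infinite product $\rho^{(1)}\rho^{(2)}\cdots$ then converges in $C^0$ with the stated bound, and the cocycle-defect issue never arises because every $h^{(k)}$ is an honest cocycle throughout.
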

	Since $\mathsf{W}^{2}\mathrm{L}^{p, \lambda}$ embeds continuously into $C^{0, \frac{\lambda -n +2p}{2p}}$, for $1<p<n$ and $\lambda > n-2p,$ we can easily derive the following simple corollary by arguing along the same lines as in Corollary 3.3 in \cite{UhlenbeckGaugefixing}. 
	\begin{theorem}\label{subcritical cocycle factorization}
		Let $\left\lbrace U_{\alpha}\right\rbrace_{\alpha \in I}$ be an finite open cover for $M^{n}$, where $I$ has $N$ elements. Let $h_{\alpha \beta}: U_\alpha \cap U_\beta \rightarrow G$ and $g_{\alpha \beta}: U_\alpha \cap U_\beta \rightarrow G$ be two sets of  $\mathsf{W}^{2}\mathrm{L}^{p,\lambda}$ cocycles for some $1 < p < n$ and $\lambda > n-2p.$ Then there exists $\delta_{0} = \delta_{0}\left( n, p, \lambda, N, M^{n}, G \right)$ such that if
		$$
		\Lambda=\max _{\substack{\alpha, \beta \in I \\ x \in U_\alpha \cap U_\beta}}\left\lVert h_{\alpha, \beta} -  g_{\alpha\beta}\right\rVert_{\mathsf{W}^{2}\mathrm{L}^{p,\lambda}} \leq \delta_{0},
		$$
		then there exists a refinement $ \left\lbrace V_{j} \right\rbrace_{j \in J}$ of $\left\lbrace U_{\alpha}\right\rbrace_{\alpha \in I}$, and H\"{o}lder continuous gauge transformations $\left\lbrace \rho_{j} \right\rbrace_{j \in J}$ such that for each $j \in J,$ we have $\rho_{j} \in  \mathsf{W}^{2}\mathrm{L}^{p,\lambda} \left( V_j ; G \right)$ and for every $i, j \in J$ such that  $V_i \cap V_j \neq \emptyset, $ we have $$ h_{ij}=\rho_i g_{ij} \rho_{j}^{-1} \qquad \text{ on  } V_i \cap V_j.$$  Moreover, there exists a constant $c = c \left( n, N, p, \lambda, M^{n}, G \right)>0$ such that $$\max\limits_{j \in J} \left\lVert \rho_{j} - \mathbbm{1}_{G}  \right\rVert_{\mathsf{W}^{2}\mathrm{L}^{p,\lambda}} \leq c\Lambda.$$  
	\end{theorem}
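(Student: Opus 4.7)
The plan is to reduce Theorem \ref{subcritical cocycle factorization} to Uhlenbeck's continuous cocycle factorization, Theorem \ref{C0 cocycle factorization}, and then promote the regularity of the resulting gauges to the Morrey--Sobolev class. The subcritical hypothesis $\lambda > n-2p$ is precisely what makes both steps work.

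First I would use the Morrey--Sobolev embedding from Proposition \ref{Adams embedding bounded domain} to obtain the continuous inclusion $\mathsf{W}^{2}\mathrm{L}^{p,\lambda}\hookrightarrow C^{0,(\lambda-n+2p)/(2p)}\hookrightarrow L^{\infty}$. The smallness hypothesis $\|h_{\alpha\beta}-g_{\alpha\beta}\|_{\mathsf{W}^{2}\mathrm{L}^{p,\lambda}}\leq \Lambda$ then gives $\|h_{\alpha\beta}-g_{\alpha\beta}\|_{C^{0}}\leq C\Lambda$, and since $G$ is compact with Lipschitz multiplication, $\|h_{\alpha\beta}g_{\beta\alpha}-\mathbbm{1}_{G}\|_{C^{0}}\leq C\Lambda$. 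For $\Lambda$ small enough, all pointwise values of $h_{\alpha\beta}g_{\beta\alpha}$ lie in a neighborhood of $\mathbbm{1}_{G}$ where $\exp^{-1}$ is smooth, so $\max_{\alpha,\beta,x}|\exp^{-1}(h_{\alpha\beta}(x)g_{\beta\alpha}(x))|\leq C'\Lambda$. Choosing $\delta_{0}$ small enough makes this quantity smaller than the threshold in Theorem \ref{C0 cocycle factorization}, and that theorem produces the refinement $\{V_{j}\}_{j\in J}$ together with continuous gauges $\rho_{j}:V_{j}\to G$ satisfying $h_{ij}=\rho_{i}g_{ij}\rho_{j}^{-1}$ pointwise and $\max_{j}\|\exp^{-1}\rho_{j}\|_{C^{0}}\leq c\Lambda$.

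The upgrade of each $\rho_{j}$ from $C^{0}$ to $\mathsf{W}^{2}\mathrm{L}^{p,\lambda}$ is obtained by inspecting Uhlenbeck's construction: in a chart at $\mathbbm{1}_{G}$ the gauges $\rho_{j}$ are built by applying the exponential chart to a partition-of-unity weighted combination of the elements $\exp^{-1}(h_{\alpha\beta}g_{\beta\alpha})$ and products of cocycle values, possibly after finitely many iterations of a contraction argument. Because $2p+\lambda>n$, the space $\mathsf{W}^{2}\mathrm{L}^{p,\lambda}$ is a Banach algebra under pointwise multiplication and is stable under composition with smooth maps defined on a bounded neighborhood of $G$ in $\mathbb{R}^{N_{0}}$ (the argument is the standard one for $W^{2,p}$ with $p>n/2$, which extends to the Morrey--Sobolev scale via the embedding into $C^{0,\alpha}$). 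Therefore every intermediate quantity in the construction lies in $\mathsf{W}^{2}\mathrm{L}^{p,\lambda}$, with norms controlled by the cocycle norms and the fixed (smooth) partition of unity subordinate to the refinement.

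Finally, since the construction depends linearly to leading order on the log-differences $\exp^{-1}(h_{\alpha\beta}g_{\beta\alpha})$ --- with higher-order chart corrections producing only $O(\Lambda^{2})$ contributions --- keeping track of constants through the Banach algebra bounds delivers the quantitative estimate
\[
\max_{j\in J}\|\rho_{j}-\mathbbm{1}_{G}\|_{\mathsf{W}^{2}\mathrm{L}^{p,\lambda}}\leq c\Lambda,
\]
with $c$ depending only on $n,N,p,\lambda,M^{n},G$. The main technical point I expect is simply bookkeeping: tracking how composition with $\exp$ and $\exp^{-1}$, and multiplication by the partition of unity, interact with the Morrey--Sobolev norm, and making sure the linear estimate is preserved through the finitely many iterations. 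No density or limit argument is needed, since the continuous embedding into $C^{0,\alpha}$ renders the classical Uhlenbeck calculation applicable essentially verbatim in this subcritical Morrey--Sobolev regime.
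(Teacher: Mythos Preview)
Your proposal is correct and follows essentially the same approach as the paper: the paper does not give a detailed proof but simply observes that, since $\mathsf{W}^{2}\mathrm{L}^{p,\lambda}\hookrightarrow C^{0,(\lambda-n+2p)/(2p)}$ for $\lambda>n-2p$, the result is a direct corollary obtained by arguing along the same lines as Corollary~3.3 in Uhlenbeck~\cite{UhlenbeckGaugefixing}. Your reduction to Theorem~\ref{C0 cocycle factorization} via the H\"older embedding, followed by upgrading regularity through the Banach-algebra structure of $\mathsf{W}^{2}\mathrm{L}^{p,\lambda}$ in the subcritical regime, is exactly this argument spelled out in more detail.
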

	
	\subsection{Elliptic estimates in Morrey spaces}
	
	\subsubsection{Gaffney type inequalities}
	We begin with a Gaffney inequality for Morrey spaces. These are intimately related to second order Morrey estimates for the Hodge type boundary value problems for the Hodge Laplacian. Morrey's proof of $L^{p}$ and Schauder estimates for the Hodge Laplacian uses representation formulas and boundedness of Calderon-Zygmund operators in those spaces ( see Section 7.6 in \cite{Morrey1966} ) and the Calderon-Zygmund operators are known to be bounded in Morrey spaces ( cf. \cite{Adams_MorreySpaces} ), the results is probably well-known to experts. However, since we are unable to find a precise reference for this, here we deduce them from a more general result proved in \cite{Sengupta_Sil_Morrey_Gaffney}.    
	\begin{proposition}[Gaffney inequality in Morrey spaces]\label{Gaffney ineq in Morrey spaces}
		Let $\Omega \subset \mathbb{R}^{n}$ be open, bounded, smooth and contractible. Let $1 \leq k \leq n-1,$ $N \geq 1$ be integers. Let $X$ be any $N$-dimensional real vector space and let $ 0 <  \mu < n$ and $1 < p < \infty$ be real numbers. Let 
		\begin{gather*}
			f \in \mathrm{L}^{p,\mu}\left(\Omega; \Lambda^{k+1}\mathbb{R}^{n}\otimes X \right), 
			g \in  \mathrm{L}^{p,\mu}\left(\Omega; \Lambda^{k-1}\mathbb{R}^{n}\otimes X \right), \\\omega_{0} \in \mathsf{W}^{1}\mathrm{L}^{p,\mu}\left(\Omega; \Lambda^{k}\mathbb{R}^{n}\otimes X \right).
		\end{gather*}
		
		\noindent \textbf{(i)} Suppose $df = 0$, $\delta g = 0$ in $\Omega$ and 
		$ \iota^{\ast}_{\partial \Omega } d\omega_{0} = \iota^{\ast}_{\partial \Omega } f $ on $\partial\Omega.$
		Then there exists a unique solution $\omega \in \mathsf{W}^{1}\mathrm{L}^{p,\mu}\left(\Omega; \Lambda^{k}\mathbb{R}^{n}\otimes X \right)$  to the following boundary value problem, 
		\begin{equation} \label{problemddeltalinear}
			\left\lbrace \begin{aligned}
				d\omega = f  \quad &\text{and} \quad d^{\ast} \omega = g &&\text{ in } \Omega, \\
				\iota^{\ast}_{\partial \Omega }\omega &= 	\iota^{\ast}_{\partial \Omega }\omega_0 &&\text{  on } \partial\Omega,
			\end{aligned} 
			\right. \tag{$\mathcal{P}_{T}$}
		\end{equation}
		and there exists a constant $C_{\text{e}} = C_{\text{e}} \left(m,n,k,p,\mu, \Omega \right) >0$ such that 
		\begin{align*}
			\left\lVert \omega \right\rVert_{\mathsf{W}^{1}\mathrm{L}^{p,\mu}} \leq  C_{\text{e}} \left( \left\lVert f\right\rVert_{\mathrm{L}^{p,\mu}}  + \left\lVert g\right\rVert_{\mathrm{L}^{p,\mu}} + \left\lVert \omega_{0}\right\rVert_{\mathsf{W}^{1}\mathrm{L}^{p,\mu}}\right).  
		\end{align*}\smallskip
		
		\noindent\textbf{(ii)} Suppose $df = 0$, $\delta g = 0$ in $\Omega$ and $ 	\iota^{\ast}_{\partial \Omega } \left( \ast g\right)  = 	\iota^{\ast}_{\partial \Omega } \left( \ast d^{\ast}\omega_{0}\right) $ on $\partial\Omega.$
		Then there exists a unique solution $\omega \in \mathsf{W}^{1}\mathrm{L}^{p,\mu}\left(\Omega; \Lambda^{k}\mathbb{R}^{n}\otimes X \right)$  to the following boundary value problem, 
		\begin{equation} \label{problemddeltalinearnormal}
			\left\lbrace \begin{aligned}
				d\omega = f  \quad &\text{and} \quad  d^{\ast} \omega = g &&\text{ in } \Omega, \\
				\iota^{\ast}_{\partial \Omega } \left( \ast \omega\right)	 &= \iota^{\ast}_{\partial \Omega }  \left( \ast \omega_{0}\right)  &&\text{  on } \partial\Omega,
			\end{aligned} 
			\right. \tag{$\mathcal{P}_{N}$}
		\end{equation}
		and there exists a constant $C_{\text{e}} = C_{\text{e}} \left(m,n,k,p,\mu, \Omega \right) >0$ such that 
		\begin{align*}
			\left\lVert \omega \right\rVert_{\mathsf{W}^{1}\mathrm{L}^{p,\mu}} \leq  C_{\text{e}} \left( \left\lVert f\right\rVert_{\mathrm{L}^{p,\mu}}  + \left\lVert g\right\rVert_{\mathrm{L}^{p,\mu}} + \left\lVert \omega_{0}\right\rVert_{\mathsf{W}^{1}\mathrm{L}^{p,\mu}}\right).  
		\end{align*}
	\end{proposition}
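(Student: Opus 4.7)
The plan is to reduce both boundary value problems to the homogeneous-data case and then invoke the general Morrey-space Gaffney inequality of \cite{Sengupta_Sil_Morrey_Gaffney}. First I would set $\eta := \omega - \omega_{0}$, which converts \eqref{problemddeltalinear} into a problem for $\eta$ with data $f - d\omega_{0} \in \mathrm{L}^{p,\mu}$, $g - d^{\ast}\omega_{0} \in \mathrm{L}^{p,\mu}$ and vanishing tangential trace $\iota^{\ast}_{\partial\Omega}\eta = 0$. Here one checks that $d(f-d\omega_{0}) = df = 0$ and $\delta(g - d^{\ast}\omega_{0}) = \delta g = 0$ still hold, and the assumption $\iota^{\ast}_{\partial\Omega}d\omega_{0} = \iota^{\ast}_{\partial\Omega}f$ translates exactly into the consistency condition $\iota^{\ast}_{\partial\Omega}(f - d\omega_{0}) = 0$ needed for the first-order Hodge system with homogeneous tangential trace.

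Next I would apply the $\mathsf{W}^{1}\mathrm{L}^{p,\mu}$-estimate for the first-order system $(d, d^{\ast})$ on contractible domains with homogeneous tangential trace established in \cite{Sengupta_Sil_Morrey_Gaffney}. Because $X$ is a finite-dimensional vector space, the system decouples componentwise and the scalar-valued result applies directly. This yields a unique $\eta \in \mathsf{W}^{1}\mathrm{L}^{p,\mu}$ satisfying
\begin{equation*}
\left\lVert \eta \right\rVert_{\mathsf{W}^{1}\mathrm{L}^{p,\mu}} \leq C \bigl( \left\lVert f - d\omega_{0}\right\rVert_{\mathrm{L}^{p,\mu}} + \left\lVert g - d^{\ast}\omega_{0}\right\rVert_{\mathrm{L}^{p,\mu}} \bigr).
\end{equation*}
Setting $\omega := \eta + \omega_{0}$ recovers a solution of \eqref{problemddeltalinear}, and the triangle inequality together with the bound $\left\lVert d\omega_{0}\right\rVert_{\mathrm{L}^{p,\mu}} + \left\lVert d^{\ast}\omega_{0}\right\rVert_{\mathrm{L}^{p,\mu}} \leq C\left\lVert \omega_{0}\right\rVert_{\mathsf{W}^{1}\mathrm{L}^{p,\mu}}$ gives the claimed Morrey-Sobolev estimate. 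Uniqueness is inherited from uniqueness for the homogeneous problem, which in turn uses contractibility of $\Omega$ to rule out nontrivial harmonic fields with vanishing tangential trace.

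For \eqref{problemddeltalinearnormal} I would argue by Hodge duality: applying $\ast$ identifies $k$-forms with normal trace to $(n-k)$-forms with tangential trace, interchanges the roles of $d$ and $d^{\ast}$, and preserves Morrey norms up to a dimensional constant. Thus the problem reduces to a tangential-trace problem already covered by the argument above. The main obstacle in the overall argument is really the boundary bookkeeping rather than a deep analytic step: one must carefully check that the prescribed compatibility conditions on $\iota^{\ast}_{\partial\Omega}d\omega_{0}$ (resp.\ $\iota^{\ast}_{\partial\Omega}(\ast d^{\ast}\omega_{0})$) are exactly what is needed for the reduced problem to be solvable, and that contractibility of $\Omega$ kills the finite-dimensional cohomological obstruction so that uniqueness survives the passage to Morrey spaces.
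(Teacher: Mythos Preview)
Your proposal is correct and matches the paper's approach: the paper does not give a self-contained proof but simply states that the result is deduced from the more general Morrey--Lorentz Gaffney estimates in \cite{Sengupta_Sil_Morrey_Gaffney}. Your reduction to homogeneous boundary data via $\eta = \omega - \omega_{0}$ and the Hodge-duality argument for part~(ii) are exactly the standard bookkeeping steps one would use to pass from the general estimate in that reference to the specific boundary value problems stated here, and the paper leaves these steps entirely implicit.
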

	As a consequence of these estimates, we have the following. 
	\begin{proposition}\label{Morrey estimates for the Laplacian}
		Let $\Omega \subset \mathbb{R}^{n}$ be open, bounded, smooth and contractible. Let $N \geq 1$ be an integer and let $X$ be any $N$-dimensional real vector space. Let $ 0 <  \mu < n$ and $1 < p < \infty$ be real numbers. Let 
		\begin{align*}
			f \in \mathrm{L}^{p,\mu}\left(\Omega; X \right), u_{0} \in \mathsf{W}^{2}\mathrm{L}^{p,\mu}\left(\Omega; X \right).
		\end{align*}
		
		\noindent \textbf{(i)} There exists a unique solution $u \in \mathsf{W}^{2}\mathrm{L}^{p,\mu}\left(\Omega; X \right)$  to the following Dirichlet boundary value problem, 
		\begin{equation} \label{problemDirichletLaplacian}
			\left\lbrace \begin{aligned}
				\Delta u &= f  &&\text{ in } \Omega, \\
				u &= 	u_0 &&\text{  on } \partial\Omega,
			\end{aligned} 
			\right. \tag{$\mathcal{P}_{Dirichlet}$}
		\end{equation}
		and there exists a constant $C_{\text{e}} = C_{\text{e}} \left(m,n,p,\mu, \Omega \right) >0$ such that 
		\begin{align*}
			\left\lVert u \right\rVert_{\mathsf{W}^{2}\mathrm{L}^{p,\mu}} \leq  C_{\text{e}} \left( \left\lVert f\right\rVert_{\mathrm{L}^{p,\mu}} + \left\lVert u_{0}\right\rVert_{\mathsf{W}^{2}\mathrm{L}^{p,\mu}}\right).  
		\end{align*}\smallskip
		
		\noindent\textbf{(ii)} Suppose $f$ satisfy 
		\begin{align*}
			\int_{\Omega} f = \int_{\partial \Omega} \frac{\partial u_{0}}{\partial \nu}. 
		\end{align*}
		Then there exists a unique solution $\omega \in \mathsf{W}^{2}\mathrm{L}^{p,\mu}\left(\Omega; X \right)$  with 
		$\int_{\Omega} u = 0,$ to the following Neumann boundary value problem, 
		\begin{equation} \label{problemNeumannLaplacian}
			\left\lbrace \begin{aligned}
				\Delta u &= f  &&\text{ in } \Omega, \\
				\frac{\partial u}{\partial \nu}	 &= \frac{\partial u_{0}}{\partial \nu} &&\text{  on } \partial\Omega,
			\end{aligned} 
			\right. \tag{$\mathcal{P}_{Neumann}$}
		\end{equation}
		and there exists a constant $C_{\text{e}} = C_{\text{e}} \left(m,n,p,\mu, \Omega \right) >0$ such that 
		\begin{align*}
			\left\lVert \omega \right\rVert_{\mathsf{W}^{2}\mathrm{L}^{p,\mu}} \leq  C_{\text{e}} \left( \left\lVert f\right\rVert_{\mathrm{L}^{p,\mu}}  + \left\lVert u_{0}\right\rVert_{\mathsf{W}^{2}\mathrm{L}^{p,\mu}}\right).  
		\end{align*}	
	\end{proposition}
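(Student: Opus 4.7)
My plan is to reduce both boundary value problems for the Laplacian to the Hodge-type systems addressed in Proposition \ref{Gaffney ineq in Morrey spaces}, by setting $\omega = du$ (up to a homogenization for the Dirichlet problem) and using the identity $\Delta u = d^{\ast}du$ on $0$-forms. Contractibility of $\Omega$ then allows one to recover $u$ from $\omega$ via Poincaré's lemma. The vector-valued setting is handled by applying the scalar result componentwise, which is legitimate because the Laplacian decouples on components.

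For part (i), I first homogenize by setting $v = u - u_0 \in \mathsf{W}^{2}\mathrm{L}^{p,\mu}$, so that the problem becomes $\Delta v = f - \Delta u_0$ in $\Omega$ with $v = 0$ on $\partial\Omega$. I apply Proposition \ref{Gaffney ineq in Morrey spaces}(i) at $k=1$ with data $f_{G} = 0$, $g_{G} = f - \Delta u_0 \in \mathrm{L}^{p,\mu}$, and $\omega_0 = 0$; the compatibility $\iota^{\ast}_{\partial\Omega} d\omega_0 = \iota^{\ast}_{\partial\Omega} f_{G}$ is trivial. This produces a unique $\omega \in \mathsf{W}^{1}\mathrm{L}^{p,\mu}$ with $d\omega = 0$, $d^{\ast}\omega = f - \Delta u_0$, $\iota^{\ast}_{\partial\Omega}\omega = 0$, and the Gaffney estimate. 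Contractibility of $\Omega$ and $d\omega = 0$ give $\omega = dv$ for some $v$, uniquely determined after fixing the constant so that $v = 0$ on $\partial\Omega$ (which is possible because for bounded contractible smooth domains in $\mathbb{R}^n$, $n \geq 2$, the boundary is connected, and the vanishing tangential part of $dv$ forces $v$ to be constant on $\partial\Omega$). Then $\Delta v = d^{\ast}\omega = f - \Delta u_0$, and setting $u := v + u_0$ gives a solution. The $\mathsf{W}^{2}\mathrm{L}^{p,\mu}$ estimate on $v$ follows by combining the Gaffney bound on $\omega = dv$ with a Morrey-Poincaré inequality (valid for $\mathsf{W}^{1,p}$ functions with vanishing trace) to control $\lVert v \rVert_{\mathrm{L}^{p,\mu}}$, and uniqueness is standard by the energy identity in $W^{2,p}$.

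For part (ii), I directly set $\omega = du$ and apply Proposition \ref{Gaffney ineq in Morrey spaces}(ii) at $k=1$ with $f_{G} = 0$, $g_{G} = f$, $\omega_0 = du_0 \in \mathsf{W}^{1}\mathrm{L}^{p,\mu}$. The key observation is that the normal trace condition $\iota^{\ast}_{\partial\Omega}(\ast\omega) = \iota^{\ast}_{\partial\Omega}(\ast\omega_0)$ for $1$-forms encodes exactly $\partial u/\partial\nu = \partial u_0/\partial\nu$ on $\partial\Omega$, and integrating $d^{\ast}\omega$ by parts gives the solvability condition
\begin{equation*}
\int_{\Omega} f = \int_{\partial\Omega} \iota^{\ast}(\ast du_0) = \int_{\partial\Omega} \frac{\partial u_0}{\partial\nu},
\end{equation*}
which is precisely the stated hypothesis. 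The formal boundary-compatibility condition in the Gaffney proposition collapses to a triviality at $k=1$ (both sides are pullbacks of top-degree forms to the boundary). Invoking Gaffney produces a unique $\omega \in \mathsf{W}^{1}\mathrm{L}^{p,\mu}$ with the required estimate; contractibility again yields $\omega = du$ for some $u$, normalized by $\int_{\Omega} u = 0$, and $\Delta u = d^{\ast}du = d^{\ast}\omega = f$. The $\mathsf{W}^{2}\mathrm{L}^{p,\mu}$ estimate on $u$ follows from the $\mathsf{W}^{1}\mathrm{L}^{p,\mu}$ estimate on $\omega = \nabla u$ together with a zero-mean Morrey-Poincaré inequality to bound $\lVert u \rVert_{\mathrm{L}^{p,\mu}}$. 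Uniqueness in the zero-mean class follows by testing $\Delta w = 0$, $\partial w/\partial\nu = 0$ against $w$ and integrating by parts.

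The main obstacle I anticipate is bookkeeping on the boundary data: ensuring that the scalar Poincaré lemma on contractible $\Omega$ produces a primitive with the correct quantitative regularity in the Morrey-Sobolev scale, and that a Morrey-version of the Poincaré inequality (both Dirichlet and zero-mean flavors) is available to upgrade the $\mathsf{W}^{1}\mathrm{L}^{p,\mu}$ bound on $du$ to a full $\mathsf{W}^{2}\mathrm{L}^{p,\mu}$ bound on $u$. The latter reduces to the scalar Poincaré inequality after localization, combined with the Morrey scaling property, so it should cause no serious difficulty; the subtler point is that the constant correction for the Dirichlet reconstruction genuinely uses connectedness of $\partial\Omega$, which I will note explicitly.
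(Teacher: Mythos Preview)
Your proposal is correct and follows essentially the same route as the paper: reduce both problems to the Gaffney-type estimates of Proposition~\ref{Gaffney ineq in Morrey spaces} by setting $\omega = du$. The only difference is that the paper's proof is a two-line sketch that cites existence of a weak solution as standard and invokes Gaffney only for the a~priori estimate, whereas you also extract existence from Gaffney via the Poincar\'e lemma and are explicit about the Poincar\'e-type control of $\lVert u\rVert_{\mathrm{L}^{p,\mu}}$ and the boundary-connectedness issue; this is elaboration rather than a different argument.
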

	\begin{proof}
		Existence of weak solution is standard. For the estimates, apply the estimates in Proposition \ref{Gaffney ineq in Morrey spaces} to $\omega = du.$
	\end{proof}
	\subsubsection{Estimates for the critical case}
	\begin{lemma}\label{zero boundary value estimate lemma}
		Let $n \geq 3, N \geq 1$ be integers and and let $X$ be any $N$-dimensional real vector space. Let $1 < m \leq \frac{n}{2}$ be a real number and $\Omega \subset \mathbb{R}^{n}$ be an open, bounded, smooth, contractible set. Let $A \in \mathrm{L}^{2m, n-2m}\left(\Omega;\mathbb{R}^{n}\otimes X\right).$ If $\alpha \in W_{0}^{1,2}\left( \Omega; X\right)\cap \mathsf{W}^{1}\mathrm{L}^{2,n-2m}\left( \Omega; X\right) $ be a weak solution to \begin{align}\label{critical elliptic with zero boundary}
			\left\lbrace \begin{aligned}
				\Delta \alpha &= A\cdot \nabla \alpha + F &&\text{ in } \Omega, \\
				\alpha &=0 &&\text{ on } \partial\Omega,
			\end{aligned} \right. 
		\end{align}
		with $F \in \mathrm{L}^{q, n-2m}\left( \Omega; X\right)$ for some $ \frac{2m}{m+1} \leq q < 2m, $ then there exists a small constant $\varepsilon_{1}= \varepsilon_{1}\left( n,m, N, q, \Omega \right) > 0$ such that if $$ \left\lVert A \right\rVert_{\mathrm{L}^{2m, n-2m}\left(\Omega;\mathbb{R}^{n}\otimes X\right)} \leq \varepsilon_{1},$$ then we have $\alpha \in \mathsf{W}^{2}\mathrm{L}^{q,n-2m}\left( \Omega; X\right).$ Moreover, there exists a constant $C_{1} = C_{1}\left(n,m, N, q, \Omega\right) > 0$ such that 
		\begin{align*}
			\left\lVert \alpha \right\rVert_{\mathsf{W}^{2}\mathrm{L}^{q,n-2m}\left( \Omega; X\right)} \leq C_{1} 	\left\lVert F \right\rVert_{\mathrm{L}^{q,n-2m}\left( \Omega; X\right)}.
		\end{align*}
	\end{lemma}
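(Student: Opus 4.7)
The plan is to phrase the equation as a linear fixed-point problem in $Y := \mathsf{W}^{2}\mathrm{L}^{q,n-2m}(\Omega; X) \cap W_{0}^{1,2}(\Omega; X)$ and run a contraction argument driven by the smallness of $\|A\|_{\mathrm{L}^{2m,n-2m}}$. For $\beta \in Y$, define $T(\beta)$ to be the unique solution of the Dirichlet problem $\Delta v = A \cdot \nabla \beta + F$ in $\Omega$ with $v = 0$ on $\partial\Omega$ produced by Proposition \ref{Morrey estimates for the Laplacian}.

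First I would verify that $T$ sends $Y$ into itself. Given $\beta \in Y$, the Adams embedding in Proposition \ref{Adams embedding bounded domain}, applied to $\nabla\beta \in \mathsf{W}^{1}\mathrm{L}^{q,n-2m}$ (the condition $p+\lambda<n$ becomes $q<2m$, which is part of the hypothesis), yields $\nabla\beta \in \mathrm{L}^{\frac{2mq}{2m-q},\,n-2m}$. The Morrey Hölder inequality of Proposition \ref{holderMorrey} then gives
\[\|A \cdot \nabla\beta\|_{\mathrm{L}^{q,n-2m}} \leq \|A\|_{\mathrm{L}^{2m,n-2m}} \|\nabla\beta\|_{\mathrm{L}^{\frac{2mq}{2m-q},\,n-2m}} \leq C\|A\|_{\mathrm{L}^{2m,n-2m}} \|\beta\|_{\mathsf{W}^{2}\mathrm{L}^{q,n-2m}},\]
so $A\cdot\nabla\beta + F \in \mathrm{L}^{q,n-2m}$, and the Morrey estimate for the Dirichlet Laplacian produces $T(\beta) \in \mathsf{W}^{2}\mathrm{L}^{q,n-2m}$. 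The lower bound $q \geq \tfrac{2m}{m+1}$ guarantees $\tfrac{2mq}{2m-q} \geq 2$, so a second application of Adams gives $\nabla T(\beta) \in \mathrm{L}^{2,n-2m} \subset L^{2}$ and hence $T(\beta) \in W_{0}^{1,2}$ as well.

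The map $T$ is affine and $T(\beta_{1}) - T(\beta_{2})$ solves the homogeneous problem with source $A \cdot \nabla(\beta_{1} - \beta_{2})$, so the same estimates yield
\[\|T(\beta_{1}) - T(\beta_{2})\|_{\mathsf{W}^{2}\mathrm{L}^{q,n-2m}} \leq C_{0} \|A\|_{\mathrm{L}^{2m,n-2m}} \|\beta_{1} - \beta_{2}\|_{\mathsf{W}^{2}\mathrm{L}^{q,n-2m}}.\]
Choosing $\varepsilon_{1}$ so that $C_{0}\varepsilon_{1} \leq \tfrac{1}{2}$ makes $T$ a contraction, and the Banach fixed-point theorem supplies a unique $\tilde\alpha \in Y$ with $T(\tilde\alpha) = \tilde\alpha$ together with the quantitative bound $\|\tilde\alpha\|_{Y} \leq 2C\|F\|_{\mathrm{L}^{q,n-2m}}$.

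The main obstacle is identifying $\tilde\alpha$ with the given $\alpha$, i.e.\ establishing uniqueness at the weak level $W_{0}^{1,2} \cap \mathsf{W}^{1}\mathrm{L}^{2,n-2m}$ (both candidates sit in this space; for $\tilde\alpha$ this follows from $\mathsf{W}^{2}\mathrm{L}^{q,n-2m} \hookrightarrow \mathsf{W}^{1}\mathrm{L}^{2,n-2m}$ via Adams, using $q \geq \tfrac{2m}{m+1}$). The difference $w = \alpha - \tilde\alpha$ satisfies $\Delta w = A \cdot \nabla w$ weakly with zero trace, and in the supercritical range $1 < m < n/2$ the ordinary Sobolev embedding $W_{0}^{1,2} \hookrightarrow L^{2^{\ast}}$ falls short of the integrability $\tfrac{2m}{m-1}$ dual to $\tfrac{2m}{m+1}$. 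I would therefore test with $w$ and combine the Hölder estimate $|\int A \cdot \nabla w \cdot w| \leq \|A\|_{L^{2m}} \|\nabla w\|_{L^{2}} \|w\|_{L^{2m/(m-1)}}$ with the Adams embedding $\mathsf{W}^{1}\mathrm{L}^{2,n-2m} \hookrightarrow \mathrm{L}^{\frac{2m}{m-1},\,n-2m} \subset L^{\frac{2m}{m-1}}$ (using $m>1$), then bootstrap the resulting inequality on $w$ through the Morrey Laplacian estimate so that a further shrinkage of $\varepsilon_{1}$ forces $w\equiv 0$. With $\alpha = \tilde\alpha$ in hand, feeding $\alpha$ into the fixed-point bound and absorbing the $C_{0}\|A\|\|\alpha\|_{Y}$ term produces the advertised estimate $\|\alpha\|_{\mathsf{W}^{2}\mathrm{L}^{q,n-2m}} \leq C_{1} \|F\|_{\mathrm{L}^{q,n-2m}}$.
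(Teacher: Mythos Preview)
Your proposal is correct and follows essentially the same route as the paper: set up the affine map $T$ on $\mathsf{W}^{2}\mathrm{L}^{q,n-2m}\cap W_{0}^{1,2}$ via the Morrey Dirichlet estimate, use H\"older together with the Adams embedding $\mathsf{W}^{1}\mathrm{L}^{q,n-2m}\hookrightarrow \mathrm{L}^{\frac{2mq}{2m-q},n-2m}$ to get the contraction constant $C_{0}\lVert A\rVert_{\mathrm{L}^{2m,n-2m}}$, and then identify the fixed point with $\alpha$ by a uniqueness argument at the level $W_{0}^{1,2}\cap\mathsf{W}^{1}\mathrm{L}^{2,n-2m}$.

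The only place you diverge from the paper is the uniqueness step, and there your description takes a small detour. Testing with $w$ and using $\lvert\int A\cdot\nabla w\cdot w\rvert\le \lVert A\rVert_{L^{2m}}\lVert\nabla w\rVert_{L^{2}}\lVert w\rVert_{L^{2m/(m-1)}}$ together with Adams only yields $\lVert\nabla w\rVert_{L^{2}}\le C\varepsilon_{1}\lVert w\rVert_{\mathsf{W}^{1}\mathrm{L}^{2,n-2m}}$, which does not close by itself since the right-hand side is a Morrey norm. The paper skips this energy step entirely and goes straight to the Morrey Dirichlet estimate at exponent $\tfrac{2m}{m+1}$: from $\Delta w=A\cdot\nabla w$ one gets
\[
\lVert\nabla w\rVert_{\mathrm{L}^{2,n-2m}}\le C\lVert w\rVert_{\mathsf{W}^{2}\mathrm{L}^{\frac{2m}{m+1},n-2m}}\le C\lVert A\cdot\nabla w\rVert_{\mathrm{L}^{\frac{2m}{m+1},n-2m}}\le C\lVert A\rVert_{\mathrm{L}^{2m,n-2m}}\lVert\nabla w\rVert_{\mathrm{L}^{2,n-2m}},
\]
and smallness forces $w\equiv 0$. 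This is exactly what you call ``bootstrap through the Morrey Laplacian estimate'', so your argument lands in the right place; the preliminary testing can simply be dropped.
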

	\begin{proof}
		The proof of the claim is via a fixed point argument coupled with uniqueness. With $q$ as in the lemma, for any $v \in \mathsf{W}^{2}\mathrm{L}^{q,n-2m},$  let $T(v)$ be the solution of the equation $$\Delta \left( T(v) \right) = A.\nabla v + F$$ with zero boundary values. By H\"{o}lder inequality and Adam's embedding, we have 
		\begin{align*}
			\left\lVert A.\nabla v \right\rVert_{\mathrm{L}^{q, n-2m}} &\leq \left\lVert A \right\rVert_{\mathrm{L}^{2m, n-2m}} \left\lVert \nabla v \right\rVert_{\mathrm{L}^{\frac{2mq}{2m -q},n-2m}} \\
			&\leq C_{\text{S}}\left\lVert A \right\rVert_{\mathrm{L}^{2m, n-2m}} \left\lVert v \right\rVert_{\mathsf{W}^{2}\mathrm{L}^{q,n-2m}}. 
		\end{align*} Thus the term $A.\nabla v $ in the right hand side is $\mathrm{L}^{q,n-2m}$ and since $F \in \mathrm{L}^{q,n-2m}$ as well. By Proposition \ref{Morrey estimates for the Laplacian}, (i), we deduce  
		\begin{align*}
			\left\lVert T(v)\right\rVert_{\mathsf{W}^{2}\mathrm{L}^{q,n-2m}} &\leq C_{\text{e}}  \left\lVert \Delta \left( T(v) \right) \right\rVert_{\mathrm{L}^{q, n-2m}} \\&\leq C_{\text{e}}C_{\text{S}}\left\lVert A \right\rVert_{\mathrm{L}^{2m, n-2m}} \left\lVert v \right\rVert_{\mathsf{W}^{2}\mathrm{L}^{q,n-2m}} + C_{\text{e}}\left\lVert F \right\rVert_{\mathrm{L}^{q,n-2m}}. 
		\end{align*} Since $q \geq \frac{2m}{m+1},$ this implies $T$ maps $\mathsf{W}^{2}\mathrm{L}^{q,n-2m}\cap W_{0}^{1,2}$ to itself. Also, for any $v,w \in \mathsf{W}^{2}\mathrm{L}^{q,n-2m}\cap W^{1,2}_{0},$ we have the estimate $$ \left\lVert T(v)- T(w)\right\rVert_{\mathsf{W}^{2}\mathrm{L}^{q,n-2m}}\leq  C_{\text{e}}C_{\text{S}}\left\lVert A \right\rVert_{\mathrm{L}^{2m, n-2m}} \left\lVert  v - w \right\rVert_{\mathsf{W}^{2}\mathrm{L}^{q,n-2m}}. $$  Then we can choose $\left\lVert A \right\rVert_{\mathrm{L}^{2m, n-2m}} $ small enough such that $T$ is a contraction and conclude the existence of an unique fixed point $v_{0} \in \mathsf{W}^{2}\mathrm{L}^{q,n-2m}\cap W_{0}^{1,2}$ by Banach fixed point theorem. But then $v_{0}$ and $\alpha$ are both in  $W_{0}^{1,2}\left( \Omega; X\right)\cap \mathsf{W}^{1}\mathrm{L}^{2,n-2m}\left( \Omega; X\right) $ and are weak solutions of \eqref{critical elliptic with zero boundary}. Thus, we have the estimate  
		\begin{multline*}
			\left\lVert \nabla \alpha - \nabla v_{0}\right\rVert_{\mathrm{L}^{2, n-2m}} \leq C\left\lVert \Delta \left( \alpha - v_{0} \right) \right\rVert_{\mathrm{L}^{\frac{2m}{m+1}, n-2m}} \\\leq C \left\lVert A \right\rVert_{\mathrm{L}^{2m, n-2m}} \left\lVert \nabla \alpha - \nabla v_{0}  \right\rVert_{\mathrm{L}^{2, n-2m}}. 
		\end{multline*} Thus we can choose $\left\lVert A \right\rVert_{\mathrm{L}^{2m, n-2m}} $ small enough such that $C \left\lVert A \right\rVert_{\mathrm{L}^{2m, n-2m}}  < 1$ and hence the above estimate forces $\alpha=v_{0}.$ The result follows.
	\end{proof}
	\begin{lemma}[Elliptic estimate in Morrey-Sobolev setting]\label{ellipticCritical}
		Let $n \geq 3, N \geq 1$ be integers and and let $X$ be any $N$-dimensional real vector space. Let $1 < m \leq \frac{n}{2}$ and $\frac{2m}{m+1} \leq p < 2m$ be real numbers and $\Omega \subset \mathbb{R}^{n}$ be an open, bounded, smooth, contractible set.  Let $A \in \mathrm{L}^{2m, n-2m}\left(\Omega;\mathbb{R}^{n}\otimes X \right)$ and $f \in \mathrm{L}^{p, n-2m}\left(\Omega; X\right).$ There exists a constant $\varepsilon_{\Delta_{Cr}} = \varepsilon_{\Delta_{Cr}}\left( n,N,m, p, \Omega \right) > 0$ such that if $$ \left\lVert A \right\rVert_{\mathrm{L}^{2m, n-2m}\left(\Omega;\mathbb{R}^{n}\otimes X\right)} \leq \varepsilon_{\Delta_{Cr}},$$ then any weak solution $u \in W^{1,2}\left(\Omega; X\right)\cap \mathsf{W}^{1}\mathrm{L}^{2, n-2m}\left(\Omega; X\right)$ of 
		\begin{equation}\label{critical elliptic eqn}
			\Delta u = A\cdot \nabla u + f \qquad \text{ in } \Omega, 
		\end{equation} is in fact in $\mathsf{W}_{\text{loc}}^{2}\mathrm{L}^{p, n-2m}\left(\Omega; X\right).$ Furthermore, for any compact set $K \subset \subset \Omega, $  the exists a constant $C = C( \varepsilon_{\Delta_{Cr}}, n, N, m, p, \Omega, K ) \geq 1$ such that we have the estimate 
		\begin{equation}
			\left\lVert u \right\rVert_{\mathsf{W}^{2}\mathrm{L}^{p, n-2m}\left(K; \mathbb{R}^{n}\otimes X\right)} \leq C \left( \left\lVert u \right\rVert_{\mathsf{W}^{1}\mathrm{L}^{2, n-2m}\left(\Omega; X\right)} + \left\lVert f \right\rVert_{\mathrm{L}^{p, n-2m}\left(\Omega;X\right)} \right).  \end{equation}
		Moreover, the smallness parameter $\varepsilon_{\Delta_{Cr}}$ is scale invariant. More precisely, if $\Omega_{r}= \left\lbrace rx : x \in \Omega \right\rbrace $ is a rescaling of $\Omega,$ then $\Omega$ and $\Omega_{r}$ has the same $\varepsilon_{\Delta_{Cr}}.$ 
	\end{lemma}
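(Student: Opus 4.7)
The plan is to reduce the problem to the zero-boundary-value case already settled in Lemma \ref{zero boundary value estimate lemma}, via a standard cutoff localization followed by a finite bootstrap in the Morrey-Sobolev exponent. Fix a nested chain of smooth bounded contractible domains $K \subset\subset \Omega_N \subset\subset \cdots \subset\subset \Omega_1 \subset\subset \Omega_0 = \Omega$ together with cutoffs $\eta_k \in C_c^\infty(\Omega_{k-1})$ satisfying $\eta_k \equiv 1$ on $\Omega_k$. Setting $v_k := \eta_k u$, a Leibniz calculation gives
\[
\Delta v_k = A \cdot \nabla v_k + F_k \quad \text{in } \Omega, \qquad v_k\vert_{\partial \Omega} = 0,
\]
where $F_k = \eta_k f - (A \cdot \nabla \eta_k)\, u + 2\, \nabla \eta_k \cdot \nabla u + u\, \Delta \eta_k$. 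Crucially, every contribution to $F_k$ other than $\eta_k f$ is compactly supported in $\Omega_{k-1} \setminus \Omega_k$, a region where progressively better regularity for $u$ and $\nabla u$ becomes available at later iterations.

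At the first step ($k=1$), the hypothesis $u \in W^{1,2}(\Omega) \cap \mathsf{W}^1\mathrm{L}^{2, n-2m}(\Omega)$ combined with the Adams embedding (Proposition \ref{Adams embedding bounded domain}) yields $u \in \mathrm{L}^{2m/(m-1), n-2m}$; the H\"{o}lder inequality in Morrey spaces (Proposition \ref{holderMorrey}) then gives $(A \cdot \nabla \eta_1)\, u \in \mathrm{L}^{2, n-2m}$, while $\nabla \eta_1 \cdot \nabla u$ and $u\, \Delta \eta_1$ are trivially in $\mathrm{L}^{2, n-2m}$. Combined with $\eta_1 f \in \mathrm{L}^{p, n-2m}$ we obtain $F_1 \in \mathrm{L}^{\min(p,2), n-2m}$. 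Since $1 < m \le n/2$ forces $\min(p, 2) \in [\tfrac{2m}{m+1}, 2m)$, Lemma \ref{zero boundary value estimate lemma} applies once $\lVert A\rVert_{\mathrm{L}^{2m, n-2m}}$ is small enough, producing $u \in \mathsf{W}^2\mathrm{L}^{\min(p,2), n-2m}(\Omega_1)$ with the desired estimate. If $p \le 2$ the proof is complete.

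For $p > 2$ we bootstrap. Suppose at iteration $k$ we have $u \in \mathsf{W}^2\mathrm{L}^{p_k, n-2m}(\Omega_k)$ for some $p_k \in [2, p)$. Applying Adams on $\Omega_k$ to $\nabla u \in \mathsf{W}^1\mathrm{L}^{p_k, n-2m}$ yields $\nabla u \in \mathrm{L}^{2m p_k/(2m - p_k), n-2m}$, with a parallel gain for $u$. Feeding these improvements into $F_{k+1}$ -- whose singular terms live on $\Omega_k$ by construction -- produces $F_{k+1} \in \mathrm{L}^{p_{k+1}, n-2m}$ with
\[
p_{k+1} \;=\; \min\!\bigl(p,\, \tfrac{2m\, p_k}{2m - p_k}\bigr) \;>\; p_k.
\]
Since the pure iteration $p_k \mapsto \tfrac{2m p_k}{2m - p_k}$ is strictly increasing and tends to $\infty$ as $p_k \to 2m$, finitely many iterations reach $p_{k+1} = p$, yielding $u \in \mathsf{W}^2\mathrm{L}^{p, n-2m}(\Omega_N) \supset \mathsf{W}^2\mathrm{L}^{p, n-2m}(K)$. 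The threshold $\varepsilon_{\Delta_{Cr}}$ is taken as the minimum, over the finite set of exponents encountered, of the $\varepsilon_1$ from Lemma \ref{zero boundary value estimate lemma}.

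Scale invariance of $\varepsilon_{\Delta_{Cr}}$ follows by direct computation: under $y = x/r$, setting $\tilde u(x) := u(x/r)$ and $\tilde A(x) := r^{-1} A(x/r)$ transforms the equation into $\Delta_x \tilde u = \tilde A \cdot \nabla_x \tilde u + \tilde f$ on $\Omega_r$, and the Morrey norm $\lVert \cdot \rVert_{\mathrm{L}^{2m, n-2m}}$ carries scaling exponent $(n - (n-2m))/(2m) = 1$, which exactly cancels the $r^{-1}$ from the rescaling of $A$ as a $1$-form; hence $\lVert \tilde A \rVert_{\mathrm{L}^{2m, n-2m}(\Omega_r)} = \lVert A \rVert_{\mathrm{L}^{2m, n-2m}(\Omega)}$. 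The main obstacle is the bookkeeping of how constants and nested domains accumulate across the bootstrap; since the number of iterations depends only on $p$ and $m$, the final constant $C$ remains finite and depends only on the prescribed data $(n, N, m, p, \Omega, K)$.
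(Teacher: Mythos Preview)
Your proof is correct and follows essentially the same route as the paper: localize via cutoffs to reduce to Lemma~\ref{zero boundary value estimate lemma}, then bootstrap the Morrey exponent along the iteration $q \mapsto \frac{2m q}{2m-q}$ (the paper writes this explicitly as the sequence $q_k = \frac{2m}{m-k+1}$), taking $\varepsilon_{\Delta_{Cr}}$ as the minimum of the finitely many $\varepsilon_1$'s encountered. The scale-invariance argument is likewise the same, just run in the inverse direction ($\Omega \to \Omega_r$ rather than $\Omega_r \to \Omega$).
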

	\begin{remark}
		(i) The scale invariance will be used crucially later, since we are going chose balls with small enough radius such that the smallness condition on the scale-invariant $\mathrm{L}^{2m, n-2m}$ norm of $A$ is satisfied, it is necessary that $\varepsilon_{\Delta_{Cr}}$ stays the same as we scale down the radius. Note however that neither the estimate nor the constant $C$ appearing there is scale-invariant. But this would not cause problems for us. \smallskip 
		
		\noindent (ii) The result does not extend to $p=2m.$    
	\end{remark}
	\begin{proof}
		With lemma \ref{zero boundary value estimate lemma} in hand, the proof of this lemma is just about localizing and bootstrapping. To this end, we chose the smallest integer $l \geq 1$ such that $l \geq \left(m+1\right) -\frac{2m}{p}.$ For any $K \subset \subset \Omega,$ we choose open sets $\left\lbrace \Omega_{k}\right\rbrace_{1 \leq k \leq l}$ such that 
		\begin{align*}
			K \subset \subset \Omega_{l} \subset \subset \ldots \Omega_{k+1} \subset \subset \Omega_{k} \subset \subset \ldots \Omega_{1}\subset \subset \Omega.  
		\end{align*}
		Now we shall show that we can choose $\left\lVert A \right\rVert_{\mathrm{L}^{2m, n-2m}}$ small enough such that for any solution  of 
		\eqref{critical elliptic eqn}, 
		\begin{align*}
			u \in \mathsf{W}^{1}\mathrm{L}^{2, n-2m}\left(\Omega; X\right) \Rightarrow u \in \mathsf{W}^{2}\mathrm{L}^{\frac{2m}{m-l+1}, n-2m}\left(\Omega_{l}; X \right).
		\end{align*}
		We prove this by induction over $k.$ Let $1 \leq k \leq l-1$ and choose a smooth cutoff function $\zeta \in C_{c}^{\infty}\left(\Omega_{k}\right)$ such that $\zeta \equiv 1$ in a neighborhood of $\Omega_{k+1}.$ Since $u$ is a solution to \eqref{critical elliptic eqn}, by the induction hypothesis, $\zeta u \in W_{0}^{1,2} \left(\Omega_{k}; X \right) \cap \mathsf{W}^{2}\mathrm{L}^{\frac{2m}{m-k+1}, n-2m}\left(\Omega_{k}; X \right)$ solves 
		\begin{align}\label{localized equation}
			\Delta \left(\zeta u \right) = A \cdot \nabla \left(\zeta u \right) + \zeta f + u \left( \Delta \zeta -  A \cdot \nabla \zeta \right) + 2 \nabla \zeta \cdot \nabla u  \qquad \text{ in } \Omega_{l}.  
		\end{align}
		We set $\alpha = \zeta u$ and $F = \zeta f + u \left( \Delta \zeta -  A \cdot \nabla \zeta \right) + 2 \nabla \zeta \cdot \nabla u $ and plan to use lemma \ref{zero boundary value estimate lemma}. Note that by our choice of $l,$ the Morrey-integrability of $F$ is determined by the least regular third term, which is in $\mathrm{L}^{\frac{2m}{m-k}, n-2m}\left(\Omega_{k}; X \right).$ Thus, using lemma \ref{zero boundary value estimate lemma} with $q = \frac{2m}{m-k},$ we obtain $\zeta u \in \mathsf{W}^{2}\mathrm{L}^{\frac{2m}{m-(k+1)+1}, n-2m}\left(\Omega_{k}; X \right).$ Since $\phi$ is identically $1$ in a neighborhood of $\Omega_{k+1},$ this proves the induction step.  The same argument shows that $u \in \mathsf{W}^{2}\mathrm{L}^{2, n-2m}\left(\Omega_{1}; X \right),$ as we have $u \in \mathsf{W}^{1}\mathrm{L}^{2, n-2m}\left(\Omega; X\right).$ Thus we can start the induction and conclude that $u \in \mathsf{W}^{2}\mathrm{L}^{\frac{2m}{m-l+1}, n-2m}\left(\Omega_{l}; X \right).$ Once again we choose a smooth cut-off function $\zeta \in C_{c}^{\infty}\left(\Omega_{l}\right)$ such that $\zeta \equiv 1$ in a neighborhood of $K.$ Once again, $\zeta u $ satisfies \eqref{localized equation} in $\Omega_{l}.$ But since $l \geq \left(m+1\right) -\frac{2m}{p}$, this time the Morrey-integrability of $F$ is determined by the first term $\zeta f,$ which is $\mathrm{L}^{p, n-2m}\left(\Omega_{l}; \mathbb{R}^{N} \right).$ Thus applying lemma \ref{zero boundary value estimate lemma} once again, we deduce that $u \in \mathsf{W}^{2}\mathrm{L}^{p, n-2m}\left(K; \mathbb{R}^{n}\otimes X\right).$ We finally choose the smallness parameter to be the minimum of the smallness parameters in the finitely many steps. Combining the estimates in each step yields the final estimate.

		Now we show the scale invariance. If $u, A, f$ are as in the hypothesis of the lemma and satisfies \eqref{critical elliptic eqn} in $\Omega_{r},$ then the rescaled maps \begin{align*}
			\tilde{u}(x):= u (rx), \tilde{A}(x):= r A(rx) \text{ and } \tilde{f}(x):= r^{2}f(rx)
		\end{align*}
		satisfies \eqref{critical elliptic eqn} in $\Omega.$ The scale invariance of the smallness parameter now follows from the fact that $$\left\lVert \tilde{A} \right\rVert_{\mathrm{L}^{2m, n-2m}\left(\Omega;\mathbb{R}^{n}\otimes X \right)} = \left\lVert A \right\rVert_{\mathrm{L}^{2m, n-2m}\left(\Omega_{r};\mathbb{R}^{n}\otimes X \right)}.$$
		This completes the proof. \end{proof}
	\subsection{Existence of Morrey-Sobolev connections}
	Here we prove our class of bundle-connection pairs is not empty. 
	\begin{theorem}\label{existence of connection}
		Every $\mathsf{W}^{2}\mathrm{L}^{m, n-2m},$ respectively $\mathsf{W}^{2}\mathrm{VL}^{m, n-2m},$ bundle admits a $\mathsf{W}^{1}\mathrm{L}^{m, n-2m},$ respectively $\mathsf{W}^{1}\mathrm{VL}^{m, n-2m},$ connection. Moreover, the $\mathsf{W}^{1}\mathrm{L}^{m, n-2m}$ norm of the connection ( also the Morrey modulii ) can be estimated in terms of the same for the transition functions. 
	\end{theorem}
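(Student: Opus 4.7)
The plan is to use the classical partition-of-unity construction and verify that it outputs a connection of the required Morrey-Sobolev class. Fix a smooth partition of unity $\left\lbrace \chi_{\gamma}\right\rbrace_{\gamma \in I}$ subordinate to the given cover $\left\lbrace U_{\gamma}\right\rbrace_{\gamma \in I}$, which is available since $M^{n}$ is smooth and the cover is finite and shrinkable. On each $U_{\alpha}$, define
\begin{align*}
A_{\alpha} := -\sum_{\gamma \in I} \chi_{\gamma}\, dg_{\alpha\gamma}\, g_{\alpha\gamma}^{-1},
\end{align*}
where each summand is extended by zero outside $\operatorname{supp}\chi_{\gamma}\cap U_{\alpha} \subset U_{\alpha}\cap U_{\gamma}$, and the $\gamma = \alpha$ term vanishes since $g_{\alpha\alpha} = \mathbbm{1}_{G}$.

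To verify the gluing relation \eqref{gluing relation def}, I would differentiate the cocycle identity $g_{\alpha\gamma} = g_{\alpha\beta} g_{\beta\gamma}$ on $U_{\alpha}\cap U_{\beta}\cap U_{\gamma}$ to obtain the Maurer-Cartan type identity
\begin{align*}
dg_{\alpha\gamma}\, g_{\alpha\gamma}^{-1} = dg_{\alpha\beta}\, g_{\alpha\beta}^{-1} + g_{\alpha\beta}\bigl( dg_{\beta\gamma}\, g_{\beta\gamma}^{-1}\bigr) g_{\alpha\beta}^{-1}.
\end{align*}
Multiplying by $\chi_{\gamma}$, summing over $\gamma$, and invoking $\sum_{\gamma}\chi_{\gamma}\equiv 1$ immediately produces $A_{\alpha} = g_{\alpha\beta} A_{\beta} g_{\alpha\beta}^{-1} - dg_{\alpha\beta}\, g_{\alpha\beta}^{-1}$, which rearranges to precisely \eqref{gluing relation def}.

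Regularity of $A_{\alpha}$ is then a direct application of the tools in Section \ref{Prep estimates}. Since $G \subset \mathbb{R}^{N_{0}}$ is compact and inversion is smooth on $G$, $g_{\alpha\gamma}^{-1}$ inherits the $\mathsf{W}^{2}\mathrm{L}^{m,n-2m}$ (respectively $\mathsf{W}^{2}\mathrm{VL}^{m,n-2m}$) regularity of $g_{\alpha\gamma}$, is bounded in $L^{\infty}$, and satisfies $d(g^{-1}) = -g^{-1}(dg)g^{-1}$ almost everywhere. The Adams embedding of Proposition \ref{Adams embedding bounded domain} yields $dg_{\alpha\gamma},\ d(g_{\alpha\gamma}^{-1}) \in \mathrm{L}^{2m,n-2m}$ (respectively $\mathrm{VL}^{2m,n-2m}$), so $A_{\alpha} \in \mathrm{L}^{2m,n-2m}\subset \mathrm{L}^{m,n-2m}$. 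Expanding
\begin{align*}
dA_{\alpha} = -\sum_{\gamma}\Bigl[ d\chi_{\gamma}\wedge dg_{\alpha\gamma}\, g_{\alpha\gamma}^{-1} - \chi_{\gamma}\, dg_{\alpha\gamma}\wedge d(g_{\alpha\gamma}^{-1}) + \chi_{\gamma}\, (d^{2}g_{\alpha\gamma})\, g_{\alpha\gamma}^{-1}\Bigr],
\end{align*}
each term lies in $\mathrm{L}^{m,n-2m}$: the first by boundedness of $d\chi_{\gamma}$ and $g_{\alpha\gamma}^{-1}$; the second by the Hölder inequality in Morrey spaces (Proposition \ref{holderMorrey}) applied with $\frac{1}{m}=\frac{1}{2m}+\frac{1}{2m}$ and $\frac{n-2m}{m} = 2\cdot\frac{n-2m}{2m}$; the third directly from $d^{2}g_{\alpha\gamma}\in \mathrm{L}^{m,n-2m}$ times bounded factors. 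Quantitatively this produces a bound for $\left\lVert A_{\alpha} \right\rVert_{\mathsf{W}^{1}\mathrm{L}^{m,n-2m}}$ by a polynomial of degree two in $\max_{\gamma}\left\lVert g_{\alpha\gamma}\right\rVert_{\mathsf{W}^{2}\mathrm{L}^{m,n-2m}}$, with constants depending only on the cover, the partition of unity, $n, m, G$.

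For the vanishing Morrey conclusion, every summand above has vanishing $\mathrm{L}^{m,n-2m}$ modulus: the linear-in-derivative terms by the vanishing version of the Adams embedding together with boundedness of the smooth cutoffs, and the quadratic term by the pointwise modulus estimate $\Theta^{fg}_{m,n-2m}(\rho)\leq \Theta^{f}_{2m,n-2m}(\rho)\,\Theta^{g}_{2m,n-2m}(\rho) \to 0$ as $\rho\to 0$, which follows by applying the Hölder inequality ball-by-ball. The sole conceptual obstacle is the quadratic term $dg_{\alpha\gamma}\wedge d(g_{\alpha\gamma}^{-1})$, which is critical both in summability and in the vanishing-modulus statement, but is controlled precisely by the scale-invariant Hölder inequality above. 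Everything else is routine bookkeeping; in particular the modulus estimate on $A_{\alpha}$ is controlled by the modulii of $g_{\alpha\gamma}$ and their first derivatives.
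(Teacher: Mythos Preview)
Your construction is exactly the paper's: the same partition-of-unity formula $A_{\alpha} = -\sum_{\gamma}\chi_{\gamma}\,dg_{\alpha\gamma}\,g_{\alpha\gamma}^{-1}$, the same cocycle-differentiation check of the gluing relation, and the same H\"older/Adams verification of the Morrey-Sobolev regularity (indeed you spell out more detail than the paper does). One notational point: in your displayed expansion you want the full gradient $\nabla A_{\alpha}$ rather than the exterior derivative $dA_{\alpha}$, so that the second-derivative term $\chi_{\gamma}\,(\nabla dg_{\alpha\gamma})\,g_{\alpha\gamma}^{-1}$ genuinely appears (with the exterior $d$ it would vanish since $d^{2}=0$); your subsequent estimate of that term is correct once read this way.
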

	\begin{proof}
		Let $P = \left( \left\lbrace U_{\alpha}\right\rbrace_{\alpha \in I}, \left\lbrace g_{\alpha\beta}\right\rbrace_{\alpha, \beta \in I}\right)$ be a $\mathsf{W}^{2}\mathrm{L}^{m, n-2m}$ bundle. Pick a partition of unity $\left\lbrace \zeta_{\alpha} \right\rbrace_{\alpha}$ subordinate to the cover $\left\lbrace U_{\alpha}\right\rbrace_{\alpha \in I}$ such that 
		\begin{align*}
			\sup\limits_{\alpha \in I}\left\lvert \nabla \zeta_{\alpha} \right\rvert < C,
		\end{align*} for some constant $C>0,$ dependng on the cover, and define 
		\begin{equation}\label{definingconn}
			A_{\alpha} := \sum_{\substack{\beta \in I ,\beta \neq \alpha, \\ U_{\alpha}\cap U_{\beta} \neq \emptyset}} \zeta_{\beta} g_{\beta\alpha}^{-1} dg_{\beta\alpha} := \sum_{\substack{\beta \in I ,\beta \neq \alpha, \\ U_{\alpha}\cap U_{\beta} \neq \emptyset}} - \zeta_{\beta} dg_{\alpha\beta}g_{\alpha\beta}^{-1}    \qquad \text{ for each } \alpha \in I.
		\end{equation}
		By a straight forward computation, we can verify the gluing condition \begin{equation*}
			A_{\beta} = g_{\alpha\beta}^{-1} d g_{\alpha\beta} + g_{\alpha\beta}^{-1} A_{\alpha} g_{\alpha\beta} \qquad \text{ a.e. in } U_{\alpha} \cap U_{\beta}
		\end{equation*} holds for every $\alpha, \beta \in I$ whenever $U_{\alpha} \cap U_{\beta} \neq \emptyset.$  It is also easy to check $ A_{\alpha} \in \mathsf{W}^{1}\mathrm{L}^{m, n-2m}$ with corresponding estimates. This completes the proof.
	\end{proof}

	\subsection{Coulomb gauges and Coulomb bundles}
	\subsubsection{Local Coulomb gauges in Morrey spaces}
	We first prove the existence of local Coulomb gauges for smooth connections with curvature being small in the critical Morrey spaces. The scheme of the proof is well known and goes back to Uhlenbeck \cite{UhlenbeckGaugefixing} for the Lebesgue case $n=2m=4.$ Adaptation to the Morrey case when $m=2$ and $n \geq 5$ is obtained in Theorem 3.7 in \cite{MeyerRiviere_YangMills} and independently also in Theorem 4.6 in \cite{TaoTian_YangMills}. The main point of the proof in Meyer-Rivi\`{e}re \cite{MeyerRiviere_YangMills} is an interpolation inequality. This inequality is known to hold in $L^{p}$ setup too ( see \cite{Strzelecki_BMOGagliardoNirenberg} ) and can be used to prove our Theorem \ref{existence of Coulomb gauges in better spaces}. However, we provide a proof based on Gaffney inequality in Morrey spaces.  
	\begin{theorem}[Existence of Coulomb gauges for smooth connections]\label{existence of Coulomb gauges in better spaces}
		Let $n \geq 4$ be an integer and let $1 < m \leq \frac{n}{2} ,$ $r>0 $ be real numbers, $x_{0} \in \mathbb{R}^{n}$ and let $B_{r}(x_{0})\subset \mathbb{R}^{n}$ be the ball of radius $r$ around $x_{0}$. Then there exist constants $\varepsilon_{Coulomb} = \varepsilon_{Coulomb}\left( G, n \right)  > 0 $ and $C_{Coulomb}= C_{Coulomb}\left( G, n \right)$ such that for any $A \in C^{\infty}\left(\overline{B_{r}(x_{0})}; \Lambda^{1}\mathbb{R}^{n}\otimes \mathfrak{g}\right)$  and 
		\begin{align}
			\left\lVert F_{A}\right\rVert_{\mathrm{L}^{m,n-2m}\left( B_{r}(0);\Lambda^{2} T^{\ast}B_{r}(0)\otimes \mathfrak{g} \right)} \leq \varepsilon_{Coulomb},
		\end{align}
		there exists $\rho \in \mathsf{W}^{2}\mathrm{L}^{m,n-2m}\left(B_{r}(x_{0}) ; G \right)$ such that 
		\begin{align}\label{Coulomb gauge BVP}
			\left\lbrace \begin{aligned}
				&d^{\ast} A^{\rho} = 0 \quad \text{ in } B_{r}(x_{0}),  \\
				&\iota^{\ast}_{\partial B_{r}(x_{0}) } \left( \ast A^{\rho}\right)  = 0 \quad \text{ on } \partial B_{r}(x_{0})	\end{aligned}\right.
		\end{align}
		and we have the estimates 
		\begin{align}\label{Coulomb gauge coercivity}
			\left\lVert \nabla A^{\rho}\right\rVert_{\mathrm{L}^{m,n-2m}} + \left\lVert  A^{\rho}\right\rVert_{\mathrm{L}^{2m,n-2m}}  \leq C_{Coulomb} \left\lVert F_{A}\right\rVert_{\mathrm{L}^{m,n-2m}}, 
		\end{align}	
		\begin{align}\label{Coulomb gauge first derivative}
			\left\lVert  d\rho\right\rVert_{\mathrm{L}^{2m,n-2m}} \leq C_{G}\left( C_{Coulomb}\left\lVert F_{A}\right\rVert_{\mathrm{L}^{m,n-2m}}  + \left\lVert  A\right\rVert_{\mathrm{L}^{2m,n-2m}} \right)
		\end{align}
		and 
		\begin{align}
			\left\lVert  \nabla d\rho\right\rVert_{\mathrm{L}^{m,n-2m}} \leq C_{G}&\left( \left\lVert  \nabla A\right\rVert_{\mathrm{L}^{m,n-2m}}  + C_{Coulomb}\left\lVert F_{A}\right\rVert_{\mathrm{L}^{m,n-2m}}\right) 
			\notag \\ &+C_{G}\left( C_{Coulomb}\left\lVert F_{A}\right\rVert_{\mathrm{L}^{m,n-2m}}  + \left\lVert  A\right\rVert_{\mathrm{L}^{2m,n-2m}} \right)^{2},\label{Coulomb gauge second derivative}
		\end{align}
		where $C_{G} \geq 1 $ is an $L^{\infty}$ bound for $G.$	
	\end{theorem}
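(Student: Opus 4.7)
The strategy is Uhlenbeck's continuity method, adapted to the Morrey-Sobolev scale through the Gaffney inequality of Proposition~\ref{Gaffney ineq in Morrey spaces}(ii), the Adams--Peetre embedding $\mathsf{W}^{1}\mathrm{L}^{m,n-2m}\hookrightarrow\mathrm{L}^{2m,n-2m}$ (Proposition~\ref{Adams embedding bounded domain}), and the H\"{o}lder inequality in Morrey spaces (Proposition~\ref{holderMorrey}). For a large universal constant $K$ to be chosen, consider the path $A_t:=tA$, $t\in[0,1]$, and the continuity set
\[
T:=\{\,t\in[0,1] : \exists\,\rho_t\in\mathsf{W}^{2}\mathrm{L}^{m,n-2m}(B_r(x_0);G)\text{ with } A_t^{\rho_t}\text{ satisfying \eqref{Coulomb gauge BVP} and } \|A_t^{\rho_t}\|_{\mathsf{W}^{1}\mathrm{L}^{m,n-2m}}\leq K\varepsilon_{Coulomb}\,\}.
\]
Trivially $0\in T$ with $\rho_0\equiv\mathbbm{1}_G$. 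Openness of $T$ at $t_0$ follows from the implicit function theorem applied, after composition with $\rho_{t_0}$, to the map $(s,u)\mapsto d^{*}\bigl(A_{t_0+s}^{\rho_{t_0}e^{u}}\bigr)$ with $u\in\mathsf{W}^{2}\mathrm{L}^{m,n-2m}(B_r(x_0);\mathfrak{g})$ subject to the linearized normal boundary condition $\iota^{*}(\ast du)=0$. The linearization in $u$ at $(0,0)$ is $\Delta u$ plus a perturbation of operator norm controlled by $\|A_{t_0}^{\rho_{t_0}}\|_{\mathsf{W}^{1}\mathrm{L}^{m,n-2m}}$, and invertibility on mean-zero data is supplied by Proposition~\ref{Morrey estimates for the Laplacian}(ii) once $K\varepsilon_{Coulomb}$ is small.

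Closedness hinges on the a priori estimate. Writing $\omega:=A_t^{\rho_t}$, we have $d^{*}\omega=0$ in $B_r(x_0)$, $\iota^{*}(\ast\omega)=0$ on $\partial B_r(x_0)$, and $d\omega=F_{\omega}-\omega\wedge\omega$, with $d\omega$ closed by Bianchi. Uniqueness in Proposition~\ref{Gaffney ineq in Morrey spaces}(ii) (applied with $f=d\omega$, $g=0$, $\omega_0=0$) forces $\omega$ to coincide with the solution it furnishes, so
\[
\|\omega\|_{\mathsf{W}^{1}\mathrm{L}^{m,n-2m}}\leq C_e\|d\omega\|_{\mathrm{L}^{m,n-2m}}\leq C_e\|F_{\omega}\|_{\mathrm{L}^{m,n-2m}}+C_eC_S\|\omega\|_{\mathsf{W}^{1}\mathrm{L}^{m,n-2m}}^{2},
\]
where $\|\omega\wedge\omega\|_{\mathrm{L}^{m,n-2m}}\leq\|\omega\|_{\mathrm{L}^{2m,n-2m}}^{2}\leq C_S\|\omega\|_{\mathsf{W}^{1}\mathrm{L}^{m,n-2m}}^{2}$ by Propositions~\ref{holderMorrey} and~\ref{Adams embedding bounded domain}. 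Gauge invariance of the pointwise norm of the curvature gives $\|F_{\omega}\|_{\mathrm{L}^{m,n-2m}}=\|F_{A_t}\|_{\mathrm{L}^{m,n-2m}}$, which, by continuity of $t\mapsto\|F_{A_t}\|_{\mathrm{L}^{m,n-2m}}$ and smoothness of $A$, is $\leq\varepsilon_{Coulomb}$ on the subpath where the continuity method operates. Choosing $\varepsilon_{Coulomb}$ sufficiently small depending on $K,C_e,C_S$, the quadratic term is absorbed and $\|\omega\|_{\mathsf{W}^{1}\mathrm{L}^{m,n-2m}}\leq 2C_e\|F_{A_t}\|_{\mathrm{L}^{m,n-2m}}<K\varepsilon_{Coulomb}$ strictly, preventing escape. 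For a sequence $t_k\to t_\infty$ in $T$, the identity $d\rho_{t_k}=\rho_{t_k}A_{t_k}^{\rho_{t_k}}-t_kA\rho_{t_k}$ combined with $\|\rho_{t_k}\|_{L^\infty}\leq C_G$ upgrades the bound to a uniform $\mathsf{W}^{2}\mathrm{L}^{m,n-2m}$ control on $\rho_{t_k}$; Proposition~\ref{Morrey uniformly bounded} extracts a weak limit, and strong $L^p$-convergence of $\rho_{t_k}$ together with the $L^\infty$-bound lets one pass to the limit in all nonlinear products and in the boundary trace. Hence $T$ is closed, $T=[0,1]$, and $\rho:=\rho_1$ satisfies \eqref{Coulomb gauge BVP} and \eqref{Coulomb gauge coercivity}.

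Estimates \eqref{Coulomb gauge first derivative} and \eqref{Coulomb gauge second derivative} then follow by rearranging the gauge-change relation \eqref{gauge change relation} as $d\rho=\rho A^{\rho}-A\rho$ and applying the Morrey H\"{o}lder inequality together with $\|\rho\|_{L^\infty}\leq C_G$; differentiating once more yields the second-order bound. The scale invariance of $\varepsilon_{Coulomb}$ is preserved because $\mathrm{L}^{m,n-2m}$ on $2$-forms and $\mathrm{L}^{2m,n-2m}$ on $1$-forms are invariant under the natural rescaling $A(x)\mapsto rA(rx)$, and the Gaffney constant on balls is likewise invariant. The principal obstacle is that the bootstrap closes only marginally: the H\"{o}lder--Adams chain $\|\omega\wedge\omega\|_{\mathrm{L}^{m,n-2m}}\leq C\|\omega\|_{\mathsf{W}^{1}\mathrm{L}^{m,n-2m}}^{2}$ is exactly quadratic with no sub-quadratic slack, so the smallness threshold must be calibrated carefully to absorb the quadratic term, and the scale invariance of every appearing constant must be tracked throughout so that $\varepsilon_{Coulomb}$ is genuinely independent of $r$ and $x_0$.
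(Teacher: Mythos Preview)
Your overall architecture---Uhlenbeck continuity method, Gaffney inequality in Morrey spaces, Adams embedding, absorption of the quadratic term---matches the paper's. However, two technical choices create genuine gaps.

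\textbf{The linear path does not keep the curvature small.} For $A_t=tA$ one has $F_{tA}=tF_A-t(1-t)\,A\wedge A$, so $\|F_{A_t}\|_{\mathrm{L}^{m,n-2m}}\leq t\|F_A\|_{\mathrm{L}^{m,n-2m}}+t(1-t)\|A\|_{\mathrm{L}^{2m,n-2m}}^{2}$. The hypothesis bounds only $\|F_A\|$, not $\|A\|$, so $\|F_{A_t}\|$ can exceed $\varepsilon_{Coulomb}$ for intermediate $t$, and your a~priori estimate (which needs $\|F_{A_t}\|\leq\varepsilon_{Coulomb}$) is unavailable there. Your sentence ``by continuity of $t\mapsto\|F_{A_t}\|$\dots is $\leq\varepsilon_{Coulomb}$ on the subpath'' does not follow. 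The paper avoids this by using the scaling path $A_t(x):=tA(tx)$, for which $F_{A_t}(x)=t^{2}F_A(tx)$ and the scale-invariant Morrey norm is monotone in $t$; equivalently, it shows the ambient set $U_\varepsilon$ is star-shaped about $0$ under this deformation.

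\textbf{The implicit function theorem is set up in the wrong space.} You apply the IFT to $u\mapsto d^{\ast}\bigl((A_{t_0})^{\rho_{t_0}e^{u}}\bigr)$ with $u\in\mathsf{W}^{2}\mathrm{L}^{m,n-2m}(B_r;\mathfrak{g})$. But the critical space embeds only into $\mathrm{BMO}$, not $C^{0}$, so $u$ need not be pointwise small and the Nemytskii map $u\mapsto e^{u}$ is not obviously well-defined, let alone $C^{1}$, between these Banach spaces. The paper fixes this by introducing a slack parameter $\kappa>0$ and running the entire continuity method in the subcritical space $\mathsf{W}^{1}\mathrm{L}^{m,n-2m+\kappa}$ (harmless since $A$ is smooth), where $\mathsf{W}^{2}\mathrm{L}^{m,n-2m+\kappa}\hookrightarrow C^{0}$ makes the nonlinear map $C^{1}$; the critical $\mathrm{L}^{m,n-2m}$ estimates are carried along as additional a~priori inequalities in the definition of the ``good'' set $V_\varepsilon^{C_0}$. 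The closedness step likewise uses strong convergence in the subcritical norm (hence $C^{0}$ compactness of the gauges), rather than the weak-limit extraction you propose in the critical space.
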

	\begin{proof}
		The translation invariance is of course obvious. So we can assume $x_{0}= 0 \in \mathbb{R}^{n}.$ Notice that if $A: B_{r} \rightarrow \Lambda^{1}\mathbb{R}^{n}\otimes \mathfrak{g}$ is a connection, the rescaled connection $\tilde{A}(x):= r A(rx)$ is a connection on $B_{1}$ with curvature $F_{\tilde{A}}(x) = r^{2} F_{A}(rx)$ and
		$ \nabla \tilde{A} \left(x \right) = r^{2}\nabla A \left(rx\right).$ From this we can easily check that for any $z_{0} \in B_{1}$ and any $0 < \rho < 1,$ we have the identities
		\begin{align*}
			\frac{1}{\rho^{n-2m}}\int_{B_{\rho}\left( z_{0}\right)\cap B_{1}}\left\lvert \tilde{A} \right\rvert^{2m} &= \frac{1}{\left( r\rho\right)^{n-2m}}\int_{B_{r\rho}\left( z_{0}\right)\cap B_{r}}\left\lvert A \right\rvert^{2m}, \\
			\frac{1}{\rho^{n-2m}}\int_{B_{\rho}\left( z_{0}\right)\cap B_{1}}\left\lvert \nabla \tilde{A} \right\rvert^{m} &= \frac{1}{\left( r\rho\right)^{n-2m}}\int_{B_{r\rho}\left( rz_{0}\right)\cap B_{r}}\left\lvert \nabla A \right\rvert^{m}, 
			\intertext{ and }  \frac{1}{\rho^{n-2m}}\int_{B_{\rho}\left( z_{0}\right)\cap B_{1}}\left\lvert F_{\tilde{A}} \right\rvert^{m} &= \frac{1}{\left( r\rho\right)^{n-2m}}\int_{B_{r\rho}\left( rz_{0}\right)\cap B_{r}}\left\lvert F_{A} \right\rvert^{m} .
		\end{align*} This implies the scale invariance of all the relevant Morrey norms. Similarly, for any map $g:B_{r} \rightarrow G$, we use the rescaled map $\tilde{g}: B_{1} \rightarrow G$ defined by $g\left(x\right):= g\left(rx\right).$ Hence without loss of generality, we can assume $r=1$ and $x_{0}=0 \in \mathbb{R}^{n}.$ \smallskip 
		
		Now fix $\kappa >0.$ For any $\varepsilon>0$ and $C > 0, $ we define the sets 
		\begin{align*}
			U_{\varepsilon}&:= \left\lbrace \begin{multlined}
				A \in \mathsf{W}^{1}\mathrm{L}^{m,n-2m+ \kappa}\left(B_{1}; \Lambda^{1}\mathbb{R}^{n}\otimes \mathfrak{g}\right) \\ \text{ such that } \left\lVert F_{A}\right\rVert_{\mathrm{L}^{m,n-2m}\left( B_{1};\Lambda^{2} \mathbb{R}^{n}\otimes \mathfrak{g} \right)} \leq \varepsilon.
			\end{multlined}\right\rbrace \intertext{ and }
			V_{\varepsilon}^{C_{0}}&:= \left\lbrace \begin{aligned}
				A &\in U_{\varepsilon} \text{ such that  there exists } \rho \in  \mathsf{W}^{2}\mathrm{L}^{m,n-2m+ \kappa}\left(B_{1} ; G \right)\\ &\text{ so that  } A^{\rho}   \text{ satisfies } \eqref{Coulomb gauge BVP} \text{ and the estimates } \\& 
				\left\lVert \nabla A^{\rho}\right\rVert_{\mathrm{L}^{m,n-2m}} + \left\lVert  A^{\rho}\right\rVert_{\mathrm{L}^{2m,n-2m}} \leq C_{0} \left\lVert F_{A}\right\rVert_{\mathrm{L}^{m,n-2m}}, \\
				&\left\lVert \nabla A^{\rho}\right\rVert_{\mathrm{L}^{m,n-2m+\kappa}} + \left\lVert  A^{\rho}\right\rVert_{\mathrm{L}^{2m,n-2m+2\kappa}} \leq C_{\kappa} \left\lVert F_{A}\right\rVert_{\mathrm{L}^{m,n-2m +\kappa}}. 
			\end{aligned}\right\rbrace ,  
		\end{align*}
		for some constant $C_{\kappa} >0,$ depending only on $m, n, G, \mathfrak{g}, \kappa. $
		Now we want to show that there exists $\varepsilon >0$ and $C_{0}>0$ such that 
		$$ U_{\varepsilon} = V_{\varepsilon}^{C_{0}}.$$
		
		\noindent\textbf{\textit{$U_{\varepsilon}$ is connected: }} Since $\kappa >0,$ it is easy to check that for any $\varepsilon >0,$ $U_{\varepsilon}$ is path-connected and hence connected. Indeed, for any $A \in U_{\varepsilon}$ the connection $A_{t}\left(x\right):= t A\left( tx\right) \in U_{\varepsilon}$ for any $t \in [0,1]$ and consequently $U_{\varepsilon}$ is star-shaped about the zero connection. Thus, the proof boils down to showing the existence of $\varepsilon>0$ and $C_{0}>0$ such that $V_{\varepsilon}^{C_{0}}$ is both open and closed in $U_{\varepsilon}.$ \bigskip

		\noindent\textbf{\textit{Openness of $V_{\varepsilon}^{C_{0}}$: }} Let $A \in V_{\varepsilon}^{C_{0}}.$ Without loss of generality, we can assume that $A$ satisfies 
		\begin{align*}
			\left\lbrace \begin{aligned}
				d^{\ast} A &= 0 &&\text{ in } B_{1},  \\
				\iota^{\ast}_{\partial B_{1} } \left( \ast A \right)  &= 0 &&\text{ on } \partial B_{1}.	\end{aligned}\right. \end{align*}
		and the estimate \begin{align*}
			\left\lVert \nabla A\right\rVert_{\mathrm{L}^{m,n-2m}} + \left\lVert  A\right\rVert_{\mathrm{L}^{2m,n-2m}} \leq C_{0} \left\lVert F_{A}\right\rVert_{\mathrm{L}^{m,n-2m}} \leq C_{0}\varepsilon.	
		\end{align*}
		We want to find $\delta >0$ sufficiently small, possibly depending on $A$ such that for any $\omega$ with $$\left\lVert \omega \right\rVert_{\mathsf{W}^{1}\mathrm{L}^{m,n-2m+ \kappa}\left(B_{1}; \Lambda^{1}\mathbb{R}^{n}\otimes \mathfrak{g}\right)} < \delta, $$
		there exists $g \in \mathsf{W}^{2}\mathrm{L}^{m,n-2m+ \kappa}\left(B_{1} ; G \right),$ such that $g$ is close to the constant map  $\mathbbm{1}_{G}$ in $\mathsf{W}^{2}\mathrm{L}^{m,n-2m+ \kappa}\left(B_{1} ; G \right)$ norm and satisfies 
		\begin{align*}
			\left\lbrace \begin{aligned}
				d^{\ast} \left[ g^{-1}dg + g^{-1}\left(A + \omega\right)g\right] &= 0 &&\text{ in } B_{1},  \\
				\iota^{\ast}_{\partial B_{1} } \left( \ast \left[ A +\omega\right]^{g}\right)  &= 0 &&\text{ on } \partial B_{1}.	\end{aligned}\right.
		\end{align*}
		Now let us set 
		\begin{align*}
			\mathscr{X}_{1} &:= \mathsf{W}^{1}\mathrm{L}^{m,n-2m+ \kappa}\left(B_{1}; \Lambda^{1}\mathbb{R}^{n}\otimes \mathfrak{g}\right), \\
			\mathscr{X}_{2}&:= \left\lbrace U \in \mathsf{W}^{2}\mathrm{L}^{m,n-2m+ \kappa}\left(B_{1}; \mathfrak{g}\right): \fint_{B_{1}} U = 0 \right\rbrace, \\ 
			\mathscr{Y}&:=  	\left[ \iota^{\ast}_{\partial B_{1} } \circ  \ast \right] \left( X_{1}\right), \\
			\mathscr{Z}&:= \left\lbrace \left( f, g\right)\in \mathrm{L}^{m,n-2m+ \kappa}\left(B_{1}; \mathfrak{g}\right)\times Y: \int_{B_{1}}f + \int_{\partial B_{1}}g = 0 \right\rbrace, 
		\end{align*}
		where $\left[ \iota^{\ast}_{\partial B_{1} } \circ  \ast \right]$ denotes the normal trace map and thus $Y$ is the range of the normal trace map. It is easy to see all these spaces are Banach spaces. We consider the nonlinear map 
		$	\mathcal{N}^{A}:\mathscr{X}_{1}\times \mathscr{X}_{2}\rightarrow \mathscr{Z} ,$ defined by 
		\begin{align*}
			\mathcal{N}^{A}\left(\omega, U\right) = \left( d^{\ast} \left[ g_{U}^{-1}dg_{U} + g_{U}^{-1}\left(A + \omega\right)g_{U}\right], 	\iota^{\ast}_{\partial B_{1} }  \ast \left[ A +\omega\right]^{g_{U}} \right)
		\end{align*}
		Since  $\mathsf{W}^{2}\mathrm{L}^{m,n-2m + \kappa}$ embeds into $C^{0},$  the map $\mathcal{N}^{A}$ is $C^{1}.$ The partial derivative along the $U$ direction at $(0,0)$ is 
		\begin{align*}
			\partial_{U}	\mathcal{N}^{A}\left(0, 0\right)\left[ V \right] &= \left(d^{\ast}d V + \ast \left[ dV \wedge \left(  \ast A \right)\right], \partial_{r}V\right) \\
			&= \left(\Delta  V + \ast \left[ dV \wedge \left(  \ast A \right)\right], \partial_{r}V\right)\qquad \text{ for all } V \in \mathscr{X}_{2}. 
		\end{align*} Now Proposition \ref{Morrey estimates for the Laplacian}, (ii) implies that the linear map 
		$T: \mathscr{X}_{2} \rightarrow \mathscr{Z} ,$  defined by 
		\begin{align*}
			T\left[V\right] = \left(  \Delta V , \partial_{r}V\right)
		\end{align*} is invertible. Now we can easily estimate 
		\begin{multline*}
			\left\lVert 	T\left[V\right] - \partial_{U}	\mathcal{N}^{A}\left(0, 0\right)\left[ V \right] \right\rVert_{\mathscr{Z}} \\\begin{aligned}
				&= \left\lVert \ast \left[ dV \wedge \left(  \ast A \right)\right] \right\rVert_{\mathrm{L}^{m,n-2m+ \kappa}\left(B_{1}; \mathfrak{g}\right)} \\&\leq C\left\lVert  A \right\rVert_{\mathrm{L}^{2m,n-2m}\left(B_{1}; \Lambda^{1}\mathbb{R}^{n}\otimes \mathfrak{g}\right)}\left\lVert dV  \right\rVert_{\mathrm{L}^{2m,n-2m+ 2\kappa}\left(B_{1}; \Lambda^{1}\mathbb{R}^{n}\otimes \mathfrak{g}\right)}\\
				&\leq CC_{0}\varepsilon \left\lVert dV  \right\rVert_{\mathrm{L}^{2m,n-2m+ 2\kappa}\left(B_{1}; \Lambda^{1}\mathbb{R}^{n}\otimes \mathfrak{g}\right)}\\&\stackrel{\textbf{emb}}{\leq} CC_{0}\varepsilon\left\lVert V  \right\rVert_{\mathsf{W}^{2}\mathrm{L}^{m,n-2m+ \kappa}\left(B_{1}; \mathfrak{g}\right)}. 
			\end{aligned}
		\end{multline*}
		Thus, we have 
		\begin{align*}
			\left\lVert T - \partial_{U} \mathcal{N}^{A}\left(0, 0\right)\right\rVert_{\mathcal{B}\left(\mathscr{X}_{2}; \mathscr{Z} \right)}
			&:= \sup\limits_{\left\lVert V\right\rVert_{\mathscr{X}_{2}}\leq 1}  \left\lVert 	\left[ T - \partial_{U}	\mathcal{N}^{A}\left(0, 0\right)\right] \left[ V \right] \right\rVert_{\mathscr{Z}} \\
			&\leq CC_{0}\varepsilon. 
		\end{align*}
		Thus, by choosing $\varepsilon >0$ small enough, for any fixed $C_{0},$ we can make the operator norm of $T - \partial_{U} \mathcal{N}^{A}\left(0, 0\right)$ less than $1,$ which would imply, by standard functional analysis arguments via the Neumann series that $\mathcal{N}^{A}\left(0, 0\right)$ is invertible. Now inverse function theorem implies the existence of $\delta >0$ and an open neighborhood $0 \in \mathscr{O} \subset  	\mathscr{X}_{2}$ such that for all $\omega \in \mathscr{X}_{1}$ with 
		\begin{align*}
			\left\lVert \omega \right\rVert_{\mathsf{W}^{1}\mathrm{L}^{m,n-2m+ \kappa}\left(B_{1}; \Lambda^{1}\mathbb{R}^{n}\otimes \mathfrak{g}\right)} < \delta, 
		\end{align*}
		there exists unique $V_{\omega} \in \mathscr{O}$ such that 
		\begin{align*}
			\mathcal{N}^{A}\left(\omega, V_{\omega}\right) = 0. 
		\end{align*}
		Shrinking $\delta$ to shrink $\mathscr{O}$, if necessary, we can ensure $g_{\omega}:=\operatorname{exp}\left(V_{\omega}\right)$ is well defined, $\left( A + \omega \right)^{g_{\omega}} \in U_{\varepsilon}$ and $\left( A + \omega \right)^{g_{\omega}}$  satisfies \eqref{Coulomb gauge BVP}. 
		Now we have the estimate 
		\begin{align*}
			\left\lVert  \left( A + \omega \right)^{g_{\omega}}\right\rVert_{\mathrm{L}^{2m,n-2m}} &\leq C_{G}\left( 	\left\lVert A \right\rVert_{\mathrm{L}^{2m,n-2m}} + 	\left\lVert dg_{\omega}\right\rVert_{\mathrm{L}^{2m,n-2m}} +	\left\lVert  \omega \right\rVert_{\mathrm{L}^{2m,n-2m}}\right) \\
			&\leq C_{G}\left( 	\left\lVert F_{A} \right\rVert_{\mathrm{L}^{m,n-2m}} + 	\left\lVert V_{\omega}\right\rVert_{\mathsf{W}^{2}\mathrm{L}^{m,n-2m}} +	\left\lVert  \omega \right\rVert_{\mathsf{W}^{1}\mathrm{L}^{m,n-2m}}\right) \\
			&\leq C_{G}\left( 	\varepsilon + 	\left\lVert V_{\omega}\right\rVert_{\mathsf{W}^{2}\mathrm{L}^{m,n-2m}} +	\left\lVert  \omega \right\rVert_{\mathsf{W}^{1}\mathrm{L}^{m,n-2m}}\right).
		\end{align*}
		Thus, the last two terms can be made arbitrarily small by choosing $\delta >0$ small enough. Thus, we have 
		\begin{multline*}
			\left\lVert  d\left( A + \omega \right)^{g_{\omega}}\right\rVert_{\mathrm{L}^{m,n-2m}} \\
			\begin{aligned}
				&\leq \left\lVert F_{A+\omega}\right\rVert_{\mathrm{L}^{m,n-2m}} + 	\left\lVert  \left( A + \omega \right)^{g_{\omega}}\wedge \left( A + \omega \right)^{g_{\omega}} \right\rVert_{\mathrm{L}^{m,n-2m}} \\ 
				&\leq \left\lVert F_{A+\omega}\right\rVert_{\mathrm{L}^{m,n-2m}} + 	\left\lVert  \left( A + \omega \right)^{g_{\omega}}\right\rVert_{\mathrm{L}^{2m,n-2m}}^{2} \\
				&\leq \left\lVert F_{A+\omega}\right\rVert_{\mathrm{L}^{m,n-2m}} + 	\left\lVert  \left( A + \omega \right)^{g_{\omega}}\right\rVert_{\mathrm{L}^{2m,n-2m}}\left\lVert  d\left( A + \omega \right)^{g_{\omega}}\right\rVert_{\mathrm{L}^{m,n-2m}} \\&\leq \left\lVert F_{A+\omega}\right\rVert_{\mathrm{L}^{m,n-2m}} + 	\frac{1}{2}\left\lVert  d\left( A + \omega \right)^{g_{\omega}}\right\rVert_{\mathrm{L}^{m,n-2m}}.
			\end{aligned}
		\end{multline*}
		This implies the estimate 
		\begin{align*}
			\left\lVert  d\left( A + \omega \right)^{g_{\omega}}\right\rVert_{\mathrm{L}^{m,n-2m}} \leq 2 \left\lVert F_{A+\omega}\right\rVert_{\mathrm{L}^{m,n-2m}}. 
		\end{align*}
		Now Proposition \ref{Gaffney ineq in Morrey spaces}, (ii) concludes the argument. \bigskip 
		
		\noindent \textbf{\textit{Closedness of $V_{\varepsilon}^{C_{0}}$: }} Let $\left\lbrace A_{s} \right\rbrace_{s \in \mathbb{N}} \subset V_{\varepsilon}^{C_{0}}$ be a sequence such that 
		\begin{align}\label{strong convergence Morrey Sobolev non borderline}
			A_{s} \rightarrow A \qquad \text{ strongly in }  \mathsf{W}^{1}\mathrm{L}^{m,n-2m+ \kappa}\left(B_{1}; \Lambda^{1}\mathbb{R}^{n}\otimes \mathfrak{g}\right). 
		\end{align}
		We need to show $A \in V_{\varepsilon}^{C_{0}}$ for some choice of $\varepsilon>0$ small enough and $C_{0} \geq 1$ large enough. 
		Now note that since $A_{s}  \in V_{\varepsilon}^{C_{0}}$ for every $s \in \mathbb{N},$ there exists a sequence $\left\lbrace \rho_{s} \right\rbrace_{s \in \mathbb{N}} \subset \mathsf{W}^{2}\mathrm{L}^{m,n-2m+ \kappa}\left(B_{1} ; G \right)$ such that we have 
		\begin{align}
			\left\lVert \nabla A_{s}^{\rho_{s}}\right\rVert_{\mathrm{L}^{m,n-2m +\kappa}} + \left\lVert  A_{s}^{\rho_{s}}\right\rVert_{\mathrm{L}^{2m,n-2m +2\kappa}} &\leq C_{\kappa} \left\lVert F_{A_{s}}\right\rVert_{\mathrm{L}^{m,n-2m +\kappa}}, \label{Coulomb estimate for closedness sequence kappa} \\
			\left\lVert \nabla A_{s}^{\rho_{s}}\right\rVert_{\mathrm{L}^{m,n-2m}} + \left\lVert  A_{s}^{\rho_{s}}\right\rVert_{\mathrm{L}^{2m,n-2m}} &\leq C_{0} \left\lVert F_{A_{s}}\right\rVert_{\mathrm{L}^{m,n-2m}} \label{Coulomb estimate for closedness sequence}. 
		\end{align}
		Now note that the strong convergence \eqref{strong convergence Morrey Sobolev non borderline} implies that 
		\begin{align}\label{strong convergence Morrey Sobolev non borderline for curvatures}
			F_{A_{s}} \rightarrow F_{A} \qquad \text{ strongly in }  \mathrm{L}^{m,n-2m+ \kappa}\left(B_{1}; \Lambda^{2}\mathbb{R}^{n}\otimes \mathfrak{g}\right). 
		\end{align}
		Thus, RHS of \eqref{Coulomb estimate for closedness sequence} is uniformly bounded. Thus, we deduce 
		\begin{align*}
			\left\lVert d\rho_{s}\right\rVert_{\mathrm{L}^{2m,n-2m + \kappa}} &\leq C_{G} \left\lVert \rho_{s}^{-1}d\rho_{s}\right\rVert_{\mathrm{L}^{2m,n-2m + \kappa}} \\
			&= C_{G} \left\lVert A_{s}^{\rho_{s}} - \rho_{s}^{-1}A_{s}\rho_{s}\right\rVert_{\mathrm{L}^{2m,n-2m + \kappa}} \\
			&\leq C_{G} \left\lVert A_{s}^{\rho_{s}} \right\rVert_{\mathrm{L}^{2m,n-2m + \kappa}} + C_{G}^{3}\left\lVert A_{s} \right\rVert_{\mathrm{L}^{2m,n-2m + \kappa}} \\
		\end{align*}
		Thus, $\left\lbrace d\rho_{s} \right\rbrace_{s \in \mathbb{N}}$ is uniformly bounded in $\mathrm{L}^{2m,n-2m + \kappa}$ and thus, up to the extraction of a subsequence, which we do not relabel, 
		\begin{align*}
			\rho_{s} \rightarrow \rho \qquad \text{ strongly in } C^{0}. 
		\end{align*}
		Thus, $\rho$ is $G$-valued. Going back to \eqref{Coulomb estimate for closedness sequence kappa}, it is also easy to see that this implies $\rho \in \mathsf{W}^{2}\mathrm{L}^{m,n-2m+ \kappa}\left(B_{1} ; G \right),$ $A^{\rho} \in  \mathsf{W}^{1}\mathrm{L}^{m,n-2m+ \kappa}\left(B_{1}; \Lambda^{1}\mathbb{R}^{n}\otimes \mathfrak{g}\right)$  and we have 
		\begin{align*}
			\nabla A_{s}^{\rho_{s}} &\rightharpoonup \nabla A^{\rho} \qquad \text{ weakly in } L^{m}\left(B_{1}; \mathbb{R}^{n}\otimes\Lambda^{1}\mathbb{R}^{n}\otimes \mathfrak{g}\right), \\ \intertext{ and }
			A_{s}^{\rho_{s}} &\rightharpoonup  A^{\rho} \qquad \text{ weakly in } L^{2m}\left(B_{1}; \Lambda^{1}\mathbb{R}^{n}\otimes \mathfrak{g}\right). 
		\end{align*}
		This immediately implies $A^{\rho}$ satisfies \eqref{Coulomb gauge BVP}. Thus, we only need to show that we can choose $C_{0}>0$ large enough such that the we have the estimates 
		\begin{align*}
			\left\lVert \nabla A^{\rho}\right\rVert_{\mathrm{L}^{m,n-2m}} + \left\lVert  A^{\rho}\right\rVert_{\mathrm{L}^{2m,n-2m}} &\leq C_{0} \left\lVert F_{A}\right\rVert_{\mathrm{L}^{m,n-2m}}, \\
			\left\lVert \nabla A^{\rho}\right\rVert_{\mathrm{L}^{m,n-2m+\kappa}} + \left\lVert  A^{\rho}\right\rVert_{\mathrm{L}^{2m,n-2m+2\kappa}} &\leq C_{\kappa} \left\lVert F_{A}\right\rVert_{\mathrm{L}^{m,n-2m +\kappa}}.
		\end{align*}
		We only show the first estimate, the second one is much easier. Note that since $	A_{s}^{\rho_{s}} \rightharpoonup  A^{\rho}$ weakly in $ L^{2m},$ we have 
		\begin{align*}
			\left\lVert  A^{\rho}\right\rVert_{\mathrm{L}^{2m,n-2m}} &\leq \liminf\limits_{s\rightarrow \infty} \left\lVert  A_{s}^{\rho_{s}}\right\rVert_{\mathrm{L}^{2m,n-2m}} \\&\stackrel{\eqref{Coulomb estimate for closedness sequence}}{\leq} C_{0} \liminf\limits_{s\rightarrow \infty}\left\lVert F_{A_{s}}\right\rVert_{\mathrm{L}^{m,n-2m}} \leq  C_{0} \left\lVert F_{A}\right\rVert_{\mathrm{L}^{m,n-2m}} \leq C_{0}\varepsilon.
		\end{align*}
		But this implies 
		\begin{align*}
			\left\lVert  dA^{\rho}\right\rVert_{\mathrm{L}^{m,n-2m}} &\leq \left\lVert F_{A}\right\rVert_{\mathrm{L}^{m,n-2m}} + \left\lVert A^{\rho}\wedge A^{\rho} \right\rVert_{\mathrm{L}^{m,n-2m}} \\
			&\leq \left\lVert F_{A}\right\rVert_{\mathrm{L}^{m,n-2m}} + \left\lVert A^{\rho}\right\rVert^{2}_{\mathrm{L}^{2m,n-2m}} \\
			&\leq\left\lVert F_{A}\right\rVert_{\mathrm{L}^{m,n-2m}} +  C_{0}\varepsilon \left\lVert A^{\rho}\right\rVert_{\mathrm{L}^{2m,n-2m}} \\
			&\leq\left\lVert F_{A}\right\rVert_{\mathrm{L}^{m,n-2m}} +  C_{0}C_{S}\varepsilon \left\lVert A^{\rho}\right\rVert_{\mathsf{W}^{1}\mathrm{L}^{m,n-2m}}. 
		\end{align*}
		Finally, by Proposition \ref{Gaffney ineq in Morrey spaces}, (ii), we deduce  
		\begin{align*}
			\left\lVert A^{\rho}\right\rVert_{\mathsf{W}^{1}\mathrm{L}^{m,n-2m}} \leq 	\left\lVert  dA^{\rho}\right\rVert_{\mathrm{L}^{m,n-2m}} \leq  \left\lVert F_{A}\right\rVert_{\mathrm{L}^{m,n-2m}} + C_{0}C_{S}\varepsilon\left\lVert A^{\rho}\right\rVert_{\mathsf{W}^{1}\mathrm{L}^{m,n-2m}}.
		\end{align*}
		Choosing $\varepsilon$ sufficiently small implies the estimate. This establishes the existence of the Coulomb gauge and the estimate \eqref{Coulomb gauge coercivity}. The estimates \eqref{Coulomb gauge first derivative} and \eqref{Coulomb gauge second derivative} follow from combining \eqref{Coulomb gauge coercivity} with the obvious estimates using the identity
		\begin{align*}
			d\rho &= \rho A^{\rho} - A \rho &&\text{ in } B_{1}. 
		\end{align*}
		This completes the proof. 
	\end{proof}
	By approximation, we can immediately extend this result to vanishing Morrey-Sobolev connections. We state the results and skip the easy details. 
	\begin{theorem}[Existence of Coulomb gauges for vanishing Morrey-Sobolev connections]\label{existence of Coulomb gauges in critical Morrey-Sobolev case} Let $n \geq 4$ be an integer and let $1 < m \leq \frac{n}{2} ,$ $r>0 $ be real numbers, $x_{0} \in \mathbb{R}^{n}$ and let $B_{r}(x_{0})\subset \mathbb{R}^{n}$ be the ball of radius $r$ around $x_{0}$. Then there exist constants $\varepsilon_{Coulomb} = \varepsilon_{Coulomb}\left( G, n \right)  > 0 $ and $C_{Coulomb}= C_{Coulomb}\left( G, n \right)$ such that for any $A \in \mathsf{W}^{1}\mathrm{VL}^{m,n-2m}\left(B_{r}(x_{0}); \Lambda^{1}\mathbb{R}^{n}\otimes \mathfrak{g}\right)$ with
		\begin{align}
			\left\lVert F_{A}\right\rVert_{\mathrm{L}^{m,n-2m}\left( B_{r}(0);\Lambda^{2}\mathbb{R}^{n}\otimes \mathfrak{g} \right)} \leq \varepsilon_{Coulomb},
		\end{align}
		there exists $\rho \in \mathsf{W}^{2}\mathrm{VL}^{m,n-2m}\left(B_{r}(x_{0}) ; G \right)$ such that 
		\begin{align}\label{Coulomb gauge BVP VMS}
			\left\lbrace \begin{aligned}
				&d^{\ast} A^{\rho} = 0 \quad \text{ in } B_{r}(x_{0}),  \\
				&\iota^{\ast}_{\partial B_{r}(x_{0}) } \left( \ast A^{\rho}\right)  = 0 \quad \text{ on } \partial B_{r}(x_{0})	\end{aligned}\right.
		\end{align}
		and we have the estimates 
		\begin{align}\label{Coulomb gauge coercivity VMS}
			\left\lVert \nabla A^{\rho}\right\rVert_{\mathrm{L}^{m,n-2m}} + \left\lVert  A^{\rho}\right\rVert_{\mathrm{L}^{2m,n-2m}}  \leq C_{Coulomb} \left\lVert F_{A}\right\rVert_{\mathrm{L}^{m,n-2m}}, 
		\end{align}	
		\begin{align}\label{Coulomb gauge first derivative VMS}
			\left\lVert  d\rho\right\rVert_{\mathrm{L}^{2m,n-2m}} \leq C_{G}\left( C_{Coulomb}\left\lVert F_{A}\right\rVert_{\mathrm{L}^{m,n-2m}}  + \left\lVert  A\right\rVert_{\mathrm{L}^{2m,n-2m}} \right)
		\end{align}
		and 
		\begin{align}
			\left\lVert  \nabla d\rho\right\rVert_{\mathrm{L}^{m,n-2m}} \leq C_{G}&\left( \left\lVert  \nabla A\right\rVert_{\mathrm{L}^{m,n-2m}}  + C_{Coulomb}\left\lVert F_{A}\right\rVert_{\mathrm{L}^{m,n-2m}}\right) 
			\notag \\ &+C_{G}\left( C_{Coulomb}\left\lVert F_{A}\right\rVert_{\mathrm{L}^{m,n-2m}}  + \left\lVert  A\right\rVert_{\mathrm{L}^{2m,n-2m}} \right)^{2},\label{Coulomb gauge second derivative VMS}
		\end{align}
		where $C_{G} \geq 1 $ is an $L^{\infty}$ bound for $G.$	
	\end{theorem}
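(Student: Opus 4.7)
The plan is to obtain Theorem \ref{existence of Coulomb gauges in critical Morrey-Sobolev case} from the smooth case (Theorem \ref{existence of Coulomb gauges in better spaces}) by density of smooth maps in vanishing Morrey-Sobolev spaces, together with a careful passage to the limit that tracks both the Morrey estimates and the equivanishing of the associated Morrey moduli.

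First, given $A \in \mathsf{W}^{1}\mathrm{VL}^{m,n-2m}\bigl(B_{r}(x_{0}); \Lambda^{1}\mathbb{R}^{n}\otimes \mathfrak{g}\bigr)$ satisfying the smallness condition, use Proposition \ref{approximation in Morrey-Sobolev spaces}(ii) on a slightly larger ball (after Morrey-Sobolev extension, Proposition \ref{Morrey-Sobolev extension}) to produce a sequence $\{A^{s}\}_{s\in\mathbb{N}}$ of smooth connection forms with $A^{s}\to A$ strongly in $\mathsf{W}^{1}\mathrm{L}^{m,n-2m}(B_{r}(x_{0}))$. The strong convergence, together with Proposition \ref{holderMorrey}, forces $F_{A^{s}}\to F_{A}$ strongly in $\mathrm{L}^{m,n-2m}$, so that for $s$ large the smallness $\|F_{A^{s}}\|_{\mathrm{L}^{m,n-2m}}\leq \varepsilon_{Coulomb}$ holds. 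Apply Theorem \ref{existence of Coulomb gauges in better spaces} to each $A^{s}$ to obtain gauges $\rho_{s}\in \mathsf{W}^{2}\mathrm{L}^{m,n-2m}(B_{r}(x_{0});G)$ satisfying \eqref{Coulomb gauge BVP}, together with the uniform estimates \eqref{Coulomb gauge coercivity}--\eqref{Coulomb gauge second derivative}.

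Next, I would pass to the limit exactly as in the closedness argument inside the proof of Theorem \ref{existence of Coulomb gauges in better spaces}: the uniform $\mathrm{L}^{2m,n-2m}$ bound on $d\rho_{s}$ from \eqref{Coulomb gauge first derivative} combined with the Adams--Peetre embedding yields (along a subsequence) $\rho_{s}\to\rho$ strongly in $C^{0}$, so $\rho$ is $G$-valued; the bound \eqref{Coulomb gauge second derivative} yields $\rho_{s}\rightharpoonup\rho$ weakly in $W^{2,m}$ and weakly in the Morrey topology in the sense of Proposition \ref{Morrey uniformly bounded}; meanwhile $A_{s}^{\rho_{s}}\rightharpoonup A^{\rho}$ weakly in $L^{m}\cap L^{2m}$ and the Morrey semicontinuity yields \eqref{Coulomb gauge coercivity VMS}. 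The Coulomb condition \eqref{Coulomb gauge BVP VMS} and the estimates \eqref{Coulomb gauge first derivative VMS}, \eqref{Coulomb gauge second derivative VMS} for $\rho$ then follow verbatim from the identity $d\rho = \rho A^{\rho}-A\rho$ and the Morrey Hölder inequality.

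The main obstacle, and the only truly new point beyond rerunning the smooth-case argument, is to upgrade membership in $\mathrm{L}^{p,n-2m}$ to membership in $\mathrm{VL}^{p,n-2m}$ for $\rho$ and $A^{\rho}$, i.e. to show the vanishing of the Morrey moduli in the limit. The key observation is that the estimates \eqref{Coulomb gauge coercivity}--\eqref{Coulomb gauge second derivative} are derived locally on every ball $B_{\rho}(z)\subset\subset B_{r}(x_{0})$ with scale-invariant Morrey constants, so they can be applied to the restricted connections to obtain pointwise-in-scale estimates of the form
\begin{align*}
\Theta^{\nabla A_{s}^{\rho_{s}}}_{m,n-2m,B_{r}(x_{0})}(\rho) &\leq C\,\Theta^{F_{A^{s}}}_{m,n-2m,B_{r}(x_{0})}(\rho), \\
\Theta^{A_{s}^{\rho_{s}}}_{2m,n-2m,B_{r}(x_{0})}(\rho) &\leq C\,\Theta^{F_{A^{s}}}_{m,n-2m,B_{r}(x_{0})}(\rho),
\end{align*}
uniformly in $s$. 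Since $A^{s}\to A$ strongly in $\mathsf{W}^{1}\mathrm{L}^{m,n-2m}$ and $A\in\mathsf{W}^{1}\mathrm{VL}^{m,n-2m}$, the sequence $\{F_{A^{s}}\}$ is equivanishing in $\mathrm{VL}^{m,n-2m}$; hence the right-hand sides above tend to $0$ as $\rho\to 0$ uniformly in $s$. Passing to the weak limit and using Proposition \ref{Morrey uniformly bounded} locally at each scale $\rho$ then shows that $A^{\rho}\in\mathsf{W}^{1}\mathrm{VL}^{m,n-2m}$. Feeding this back into $d\rho=\rho A^{\rho}-A\rho$ and differentiating, the same equivanishing argument combined with \eqref{Coulomb gauge second derivative VMS} gives $\rho\in\mathsf{W}^{2}\mathrm{VL}^{m,n-2m}(B_{r}(x_{0});G)$, completing the proof.
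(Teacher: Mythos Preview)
Your overall strategy---approximate $A$ by smooth forms in $\mathsf{W}^{1}\mathrm{L}^{m,n-2m}$, apply Theorem~\ref{existence of Coulomb gauges in better spaces}, and pass to the limit---is exactly what the paper has in mind when it writes ``by approximation, we can immediately extend this result'' and skips the details. Two points in your execution need correction, one minor and one substantive.

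The minor one: the Adams--Peetre embedding with $p=2m$, $\lambda=n-2m$ is the borderline case $p+\lambda=n$, which lands only in $\mathrm{BMO}$, not in $C^{0}$ (Proposition~\ref{Adams embedding bounded domain}). You therefore do not get $\rho_{s}\to\rho$ in $C^{0}$ from the $\mathrm{L}^{2m,n-2m}$ bound on $d\rho_{s}$. Instead, combine the uniform $W^{2,m}$ bound from \eqref{Coulomb gauge second derivative} with the $L^{\infty}$ bound coming from compactness of $G$ and the compact embedding $W^{2,m}\hookrightarrow L^{m}$ to obtain $\rho_{s}\to\rho$ strongly in every $L^{p}$, $p<\infty$; this is precisely what the paper does in the analogous limit inside the proof of Theorem~\ref{existence_Coulombgauges_small_connection}, and it suffices for all subsequent identifications.

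The substantive gap is your vanishing step. The Coulomb estimates \eqref{Coulomb gauge coercivity}--\eqref{Coulomb gauge second derivative} are \emph{global} estimates on $B_{r}(x_{0})$ that use the boundary condition $\iota^{\ast}_{\partial B_{r}(x_{0})}(\ast A^{\rho})=0$; restricting $A_{s}^{\rho_{s}}$ to a sub-ball $B_{t}(z)$ does not give that boundary condition on $\partial B_{t}(z)$, so the asserted modulus inequality $\Theta^{A_{s}^{\rho_{s}}}_{2m,n-2m}(t)\leq C\,\Theta^{F_{A^{s}}}_{m,n-2m}(t)$ does not follow (applying Theorem~\ref{existence of Coulomb gauges in better spaces} on $B_{t}(z)$ would only produce a \emph{different} gauge there). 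A clean fix is to argue after the limit. Once you know $A^{\rho}$ solves the Coulomb boundary value problem with $dA^{\rho}=F_{A^{\rho}}-A^{\rho}\wedge A^{\rho}$, use that the linear Gaffney solution operator of Proposition~\ref{Gaffney ineq in Morrey spaces} is bounded from $\mathrm{L}^{m,n-2m}$ to $\mathsf{W}^{1}\mathrm{L}^{m,n-2m}$ and therefore maps $\mathrm{VL}^{m,n-2m}$ into $\mathsf{W}^{1}\mathrm{VL}^{m,n-2m}$ (density of smooth functions in $\mathrm{VL}$ plus closedness of $\mathrm{VL}$). Since $|F_{A^{\rho}}|=|F_{A}|$ lies in $\mathrm{VL}^{m,n-2m}$ and the quadratic map $B\mapsto B\wedge B$ is a contraction on the ball $\{\|B\|_{\mathrm{L}^{2m,n-2m}}\leq C_{Coulomb}\varepsilon_{Coulomb}\}$, the Picard iteration for the fixed-point equation stays in the closed subspace $\mathsf{W}^{1}\mathrm{VL}^{m,n-2m}$ and converges there to $A^{\rho}$; hence $A^{\rho}\in\mathsf{W}^{1}\mathrm{VL}^{m,n-2m}$. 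Then $d\rho=\rho A^{\rho}-A\rho$ and its derivative give $\rho\in\mathsf{W}^{2}\mathrm{VL}^{m,n-2m}$ exactly as you say.
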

	
	Now we slightly sharpen the result in Proposition \ref{approximation in Morrey-Sobolev spaces} (i). 
	\begin{lemma}[Smooth approximation in Lebesgue norms for Morrey-Sobolev connections]\label{approximabilityconditionremovallemma}
		Let $n \geq 4$ be an integer and let $1 < m \leq \frac{n}{2} $ and $r>0$ be real numbers. Let $A: B_{r}(0) \rightarrow \Lambda^{1}\mathbb{R}^{n}\otimes \mathfrak{g}$ be a Lie-algebra valued $1$-form for some compact finite dimensional Lie group $G$ such that $A \in \mathsf{W}^{1}\mathrm{L}^{m, n-2m}\left( B_{r}(0),\Lambda^{1}\mathbb{R}^{n}\otimes \mathfrak{g} \right).$ Then there exists a sequence $\left\lbrace A_{s} \right\rbrace_{s \in \mathbb{N}} \subset C^{\infty}\left( \overline{B_{r}(0)},\Lambda^{1}\mathbb{R}^{n}\otimes \mathfrak{g} \right)$ and a constant $C=C(n, m, r ) > 0$ such that 
		\begin{align}
			A_{s} &\rightarrow A &&\text{ in } L^{2m}\left( B_{r}(0),\Lambda^{1}\mathbb{R}^{n}\otimes \mathfrak{g} \right), \label{L4 convgce} \\
			dA_{s} &\rightarrow dA &&\text{ in } L^{m}\left( B_{r}(0),\Lambda^{2}\mathbb{R}^{n}\otimes \mathfrak{g} \right) \label{L2 convgce} 
		\end{align} 
		and we have the bounds 
		\begin{align}\label{uniformmorreybound}
			\limsup\limits_{s\rightarrow \infty}\left\lVert A_{s}\right\rVert_{\mathrm{L}^{2m, n-2m}} &\leq C \left\lVert A \right\rVert_{\mathsf{W}^{1}\mathrm{L}^{m, n-2m}}, \intertext{and}  	\limsup\limits_{s\rightarrow \infty}\left\lVert dA_{\nu} \right\rVert_{\mathrm{L}^{m, n-2m}} &\leq C  \left\lVert A \right\rVert_{\mathsf{W}^{1}\mathrm{L}^{m, n-2m}}.\label{uniformmorreyboundderivatives} 
		\end{align}
	\end{lemma}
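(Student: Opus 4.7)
The plan is to extend $A$ to all of $\mathbb{R}^{n}$ and then mollify. First I invoke the extension result Proposition \ref{Morrey-Sobolev extension} to produce $\tilde{A} \in \mathsf{W}^{1}\mathrm{L}^{m,n-2m}\left(\mathbb{R}^{n}; \Lambda^{1}\mathbb{R}^{n}\otimes\mathfrak{g}\right)$ with $\tilde{A}|_{B_{r}(0)} = A$ and $\lVert \tilde{A}\rVert_{\mathsf{W}^{1}\mathrm{L}^{m,n-2m}(\mathbb{R}^{n})} \leq C \lVert A\rVert_{\mathsf{W}^{1}\mathrm{L}^{m,n-2m}(B_{r}(0))}$. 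Then I fix a standard radial mollifier $\eta \in C_{c}^{\infty}(B_{1}(0))$ with $\int \eta = 1$, put $\eta_{s}(x) = s^{n}\eta(sx)$, and define $A_{s} := \left(\tilde{A} \ast \eta_{s}\right)\big|_{\overline{B_{r}(0)}} \in C^{\infty}\left(\overline{B_{r}(0)}; \Lambda^{1}\mathbb{R}^{n}\otimes\mathfrak{g}\right)$, noting that $dA_{s} = (d\tilde{A}) \ast \eta_{s}$ componentwise.

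The Lebesgue convergences \eqref{L4 convgce} and \eqref{L2 convgce} will follow from standard mollification theory once one knows that $\tilde{A} \in L^{2m}_{\mathrm{loc}}(\mathbb{R}^{n})$ and $d\tilde{A} \in L^{m}_{\mathrm{loc}}(\mathbb{R}^{n})$. The second is by definition, while the first is a consequence of the Adams embedding of Proposition \ref{Adams embedding bounded domain}, which with $p=m$ and $\lambda = n-2m$ reads $\mathsf{W}^{1}\mathrm{L}^{m,n-2m} \hookrightarrow \mathrm{L}^{2m,n-2m} \hookrightarrow L^{2m}_{\mathrm{loc}}$.

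The substantive step is the uniform Morrey bounds \eqref{uniformmorreybound} and \eqref{uniformmorreyboundderivatives}, which I will obtain by a Jensen inequality argument. For any $x \in \mathbb{R}^{n}$ and $\rho > 0$, convexity of $t \mapsto t^{2m}$ applied to the probability measure $\eta_{s}(z)\,\mathrm{d}z$, together with Fubini and a translation in $w=y-z$, yields
\begin{align*}
\int_{B_{\rho}(x)} \lvert A_{s}(y)\rvert^{2m}\,\mathrm{d}y
&\leq \int_{\mathbb{R}^{n}} \eta_{s}(z) \int_{B_{\rho}(x-z)} \lvert \tilde{A}(w)\rvert^{2m}\,\mathrm{d}w\,\mathrm{d}z \\
&\leq \rho^{n-2m} \lVert \tilde{A}\rVert_{\mathrm{L}^{2m,n-2m}(\mathbb{R}^{n})}^{2m}.
\end{align*}
Dividing by $\rho^{n-2m}$, taking the supremum over $(x,\rho)$, and combining Adams' embedding with the extension estimate produces \eqref{uniformmorreybound}. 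The identical computation with $d\tilde{A}$ in place of $\tilde{A}$ and the exponent $m$ in place of $2m$ immediately gives \eqref{uniformmorreyboundderivatives} with no appeal to any embedding beyond the extension.

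There is no genuine obstacle here; the only point that requires any care is the Jensen step, which is exactly what guarantees that mollification preserves Morrey norms. It is worth emphasizing that this is also why one cannot upgrade \eqref{uniformmorreybound}--\eqref{uniformmorreyboundderivatives} to strong Morrey convergence in the statement: smooth functions fail to be dense in $\mathrm{L}^{p,\lambda}$ as soon as $\lambda > 0$ by the Zorko counterexample recalled after Proposition \ref{holderMorrey}, so without a vanishing Morrey hypothesis on $A$ the best one can hope for is the $\limsup$ inequality stated.
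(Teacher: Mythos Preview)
Your proof is correct and follows essentially the same approach as the paper: extend $A$ to $\mathbb{R}^{n}$ via Proposition \ref{Morrey-Sobolev extension}, mollify, invoke Adams' embedding for the $L^{2m}$ control, and use the Jensen/Fubini argument on the probability measures $\eta_{s}(z)\,\mathrm{d}z$ to transfer the Morrey bounds through the convolution. The paper's presentation is nearly identical, with only cosmetic differences in how the Jensen step is written out.
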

	\begin{remark}
		Note that the estimates \eqref{uniformmorreybound} and \eqref{uniformmorreyboundderivatives} are not scale-homogeneous. 
	\end{remark}
	\begin{proof}
		We first use Proposition \ref{approximation in Morrey-Sobolev spaces} to extend $A \in \mathsf{W}^{1}\mathrm{L}^{m, n-2m}\left( B_{r}(0),\Lambda^{1}\mathbb{R}^{n}\otimes \mathfrak{g} \right)$ to all of $\mathbb{R}^{n}.$ Let $\tilde{A} \in \mathsf{W}^{1}\mathrm{L}^{m, n-2m}\left( \mathbb{R}^{n},\Lambda^{1}\mathbb{R}^{n}\otimes \mathfrak{g} \right)$ be the extension. Thus, $\tilde{A} = A$ in $B_{r}(0)$ and we have the bounds
		\begin{align}\label{control of W1m norm connection}
			\left\lVert \tilde{A}\right\rVert_{\mathsf{W}^{1}\mathrm{L}^{m, n-2m}\left( \mathbb{R}^{n},\Lambda^{1}\mathbb{R}^{n}\otimes \mathfrak{g} \right)} \leq C \left\lVert A \right\rVert_{\mathsf{W}^{1}\mathrm{L}^{m, n-2m}\left( B_{r}(0),\Lambda^{1}\mathbb{R}^{n}\otimes \mathfrak{g} \right)}.
		\end{align} 
		By Proposition \ref{Adams embedding}, we have $\tilde{A} \in \mathrm{L}^{2m, n-2m}\left( \mathbb{R}^{n},\Lambda^{1}\mathbb{R}^{n}\otimes \mathfrak{g} \right)$ as well and we have the estimate 
		\begin{align}
			\left\lVert \tilde{A}\right\rVert_{\mathrm{L}^{2m, n-2m}\left( \mathbb{R}^{n},\Lambda^{1}\mathbb{R}^{n}\otimes \mathfrak{g} \right)} &\leq C	\left\lVert \tilde{A}\right\rVert_{\mathsf{W}^{1}\mathrm{L}^{m, n-2m}\left( \mathbb{R}^{n},\Lambda^{1}\mathbb{R}^{n}\otimes \mathfrak{g} \right)} \notag \\&\leq C \left\lVert A \right\rVert_{\mathsf{W}^{1}\mathrm{L}^{m, n-2m}\left( B_{r}(0),\Lambda^{1}\mathbb{R}^{n}\otimes \mathfrak{g} \right)} \label{control of L2m norm connection}.
		\end{align}
		Now we let $\eta \in C_{c}^{\infty}(B_{1}(0))$ be a smooth mollifier with $ 0 \leq \eta \leq 1$ and $\int_{\mathbb{R}^{n}} \eta = 1.$ Define $\eta_{s}(x) := s^{n}\eta(s x)$ and set $A_{s} := \tilde{A} \ast \eta_{s}.$ Then the convergences in \eqref{L4 convgce} and \eqref{L2 convgce} are standard facts. So we only need to show \eqref{uniformmorreybound} and \ref{uniformmorreyboundderivatives}. 
		
		To show this, for a fixed $x \in \mathbb{R}^{n},$ set $g_{x}(y):= \tilde{A}(x-y)$ for all $y \in \mathbb{R}^{n}.$ Since $\int_{\mathbb{R}^{n}} \eta_{s} = 1$ for all $s \in \mathbb{N},$ the measures $\eta_{s}(x)dx$ are probability measures for all $s.$ Thus, applying Jensen's inequality with respect to these measures, we obtain 
		$$ \left\lvert  \int_{\mathbb{R}^{n}} g_{x}(y)\eta_{s}(y)dy \right\rvert^{2m} \leq  \int_{\mathbb{R}^{n}} \left\lvert   g_{x}(y) \right\rvert^{2m}\eta_{s}(y)dy \qquad \text{ for every } x \in \mathbb{R}^{n} \text{ and  every } s.  $$
		
		Thus, integrating with respect to $x \in B_{\rho}(x_{0})$ over any ball $B_{\rho}(x_{0}) ,$    we obtain 
		\begin{align*}
			\frac{1}{\rho^{n-2m}}\int_{B_{\rho}(x_{0})} \left\lvert A_{s}(x) \right\rvert^{2m} dx &= \frac{1}{\rho^{n-2m}}\int_{B_{\rho}(x_{0})} \left\lvert   \int_{\mathbb{R}^{n}} g_{x}(y)\eta_{s}(y)dy \right\rvert^{2m}  dx  \\
			&\leq \frac{1}{\rho^{n-2m}}\int_{B_{\rho}(x_{0})} \left( \int_{\mathbb{R}^{n}} \left\lvert   g_{x}(y) \right\rvert^{2m}\eta_{s}(y)dy \right)  dx \\
			&= \frac{1}{\rho^{n-2m}}\int_{B_{\rho}(x_{0})} \left( \int_{\mathbb{R}^{n}} \left\lvert   \tilde{A}( x- y) \right\rvert^{2m}\eta_{s}(y)dy \right)  dx \\
			&\stackrel{\text{Fubini}}{\leq } \left\lVert \tilde{A} \right\rVert_{\mathrm{L}^{2m, n-2m}\left( \mathbb{R}^{n},\Lambda^{1}\mathbb{R}^{n}\otimes \mathfrak{g} \right)}\int_{\mathbb{R}^{n}} \eta_{s}(y)dy   \\
			&= \left\lVert \tilde{A} \right\rVert^{2m}_{\mathrm{L}^{2m, n-2m}\left( \mathbb{R}^{n},\Lambda^{1}\mathbb{R}^{n}\otimes \mathfrak{g} \right)} \\
			&\stackrel{\eqref{control of L2m norm connection}}{\leq} C \left\lVert A \right\rVert^{2m}_{\mathsf{W}^{1}\mathrm{L}^{m, n-2m}\left( B_{r}(0),\Lambda^{1}\mathbb{R}^{n}\otimes \mathfrak{g} \right)}. 
		\end{align*} This implies \eqref{uniformmorreybound}. Using the standard fact $dA_{s} = d\tilde{A} \ast \eta_{s},$ we can similarly estimate $\mathrm{L}^{m, n -2m}$ norm of $dA_{s}.$ This completes the proof. 
	\end{proof}	
	Now we are ready to show the following. 	
	\begin{theorem}[Existence of Coulomb gauges under small norm of connection]\label{existence_Coulombgauges_small_connection}
		Let $n \geq 4$ be an integer and let $1 < m \leq \frac{n}{2} ,$ $r>0$ be real numbers, $x_{0} \in \mathbb{R}^{n}$ and let $B_{r}(x_{0})\subset \mathbb{R}^{n}$ be the ball of radius $r$ around $x_{0}$. Then there exist constants $\varepsilon_{C} = \varepsilon_{C}\left( G, n, m, r \right)  \in \left(0,1\right)$ and $C_{C}= C_{C}\left( G, n, m, r \right) \geq 1 $ such that for any $A \in \mathsf{W}^{1}\mathrm{L}^{m,n-2m}\left(B_{r}(x_{0}); \Lambda^{1}\mathbb{R}^{n}\otimes \mathfrak{g}\right)$ with 
		\begin{align}
			\left\lVert A\right\rVert_{\mathsf{W}^{1}\mathrm{L}^{m,n-2m}\left(B_{r}(x_{0}); \Lambda^{1}\mathbb{R}^{n}\otimes \mathfrak{g}\right)} \leq \varepsilon_{C},
		\end{align}
		there exists $\rho \in \mathsf{W}^{2}\mathrm{L}^{m,n-2m}\left(B_{r}(x_{0}) ; G \right)$ such that 
		\begin{align*}
			\left\lbrace \begin{aligned}
				&d^{\ast} A^{\rho} = 0 \quad \text{ in } B_{r}(x_{0}),  \\
				&\iota^{\ast}_{\partial B_{r}(x_{0}) } \left( \ast A^{\rho}\right)  = 0 \quad \text{ on } \partial B_{r}(x_{0}),	\end{aligned}\right.
		\end{align*}
		and we have the estimates 
		\begin{align}\label{Coulomb with small conn}
			\left\lVert \nabla A^{\rho}\right\rVert_{\mathrm{L}^{m,n-2m}} + \left\lVert  A^{\rho}\right\rVert_{\mathrm{L}^{2m,n-2m}}  \leq C_{C}\left( \left\lVert  F_{A}\right\rVert_{\mathrm{L}^{m,n-2m}} + \left\lVert  A\right\rVert_{\mathsf{W}^{1}\mathrm{L}^{m,n-2m}}\right)  
		\end{align}	
		and 
		\begin{align*}
			\left\lVert  \nabla d\rho\right\rVert_{\mathrm{L}^{m,n-2m}} + \left\lVert   d\rho\right\rVert_{\mathrm{L}^{2m,n-2m}} \leq C_{1}\left\lVert  A\right\rVert_{\mathsf{W}^{1}\mathrm{L}^{m,n-2m}},
		\end{align*}	
		where $C_{1} \geq 1 $ is a constant depending only on $C_{C},$ $r,m $ and the $L^{\infty}$ bound for $G.$	
	\end{theorem}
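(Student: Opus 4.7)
The strategy will be to reduce to the smooth case already handled in Theorem \ref{existence of Coulomb gauges in better spaces} by mollification, using Lemma \ref{approximabilityconditionremovallemma} to obtain a smooth approximating sequence with controlled Morrey norms. The crux is that while smooth functions are \emph{not} dense in Morrey norms, Lemma \ref{approximabilityconditionremovallemma} provides convergence in the weaker Lebesgue norms $L^{2m}$ and $L^m$ of the derivatives, together with a \emph{uniform bound} on the Morrey norms in terms of $\left\lVert A\right\rVert_{\mathsf{W}^{1}\mathrm{L}^{m,n-2m}}$. This is precisely what is needed to invoke the smooth result on each approximant and then pass to the limit.

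Concretely, I would first extend $A$ to $\mathbb{R}^{n}$ via Proposition \ref{Morrey-Sobolev extension} and mollify as in Lemma \ref{approximabilityconditionremovallemma} to obtain $\{A_s\}\subset C^{\infty}\left(\overline{B_{r}(x_0)};\Lambda^{1}\mathbb{R}^{n}\otimes\mathfrak{g}\right)$ with $A_s\to A$ in $L^{2m}$, $dA_s\to dA$ in $L^m$, and
\begin{align*}
	\limsup_{s\to\infty}\Bigl(\left\lVert A_s\right\rVert_{\mathrm{L}^{2m,n-2m}}+\left\lVert dA_s\right\rVert_{\mathrm{L}^{m,n-2m}}\Bigr)\leq C\left\lVert A\right\rVert_{\mathsf{W}^{1}\mathrm{L}^{m,n-2m}}.
\end{align*}
Using $F_{A_s}=dA_s+A_s\wedge A_s$ and the H\"older inequality in Morrey spaces (Proposition \ref{holderMorrey}), we obtain
$\left\lVert F_{A_s}\right\rVert_{\mathrm{L}^{m,n-2m}}\leq \left\lVert dA_s\right\rVert_{\mathrm{L}^{m,n-2m}}+\left\lVert A_s\right\rVert^{2}_{\mathrm{L}^{2m,n-2m}}\leq C\left(\varepsilon_C+\varepsilon_C^{2}\right)$, so by choosing $\varepsilon_C$ small enough we can guarantee $\left\lVert F_{A_s}\right\rVert_{\mathrm{L}^{m,n-2m}}\leq \varepsilon_{\text{Coulomb}}$ for all $s$ large. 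Theorem \ref{existence of Coulomb gauges in better spaces} then produces smooth gauges $\rho_s$ with $A_s^{\rho_s}$ Coulomb and the estimates \eqref{Coulomb gauge coercivity}--\eqref{Coulomb gauge second derivative} holding for each $s$, yielding uniform bounds on $\rho_s$ in $\mathsf{W}^{2}\mathrm{L}^{m,n-2m}$ in terms of $\left\lVert F_{A_s}\right\rVert_{\mathrm{L}^{m,n-2m}}+\left\lVert A_s\right\rVert_{\mathrm{L}^{2m,n-2m}}+\left\lVert \nabla A_s\right\rVert_{\mathrm{L}^{m,n-2m}}\leq C\left\lVert A\right\rVert_{\mathsf{W}^{1}\mathrm{L}^{m,n-2m}}$.

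Next I would pass to the limit. Because $\mathsf{W}^{2}\mathrm{L}^{m,n-2m}$ embeds into $C^{0}$ for the critical Morrey-Sobolev scale (Proposition \ref{Adams embedding bounded domain}), up to a subsequence $\rho_s\to\rho$ uniformly, so $\rho$ is $G$-valued. The weak Morrey compactness in Proposition \ref{Morrey uniformly bounded} together with the uniform $\mathsf{W}^{2}\mathrm{L}^{m,n-2m}$ bound yields weak convergence of $\rho_s$ in $W^{2,m}$ and of $d\rho_s$ in $L^{2m}$, with Morrey-norm lower semicontinuity providing $\rho\in \mathsf{W}^{2}\mathrm{L}^{m,n-2m}\left(B_{r}(x_0);G\right)$ satisfying the announced bounds. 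The gauge-change identity $A_s^{\rho_s}=\rho_s^{-1}d\rho_s+\rho_s^{-1}A_s\rho_s$ converges in distribution to $A^{\rho}=\rho^{-1}d\rho+\rho^{-1}A\rho$ since $\rho_s\to\rho$ uniformly and $A_s\to A$ in $L^{2m}$, $d\rho_s\rightharpoonup d\rho$ weakly in $L^{2m}$. Consequently $A_s^{\rho_s}\rightharpoonup A^{\rho}$ weakly in $L^{2m}$ and in $W^{1,m}$, and the Coulomb PDE and boundary condition pass to the limit.

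The main obstacle will be the limit argument for the boundary condition $\iota^{\ast}_{\partial B_r}(\ast A^{\rho})=0$, since the trace is only defined in a weak sense on $\mathsf{W}^{1}\mathrm{L}^{m,n-2m}$ forms. I would handle this by testing the weak formulation of \eqref{Coulomb gauge BVP} against arbitrary $\phi\in C^{\infty}\left(\overline{B_r(x_0)};\mathfrak{g}\right)$: the smooth Coulomb condition for $A_s^{\rho_s}$ reads $\int_{B_r}\langle A_s^{\rho_s},d\phi\rangle=0$, and weak $L^{2m}$ convergence transmits this to $A^{\rho}$, which is equivalent to \eqref{Coulomb gauge BVP} for the limit. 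Finally, the estimates \eqref{Coulomb with small conn} and the bound on $\rho$ follow from the smooth estimates via lower semicontinuity of the Morrey norm under weak $L^p$ convergence, combined with the identity $d\rho=\rho A^{\rho}-A\rho$ to recover the bound on $d\rho$ and $\nabla d\rho$ in Morrey norms, where the constants are allowed to depend on $r$ because the extension operator and Sobolev constants in use are not scale invariant.
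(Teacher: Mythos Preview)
Your overall strategy matches the paper's: mollify via Lemma~\ref{approximabilityconditionremovallemma}, apply Theorem~\ref{existence of Coulomb gauges in better spaces} to each $A_s$, and pass to the limit. However, there is a genuine error in your limit argument. You claim that $\mathsf{W}^{2}\mathrm{L}^{m,n-2m}$ embeds into $C^{0}$ by Proposition~\ref{Adams embedding bounded domain}, and use this to get $\rho_s\to\rho$ uniformly. This is false: with $p=m$ and $\lambda=n-2m$, Adams' embedding gives $\nabla\rho_s\in\mathrm{L}^{2m,n-2m}$, so $\rho_s\in\mathsf{W}^{1}\mathrm{L}^{2m,n-2m}$; but then $\lambda=n-2m=n-p$ with $p=2m$, which is exactly the \emph{borderline} case and yields only $\mathrm{BMO}$, not $C^{0}$. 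Indeed, if $\mathsf{W}^{2}\mathrm{L}^{m,n-2m}$ embedded into $C^{0}$, the transition maps of any such bundle would be continuous and the entire topological question addressed by the paper would evaporate.

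The paper circumvents this by a different compactness argument that you should adopt: from the uniform $\mathsf{W}^{2}\mathrm{L}^{m,n-2m}$ bound on $\rho_s$ one extracts $\rho_s\rightharpoonup\rho$ weakly in $W^{2,m}$ and $\rho_s\stackrel{\ast}{\rightharpoonup}\rho$ in $L^{\infty}$ (the latter from compactness of $G$). The compact Sobolev embedding $W^{2,m}\hookrightarrow L^{m}$ then gives $\rho_s\to\rho$ strongly in $L^{m}$, and interpolating with the uniform $L^{\infty}$ bound upgrades this to strong convergence in $L^{p}$ for every $p<\infty$, whence $\rho$ is $G$-valued a.e. This strong $L^{p}$ convergence (not uniform convergence) is what allows you to pass to the limit in the products $\rho_s^{-1}d\rho_s+\rho_s^{-1}A_s\rho_s$: write each product as a telescoping sum and use that $\rho_s^{-1}-\rho^{-1}\to 0$ strongly in every $L^{p}$ while $d\rho_s$ and $A_s$ stay bounded in $L^{2m}$. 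The identification $B=A^{\rho}$ then follows by uniqueness of weak limits in $L^{q}$ for some $q<2m$. The rest of your argument (weak passage in the Coulomb condition, lower semicontinuity of Morrey norms, recovery of the $d\rho$ estimates from $d\rho=\rho A^{\rho}-A\rho$) is correct and in line with the paper.
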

	\begin{remark}
		Note that the theorem is markedly different from usual Coulomb gauge existence results. Firstly, the smallness paramater ( and also the constants ) are not scale-invariant and depend on the radius $r.$ Also, we need an additional smallness condition on the norm of the connection. Moreover, the norm of the connection appears on the right hand side of the estimate \eqref{Coulomb with small conn}. So norm of the curvature alone does not control the Morrey-Sobolev norm of the connection in the Coulomb gauge. In a sense, all of these somewhat defeats the purpose of trying to obtain the existence of a Coulomb gauge. But as we shall see, this would still be sufficient and useful in many purposes. The question of the existence of a Coulomb gauge in super critical dimensions under only a scale-invariant smallness condition on the norm of the curvature is widely open till date.    
	\end{remark}
	\begin{proof}
		The translation invariance is obvious. So we prove for $B_{r}\left(0\right).$ Let $\varepsilon \in \left(0,1\right)$ be a parameter, which we are going to chose later, such that 
		\begin{align*}
			\left\lVert A\right\rVert_{\mathsf{W}^{1}\mathrm{L}^{m,n-2m}} \leq \varepsilon.
		\end{align*}
		Since $A \in \mathsf{W}^{1}\mathrm{L}^{m,n-2m},$ Lemma \ref{approximabilityconditionremovallemma}, we can find a sequence of smooth connection $\left\lbrace A_{s} \right\rbrace_{s \in \mathbb{N}} \subset C^{\infty}\left( \overline{B_{r}(0)},\Lambda^{1}\mathbb{R}^{n}\otimes \mathfrak{g} \right)$ such that \begin{align*}
			A_{s} &\rightarrow A &&\text{ in } L^{2m}\left( B_{r},\Lambda^{1}\mathbb{R}^{n}\otimes \mathfrak{g} \right), \\
			dA_{s} &\rightarrow dA &&\text{ in } L^{m}\left( B_{r},\Lambda^{2}\mathbb{R}^{n}\otimes \mathfrak{g} \right) 
		\end{align*} 
		and for all $ s\in \mathbb{N},$ we have the uniform bounds 
		\begin{align}\label{morrey bound for approx conn}
			\left\lVert A_{s}\right\rVert_{\mathrm{L}^{2m, n-2m}} , \quad \left\lVert dA_{s} \right\rVert_{\mathrm{L}^{m, n-2m}} \leq C  \left\lVert A \right\rVert_{\mathsf{W}^{1}\mathrm{L}^{m, n-2m}} . 
		\end{align} Thus, for all $s \in \mathbb{N},$ we deduce the bound  
		\begin{align}
			\left\lVert F_{A_{s}}\right\rVert_{\mathrm{L}^{2m, n-2m}} &\leq  \left\lVert dA_{s} \right\rVert_{\mathrm{L}^{m, n-2m}} + \left\lVert A_{s}\right\rVert^{2}_{\mathrm{L}^{2m, n-2m}}  \notag \\&\leq C  \left\lVert A \right\rVert_{\mathsf{W}^{1}\mathrm{L}^{m, n-2m}}  + C^{2}  \left\lVert A \right\rVert_{\mathsf{W}^{1}\mathrm{L}^{m, n-2m}}^{2}  \notag \\&\leq  C^{2}  \left\lVert A \right\rVert_{\mathsf{W}^{1}\mathrm{L}^{m, n-2m}} \leq C^{2}\varepsilon. 
		\end{align}Thus, choosing $\varepsilon < \frac{1}{C^{2}}\varepsilon_{Coulomb},$ where here $\varepsilon_{Coulomb}$ stands for the smallness parameter given by Theorem \ref{existence of Coulomb gauges in better spaces}, we deduce as a consequence of Theorem \ref{existence of Coulomb gauges in better spaces} the existence of Coulomb gauges $\rho_{s} \in \mathsf{W}^{2}\mathrm{L}^{m,n-2m}\left(B_{r} ; G \right)$ such that 
		\begin{align*}
			\left\lbrace \begin{aligned}
				&d^{\ast} A_{s}^{\rho_{s}} = 0 \quad \text{ in } B_{r},  \\
				&\iota^{\ast}_{\partial B_{r} } \left( \ast A_{s}^{\rho_{s}}\right)  = 0 \quad \text{ on } \partial B_{r}.	\end{aligned}\right.
		\end{align*}
		
		and we have the estimates 
		\begin{align}
			\left\lVert \nabla A_{s}^{\rho_{s}}\right\rVert_{\mathrm{L}^{m,n-2m}} + \left\lVert  A_{s}^{\rho_{s}}\right\rVert_{\mathrm{L}^{2m,n-2m}}  &\leq C_{Coulomb} \left\lVert F_{A_{s}}\right\rVert_{\mathrm{L}^{m,n-2m}} \notag \\&\leq  C_{Coulomb}C^{2}  \left\lVert A \right\rVert_{\mathsf{W}^{1}\mathrm{L}^{m, n-2m}}.\label{estimate for connections}
		\end{align}	
		and  \begin{align}\label{estimate for gauges}
			\left\lVert  \nabla d\rho_{s}\right\rVert_{\mathrm{L}^{m,n-2m}} + \left\lVert   d\rho_{s}\right\rVert_{\mathrm{L}^{2m,n-2m}} \leq C_{1}\left\lVert  A\right\rVert_{\mathsf{W}^{1}\mathrm{L}^{m,n-2m}} .
		\end{align}
		
		Thus we have, in particular, that $d\rho_{s}$ is uniformly bounded in $W^{1,m}.$ Since $G$ is compact, $\rho_{s}$ is uniformly bounded in $L^{\infty}.$ Thus, we have, 
		\begin{align}
			\rho_{s} \rightharpoonup \rho  \quad \text{ in } W^{2,m} \qquad \text{ and } \qquad \rho_{s} \stackrel{\ast}{\rightharpoonup} \rho  \quad \text{ in } L^{\infty}
		\end{align} for some $\rho \in W^{2,m}\cap L^{\infty}. $ By compactness of the Sobolev embedding $W^{2,m}\hookrightarrow L^{m},$ for any $m \leq p < \infty,$ we deduce 
		\begin{align*}
			\left\lVert \rho_{s} - \rho \right\rVert_{L^{p}} \leq  \left( 2C_{G}\right)^{\frac{p-m}{p}}\left\lVert \rho_{\nu} - \rho \right\rVert^{\frac{m}{p}}_{L^{m}} \rightarrow 0. 
		\end{align*}
		Thus, \begin{align}\label{strong convergence to rho}
			\rho_{s} \rightarrow \rho  \quad \text{ in } L^{p} \quad \text{ and thus also } \quad \rho_{s}(x) \rightarrow \rho(x) \text{ for a.e. } x,
		\end{align} for any $1 \leq p < \infty. $ So $\rho(x) \in G$ for a.e. $x.$ Now, \eqref{estimate for connections} implies, in particular, 
		\begin{align}\label{weak convergence to B}
			A_{s}^{\rho_{s}} \rightharpoonup B  \quad \text{ in } W^{1,m}
		\end{align} for some $B \in W^{1,m}\left( B_{r};\Lambda^{1}\mathbb{R}^{n}\otimes \mathfrak{g} \right).$ By uniqueness of weak limits, we also have 
		\begin{align*}
			A_{s}^{\rho_{s}} \rightharpoonup B  \quad \text{ in } L^{2m}.
		\end{align*} Now we claim that $$B = A^{\rho}:= \rho^{-1}d\rho + \rho^{-1}A\rho$$ for a.e. $x \in B_{r}.$ Indeed, since we have 
		$$A_{s}^{\rho_{s}}=  \rho_{s}^{-1}d\rho_{s} + \rho_{s}^{-1}A_{s}\rho_{s}$$ for every $s,$ and $A_{s} \rightarrow A$ in $L^{2m},$ in view of \eqref{strong convergence to rho}, the right hand side converges weakly to 
		$$\rho_{\nu}^{-1}d\rho_{\nu} + \rho_{\nu}^{-1}A_{\nu}\rho_{\nu} \rightharpoonup \rho^{-1}d\rho + \rho^{-1}A\rho \qquad \text{ in } L^{q}, $$
		for any $q < m.$ Thus the claim follows by uniqueness of weak limits. \smallskip 
		
		Thus we have established 
		\begin{align*}
			A_{s}^{\rho_{s}} \rightharpoonup A^{\rho}  \quad \text{ in } W^{1,m} \text{ and in }  L^{2m}.
		\end{align*}
		This implies that $A^{\rho}$ satisfies 
		\begin{align*}
			\left\lbrace \begin{aligned}
				&d^{\ast} A^{\rho} = 0 \quad \text{ in } B_{r},  \\
				&\iota^{\ast}_{\partial B_{r} } \left( \ast A^{\rho}\right)  = 0 \quad \text{ on } \partial B_{r},	\end{aligned}\right.
		\end{align*}
		Thus,  using Proposition \ref{Adams embedding bounded domain} and Proposition \ref{Gaffney ineq in Morrey spaces} and the gauge invariance of the curvature, we have,
		\begin{align*}
			\left\lVert \nabla A^{\rho}\right\rVert_{\mathrm{L}^{m,n-2m}} &+ \left\lVert  A^{\rho}\right\rVert_{\mathrm{L}^{2m,n-2m}} 
			\\&\leq C\left\lVert  A^{\rho}\right\rVert_{\mathsf{W}^{1}\mathrm{L}^{m,n-2m}} \\
			&\leq C \left\lVert  dA^{\rho}\right\rVert_{\mathrm{L}^{m,n-2m}} \\
			&\leq C \left( \left\lVert  F_{A^{\rho}}\right\rVert_{\mathrm{L}^{m,n-2m}} + \left\lVert  A^{\rho}\wedge A^{\rho}\right\rVert_{\mathrm{L}^{m,n-2m}}\right) \\
			&\leq C\left( \left\lVert  F_{A}\right\rVert_{\mathrm{L}^{m,n-2m}} + \left\lVert  A^{\rho}\right\rVert^{2}_{\mathrm{L}^{2m,n-2m}}\right).
		\end{align*}
		Since $	A_{s}^{\rho_{s}} \rightharpoonup A^{\rho}$ in $L^{2m},$ using $\varepsilon <1,$ we have 
		\begin{align*}
			\left\lVert  A^{\rho}\right\rVert^{2}_{\mathrm{L}^{2m,n-2m}} \leq \liminf\limits_{s\rightarrow \infty}\left\lVert  A_{s}^{\rho_{s}}\right\rVert^{2}_{\mathrm{L}^{2m,n-2m}} \leq C \left\lVert  A\right\rVert^{2}_{\mathsf{W}^{1}\mathrm{L}^{m,n-2m}} \leq C \left\lVert  A\right\rVert_{\mathsf{W}^{1}\mathrm{L}^{m,n-2m}}.
		\end{align*}
		The estimate for $\left\lVert  d\rho\right\rVert_{\mathsf{W}^{1}\mathrm{L}^{m,n-2m}}$ follows easily from this. This completes the proof. 		
	\end{proof}
	\subsubsection{Regularity of Coulomb bundles}
	Now we establish an epsilon regularity result for Coulomb bundles, which is perhaps the most crucial tool for all our results.  
	\begin{theorem}\label{regularity Coulomb}
		Let $P = \left( \left\lbrace U_{\alpha}\right\rbrace_{\alpha \in I}, \left\lbrace g_{\alpha\beta} \right\rbrace_{\alpha, \beta \in I}\right)$ be a $\mathsf{W}^{2}\mathrm{L}^{m,n-2m}$ principal $G$-bundle over $M^{n}$ and $A = \left\lbrace A_{\alpha}\right\rbrace_{\alpha \in I}$ be a $ \mathsf{W}^{1}\mathrm{L}^{m,n-2m}$ connection on $P$ which is Coulomb.	Then there exists a smallness parameter $\varepsilon_{\text{reg}} = \varepsilon_{\text{reg}} \left( n, m, G, M^{n} \right) \in (0,1)$  such that if 
		\begin{align*}
			\sup\limits_{\alpha \in I}  \left\lVert A \right\rVert_{\mathrm{L}^{2m, n-2m}\left(U_{\alpha};\Lambda^{1}\mathbb{R}^{n}\otimes \mathfrak{g}\right)} \leq \varepsilon_{\text{reg}},
		\end{align*} then $P$ is a $\mathsf{W}^{2}\mathrm{L}^{q,n-2m}\cap C^{0,\gamma}$-bundle for any exponent $2m/(m+1) \leq q < 2m$ and any $0 \leq \gamma < 1.$ More precisely, there exists a good refinement $\left\lbrace V_{j}\right\rbrace_{j \in J}$ of $\left\lbrace U_{\alpha}\right\rbrace_{\alpha \in I}$ with $V_{j} \subset \subset U_{\phi (j)}$ for every $j \in J,$ where $\phi: J \rightarrow I$ is the refinement map,  such that we have  $g_{\phi(i)\phi(j)} \in  \mathsf{W}^{2}\mathrm{L}^{q,n-2m}\left(V_{i}\cap V_{j}; G\right)$ for any $ m < q < 2m $ and thus also  $C^{0,\gamma}\left(\overline{V_{i}\cap V_{j}}; G\right)$ for any $0 \leq \gamma < 1,$ for all $i,j \in J,$ whenever $V_{i}\cap V_{j} \neq \emptyset$.
	\end{theorem}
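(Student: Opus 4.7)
The plan is to derive a linear elliptic equation for each transition function $g_{\alpha\beta}$ from the gluing relation and the Coulomb condition, and then bootstrap using Lemma \ref{ellipticCritical}.

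Rewriting the gluing identity \eqref{gluing relation def} as $dg_{\alpha\beta}=g_{\alpha\beta}A_{\beta}-A_{\alpha}g_{\alpha\beta}$, applying the codifferential, and using $d^{\ast}A_{\alpha}=d^{\ast}A_{\beta}=0$, one arrives schematically at
\begin{equation*}
    \Delta g_{\alpha\beta}=\mathcal{A}\bigl[\nabla g_{\alpha\beta}\bigr]+f \qquad\text{in }U_{\alpha}\cap U_{\beta},
\end{equation*}
where the coefficient tensor $\mathcal{A}$, built from the two multiplication operators $\nabla g\mapsto\nabla g\cdot A_{\beta}$ and $\nabla g\mapsto A_{\alpha}\cdot\nabla g$, has $\mathrm{L}^{2m,n-2m}$ norm bounded by $\|A_{\alpha}\|_{\mathrm{L}^{2m,n-2m}}+\|A_{\beta}\|_{\mathrm{L}^{2m,n-2m}}$, and $f$ collects the zero-order corrections produced when the Hodge Laplacian is written in local coordinates on the curved base. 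Since $g_{\alpha\beta}$ is $G$-valued hence uniformly bounded, the Adams embedding $\mathsf{W}^{1}\mathrm{L}^{m,n-2m}\hookrightarrow\mathrm{L}^{2m,n-2m}$ of Proposition \ref{Adams embedding bounded domain} combined with the H\"older inequality in Morrey spaces (Proposition \ref{holderMorrey}) shows that $f\in\mathrm{L}^{q,n-2m}$ for every $q<2m$.

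The initial regularity $g_{\alpha\beta}\in\mathsf{W}^{2}\mathrm{L}^{m,n-2m}$ guarantees $g_{\alpha\beta}\in W^{1,2}\cap\mathsf{W}^{1}\mathrm{L}^{2,n-2m}$ via Adams applied to $\nabla g_{\alpha\beta}$. We choose $\varepsilon_{\text{reg}}$ so that $\|\mathcal{A}\|_{\mathrm{L}^{2m,n-2m}}\leq\varepsilon_{\Delta_{Cr}}$, the scale-invariant smallness constant of Lemma \ref{ellipticCritical}, and refine the cover to $\{V_{j}\}_{j\in J}$ of small enough geodesic balls $V_{j}\subset\subset U_{\phi(j)}$ so that the manifold metric is comparable to the Euclidean one in normal coordinates; by the scale-invariance of $\varepsilon_{\Delta_{Cr}}$, this refinement does not disturb the smallness. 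Applying Lemma \ref{ellipticCritical} on each $V_{i}\cap V_{j}$ then yields $g_{\phi(i)\phi(j)}\in\mathsf{W}^{2}\mathrm{L}^{q,n-2m}(V_{i}\cap V_{j};G)$ for every $q\in[2m/(m+1),2m)$.

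The H\"older continuity follows by iterating the Morrey-Sobolev embeddings: for $q\in(m,2m)$, Adams applied to $\nabla g_{\phi(i)\phi(j)}\in\mathsf{W}^{1}\mathrm{L}^{q,n-2m}$ gives $\nabla g_{\phi(i)\phi(j)}\in\mathrm{L}^{q^{\ast},n-2m}$ with $q^{\ast}=2mq/(2m-q)$, and the inequality $q^{\ast}+(n-2m)>n$ is equivalent to $q>m$; Morrey's embedding (third bullet of Proposition \ref{Adams embedding bounded domain}) then produces $g_{\phi(i)\phi(j)}\in C^{0,(q^{\ast}-2m)/q^{\ast}}$, and letting $q\nearrow 2m$ sends the H\"older exponent to $1$, yielding membership in $C^{0,\gamma}$ for every $\gamma<1$. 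The main obstacle is the uniform book-keeping across the cover: the smallness parameter $\varepsilon_{\text{reg}}$ must be fixed once and for all at the level of the original cover, independently of the radius of the refined balls used to flatten the metric, and it is precisely the scale-invariance of $\varepsilon_{\Delta_{Cr}}$ in Lemma \ref{ellipticCritical} that makes this possible; the curvature remainder $f$ is merely a technicality, being strictly better-behaved than the right-hand side the elliptic lemma can accommodate.
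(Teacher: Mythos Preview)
Your proof is correct and follows essentially the same route as the paper: rewrite the gluing relation as $dg_{\alpha\beta}=g_{\alpha\beta}A_{\beta}-A_{\alpha}g_{\alpha\beta}$, take the codifferential using the Coulomb condition to obtain a critical elliptic equation of the form treated in Lemma~\ref{ellipticCritical}, and then invoke that lemma together with the Morrey--Sobolev embeddings. The only cosmetic difference is that the paper works directly in Euclidean coordinates on small geodesic balls and obtains the homogeneous equation $-\Delta h_{ij}=\ast[dh_{ij}\wedge(\ast A_{j})]+\ast[(\ast A_{i})\wedge dh_{ij}]$ with $f\equiv 0$, whereas you carry along an inhomogeneous term $f$ accounting for metric corrections; since those corrections are lower-order and strictly subcritical, both presentations lead to the same conclusion.
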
 
	\begin{proof}
		Let $C\left(\mathfrak{g}\right) \geq 1$ be a fixed number depending only on the dimension of the Lie algebra $\mathfrak{g}.$ We shall specify the choice of $C\left(\mathfrak{g}\right)$ later, but the important point is that this choice would not depend on any subsequent choices.  We choose a good refinement $\left\lbrace V_{j}\right\rbrace_{j \in J}$ of $\left\lbrace U_{\alpha}\right\rbrace_{\alpha \in I}$ by geodesic balls in such a way that there is an enlarged cover $\left\lbrace V^{'}_{j}\right\rbrace_{j \in J}$ which is also a refinement of $\left\lbrace U_{\alpha}\right\rbrace_{\alpha \in I}$ with the same refinement map $\phi: J \rightarrow I.$ Choosing $\varepsilon_{\text{reg}} < \frac{\varepsilon_{\Delta_{Cr}}}{4C\left(\mathfrak{g}\right)},$ we have 
		\begin{align}
			\left\lVert A_{j}\right\rVert_{\mathrm{L}^{2m,n-2m}\left( V^{'}_{j};\Lambda^{1} \mathbb{R}^{n}\otimes \mathfrak{g} \right)} &<\frac{\varepsilon_{\Delta_{Cr}}}{4C\left(\mathfrak{g}\right)}  \label{LnsmallnessCoulomb}
		\end{align} for every $ j \in J,$ where $A_{j}:= A_{\phi(j)}|_{V^{'}_{j}},$  and 
		we have $$ V_{j} \subset\subset V^{'}_{j} \subset U_{\phi(j)} \quad \text{ for every } j \in J, \qquad \bigcup\limits_{j \in J} V_{j} = \bigcup\limits_{\alpha \in I} U_{\alpha} = M^{n} .$$ 
		Now, setting $h_{ij}= g_{\phi(i)\phi(j)}$ for every $i,j \in J$ such that $V^{'}_{i}\cap V^{'}_{j} \neq \emptyset,$ we have the gluing relations,  
		\begin{equation}\label{gluing identityCoulomb}
			A_{j} = h_{ij}^{-1}dh_{ij} + h_{ij}^{-1}A_{i}h_{ij} \qquad \text{ for a.e } x \text{ in } V^{'}_{i}\cap V^{'}_{j} \text{ whenever }V^{'}_{i}\cap V^{'}_{j} \neq \emptyset.
		\end{equation}
		Rewriting \eqref{gluing identityCoulomb}, we have, 
		$$ dh_{ij} = h_{ij}A_{j} -  A_{i} h_{ij} \qquad \text{ for a.e } x \text{ in } V^{'}_{ij} \text{ whenever }V^{'}_{ij} \neq \emptyset .$$
		Also, since $A$ is in Coulomb gauge, we have  $$d^{\ast}A_{i} = 0 = d^{\ast}A_{j}  \qquad \text{ in } V^{'}_{ij} .$$
		This implies, 
		\begin{equation}
			- \Delta h_{ij} = \ast \left[ dh_{ij} \wedge \left( \ast A_{j}\right)\right] + \ast \left[  \left( \ast A_{i}\right) \wedge dh_{ij} \right]   \qquad \text{ in } V^{'}_{ij} .
		\end{equation}
		This is of the same form as \eqref{critical elliptic eqn} with $f=0.$
		Thus, by \eqref{LnsmallnessCoulomb}, we can choose $C\left(\mathfrak{g}\right)$ such that we can apply lemma \ref{ellipticCritical} with $f =0$ and deduce that  $h_{ij} \in \mathsf{W}^{2} \mathrm{L}^{q,n-2m}\left( V_{ij}; G \right),$ for \emph{any} $2m/(m+1) \leq  q < 2m.$ Now we can apply the last conclusion of Proposition \ref{Adams embedding bounded domain} for $m < q < 2m$ to prove the H\"{o}lder continuity of $h_{ij}$ in $V_{ij}.$ This completes the proof.
	\end{proof}
	\subsubsection{Uniqueness of Coulomb bundles}
	\begin{proposition}\label{gaugerelatedcoulomb}
		For $i=1,2,$ let $P^{i}$ be a $\mathsf{W}^{2}\mathrm{VL}^{m, n-2m}$ principal $G$-bundle over $M^{n}$ and $A^{i}$ be a $\mathsf{W}^{1}\mathrm{VL}^{m, n-2m}$ connection on $P^{i}$ such that $A^{i}$ is Coulomb on $P^{i}.$ If there exists a $\mathsf{W}^{2}\mathrm{L}^{m, n-2m}$ gauge transformation such that  $$\left( P^{1}, A^{1}\right)\simeq_{\sigma} \left( P^{2}, A^{2}\right),$$ Then $P_{1}$ and $P_{2}$ are $C^{0}$-equivalent. 	
	\end{proposition}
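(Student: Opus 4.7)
The plan is to upgrade the regularity of the gauge transformation $\sigma$ from $\mathsf{W}^{2}\mathrm{L}^{m,n-2m}$ to $C^{0}$, using an elliptic equation derived from the fact that both $A^{1}$ and $A^{2}$ are Coulomb.

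First, I would exploit the vanishing Morrey hypothesis. Since $A^{k}\in \mathsf{W}^{1}\mathrm{VL}^{m,n-2m}$ for $k=1,2$, the vanishing Adams embedding $\mathsf{W}^{1}\mathrm{VL}^{m,n-2m}\hookrightarrow \mathrm{VL}^{2m,n-2m}$ (Proposition \ref{Adams embedding bounded domain}) forces the $\mathrm{L}^{2m,n-2m}$ modulus of each $A^{k}$ to vanish at small scales. Passing to a sufficiently fine common refinement $\{V_{j}\}_{j\in J}$ of the covers underlying $P^{1}$, $P^{2}$, and $\sigma$, I may thus assume
\[
\left\lVert A^{k}_{j}\right\rVert_{\mathrm{L}^{2m,n-2m}(V_{j};\Lambda^{1}\mathbb{R}^{n}\otimes\mathfrak{g})} \leq \varepsilon \qquad \text{for } k=1,2 \text{ and every } j \in J,
\]
where $\varepsilon$ is chosen smaller than both $\varepsilon_{\text{reg}}$ of Theorem \ref{regularity Coulomb} and $\varepsilon_{\Delta_{Cr}}$ of Lemma \ref{ellipticCritical}. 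Theorem \ref{regularity Coulomb} applied separately to each pair then shows that each $P^{k}$ is already a $C^{0}$ (in fact H\"older) bundle on this cover.

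Next, on each $V_{j}$ the definition of gauge equivalence yields the identity $d\sigma_{j} = \sigma_{j}A^{2}_{j} - A^{1}_{j}\sigma_{j}$, and applying $d^{\ast}$ while invoking $d^{\ast}A^{1}_{j} = d^{\ast}A^{2}_{j} = 0$ produces
\[
\Delta \sigma_{j} = \nabla \sigma_{j}\cdot A^{2}_{j} - A^{1}_{j}\cdot \nabla \sigma_{j} \qquad \text{in } V_{j}.
\]
This is precisely the form $\Delta u = B\cdot \nabla u + f$ (with $f=0$) handled by Lemma \ref{ellipticCritical}, the ``coefficient'' being bounded by $2C_{G}\varepsilon$ in $\mathrm{L}^{2m,n-2m}$. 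Compactness of $G$ places $\sigma_{j}\in L^{\infty}$, while the Adams embedding $\mathsf{W}^{1}\mathrm{L}^{m,n-2m}\hookrightarrow \mathrm{L}^{2m,n-2m}$ applied to $\nabla \sigma_{j}$, followed by $\mathrm{L}^{2m,n-2m}\hookrightarrow \mathrm{L}^{2,n-2m}$ on bounded domains, supplies $\sigma_{j}\in W^{1,2}\cap \mathsf{W}^{1}\mathrm{L}^{2,n-2m}$, which is exactly the admissibility hypothesis of the lemma. Bootstrapping gives $\sigma_{j}\in \mathsf{W}^{2}_{\text{loc}}\mathrm{L}^{q,n-2m}$ for every $q\in[\tfrac{2m}{m+1},2m)$.

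Finally, choosing any $q\in(m,2m)$ and invoking the H\"older case of Proposition \ref{Adams embedding bounded domain} (applied, as in the proof of Theorem \ref{regularity Coulomb}, by iterating Adams to turn Morrey $W^{2,q}$ control into H\"older regularity) upgrades $\sigma_{j}$ to $C^{0,\alpha}_{\text{loc}}$. After one further slight shrinking of the cover, the gauges $\sigma_{j}$ are continuous up to the boundary of each piece, and the relations $h_{\tilde\phi(i)\tilde\phi(j)} = \sigma_{i}^{-1}g_{\phi(i)\phi(j)}\sigma_{j}$ then exhibit a genuine $C^{0}$ gauge equivalence between the $C^{0}$ bundles $P^{1}$ and $P^{2}$. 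The main technical hurdle is arranging the simultaneous smallness of the scale-invariant $\mathrm{L}^{2m,n-2m}$ norms of both $A^{1}$ and $A^{2}$ on one common refinement, which is precisely what the vanishing Morrey assumption is designed to provide; a secondary subtlety is justifying the distributional identity $d^{\ast}(\sigma A^{2}) = -\nabla \sigma\cdot A^{2}$ (and its mirror involving $A^{1}$) at only Morrey--Sobolev regularity, which can be handled by mollification combined with the approximation results in Proposition \ref{approximation in Morrey-Sobolev spaces}.
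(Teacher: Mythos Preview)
Your proposal is correct and follows essentially the same route as the paper: refine the cover using the vanishing Morrey hypothesis to make the $\mathrm{L}^{2m,n-2m}$ norms of both connections small, derive the elliptic equation for $\sigma_{j}$ from $d\sigma_{j}=\sigma_{j}A^{2}_{j}-A^{1}_{j}\sigma_{j}$ and the Coulomb conditions, and then apply Lemma~\ref{ellipticCritical} with $f=0$ to bootstrap $\sigma_{j}$ into $C^{0,\alpha}_{\mathrm{loc}}$. The paper's proof is exactly this argument, stated more briefly as a direct analogue of the proof of Theorem~\ref{regularity Coulomb}.
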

	\begin{proof}
		The proof is very similar to how we proved the continuity of Coulomb bundles, so we provide only a brief sketch. By passing to a common refinement and slightly shrinking the domains, using the regularity results, we can assume, without loss of generality, that $P^{1}$ and $P^{2}$ are both $C^{0}$ bundles. Since the connections are also gauge related, we have
		$$ d\sigma_{i} = \sigma A_{i}^{2} - A_{i}^{1}\sigma \qquad \text{ in } U_{i},$$ 
		for every $i\in I. $ Since $A^{1}, A^{2}$ are both Coulomb, we have, 
		$$ - \Delta\sigma_{i} = \ast \left[ d\sigma_{i} \wedge \left( \ast A^{2}_{i}\right)\right] + \ast \left[  \left( \ast A^{1}_{i}\right) \wedge d\sigma_{i} \right]  \qquad \text{ in } U_{i},$$ for every $i\in I. $ But once again this is exactly of the form of Equation  \eqref{critical elliptic eqn} with $f=0.$ Thus, by passing to a refinement of the cover  such that $\mathrm{L}^{2m, n-2m}$ norms of $A_{i}^{1}$ and $A_{i}^{2}$ are suitably small, using lemma \ref{ellipticCritical} we deduce the continuity of $\sigma_{i}$ in the interior. The proof is concluded by slightly shrinking the domains.    
	\end{proof}
	This immediately implies the uniqueness of the $C^{0}$ isomorphism class of Coulomb bundles for a bundle-connection pair in the vanishing Morrey case. 
	\begin{proposition}[Uniqueness of Coulomb bundles]\label{uniquenessvanishingMorreyCoulomb}
		Given a pair $\left( P, A \right),$ where $P$ is a $\mathsf{W}^{2}\mathrm{VL}^{2m,n-2m}$ principal $G$-bundle over $M^{n}$ and $A$ be is a $\mathsf{W}^{1}\mathrm{VL}^{m, n-2m}$ connection on $P,$ there exists a  $C^{0}$-bundle $P_{C}$, \emph{unique up to $C^{0}$-equivalence}, such that $P_{C} \simeq_{\sigma} P$ and $\left( \sigma^{-1}\right)^{\ast}A$ is a Coulomb connection on $P_{C}$ for some $\mathsf{W}^{2}\mathrm{VL}^{m, n-2m}$ gauge transformations $\sigma.$
	\end{proposition}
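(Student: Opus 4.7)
The plan is to establish existence by patching together local Coulomb gauges, and then to deduce uniqueness as a quick consequence of Proposition \ref{gaugerelatedcoulomb}.

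For existence, I first note that if $A$ is a $\mathsf{W}^{1}\mathrm{VL}^{m,n-2m}$ connection, then the local curvatures $F_{A_\alpha}$ lie in $\mathrm{VL}^{m,n-2m}$. Indeed, the local formula $F_{A_\alpha}=dA_\alpha+\tfrac{1}{2}[A_\alpha,A_\alpha]$ combined with the Adams embedding $\mathsf{W}^{1}\mathrm{VL}^{m,n-2m}\hookrightarrow\mathrm{VL}^{2m,n-2m}$ and the Hölder inequality in Morrey spaces (Proposition \ref{holderMorrey}) shows that $A_\alpha\wedge A_\alpha\in\mathrm{VL}^{m,n-2m}$, so that the Morrey modulus of $F_{A_\alpha}$ tends to zero as the radius tends to zero, uniformly in the base point. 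Choosing a refinement $\{V_i\}_{i\in J}$ of the given cover consisting of small enough geodesic balls, the bound $\|F_{A_{\phi(i)}}\|_{\mathrm{L}^{m,n-2m}(V_i)}<\varepsilon_{Coulomb}$ holds for every $i$, so Theorem \ref{existence of Coulomb gauges in critical Morrey-Sobolev case} supplies $\mathsf{W}^{2}\mathrm{VL}^{m,n-2m}$ gauges $\sigma_i$ on each $V_i$ putting $A$ into Coulomb form. Setting $h_{ij}=\sigma_i^{-1}g_{\phi(i)\phi(j)}\sigma_j$ defines a bundle $P_C=\bigl(\{V_i\},\{h_{ij}\}\bigr)$ with $P_C\simeq_\sigma P$, on which the pulled-back connection $A_C:=(\sigma^{-1})^\ast A$ is Coulomb. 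By Theorem \ref{regularity Coulomb}, after one further (harmless) shrinking of the cover to ensure the scale-invariant smallness $\|A_{C,i}\|_{\mathrm{L}^{2m,n-2m}}\leq\varepsilon_{\text{reg}}$ (again possible by vanishing Morrey of $A_C$), the transition functions $h_{ij}$ are continuous, so $P_C$ is a $C^0$ bundle.

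For uniqueness, suppose $(P_C^{1},A_C^{1})$ and $(P_C^{2},A_C^{2})$ are two such Coulomb representatives of $(P,A)$, with gauge equivalences $P_C^{1}\simeq_{\sigma^1}P$ and $P_C^{2}\simeq_{\sigma^2}P$ via $\mathsf{W}^{2}\mathrm{VL}^{m,n-2m}$ transformations and Coulomb local connection forms on each side. Passing to a common refinement of the two covers and composing, one obtains a $\mathsf{W}^{2}\mathrm{L}^{m,n-2m}$ gauge transformation $\tau$ giving a pairwise equivalence
\begin{equation*}
\bigl(P_C^{1},A_C^{1}\bigr)\simeq_{\tau}\bigl(P_C^{2},A_C^{2}\bigr),
\end{equation*}
since $A_C^{1}$ is related to $A_C^{2}$ by the gauge $\tau=(\sigma^{2})^{-1}\sigma^{1}$ (modulo the refinement bookkeeping), and both are in Coulomb gauge. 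Proposition \ref{gaugerelatedcoulomb} applies verbatim to this pair and yields $\bigl[P_C^{1}\bigr]_{C^{0}}=\bigl[P_C^{2}\bigr]_{C^{0}}$, which is the stated uniqueness.

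The main subtlety lies not in a single hard estimate but in the bookkeeping: ensuring that the cover can simultaneously be taken fine enough for (a) the curvature to satisfy the $\varepsilon_{Coulomb}$ smallness condition of Theorem \ref{existence of Coulomb gauges in critical Morrey-Sobolev case}, (b) the Coulomb-gauge connection forms $A_{C,i}$ to satisfy the $\varepsilon_{\text{reg}}$ smallness condition of Theorem \ref{regularity Coulomb}, and (c) the comparison gauge $\tau$ in the uniqueness argument to satisfy the hypotheses of Proposition \ref{gaugerelatedcoulomb} on a common refinement. Each of these is guaranteed by the vanishing Morrey condition on $A$ together with the estimates \eqref{Coulomb gauge coercivity VMS}--\eqref{Coulomb gauge second derivative VMS}, which propagate the vanishing modulus from $F_A$ and $A$ to the Coulomb representative $A_C$.
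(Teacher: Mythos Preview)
Your proposal is correct and follows essentially the same approach as the paper. The paper treats this proposition as an immediate consequence of Proposition~\ref{gaugerelatedcoulomb} for uniqueness, with the existence part carried out in detail in Steps~1--2 of the proof of Theorem~\ref{Vanishing Morrey case approx and topo}; your argument reproduces both pieces faithfully. One minor efficiency note: rather than shrinking the cover a second time to enforce $\lVert A_{C,i}\rVert_{\mathrm{L}^{2m,n-2m}}\leq\varepsilon_{\text{reg}}$, the paper chooses the initial refinement so that $\lVert F_{A_i}\rVert_{\mathrm{L}^{m,n-2m}}<\min\{\varepsilon_{Coulomb},\varepsilon_{\text{reg}}/C_{Coulomb}\}$, whereupon estimate~\eqref{Coulomb gauge coercivity VMS} gives the required smallness of $A_C$ directly---but your two-step refinement works equally well.
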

	\section{Topology of bundles}\label{topology}
	
	\subsection{Topology and the role of the connection}
	Using Theorem \ref{Vanishing Morrey case approx and topo}, we can associate a topological isomorphism class to a bundle-connection pair as follows. 
	\begin{definition}
		Let $n \geq 3$ be an integer and let $\frac{\sqrt{n}}{2} < m \leq \frac{n}{2}$ be a real number. Let $P$ be a principal $G$-bundle over a $n$-dimensional closed manifold $M^{n},$ such that the bundle transition functions are in the vanishing Morrey-Sobolev class $\mathsf{W}^{2}\mathrm{VL}^{2m,n-2m}$ and let $A$ be a connection on $P$ such that the local connection forms are in vanishing Morrey-Sobolev class $\mathsf{W}^{1}\mathrm{VL}^{m, n-2m}.$ Then we define 
		\begin{align*}
			\left[ \left( P, A\right)\right]_{\mathrm{VL}^{m, n-2m}}:= \left[ P_{C}\right]_{C^{0}},
		\end{align*}
		where $P_{C}$ is any Coulomb bundle which is gauge equivalent to $P$ via $\mathsf{W}^{2}\mathrm{VL}^{m, n-2m}$ gauge transformations.  
	\end{definition}
	Clearly, Theorem \ref{Vanishing Morrey case approx and topo} asserts such a Coulomb bundle exists and its $C^{0}$ isomorphism class is uniqueley determined. However, note that at present, we are unable to assert if the $C^{0}$ isomorphism class for $\left( P, A\right)$ and $\left( P, B\right),$ where $A$ and $B$ are two different $\mathsf{W}^{1}\mathrm{VL}^{m, n-2m}$ connections are the same or not, when $A$ and $B$ are not $\mathsf{W}^{2}\mathrm{VL}^{m, n-2m}$-gauge related. Similarly, if $P$ is itself a $C^{0}$ bundle, then we do not know if
	\begin{align*}
		\left[ \left( P, A\right)\right]_{\mathrm{VL}^{m, n-2m}} = \left[ P\right]_{C^{0}}. 
	\end{align*} 
	This somewhat strange situation is explained by the fact that a Morrey-Sobolev connection can indeed `live' on two different $C^{0}$ bundles ( see Subsection \ref{Instanton example} below ). However, as soon as the connections are more regular, our definition reduces to the usual scenario where the topology of the bundle is independent of the connection.   
	\begin{theorem}\label{topology notions coincide}
		For any $\kappa >0,$ if $P$ is a $\mathsf{W}^{2}\mathrm{L}^{m, n-2m +\kappa} $ bundle and $A$ is a $\mathsf{W}^{1}\mathrm{L}^{m, n-2m +\kappa} $ connection on $P,$ then 
		\begin{align*}
			\left[ \left( P, A\right)\right]_{\mathrm{VL}^{m, n-2m}} = \left[ P\right]_{C^{0}}. 
	\end{align*} \end{theorem}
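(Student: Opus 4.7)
The plan is to leverage the gain $\kappa > 0$ to push every object in sight into the continuous regime. First, that gain puts us just inside the subcritical Morrey regime: combining the Adams embedding with the Morrey-Campanato theorem (Proposition~\ref{Adams embedding bounded domain}) yields
\begin{equation*}
\mathsf{W}^{2}\mathrm{L}^{m, n-2m+\kappa} \hookrightarrow C^{0,\alpha}
\end{equation*}
for some $\alpha = \alpha(\kappa) > 0$ (treat $0 < \kappa < m$ first, then the case $\kappa \geq m$ follows by inclusion). Hence the transition functions $g_{\alpha\beta}$ of $P$ are automatically H\"older continuous, so $P$ is already a $C^0$-bundle. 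The theorem then reduces to showing that the gauge $\sigma$ delivered by Theorem~\ref{Vanishing Morrey case approx and topo}(i) is continuous, since then $P \simeq_{C^0} P_C$ forces $[P]_{C^0} = [P_C]_{C^0} = [(P,A)]_{\mathrm{VL}^{m,n-2m}}$ by the very definition of the latter.

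To apply Theorem~\ref{Vanishing Morrey case approx and topo}(i) I first record the inclusion $\mathrm{L}^{p,\lambda+\kappa} \subset \mathrm{VL}^{p,\lambda}$, coming from the elementary bound $\rho^{-\lambda}\int_{B_\rho}|f|^p \leq \rho^{\kappa}\|f\|_{\mathrm{L}^{p,\lambda+\kappa}}^p \to 0$; this shows $(P,A)$ satisfies the vanishing Morrey hypotheses. The theorem produces a Coulomb pair $(P_C, A_C)$ and a gauge $\sigma \in \mathsf{W}^{2}\mathrm{VL}^{m, n-2m}$ with $P_C \simeq_\sigma P$. To upgrade $\sigma$ to $C^0$ I would revisit the local construction in the proof of Theorem~\ref{existence of Coulomb gauges in better spaces}, where an auxiliary exponent $\kappa$ is deliberately carried through the fixed-point/continuation argument in parallel with the scale-invariant estimates. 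Applied with our $\kappa$, that construction delivers local Coulomb gauges $\rho \in \mathsf{W}^{2}\mathrm{L}^{m, n-2m+\kappa}$ and forms $A^\rho \in \mathsf{W}^{1}\mathrm{L}^{m, n-2m+\kappa}$; by the embedding of the first paragraph these local pieces are H\"older continuous, so after the global gluing that yields $P_C$ the components of $\sigma$ are continuous, and hence so are the transition functions $h_{ij} = \sigma_i^{-1} g_{\phi(i)\phi(j)} \sigma_j$ of $P_C$.

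The principal obstacle is the bookkeeping in the second step: tracking the improved scale $n-2m+\kappa$ through the globalization procedure of Theorem~\ref{Vanishing Morrey case approx and topo}(i). This should be routine because the underlying local construction already comes with two-parameter estimates. A self-contained alternative, bypassing that verification, is to bootstrap $\sigma$ directly from the identity $d\sigma = \sigma A_C - A\sigma$ combined with the elliptic equation
\begin{equation*}
-\Delta \sigma = \ast[\,d\sigma \wedge \ast A_C\,] + \ast[\,\ast A \wedge d\sigma\,]
\end{equation*}
arising from the Coulomb condition on $A_C$; iterating the H\"older inequality in Morrey spaces (Proposition~\ref{holderMorrey}) against Morrey estimates for $\Delta$ (Proposition~\ref{Morrey estimates for the Laplacian}), the Morrey scale of $d\sigma$ improves by a fixed fraction of $\kappa$ at each pass, and after finitely many iterations it crosses the H\"older embedding threshold $\lambda > n-p$, yielding $\sigma \in C^0$ and closing the argument.
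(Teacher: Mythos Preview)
Your primary approach---revisiting Theorem~\ref{existence of Coulomb gauges in better spaces} to extract that the local Coulomb gauges actually land in $\mathsf{W}^{2}\mathrm{L}^{m,\,n-2m+\kappa}\hookrightarrow C^{0}$---is exactly the paper's (very terse) proof, which invokes that theorem together with the inclusion $\mathrm{L}^{m,\,n-2m+\kappa}\subset \mathrm{VL}^{m,\,n-2m+\delta}$ for $0\le \delta<\kappa$ to conclude that the Coulomb gauges themselves are continuous. One small correction to your alternative bootstrap: since $A$ is not assumed Coulomb, the equation for $-\Delta\sigma$ acquires an additional zeroth-order term $(d^{\ast}A)\,\sigma$, but this lies in $\mathrm{L}^{m,\,n-2m+\kappa}$ (because $\nabla A$ does) and is harmless for the iteration.
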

	\begin{proof}
		In this non-borderline case, using Theorem \ref{existence of Coulomb gauges in better spaces} and the fact that $$\mathrm{L}^{m, n-2m + \kappa} \subset \mathrm{VL}^{m, n-2m + \delta} \qquad \text{for any }0 \leq \delta < \kappa,$$ we see immediately that the Coulomb gauges themselves are continuous. 
	\end{proof}
	
	\subsection{Example of Instanton on $\mathbb{S}^{4}$}\label{Instanton example}
	Let $P= \mathbb{S}^{4}\times \mathrm{SU}\left(2\right)$ be the trivial principal $\mathrm{SU}\left(2\right)$-bundle over the four sphere. Being a trivial bundle, this admits the trivial flat connection, often called the `zero' connection. Since this connection is Coulomb, the topological isomorphism class for the pair $\left(P, A\right)$ is just $\left[ P \right]_{C^{0}},$ when $A$ is the trivial connection. On the other hand, the well-known `basic instanton' on $\mathbb{S}^{4}$ can also be viewed as a Morrey-Sobolev type connection with $L^{2}$ curvature on $P.$ The fact that the curvature of this connection is $L^{2} \left( \mathbb{S}^{4}\right)$ is well-known ( see Section 6 of Freed-Uhlenbeck \cite{FreedUhlenbeck_instantonandfourmanifold} or Chapter II, Section 2 of Atiyah \cite{Atiyah_geometryofYM} ).  From the local expressions, one can also verify that the connection forms are in the Marcinkiewicz space weak-$L^{4}$ or  $L^{4}_{w}.$ Since $L^{4}_{w}$ embeds into $\mathrm{L}^{2m, n-2m}$ for any $1 < m < 2$ ( cf. Theorem 24, page 27, \cite{Sawano_Morrey_spacesVolI} for a proof ),  it is easy to check that this defines a Morrey-Sobolev connection on the trivial bundle.  However, the basic instanton is a smooth connection on the Hopf bundle ( which is $\mathbb{S}^{7}$ viewed as nontrivial principal $\mathrm{SU}\left(2\right)$-bundle over $\mathbb{S}^{4}$). So the natural isomorphism class that we should be associating to this connection form is the isomorphism class of the Hopf bundle, which is not topologically isomorphic to the trivial bundle. Note that since we are viewing the connection as $\mathsf{W}^{1}\mathrm{L}^{m, n-2m}$ connection, apriori the curvature is only $\mathrm{L}^{m, n-2m}$ and since $m <2,$ the computation of second Chern class is apriori not justified using the connection. However, the curvature being gauge-invariant is in fact $L^{2}$ and the second Chern class computed via the basic instanton is indeed that of the Hopf bundle.

	\section{Proof of the main results}\label{Proof Main results}
	\subsection{Proof of Theorem \ref{Vanishing Morrey case approx and topo}, \ref{small in Morrey}, \ref{approximation of cocycles}}
	\begin{proof}[\textbf{Proof of Theorem \ref{Vanishing Morrey case approx and topo}}]
		Let $\left( P, A \right) = \left( \left\lbrace U_{\alpha}\right\rbrace_{\alpha \in I}, \left\lbrace g_{\alpha\beta} \right\rbrace_{\alpha, \beta \in I}, \left\lbrace A_{\alpha} \right\rbrace_{\alpha \in I} \right).$ We divide the proof into several steps for the sake of clarity.\smallskip   
		
		\noindent \textbf{Step 1: Existence of local Coulomb gauges} Since we have assumed $A_{\alpha} \in \mathsf{W}^{1}\mathrm{VL}^{m, n-2m}\left( U_{\alpha}; \Lambda^{1} \mathbb{R}^{n}\otimes \mathfrak{g}\right),$ we can easily deduce that  we have 
		\begin{align*}
			A_{\alpha} \in \mathrm{VL}^{2m. n-2m}\left( U_{\alpha}; \Lambda^{1} \mathbb{R}^{n}\otimes \mathfrak{g}\right) \text{ and } F_{A_{\alpha}} \in \mathrm{VL}^{m. n-2m}\left( U_{\alpha}; \Lambda^{2} \mathbb{R}^{n}\otimes \mathfrak{g}\right).
		\end{align*}
		Thus, we can choose a refinement $\left\lbrace V_{j}\right\rbrace_{j \in J}$ by geodesic balls of sufficiently small radii with the refinement map $\phi$ such that for every $ j \in J,$ we have 
		\begin{align}
			\left\lVert F_{A_{j}}\right\rVert_{\mathrm{L}^{m,n-2m}\left( V_{j};\Lambda^{2}\mathbb{R}^{n}\otimes \mathfrak{g} \right)} &< \min\left\lbrace \frac{\varepsilon_{Coulomb}}{16}, \frac{\varepsilon_{\text{reg}}}{16C_{Coulomb}}   \right\rbrace \label{coulombsmallness}, \end{align}  where $A_{j}:= A_{\phi(j)}|_{V_{j}}.$ 
		By \eqref{coulombsmallness}, we can apply Theorem \eqref{existence of Coulomb gauges in critical Morrey-Sobolev case}. Thus, we deduce, for each $j \in J, $ there exist maps $\rho_{j} \in \mathsf{W}^{2}\mathrm{VL}^{m, n-2m}\left(  V_{j}; G \right) $ such that 
		\begin{align*}
			\left\lbrace \begin{aligned}
				d^{\ast} A_{j}^{\rho_{j}} &= 0  &&\text{  in } V_{j}, \\
				\iota^{\ast}_{\partial V_{j}} \left( \ast A_{j}^{\rho_{j}}\right)  &= 0 &&\text{  on } \partial V_{j},
			\end{aligned}\right. 
		\end{align*}
		and we have the estimates, 
		\begin{align}\label{Coulomb gauge coercivity VMS topo proof}
			\left\lVert \nabla A_{j}^{\rho_{j}}\right\rVert_{\mathrm{L}^{m,n-2m}} + \left\lVert  A_{j}^{\rho_{j}}\right\rVert_{\mathrm{L}^{2m,n-2m}}  \leq C_{Coulomb} \left\lVert F_{A_{j}}\right\rVert_{\mathrm{L}^{m,n-2m}}, 
		\end{align}	
		\begin{align}\label{Coulomb gauge first derivative VMS topo proof}
			\left\lVert  d\rho_{j}\right\rVert_{\mathrm{L}^{2m,n-2m}} \leq C_{G}\left( C_{Coulomb}\left\lVert F_{A_{j}}\right\rVert_{\mathrm{L}^{m,n-2m}}  + \left\lVert  A_{j}\right\rVert_{\mathrm{L}^{2m,n-2m}} \right)
		\end{align}
		and 
		\begin{align}
			\left\lVert  \nabla d\rho_{j}\right\rVert_{\mathrm{L}^{m,n-2m}} \leq C_{G}&\left( \left\lVert  \nabla A_{j}\right\rVert_{\mathrm{L}^{m,n-2m}}  + C_{Coulomb}\left\lVert F_{A_{j}}\right\rVert_{\mathrm{L}^{m,n-2m}}\right) 
			\notag \\ &+C_{G}\left( C_{Coulomb}\left\lVert F_{A_{j}}\right\rVert_{\mathrm{L}^{m,n-2m}}  + \left\lVert  A_{j}\right\rVert_{\mathrm{L}^{2m,n-2m}} \right)^{2},\label{Coulomb gauge second derivative VMS topo proof}
		\end{align}
		where  all the norms above are on $V_{j}$ and $C_{G} \geq 1 $ is an $L^{\infty}$ bound for $G.$	\smallskip

		\noindent \textbf{Step 2: Gluing local Coulomb gauges} Now we wish to show that the data $\left( \left\lbrace V_{j}\right\rbrace_{j \in J}, \left\lbrace h_{ij} \right\rbrace_{i, j \in J} \right):= P_{C} $ defines a $\mathsf{W}^{2}\mathrm{VL}^{q,n-2m}$ bundle for every $m< q < 2m,$ on which the pullback of the original connection $A$ defines a $\mathsf{W}^{1}\mathrm{VL}^{m, n-2m}$ connection in the Coulomb gauge, where \begin{align}\label{transferring to coulomb gauge}
			h_{ij} : = \rho_{i}^{-1}g_{\phi(i)\phi(j)}\rho_{j}:V_{ij} \rightarrow G \quad \text{ for every }i,j \in J \text{ with } V_{ij} \neq \emptyset.\end{align} It is easy to check that $\left\lbrace h_{ij} \right\rbrace_{i, j \in J}$ define $\mathsf{W}^{2}\mathrm{VL}^{q,n-2m}$ cocycles. Also, since $\left\lbrace A_{\alpha} \right\rbrace_{\alpha \in I}$ satisfies the gluing relations 
		\begin{equation*}
			A_{\beta} = g_{\alpha\beta}^{-1} d g_{\alpha\beta} + g_{\alpha\beta}^{-1} A_{\alpha} g_{\alpha\beta} \quad \text{ a.e. in } U_{\alpha} \cap U_{\beta},
		\end{equation*} 
		whenever $U_{\alpha} \cap U_{\beta} \neq \emptyset,$ we can easily check that $A_{j}^{\rho_{j}}$ satisfies the gluing relations 
		\begin{align*}
			A_{j}^{\rho_{j}} =  h_{ij}^{-1} dh_{ij} + h_{ij}^{-1} A_{i}^{\rho_{i}} h_{ij} \quad \text{ a.e. in } V_{i} \cap V_{j} \text{ for every }i,j \in J \text{ with } V_{ij} \neq \emptyset.
		\end{align*}
		Clearly,  $A_{C}:= A^{\rho}$ is a $\mathsf{W}^{1}\mathrm{VL}^{m, n-2m}$ connection on $P_{C}$ which is Coulomb.  By \eqref{coulombsmallness} and \eqref{Coulomb gauge coercivity VMS topo proof}, we have 
		\begin{align*}
			\left\lVert  A_{j}^{\rho_{j}}\right\rVert_{\mathrm{L}^{2m,n-2m}\left(V_{j}; \Lambda^{1}\mathbb{R}^{n}\otimes \mathfrak{g}\right)}  \leq \varepsilon_{\text{reg}} \qquad \text{ for all } j \in J.
		\end{align*} Thus the claimed regularity of $P_{C}$ follows by Theorem \ref{regularity Coulomb}. The uniqueness of the $C^{0}$ equivalence class of $P_{C}$ follows from Proposition \ref{uniquenessvanishingMorreyCoulomb}.\smallskip 
		
		\noindent \textbf{Step 3: Reduction to Approximation of $\left(P_{C}, A_{C}\right)$} Now we want to show that it is enough to approximate $\left(P_{C}, A_{C}\right)$ by sequences $\left\lbrace \left( P^{s}, A^{s}\right)\right\rbrace_{s \in \mathbb{N}}.$ Suppose there exist sequences of $\mathsf{W}^{2}\mathrm{VL}^{m, n-2m}$ gauges $\left\lbrace \left\lbrace \sigma^{s}_{i} \right\rbrace_{i \in J}\right\rbrace_{s \in \mathbb{N}}$, smooth cocycles $\left\lbrace \left\lbrace h^{s}_{ij} \right\rbrace_{i, j \in J}\right\rbrace_{s \in \mathbb{N}}$ and smooth local connection forms $\left\lbrace \left\lbrace A^{s}_{i} \right\rbrace_{i \in J}\right\rbrace_{s \in \mathbb{N}}$ such that 
		\begin{align*}
			P_{C} &\simeq_{\sigma_{s}}  P^{s}, \\
			h^{s}_{ij} &\rightarrow h_{ij} \quad \text{ in } \mathsf{W}^{2}\mathrm{VL}^{m, n-2m}, \\
			\left( A^{s}_{i} \right)^{\left( \sigma^{s}_{i}\right)^{-1}} &\rightarrow A_{i}^{\rho_{i}} \quad \text{ in } \mathsf{W}^{1}\mathrm{VL}^{m, n-2m}. 
		\end{align*}
		Now since $\rho_{i} \in \mathsf{W}^{2}\mathrm{VL}^{m, n-2m}\left(  V_{i}; G \right) $ for every $i \in J,$ by Theorem \ref{Approximation of G valued maps critical} (i), upto slightly shrinking the $V_{i}$s, which we do not relabel, for every $i \in I,$ there exists a sequence  $\left\lbrace \rho^{s}_{i} \right\rbrace_{s \in \mathbb{N}} \subset C^{\infty}\left( \overline{V_{i}}; G\right)$ such that 
		\begin{align*}
			\rho^{s}_{i} \rightarrow \rho_{i} \qquad \text{ in } \mathsf{W}^{2}\mathrm{VL}^{m, n-2m}\left(  V_{i}; G \right). 
		\end{align*}
		Note that by compactness of $G,$ a straight forward computation shows that this also implies 
		\begin{align*}
			\left( \rho^{s}_{i} \right)^{-1}\rightarrow \rho_{i}^{-1} \qquad \text{ in } \mathsf{W}^{2}\mathrm{VL}^{m, n-2m}\left(  V_{i}; G \right). 
		\end{align*}
		Now we define the sequence of bundle-connection pairs
		$$\left\lbrace \left( Q^{s}, B^{s}\right)\right\rbrace_{s \in \mathbb{N}} = \left\lbrace \left( \left\lbrace V_{i} \right\rbrace_{i \in J}, \left\lbrace  g^{s}_{ij} \right\rbrace_{i,j \in J}, \left\lbrace  B^{s}_{i} \right\rbrace_{i \in J} \right)\right\rbrace_{s \in \mathbb{N}},$$ where we set \begin{align*}
			g^{s}_{ij} := 	\rho^{s}_{i} h^{s}_{ij} \left(	\rho^{s}_{j}\right)^{-1} \qquad \text{ and } \qquad B^{s}_{i}:= \left( A^{s}_{i} \right)^{\left( \rho_{i}^{s}\right)^{-1}}. 
		\end{align*}
		Thus, setting $\theta_{i}^{s} := \rho_{i}\sigma^{s}_{i}\left( \rho_{i}^{s}\right)^{-1},$ we see that we have 
		\begin{align*}
			g^{s}_{ij} = \left( \theta_{i}^{s} \right)^{-1}g_{ij} \theta_{j}^{s} \qquad  \text{ and } \qquad  \left( B^{s}_{i} \right)^{\left( \theta_{i}^{s}\right)^{-1}}= 	\left( A^{s}_{i} \right)^{\left( \rho_{i}\sigma^{s}_{i}\right)^{-1}}. 
		\end{align*}
		Note that this mean we have 
		\begin{align*}
			g^{s}_{ij} - g_{ij} &= 	\rho^{s}_{i} h^{s}_{ij} \left(	\rho^{s}_{j}\right)^{-1} - g_{ij} 
			\\&=\rho^{s}_{i} h^{s}_{ij} \left(	\rho^{s}_{j}\right)^{-1} - \rho_{i} h_{ij} \rho_{j}^{-1}  \\&= \left( \rho^{s}_{i} - \rho_{i}\right)h^{s}_{ij} \left(	\rho^{s}_{j}\right)^{-1} + \rho_{i} \left( h^{s}_{ij} - h_{ij}\right) \left(	\rho^{s}_{j}\right)^{-1}  + \rho_{i}h_{ij}\left( \left(	\rho^{s}_{j}\right)^{-1} - \left(	\rho_{j}\right)^{-1}\right). 
		\end{align*}	
		Now straight forward computations, by computing the derivatives and using H\"{o}lder inequality in Morrey spaces shows that  	
		\begin{align*}
			g^{s}_{ij} \rightarrow 	g_{ij} \qquad \text{ in } \mathsf{W}^{2}\mathrm{VL}^{m, n-2m}\left(  V_{i}; G \right).
		\end{align*}
		Similarly, we have 
		\begin{align*}
			\left( B^{s}_{i} \right)^{\left( \theta_{i}^{s}\right)^{-1}} - A_{i}&= \left( A^{s}_{i} \right)^{\left( \rho_{i}^{s}\sigma^{s}_{i}\right)^{-1}} - A_{i} \\
			&=\left( A^{s}_{i} \right)^{\left( \sigma^{s}_{i}\right)^{-1}\left( \rho_{i}^{s}\right)^{-1}} - A_{i} \\
			&=\left[ \left( A^{s}_{i} \right)^{\left( \sigma^{s}_{i}\right)^{-1}\left( \rho_{i}^{s}\right)^{-1}} - \left( A^{s}_{i} \right)^{\left( \sigma^{s}_{i}\right)^{-1} \rho_{i}^{-1}}\right] + \left[ \left( A^{s}_{i} \right)^{\left( \sigma^{s}_{i}\right)^{-1}\left( \rho_{i}\right)^{-1}}- A_{i}\right].  
		\end{align*}	
		From the convergence 	$\left( A^{s}_{i} \right)^{\left( \sigma^{s}_{i}\right)^{-1}} \rightarrow A_{i}^{\rho_{i}}$ in $\mathsf{W}^{1}\mathrm{VL}^{m, n-2m},$ it is easy to see that the term inside the last square bracket converges to zero in $\mathsf{W}^{1}\mathrm{VL}^{m, n-2m}.$  For the second, let us write $D_{i}^{s}:= \left( A^{s}_{i} \right)^{\left( \sigma^{s}_{i}\right)^{-1}}$	and we compute 
		\begin{align*}
			\left( D_{i}^{s}\right)^{\left( \rho_{i}^{s}\right)^{-1}} - \left( D_{i}^{s}\right)^{\rho_{i}^{-1}} &=  \left[ \rho_{i}^{s}d\left( \rho_{i}^{s}\right)^{-1} - \rho_{i}d\rho_{i}^{-1}\right]   + \left[ \rho_{i}^{s} D_{i}^{s}\left( \rho_{i}^{s}\right)^{-1}  -\rho_{i} D_{i}^{s}\rho_{i}^{-1} \right].
		\end{align*}
		Now for the first square bracket, we have 
		\begin{align*}
			\rho_{i}^{s}d\left( \rho_{i}^{s}\right)^{-1} - \rho_{i}d\rho_{i}^{-1} &=\left( 	\rho_{i}^{s} - 	\rho_{i}\right)d\left( \rho_{i}^{s}\right)^{-1} + \rho_{i}\left( d\left( \rho_{i}^{s}\right)^{-1} - d\rho_{i}^{-1}\right).
		\end{align*}
		It is easy to conclude that this term converges to zero in $\mathsf{W}^{1}\mathrm{VL}^{m, n-2m}.$ 
		For the second 	square bracket, we write 
		\begin{align*}
			\rho_{i}^{s} D_{i}^{s}\left( \rho_{i}^{s}\right)^{-1}  -\rho_{i} D_{i}^{s}\rho_{i}^{-1} = \rho_{i}^{s} D_{i}^{s}\left( \left( \rho_{i}^{s}\right)^{-1} -  \rho_{i}^{-1} \right) + \left( \rho_{i}^{s}  -\rho_{i}\right) D_{i}^{s}\rho_{i}^{-1}. 
		\end{align*}
		Once again, easy computation shows this term converges to zero in  $\mathsf{W}^{1}\mathrm{VL}^{m, n-2m}.$ Thus we conclude see that we have 
		\begin{align*}
			\left(P, A\right) &\simeq_{\theta_{s}} \left( Q^{s}, B^{s}\right), \\
			g^{s}_{ij} &\rightarrow g_{ij} \quad \text{ in } \mathsf{W}^{2}\mathrm{VL}^{m, n-2m}, \\
			\left( B^{s}_{i} \right)^{\left( \theta^{s}_{i}\right)^{-1}} &\rightarrow A_{i} \quad \text{ in } \mathsf{W}^{1}\mathrm{VL}^{m, n-2m}. 
		\end{align*}\smallskip 
		
		\noindent \textbf{Step 4: Approximation of $\left(P_{C}, A_{C}\right)$}  
		Now we want to approximate $$\left(P_{C}, A_{C}\right) = \left( \left\lbrace V_{j}\right\rbrace_{j \in J}, \left\lbrace h_{ij} \right\rbrace_{i, j \in J}, \left\lbrace \left( A_{C}\right)_{i}\right\rbrace_{i \in J}\right).$$
		We pick an exponent $m < q < 2m.$ Since $P_{C}$ is a $\mathsf{W}^{2}\mathrm{VL}^{q,n-2m}$ bundle, by Theorem \ref{smoothingC0bundleMorrey}, there exists a refinement $\left\lbrace W_{i}\right\rbrace_{i \in K}$ of $\left\lbrace V_{j}\right\rbrace_{j \in J}$ with a refinement map $\psi: K \rightarrow J,$ a sequence of continuous gauges $\left\lbrace \left\lbrace \sigma_{i}^{s} \right\rbrace_{i \in K} \right\rbrace_{s \in \mathbb{N}}$ and a sequence of smooth bundles $\left\lbrace P^{s} \right\rbrace_{s \in \mathbb{N}},$ where 
		\begin{align*}
			P^{s} &= \left( \left\lbrace W_{i}\right\rbrace_{i \in K},\left\lbrace h^{s}_{ij}\right\rbrace_{i,j \in K}\right) \qquad \text{ for every } s \in \mathbb{N}, \\
			\sigma_{i}^{s} &\in \mathsf{W}^{2}\mathrm{VL}^{q,n-2m}\left( W_{i}; G\right) \qquad \text{ for every } i \in K  \text{ for every } s \in \mathbb{N},
		\end{align*} such that 
		\begin{align*}
			P^{s} &\simeq_{\sigma^{s}} P_{C}, \\
			h^{s}_{ij} &\rightarrow h_{\psi\left(i\right)\psi\left(j\right)} \qquad \text{ in } \mathsf{W}^{2}\mathrm{VL}^{q, n-2m}.
		\end{align*}
		Now, observe that the pullback of $A_{C}$ on $P^{s}$ is a $\mathsf{W}^{1}\mathrm{VL}^{m, n-2m}$ connection on $P^{s}$ for each $s \in \mathbb{N}$. Indeed, denoting the local representatives of the pullback by  
		\begin{align}\label{definitionintermediateconn}
			\tilde{A}^{s}_{i}: = \left( A_{C}\right)_{i}^{ \left( \sigma^{s}_{i}\right)^{-1}} =  \sigma^{s}_{i}d \left( \sigma^{s}_{i}\right)^{-1} + \sigma^{s}_{i}\left( A_{C}\right)_{\psi(i)} \left( \sigma^{s}_{i}\right)^{-1} \quad \text{ in } W_{i} \text{ for each }i \in K,
		\end{align}
		we can infer that $\tilde{A}^{s}_{i}$ is $\mathsf{W}^{1}\mathrm{VL}^{m, n-2m},$ since $\left( A_{C}\right)_{\psi(i)}$ is $\mathsf{W}^{1}\mathrm{VL}^{m, n-2m}$ and $\sigma^{s}_{i}$ is $\mathsf{W}^{2}\mathrm{VL}^{q, n-2m}$ for some $q > m.$
		Next we plan to show that for each fixed $s \in \mathbb{N},$ there is a sequence of smooth connection form $\left\lbrace B^{\mu,s}\right\rbrace_{\mu \in \mathbb{N}}= \left\lbrace \left\lbrace B^{\mu,s}_{i}\right\rbrace_{i \in K}\right\rbrace_{\mu \in \mathbb{N}}$ on $P^{s}$ such that 
		\begin{align}\label{approximant conn form fixed s}
			B^{\mu,s}_{i}  \rightarrow \tilde{A}^{s}_{i} \qquad \text{ in } \mathsf{W}^{1}\mathrm{VL}^{m, n-2m} \text{ for each } i \in K,  \text{ as } \mu \rightarrow \infty.
		\end{align} Note that approximating $\tilde{A}^{s}_{i}$ by smooth forms is easy, but the real point is to ensure that the approximating forms $\left\lbrace B^{\mu,s}_{i}\right\rbrace_{i \in K}$ satisfy the gluing relations 
		\begin{align}\label{verificationgluingconn}
			B^{\mu,s}_{j}= \left( h_{ij}^{s}\right)^{-1}dh^{s}_{ij} + \left(h_{ij}^{s}\right)^{-1}B^{\mu,s}_{i}h^{s}_{ij} \qquad \text{ in } W_{ij} \text{ whenever } W_{ij} \neq \emptyset.
		\end{align}
		To do this, the main idea is to approximate all the local connection forms and glue them together appropriately by using a partition of unity. We choose a partition of unity $\left\lbrace \zeta_{i}\right\rbrace_{i \in K}$ subordinate to the cover  $\left\lbrace W_{i}\right\rbrace_{k \in K}$ such that  we have the bounds 
		\begin{align}\label{partionofunitychoice}
			\left\lVert d\zeta_{i}\right\rVert_{L^{\infty}}, \left\lVert \nabla d\zeta_{i}\right\rVert_{L^{\infty}} \leq C_{part}\qquad \text{ for every } i \in K,  
		\end{align} where  $C_{part} \geq 1$ is a constant, depending on the cover $\left\lbrace W_{i}\right\rbrace_{k \in K}$. Now fix $s \in \mathbb{N}.$ By density, for each $i \in K,$ we can find a sequence $\left\lbrace \tilde{A}^{\mu, s}_{i} \right\rbrace_{\mu \in \mathbb{N}} \subset  C^{\infty}\left( \overline{W_{i}}; \Lambda^{1}\mathbb{R}^{n}\otimes \mathfrak{g}\right)$  such that 
		\begin{align}\label{smoothingconnapproxchoice}
			\left. \begin{aligned}
				\left\lVert 	\nabla \tilde{A}^{\mu, s}_{i}  - \nabla \tilde{A}^{s}_{i} \right\rVert_{\mathrm{L}^{m, n-2m}\left( W_{i}; \Lambda^{1}\mathbb{R}^{n}\otimes \mathfrak{g}\right)} &\rightarrow 0, \\
				\left\lVert \tilde{A}^{\mu, s}_{i}  -  \tilde{A}^{s}_{i} \right\rVert_{\mathrm{L}^{2m, n-2m}\left( W_{i}; \Lambda^{1}\mathbb{R}^{n}\otimes \mathfrak{g}\right)} &\rightarrow 0,	
			\end{aligned}\right\rbrace  \qquad \text{ as } \mu \rightarrow \infty.
		\end{align}
		Now, for any $\mu \in \mathbb{N}$ and for any $j \in K,$ we define 
		\begin{align}\label{definitionapproxconn}
			B^{\mu, s}_{j}: = \sum_{\substack{k \in K \\ W_{j}\cap W_{k} \neq \emptyset}} \zeta_{k}\left[ \left( h_{kj}^{s}\right)^{-1}dh_{kj}^{s} + \left(h_{kj}^{s}\right)^{-1}\tilde{A}^{\mu, s}_{k}h_{kj}^{s}\right].
		\end{align}
		Note that the possibility $j=k$ is not excluded. Clearly, $B^{\mu, s}_{j}$ is smooth for each $j \in K.$ By a straight forward computation using the identity $ dh_{kj}^{s} -  dh_{ki}^{s}h_{ij}^{s} = h_{ki}^{s}dh_{ij}^{s},$ obtained by differentiating the cocycle condition $h_{kj}^{s}=h_{ki}^{s}h_{ij}^{s},$ we arrive at \begin{align*}
			h_{ij}^{s}	B^{\mu, s}_{j} - 	B^{\mu, s}_{i}h_{ij}^{s} =\left(\zeta_{i} + \zeta_{j}\right)dh_{ij}^{s} + \sum_{\substack{k \in K, k\neq i,j,\\W_{i}\cap W_{j}\cap W_{k} \neq \emptyset }} \zeta_{k} dh_{ij}^{s} \qquad \text{ in } W_{i}\cap W_{j}, 
		\end{align*}
		for all $i \neq j \in K$ with $  W_{i}\cap W_{j} \neq \emptyset.$ Since $\left\lbrace \zeta_{k} \right\rbrace_{k \in K}$ is a partition of unity, this proves prove \eqref{verificationgluingconn}. \smallskip 
		
		Now it only remains to show \eqref{approximant conn form fixed s}. Now since $\tilde{A}^{s}_{i}$ is a connection on $P^{s}$ and $\left\lbrace \zeta_{k} \right\rbrace_{k \in K}$ is a partition of unity, we can write 
		\begin{align}
			\tilde{A}^{s}_{j} &=  \sum_{\substack{k \in K \\ W_{j}\cap W_{k} \neq \emptyset}} \zeta_{k} \left( \tilde{A}^{s}_{j}\big|_{W_{j}\cap W_{k}} \right) \notag \\&=	\sum_{\substack{k \in K \\ W_{j}\cap W_{k} \neq \emptyset}} \zeta_{k}\left[ \left( h_{kj}^{s}\right)^{-1}dh_{kj}^{s} + \left(h_{kj}^{s}\right)^{-1}\tilde{A}^{s}_{k}h_{kj}^{s}\right], \label{tilde A with part unity}
		\end{align}
		for any $j \in K.$ Thus, by \eqref{tilde A with part unity} and \eqref{definitionapproxconn}, we deduce 
		\begin{align*}
			B^{\mu, s}_{j} - \tilde{A}^{s}_{j} = \sum_{\substack{k \in K \\ W_{j}\cap W_{k} \neq \emptyset}} \zeta_{k}\left(h_{kj}^{s}\right)^{-1}\left( \tilde{A}^{\mu, s}_{k} - \tilde{A}^{s}_{k}\right)h_{kj}^{s} \quad \text{ for any } j \in K. 
		\end{align*}
		Now it is easy to see that this, combined with \eqref{smoothingconnapproxchoice} implies \eqref{approximant conn form fixed s}. \smallskip

		\noindent \textbf{Step 5: Concluding the proof} Note that \eqref{approximant conn form fixed s} clearly implies 
		\begin{align*}
			\left( B^{\mu,s}_{i} \right)^{\sigma^{s}_{i}} \rightarrow 	\left(  	\tilde{A}^{s}_{i}\right)^{\sigma^{s}_{i}} = \left( A_{C}\right)_{i}  \qquad \text{ in } \mathsf{W}^{1}\mathrm{VL}^{m, n-2m} \text{ for each } i \in K,  \text{ as } \mu \rightarrow \infty.
		\end{align*} 
		Now a diagonal argument concludes the proof. \end{proof}
	\begin{remark}
		From the proof, it might appear that we have not used the fact that $g_{\alpha\beta} \in \mathsf{W}^{2}\mathrm{VL}^{m, n-2m}$ at all and thus Theorem \ref{Vanishing Morrey case approx and topo} holds for $\mathsf{W}^{2}\mathrm{L}^{m, n-2m}$ bundles as well as long as the connection is $\mathsf{W}^{1}\mathrm{VL}^{m, n-2m}.$ This, however, is an illusion. The gluing identites imply 
		\begin{align*}
			dg_{\alpha\beta} = g_{\alpha\beta}A_{\beta} - A_{\alpha}g_{\alpha\beta} \qquad \text{ in } U_{\alpha}\cap U_{\beta}.
		\end{align*}
		This immediately implies $g_{\alpha\beta} \in  \mathsf{W}^{2}\mathrm{VL}^{m, n-2m}$ as soon as $A_{\alpha} \in \mathsf{W}^{1}\mathrm{VL}^{m, n-2m}$ for all $\alpha \in I.$ Thus, it is impossible for a $\mathsf{W}^{2}\mathrm{L}^{m, n-2m}$ bundle to `support' a $\mathsf{W}^{1}\mathrm{VL}^{m, n-2m}$ connection unless the bundle itself is a $\mathsf{W}^{2}\mathrm{VL}^{m, n-2m}$ bundle. 
	\end{remark}
	
	Now we are ready to prove Theorem \ref{small in Morrey}. The proof follows the same lines, except that this time we have to use Theorem \ref{existence_Coulombgauges_small_connection} to construct the Coulomb bundles. 
	\begin{proof}[\textbf{Proof of Theorem \ref{small in Morrey}}]
		Let $\left( P, A \right) = \left( \left\lbrace U_{\alpha}\right\rbrace_{\alpha \in I}, \left\lbrace g_{\alpha\beta} \right\rbrace_{\alpha, \beta \in I}, \left\lbrace A_{\alpha} \right\rbrace_{\alpha \in I} \right).$ Without loss of generality, we can assume that there exists a refined cover $\left\lbrace V_{\alpha}\right\rbrace_{\alpha \in I},$ with the refinement map being identity, such that $V_{\alpha}$ is obtained by slightly shrinking $U_{\alpha}$ for each $\alpha \in I.$ We first construct the Coulomb bundles. Using Theorem \ref{existence_Coulombgauges_small_connection}, for each $\alpha \in I,$ there exists a smallness parameter $ 0 < \varepsilon_{C, \alpha} < 1$ and constants $C_{C, \alpha}, C_{1, \alpha} \geq 1$ such that if 	\begin{align*}
			\left\lVert A_{\alpha}\right\rVert_{\mathsf{W}^{1}\mathrm{L}^{m,n-2m}\left( U_{\alpha}; \Lambda^{1}\mathbb{R}^{n}\otimes \mathfrak{g}\right)} \leq \varepsilon_{C, \alpha},
		\end{align*}
		there exists $\rho_{\alpha} \in \mathsf{W}^{2}\mathrm{L}^{m,n-2m}\left(U_{\alpha} ; G \right)$ such that 
		\begin{align*}
			\left\lbrace \begin{aligned}
				&d^{\ast} A_{\alpha}^{\rho_{\alpha}} = 0 \quad \text{ in } U_{\alpha},  \\
				&\iota^{\ast}_{\partial U_{\alpha} } \left( \ast  A_{\alpha}^{\rho_{\alpha}}\right)  = 0 \quad \text{ on } \partial U_{\alpha},	\end{aligned}\right.
		\end{align*}
		and we have the estimates 
		\begin{align*}
			\left\lVert \nabla  A_{\alpha}^{\rho_{\alpha}}\right\rVert_{\mathrm{L}^{m,n-2m}} + \left\lVert   A_{\alpha}^{\rho_{\alpha}}\right\rVert_{\mathrm{L}^{2m,n-2m}}  \leq C_{C, \alpha}\left( \left\lVert  F_{ A_{\alpha}}\right\rVert_{\mathrm{L}^{m,n-2m}} + \left\lVert   A_{\alpha}\right\rVert_{\mathsf{W}^{1}\mathrm{L}^{m,n-2m}}\right)  
		\end{align*}	
		and 
		\begin{align*}
			\left\lVert  \nabla d\rho_{\alpha}\right\rVert_{\mathrm{L}^{m,n-2m}} + \left\lVert   d\rho_{\alpha}\right\rVert_{\mathrm{L}^{2m,n-2m}} \leq C_{1, \alpha}\left\lVert  A_{\alpha}\right\rVert_{\mathsf{W}^{1}\mathrm{L}^{m,n-2m}}.
		\end{align*}
		Also, by Theorem \ref{Approximation of G valued maps critical} (ii), for each $\alpha \in I,$ there exists a smallness parameter $\varepsilon_{\text{approx}, \alpha}.$  Thus, we can choose  $0 < \delta < 1 $ small enough such that 
		\begin{align*}
			\sup\limits_{\alpha \in I} \left\lVert A_{\alpha} \right\rVert_{\mathsf{W}^{1}\mathrm{L}^{m, n-2m}} \leq \delta, 
		\end{align*}
		implies that for each $\alpha \in I,$ we have 
		\begin{align}\label{definition of delta existence}
			\left\lVert A_{\alpha} \right\rVert_{\mathsf{W}^{1}\mathrm{L}^{m, n-2m}\left( U_{\alpha}; \Lambda^{1}\mathbb{R}^{n}\otimes \mathfrak{g}\right)} \leq \varepsilon_{C, \alpha}, \\
			\left\lVert A_{\alpha}^{\rho_{\alpha}} \right\rVert_{\mathrm{L}^{2m, n-2m}\left( U_{\alpha}; \Lambda^{1}\mathbb{R}^{n}\otimes \mathfrak{g}\right)} \leq \varepsilon_{\text{reg}}, \label{definition of delta regularity}\\
			\left\lVert   d\rho_{\alpha}\right\rVert_{\mathrm{L}^{2m,n-2m}} \leq \varepsilon_{\text{approx}, \alpha}\label{definition of delta approximation}.
		\end{align}
		Now it is easy to check that 
		\begin{align*}
			h_{\alpha\beta} : = \rho_{\alpha}g_{\alpha\beta}\rho_{\beta}^{-1} \quad \text{ in } U_{\alpha}\cap U_{\beta},
		\end{align*}whenever $U_{\alpha}\cap U_{\beta} \neq \emptyset,$ defines a cocycle and thus $\left( \left\lbrace U_{\alpha}\right\rbrace_{\alpha \in I}, \left\lbrace h_{\alpha\beta} \right\rbrace_{\alpha, \beta \in I} \right)$ defines a $\mathsf{W}^{2}\mathrm{L}^{m,n-2m}$ bundle and $\left\lbrace A_{\alpha}^{\rho_{\alpha}}\right\rbrace_{\alpha \in I}$ is a $\mathsf{W}^{1}\mathrm{L}^{m,n-2m}$ connection on this bundle which is Coulomb. By \eqref{definition of delta regularity} and Theorem \ref{regularity Coulomb}, we see that 
		$P_{C}:=  \left( \left\lbrace V_{\alpha}\right\rbrace_{\alpha \in I}, \left\lbrace h_{\alpha\beta} \right\rbrace_{\alpha, \beta \in I} \right)$ is a $\mathsf{W}^{2}\mathrm{L}^{q,n-2m}\cap C^{0,\gamma}$-bundle for any exponent $2m/(m+1) \leq q < 2m$ and any $0 \leq \gamma < 1,$ and $A_{C}:= \left\lbrace A_{\alpha}^{\rho_{\alpha}}\right\rbrace_{\alpha \in I}$ is a $\mathsf{W}^{1}\mathrm{L}^{m,n-2m}$ connection on $P_{C}$ which is Coulomb. The rest proceeds exactly as in the proof of Theorem \ref{Vanishing Morrey case approx and topo}. Due to \eqref{definition of delta approximation}, we can approximate the Coulomb gauges $\rho_{\alpha}$ by smooth $G$-valued maps in the Sobolev norms using Theorem \ref{Approximation of G valued maps critical} (ii). 
	\end{proof}
	\begin{proof}[\textbf{Proof of Theorem \ref{approximation of cocycles}}]
		This is now an immediate corollary of Theorem \ref{Vanishing Morrey case approx and topo} and Theorem \ref{small in Morrey}, using Theorem \ref{existence of connection}. 
	\end{proof}

	\subsection{Proof of Theorem \ref{equivanishing Morrey}, \ref{flatness in vanishing}, \ref{small distance to flat}, \ref{cocycle factorization MorreySobolev}}
	\begin{proof}[\textbf{Proof of Theorem \ref{equivanishing Morrey}}]
		Since $\Theta \left(r\right) \rightarrow 0$ as $r \rightarrow 0,$ we can choose $r >0$ small enough such that 
		\begin{align*}
			\sup\limits_{\alpha \in I} \sup\limits_{\substack{0 < \rho < r, \\ B_{\rho}(x) \subset \subset U_{\alpha}, }} 
			\frac{1}{\rho^{n-2m}} \int_{B_{\rho}(x)} \lvert F_{A^{s}_{\alpha}} \rvert^{m} \leq 	\Theta\left( r \right) < \min\left\lbrace \frac{\varepsilon_{Coulomb}}{16}, \frac{\varepsilon_{\text{reg}}}{16C_{Coulomb}}  \right\rbrace. 
		\end{align*}
		Thus, by covering with geodesic balls of radius less than $r,$ we can find a cover $\left\lbrace V^{\infty}_{i} \right\rbrace_{i \in J}$ of $M^{n}$, which is refinement for the cover $\left\lbrace U_{\alpha}\right\rbrace_{\alpha \in I}$ and we have, for all $s \in \mathbb{N},$ 
		\begin{align*}
			\left\lVert F_{A^{s}_{i}} \right\rVert_{\mathrm{L}^{m, n-2m}\left( V^{\infty}_{i}; \Lambda^{2}\mathbb{R}^{n}\otimes\mathfrak{g}\right)} < \min\left\lbrace \frac{\varepsilon_{Coulomb}}{16}, \frac{\varepsilon_{\text{reg}}}{16C_{Coulomb}}  \right\rbrace \quad \text{ for all } i \in J. 
		\end{align*}
		Thus, as before, for every $s \in \mathbb{N},$ we can construct Coulomb bundles $P^{s}_{C}$, which are all trivialized over the common fixed cover $\left\lbrace V^{\infty}_{i} \right\rbrace_{i \in I}.$  Denoting the transition functions of $P^{s}_{C}$ by $g^{s}_{ij},$ we have the Coulomb condition and the gluing relations for the Coulomb bundles, 
		\begin{align}
			&dg^{s}_{ij} = g^{s}_{ij}\left( A^{s}_{C}\right)_{j}  - \left( A^{s}_{C}\right)_{i} g^{s}_{ij} &&\text{ in } V^{\infty}_{i}\cap V^{\infty}_{j} \label{gluing relation nu} \\
			&d^{\ast}\left( A^{s}_{C}\right)_{i} = 0  \qquad &&\text{ in } V^{\infty}_{i} \label{Coulomb nu}
		\end{align} 
		for every $ i,j \in J$ with $V^{\infty}_{i}\cap V^{\infty}_{j} \neq \emptyset$ and for every $i \in J$ respectively. Also, we have the estimate 
		\begin{align}\label{common coulomb estimate}
			\left\lVert  \nabla \left( A^{s}_{C}\right)_{i}\right\rVert_{\mathrm{L}^{m,n-2m}} + \left\lVert  \left( A^{s}_{C}\right)_{i} \right\rVert_{\mathrm{L}^{2m,n-2m}}  \leq C_{Coulomb} \left\lVert F_{A^{s}_{i}}\right\rVert_{\mathrm{L}^{m,n-2m}}
		\end{align} 
		every $i \in J.$ Combining \eqref{gluing relation nu} and \eqref{common coulomb estimate} and recalling that $G$ is compact, we deduce, 
		\begin{align*}
			\left\lVert dg^{s}_{ij} \right\rVert_{\mathrm{L}^{2m, n-2m}\left( V^{\infty}_{i}\cap V^{\infty}_{j}; \Lambda^{1}\mathbb{R}^{n}\otimes\mathfrak{g} \right)}  \leq C \left( \left\lVert F_{A^{s}_{i}}\right\rVert_{\mathrm{L}^{m, n-2m}}  + \left\lVert F_{A^{s}_{j}}\right\rVert_{\mathrm{L}^{m, n-2m}} \right)
		\end{align*}
		Combining this with \eqref{common coulomb estimate} and differentiating \eqref{gluing relation nu}, we also deduce 
		\begin{align*}
			\left\lVert \nabla dg^{s}_{ij} \right\rVert_{\mathrm{L}^{m, n-2m}} \leq C \left( \left\lVert F_{A^{s}_{i}}\right\rVert_{\mathrm{L}^{m, n-2m}}  + \left\lVert F_{A^{s}_{j}}\right\rVert_{\mathrm{L}^{m, n-2m}} \right)^{2}. 
		\end{align*}
		This implies that $\left\lbrace dg^{s}_{ij} \right\rbrace_{s \in \mathbb{N}}$ is uniformly bounded in $\mathsf{W}^{1}\mathrm{L}^{m, n-2m}$ and $\mathrm{L}^{2m, n-2m}.$ Since $G$ is compact, $\left\lbrace g^{s}_{ij} \right\rbrace_{s \in \mathbb{N}}$ is uniformly bounded in $L^{\infty}$ and consequently also in $\mathsf{W}^{2}\mathrm{L}^{m, n-2m}$ and $\mathsf{W}^{1}\mathrm{L}^{2m, n-2m}.$ Similarly, \eqref{common coulomb estimate} implies $\left\lbrace \left( A^{s}_{C}\right)_{i}\right\rbrace_{s \in \mathbb{N}}$ is uniformly bounded in $\mathsf{W}^{1}\mathrm{L}^{m, n-2m}$ and $\mathrm{L}^{2m, n-2m}$. Thus, using Proposition \eqref{Morrey uniformly bounded}, up to the extraction of a subsequence which we do not relabel, we have 
		\begin{align}
			g^{s}_{ij} &\stackrel{\ast}{\rightharpoonup} g^{\infty}_{ij} &&\text{ weakly $\ast$ in } L^{\infty}, \label{weakstargLinf}\\
			g^{s}_{ij} &\rightharpoonup g^{\infty}_{ij} &&\text{ weakly in } W^{2,m} \text{ and } W^{1,2m}, \label{weakgijW1n}\\
			\left( A^{s}_{C}\right)_{i} &\rightharpoonup A_{i}^{\infty} &&\text{ weakly in } W^{1,m}\text{ and } L^{2m}, \label{weakconnectionW1n2}
		\end{align}
		for some $g_{ij}^{\infty}$ and $A_{i}^{\infty}$ satisfying 
		\begin{align*}
			\left\lVert dg^{\infty}_{ij} \right\rVert_{\mathrm{L}^{2m, n-2m}} &\leq 	\liminf\limits_{s \rightarrow \infty}\left\lVert dg^{s}_{ij}\right\rVert_{\mathrm{L}^{2m, n-2m}} , \\
			\left\lVert \nabla dg^{\infty}_{ij}\right\rVert_{\mathrm{L}^{m, n-2m}} &\leq 	\liminf\limits_{s \rightarrow \infty}\left\lVert \nabla dg^{s}_{ij}\right\rVert_{\mathrm{L}^{m, n-2m}} , \\
			\left\lVert A_{i}^{\infty}\right\rVert_{\mathrm{L}^{2m, n-2m}} &\leq \liminf\limits_{s \rightarrow \infty}	\left\lVert \left( A^{s}_{C}\right)_{i} \right\rVert_{\mathrm{L}^{2m, n-2m}}, \\
			\left\lVert \nabla A_{i}^{\infty}\right\rVert_{\mathrm{L}^{m, n-2m}} &\leq 	\liminf\limits_{s \rightarrow \infty}\left\lVert \nabla \left( A^{s}_{C}\right)_{i} \right\rVert_{\mathrm{L}^{m, n-2m}}, 
		\end{align*} for every $i,j \in J.$   By compactness of the Sobolev embedding, up to the extraction of a further subsequence which we do not relabel, \eqref{weakgijW1n} and \eqref{weakstargLinf} implies 
		\begin{align}\label{strong convergence of gauges}
			g^{s}_{ij} \rightarrow g^{\infty}_{ij}\quad \text{ strongly in } L^{p} \text{ for every } p < \infty 
		\end{align} and since the maps $g^{s}_{ij}$ takes values in $G$ a.e. and satisfy the cocycle conditions, passing to the limit we deduce that the maps $g^{\infty}_{ij}$ also takes values in $G$ a.e. and satisfy the cocycle conditions as well. Thus they define a $\mathsf{W}^{2}\mathrm{VL}^{m, n-2m}$ bundle $P^{\infty}$. Using compactness of the Sobolev embedding again, up to the extraction of a further subsequence which we do not relabel, \eqref{weakconnectionW1n2} implies 
		\begin{align}\label{strong convergence of connections}
			\left( A^{s}_{C}\right)_{i} \rightarrow A_{i}^{\infty}\quad \text{ strongly in } L^{q} \text{ for every } 1 \leq q < 2m 
		\end{align} for every $i$ and thus, we have 
		\begin{align*}
			g^{s}_{ij}\left( A^{s}_{C}\right)_{j}  - \left( A^{s}_{C}\right)_{i} g^{s}_{ij} \rightarrow  g^{\infty}_{ij}A_{i}^{\infty}  - A_{j}^{\infty} g^{\infty}_{ij} \quad \text{ in } L^{\theta}
		\end{align*}
		for every $1 \leq \theta < 2m.$ Combining with \eqref{gluing relation nu} and \eqref{weakgijW1n}, this implies that the gluing relations 
		\begin{align}\label{gluing infty}
			dg^{\infty}_{ij} = g^{\infty}_{ij}A^{\infty}_{j}  - A^{\infty}_{i} g^{\infty}_{ij} \qquad \text{ in } V^{\infty}_{i}\cap V^{\infty}_{j}
		\end{align} 
		holds in the sense of distributions and pointwise a.e. for every $i,j$ with $V^{\infty}_{i}\cap V^{\infty}_{j} \neq \emptyset.$ Thus the local representatives $\left\lbrace A^{\infty}_{i}\right\rbrace_{i \in J}$ patch together to yield a global connection form $A^{\infty}$ on $P^{\infty}. $ Note that by \eqref{strong convergence of connections}, we also have 
		$$\left( A^{s}_{C}\right)_{i}\wedge \left( A^{s}_{C}\right)_{i} \rightarrow A_{i}^{\infty}\wedge A_{i}^{\infty}\quad \text{ strongly in } L^{\frac{p}{2}} \text{ for every } 2 \leq p < 2m  $$ for every $i \in J.$ Since \eqref{weakconnectionW1n2} implies that $\left( A^{s}_{C}\right)_{i}\wedge \left( A^{s}_{C}\right)_{i}$ is uniformly bounded in $L^{m},$ by uniqueness of weak limits, we deduce 
		$$ \left( A^{s}_{C}\right)_{i}\wedge \left( A^{s}_{C}\right)_{i} \rightharpoonup A_{i}^{\infty}\wedge A_{i}^{\infty}\quad \text{ weakly in } L^{m}.$$ Combining this with \eqref{weakconnectionW1n2}, we obtain 
		$$F_{A^{s}_{i}} \rightharpoonup F_{A^{\infty}_{i}} \qquad \text{ weakly in } L^{m}. $$ This implies
		\begin{align*}
			\left\lVert F_{A^{\infty}_{i}}\right\rVert_{\mathrm{L}^{m, n-2m}} \leq  \liminf\limits_{s \rightarrow \infty }\left\lVert F_{A^{s}_{i}}\right\rVert_{\mathrm{L}^{m, n-2m}}. 
		\end{align*}By \eqref{Coulomb nu} and \eqref{weakconnectionW1n2}, we deduce that $A^{\infty}$ is Coulomb and thus, up to shrinking the domains which we do not rename, by Theorem \ref{regularity Coulomb}, we can assume that  $g^{\infty}_{ij}$ are $\mathsf{W}^{2}\mathrm{L}^{q, n-2m}$ in $V^{\infty}_{i}\cap V^{\infty}_{j}$ for every $m< q < 2m$ and every $i,j$ with $V^{\infty}_{i}\cap V^{\infty}_{j} \neq \emptyset.$ Now from \eqref{gluing infty} and \eqref{gluing relation nu}, we deduce that the equation  
		\begin{multline}\label{d of diff}
			d\left( g^{s}_{ij} - g^{\infty}_{ij}\right) = \left( g^{s}_{ij} - g^{\infty}_{ij}\right)\left( A^{s}_{C}\right)_{j} - \left( A^{s}_{C}\right)_{i}\left( g^{s}_{ij} - g^{\infty}_{ij}\right) 
			\\ + g^{\infty}_{ij}\left[ \left( A^{s}_{C}\right)_{j} - A^{\infty}_{j}\right] -  \left[ \left( A^{s}_{C}\right)_{i} - A^{\infty}_{i}\right]g^{\infty}_{ij}
		\end{multline}
		holds in $V^{\infty}_{i}\cap V^{\infty}_{j}$ whenever the intersection is non-empty. 
		Now note that for any $p > \frac{n}{m-1} \geq \frac{2m}{m-1},$ we have 
		\begin{align*}
			\frac{1}{\rho^{n-2m}} \int_{B_{\rho}(x)}\left\lvert  g^{s}_{ij} - g^{\infty}_{ij} \right\rvert^{\frac{2m}{m-1}} &\leq \rho^{n\left(1 - \frac{2m}{\left(m-1\right)p}\right) - \left(n-2m\right)} \left( \int_{B_{\rho}(x)}\left\lvert  g^{s}_{ij} - g^{\infty}_{ij} \right\rvert^{p} \right)^{\frac{2m}{\left(m-1\right)p}} \\
			&= \left( \rho^{\left(m -1 \right)p -n} \int_{B_{\rho}(x)}\left\lvert  g^{s}_{ij} - g^{\infty}_{ij} \right\rvert^{p} \right)^{\frac{2m}{\left(m-1\right)p}}. 
		\end{align*}
		As $\left(m-1\right)p -n >0$ by our choice of $p$ and in view of \eqref{strong convergence of gauges}, we see that 
		\begin{align}\label{strong convergence of gauges in Morrey norms}
			\left\lVert 	g^{s}_{ij}- g^{\infty}_{ij}\right\rVert_{\mathrm{L}^{\frac{2m}{m-1},n-2m}} \rightarrow 0. 
		\end{align}
		Returning back to \eqref{d of diff}, we estimate 
		\begin{align}
			\left\lVert 	dg^{s}_{ij}- dg^{\infty}_{ij}\right\rVert_{\mathrm{L}^{2,n-2m}} &\leq 
			\left\lVert g^{s}_{ij}- g^{\infty}_{ij}\right\rVert_{\mathrm{L}^{\frac{2m}{m-1},n-2m}} 	\left\lVert 	\left( A^{s}_{C}\right)_{j}\right\rVert_{\mathrm{L}^{2m,n-2m}} \notag \\ &\quad + 	\left\lVert 	g^{s}_{ij}- g^{\infty}_{ij}\right\rVert_{\mathrm{L}^{\frac{2m}{m-1},n-2m}}\left\lVert \left( A^{s}_{C}\right)_{i}\right\rVert_{\mathrm{L}^{2m,n-2m}}  \notag \\ &\qquad + \left\lVert g^{\infty}_{ij}\right\rVert_{L^{\infty}}	\left\lVert \left( A^{s}_{C}\right)_{j} - A^{\infty}_{j}\right\rVert_{\mathrm{L}^{2,n-2m}} \notag \\&\quad\qquad + \left\lVert g^{\infty}_{ij}\right\rVert_{L^{\infty}}\left\lVert \left( A^{s}_{C}\right)_{i} - A^{\infty}_{i}\right\rVert_{\mathrm{L}^{2,n-2m}}.
			\label{estimate for d of diff in 2morrey}
		\end{align}
		The first two terms on the right converge to zero by \eqref{strong convergence of gauges in Morrey norms}, as the $\mathrm{L}^{2m, n-2m}$ norms of the connections are uniformly bounded by \eqref{common coulomb estimate} and the uniform bound on the Morrey norms of the curvatures. For the last two terms, it is enough to show 
		\begin{align*}
			\left\lVert \left( A^{s}_{C}\right)_{i} - A^{\infty}_{i}\right\rVert_{\mathrm{L}^{2,n-2m}} \rightarrow 0 \qquad \text{ for all } i \in J. 
		\end{align*}
		We establish something stronger. Note that for any $1 < q < 2m,$ we have 
		\begin{align*}
			&\frac{1}{\rho^{n-2m}} \int_{B_{\rho}(x)}\left\lvert   \left( A^{s}_{C}\right)_{i} - A^{\infty}_{i}\right\rvert^{q} \\&= 	\frac{1}{\rho^{n-2m}} \int_{B_{\rho}(x)}\left\lvert   \left( A^{s}_{C}\right)_{i} - A^{\infty}_{i}\right\rvert^{\frac{2m-q}{2m-1}}\left\lvert \left( A^{s}_{C}\right)_{i} - A^{\infty}_{i}\right\rvert^{\frac{2m\left(q-1\right)}{2m-1}} \\
			&\leq \frac{1}{\rho^{n-2m}} \left( \int_{B_{\rho}(x)}\left\lvert   \left( A^{s}_{C}\right)_{i} - A^{\infty}_{i}\right\rvert\right)^{\frac{2m-q}{2m-1}} \left( \int_{B_{\rho}(x)} \left\lvert \left( A^{s}_{C}\right)_{i} - A^{\infty}_{i}\right\rvert^{2m}\right)^{\frac{q-1}{2m-1}} \\
			&= \left( \frac{1}{\rho^{n-2m}}\int_{B_{\rho}(x)}\left\lvert   \left( A^{s}_{C}\right)_{i} - A^{\infty}_{i}\right\rvert\right)^{\frac{2m-q}{2m-1}} \left\lVert \left( A^{s}_{C}\right)_{i} - A^{\infty}_{i}\right\rVert_{\mathrm{L}^{2m, n-2m}}^{\frac{2m\left(q-1\right)}{2m-1}}
			\\&\leq \left( \int_{B_{\rho}(x)}\left\lvert   \left( A^{s}_{C}\right)_{i} - A^{\infty}_{i}\right\rvert^{\frac{n}{2m}}\right)^{\frac{2m\left(2m-q\right)}{n\left( 2m-1\right)}}  \left\lVert \left( A^{s}_{C}\right)_{i} - A^{\infty}_{i}\right\rVert_{\mathrm{L}^{2m, n-2m}}^{\frac{2m\left(q-1\right)}{2m-1}}  \\
			&\leq \left\lVert \left( A^{s}_{C}\right)_{i} - A^{\infty}_{i}\right\rVert_{L^{\frac{n}{2m}}}^{\frac{2m-q}{2m-1}}\left\lVert \left( A^{s}_{C}\right)_{i} - A^{\infty}_{i}\right\rVert_{\mathrm{L}^{2m, n-2m}}^{\frac{2m}{2m-1}}.
		\end{align*}
		Since $m > \sqrt{n}/2$ implies $n/2m < 2m,$ in view of \eqref{strong convergence of connections}, this implies  
		\begin{align}\label{strong convergence of connections in Morrey norms}
			\left\lVert \left( A^{s}_{C}\right)_{i} - A^{\infty}_{i}\right\rVert_{\mathrm{L}^{q,n-2m}} \rightarrow 0 \qquad \text{ for all } i \in J, 
		\end{align}
		for any $1 < q < 2m.$ Hence \eqref{estimate for d of diff in 2morrey} implies 
		\begin{align}\label{convergence of derivative of trabsi in 2Morrey}
			\left\lVert 	dg^{s}_{ij}- dg^{\infty}_{ij}\right\rVert_{\mathrm{L}^{2,n-2m}} \rightarrow 0. 
		\end{align}
		Now we return to \eqref{d of diff} again and noting $A^{s}_{C}$ and $A^{\infty}$ are both Coulomb, we deduce the equation
		\begin{multline}\label{laplacian of the diff}
			-\Delta u^{s}_{ij} = \ast \left[ d u^{s}_{ij} \wedge  \ast \left( A^{s}_{C}\right)_{j}\right] + \ast \left[  \ast \left( A^{s}_{C}\right)_{i} \wedge du^{s}_{ij}\right] \\
			+\ast \left[ d g^{\infty}_{ij} \wedge  \ast \left[ \left( A^{s}_{C}\right)_{j} - A^{\infty}_{j}\right]\right]  + \ast \left[  \ast \left[ \left( A^{s}_{C}\right)_{i} - A^{\infty}_{i}\right] \wedge dg^{\infty}_{ij}\right] 
		\end{multline} 
		in $V^{\infty}_{i}\cap V^{\infty}_{j}$ whenever the intersection is non-empty, where $u^{s}_{ij}=g^{s}_{ij} - g^{\infty}_{ij}.$
		Now choose exponents $m < p < 2m$ and $ 1 < q < 2m$ such that $\frac{1}{2m} < \frac{1}{q} + \frac{2m-p}{2mp} < \frac{1}{m}.$ Note that since $p > m$ implies $\frac{2mp}{2m-p}> 2m,$ such a choice of $q$ is possible. Also, since $g^{\infty}_{ij}$ is $\mathsf{W}^{2}\mathrm{L}^{p, n-2m}$, using Lemma \ref{ellipticCritical}, we deduce the estimate 
		\begin{align}
			\left\lVert u^{s}_{ij}\right\rVert_{\mathsf{W}^{2}\mathrm{L}^{\theta, n-2m}} &\leq 
			C \left\lVert u^{s}_{ij}\right\rVert_{\mathsf{W}^{1}\mathrm{L}^{2, n-2m}} \notag \\ &\quad + C \left\lVert dg^{\infty}_{ij}\right\rVert_{\mathrm{L}^{\frac{2mp}{2m-p}, n-2m}} \left\lVert \left( A^{s}_{C}\right)_{i} - A^{\infty}_{i} \right\rVert_{\mathrm{L}^{q, n-2m}} \notag \\ &\qquad + C \left\lVert dg^{\infty}_{ij}\right\rVert_{\mathrm{L}^{\frac{2mp}{2m-p}, n-2m}}\left\lVert \left( A^{s}_{C}\right)_{j} - A^{\infty}_{j} \right\rVert_{\mathrm{L}^{q, n-2m}}, \label{estimate for coulomb transition diff}
		\end{align}
		where the exponent $m < \theta < 2m $ is defined by $\frac{1}{2m} < \frac{1}{\theta}:= \frac{1}{q} + \frac{2m-p}{2mp} < \frac{1}{m}.$
		Now \eqref{strong convergence of connections in Morrey norms} implies that the last two terms on the right hand side of the estimate above converges to zero as $s \rightarrow \infty.$ The first term also converges to zero by  \eqref{convergence of derivative of trabsi in 2Morrey} and \eqref{strong convergence of gauges in Morrey norms}. Thus, using Proposition \ref{Adams embedding bounded domain}, we obtain 
		\begin{align*}
			\left\lVert g^{s}_{ij} - g^{\infty}_{ij}\right\rVert_{C^{0}} \leq 	C\left\lVert g^{s}_{ij} - g^{\infty}_{ij} \right\rVert_{\mathsf{W}^{2}\mathrm{L}^{\theta, n-2m}} \rightarrow 0 \quad \text{ as } s \rightarrow \infty
		\end{align*} for each $i,j \in J.$  By Theorem \ref{subcritical cocycle factorization}, we deduce the existence of
		a smaller cover $\left\lbrace U^{\infty}_{i}\right\rbrace_{i \in K}$ and gauge changes $\sigma_{i} \in \mathsf{W}^{2}\mathrm{L}^{\theta, n-2m}\left( U^{\infty}_{i}; G \right)$ satisfying $$ g^{\infty}_{ij}= \sigma_{i}^{-1} g^{s_{0}}_{ij} \sigma_{j} \qquad \text{ in } U^{\infty}_{i}\cap U^{\infty}_{j},$$
		whenever the intersection is non-empty, for some integer $s_{0}$ large enough. The result follows.  
	\end{proof}
	\begin{proof}[\textbf{Proof of Theorem \ref{flatness in vanishing}}]
		If the result is false, then there exists a sequence of bundle-connection pairs $\left\lbrace \left( P^{s}, A^{s} \right) \right\rbrace_{s \in \mathbb{N}}$  such that for each $s \in \mathbb{N},$ we have $ P^{s} $ is a  $\mathsf{W}^{2}\mathrm{VL}^{m, n-2m}$ bundle, trivialized over $\left\lbrace U_{\alpha}\right\rbrace_{\alpha \in I}$ and $ A^{s}$ is a $\mathsf{W}^{1}\mathrm{VL}^{m, n-2m}$ connection on $P^{s}$ such that the corresponding Coulomb bundles are not gauge equivalent to the flat bundle and 
		\begin{align*}
			\sup\limits_{\alpha \in I} \left\lVert F_{A^{s}_{\alpha}}\right\rVert_{\mathrm{L}^{m, n-2m}\left( U_{\alpha}; \Lambda^{2}\mathbb{R}^{n}\otimes \mathfrak{g} \right)} &\rightarrow 0.   
		\end{align*}
		Since this implies that the curvatures are equibounded and equivanishing in $\mathrm{L}^{m, n-2m},$ we can proceed as in the proof of Theorem \ref{equivanishing Morrey} and conclude that for the transition functions for the limitiong Coulomb bundle $P^{\infty},$ we have 
		\begin{align*}
			dg^{\infty}_{ij} = 0 \qquad \text{ for all } i, j.   
		\end{align*}
		Thus, $\left\lbrace g^{\infty}_{ij} \right\rbrace_{i, j}$ is a cocycle of constant maps and thus $P^{\infty}$ is a flat bundle. Since the Coulomb bundles associated to $\left( P^{s}, A^{s} \right)$ is gauge equivalent to $P^{\infty}$ for large enough $s,$ this contradiction proves the result.   
	\end{proof}
	\begin{proof}[\textbf{Proof of Theorem \ref{small distance to flat}}]
		This proof is similar to Theorem \ref{flatness in vanishing}. If the result is false, then there exists a sequence of bundle-connection pairs $\left\lbrace \left( P^{s}, A^{s} \right) \right\rbrace_{s \in \mathbb{N}}$  such that for each $s \in \mathbb{N},$ we have $ P^{s} $ is a  $\mathsf{W}^{2}\mathrm{L}^{m, n-2m}$ bundle, trivialized over $\left\lbrace U_{\alpha}\right\rbrace_{\alpha \in I},$ which is not gauge equivalent to a flat bundle via $\mathsf{W}^{2}\mathrm{L}^{m, n-2m}$ gauges and $ A^{s}$ is a $\mathsf{W}^{1}\mathrm{L}^{m, n-2m}$ connection on $P^{s}$ such that 
		\begin{align*}
			\sup\limits_{\alpha \in I} \left\lVert A^{s}_{\alpha}\right\rVert_{\mathsf{W}^{1}\mathrm{L}^{m, n-2m}\left( U_{\alpha}; \Lambda^{2}\mathbb{R}^{n}\otimes \mathfrak{g} \right)} &\rightarrow 0.   
		\end{align*}
		Thus, using Theorem \ref{existence_Coulombgauges_small_connection}, there exists $s_{0} \in \mathbb{N}$ such that we have 
		\begin{align*}
			\left\lVert A^{s}_{\alpha}\right\rVert_{\mathsf{W}^{1}\mathrm{L}^{m, n-2m}\left( U_{\alpha}; \Lambda^{2}\mathbb{R}^{n}\otimes \mathfrak{g} \right)} &< \varepsilon_{C, \alpha}   \quad \text{ for all } \alpha \in I \text{ for all } s\geq s_{0}. 
		\end{align*}
		Thus, the corresponding Coulomb bundles exists. Now once again, arguing exactly as in the proof of Theorem \ref{equivanishing Morrey}, we can show that the limitimg Coulomb bundle $P^{\infty}$ exists and we have 
		\begin{align*}
			dg^{\infty}_{ij} = 0 \qquad \text{ for all } i, j, 
		\end{align*} 
		where  $\left\lbrace g^{\infty}_{ij} \right\rbrace_{i, j}$ are the transition functions for $P^{\infty}.$ As before, $P^{\infty}$ must be a flat bundle which is gauge equivalent to $P^{s}$ via $\mathsf{W}^{2}\mathrm{L}^{m, n-2m}$ gauges for $s$ large enough. This contradiction proves the result. 
	\end{proof}
	\begin{proof}[\textbf{Proof of Theorem \ref{cocycle factorization MorreySobolev}}]
		Using Theorem \ref{existence of connection}, we can deduce from Theorem \ref{flatness in vanishing} and Theorem \ref{small distance to flat} that the cocycle $\left\lbrace g_{\alpha\beta}\right\rbrace_{\alpha, \beta \in I}$ is $\mathsf{W}^{2}\mathrm{L}^{m, n-2m}$ gauge related to a cocycles of constant maps and the gauges are $\mathsf{W}^{2}\mathrm{VL}^{m, n-2m}$ if $g_{\alpha\beta}$s are. But since $M^{n}$ is simply connected, any cocycle of constant maps can be factorized by constant maps over simply connected manifolds. This is exactly how one proves that any flat bundle is isomorphic to the trivial bundle over simply connected manifolds ( see  \cite{Steenrod_fibrebundles} ). This implies the result. 
	\end{proof}
	
	\section*{Acknowledgments}
	The author warmly thanks Tristan Rivi\`{e}re for introducing him to this subject and numerous discussions, suggestions and encouragement.

\end{document}